\definecolor{darkgreen}{rgb}{0, 0.5, 0}
\newtheorem{theorem}{Theorem}
\newtheorem{lemma}[theorem]{Lemma}
\newtheorem{corollary}[theorem]{Corollary}
\newtheorem{conj}[theorem]{Conjecture}
\newtheorem{Ex}[theorem]{Example}
\newtheorem*{theorem*}{Theorem}
\newtheorem{remark}[theorem]{Remark}
\newtheorem{que}[theorem]{Question}
\newcommand{\ind}{{\rm ind \hspace{.1cm}}}
\title{Seaweed algebras and the unimodal spectrum property}
\author[*]{Nicholas Mayers}
\author[**]{Nicholas Russoniello}
\affil[*]{Department of Mathematics, North Carolina State University, Raleigh, NC 27695}
\affil[**]{Department of Mathematics, College of William \& Mary, Williamsburg, VA 23185}
\begin{document}

\maketitle

\begin{abstract}
\noindent
If $\mathfrak{g}$ is a Frobenius Lie algebra, then the spectrum of $\mathfrak{g}$ is an algebraic invariant equal to the multiset of eigenvalues corresponding to a particular operator acting on $\mathfrak{g}$. In the case of Frobenius seaweed subalgebras of $A_{n-1}=\mathfrak{sl}(n)$, or type-A seaweeds for short, it has been shown that the spectrum can be computed combinatorially using an attendant graph. With the aid of such graphs, it was further shown that the spectrum of a type-A seaweed consists of an unbroken sequence of integers centered at $\frac{1}{2}$. It has been conjectured that if the eigenvalues are arranged in increasing order, then the sequence of multiplicities forms a unimodal sequence about $\frac{1}{2}$. Here, we establish this conjecture for certain families of Frobenius type-A seaweeds by finding explicit formulas for their spectra; in fact, for some families we are able to show that the corresponding sequences of multiplicities form log-concave sequences. All arguments are combinatorial.
\end{abstract}

\noindent
\textit{Mathematics Subject Classification 2020:} 05E16, 05C25, 17B45\\
\textit{Key Words and Phrases:} unimodal, log-concave, spectrum, meander, seaweed, Frobenius Lie algebra

\section{Introduction}

A biparabolic (seaweed) subalgebra of a complex reductive Lie algebra $\mathfrak{r}$ is the intersection of two parabolic subalgebras whose sum is $\mathfrak{r}$. They -- along with certain associated planar graphs called  ``meanders" -- were first introduced in the case $\mathfrak{r}=\mathfrak{gl}(n)$ by Dergachev and Kirillov (\textbf{\cite{DK}}, 2000). One of the main results of \textbf{\cite{DK}} is that the algebra's ``index," a notoriously difficult-to-compute Lie-algebraic invariant of recent interest (see \textbf{\cite{seriesA,BCD,Panov,Pancent}}), can be found by counting the number of paths and cycles in its associated meander. Of particular significance are those seaweed subalgebras of $\mathfrak{gl}(n)$ whose meanders consist of a single path and no cycles. For such algebras, imposing a vanishing trace condition results in a seaweed subalgebra of $\mathfrak{sl}(n)$ with index zero. In general, algebras with index zero are called ``Frobenius" and have been studied extensively in the context of invariant theory (see \textbf{\cite{Ooms1}}) and are connected to the classical Yang-Baxter equation (see \textbf{\cite{GG1}} and \textbf{\cite{GG2}}). Here, we are concerned with the ``spectrum" of a Frobenius Lie algebra, which is an invariant multiset of eigenvalues arising from a certain operator's action on the algebra. In the case of a Frobenius seaweed subalgebra of $\mathfrak{sl}(n)$, it is conjectured that the multiplicities of eigenvalues in the spectrum form a unimodal sequence. We establish this conjecture for particular families of Frobenius seaweed subalgebras of $\mathfrak{sl}(n)$ by utilizing the seaweeds' meanders to find explicit formulas for their spectra.
\bigskip

To fix the notation, let $\mathfrak{g}$ be a Lie algebra over $\mathbb{C}$. From any linear form $F\in\mathfrak{g}^*$ arises a skew-symmetric, bilinear two-form $B_F(-,-)=F([-,-])$, called the \textit{Kirillov form}. The index of $\mathfrak{g}$ is then given by $$\ind\mathfrak{g}=\min_{F\in\mathfrak{g}^*}\dim\ker(B_F)$$
(see \textbf{\cite{Dix}}). The Lie algebra $\mathfrak{g}$ is called \textit{Frobenius} if its index is zero, or equivalently, if there exists a linear form $F\in\mathfrak{g}^*$ such that $B_F$ is non-degenerate. We call such an $F$ a \textit{Frobenius \textup(linear\textup) form}, and the natural map $\mathfrak{g}\to\mathfrak{g}^*$ defined by $x\mapsto F[x,-]$ is an isomorphism. The inverse image of $F$ under this map is called a \textit{principal element} of $\mathfrak{g}$ and will be denoted $\widehat{F}$; that is, $\widehat{F}$ is the unique element of $\mathfrak{g}$ such that $$F\circ\text{ad }\widehat{F}=F([\widehat{F},-])=F.$$ 

Ooms (\textbf{\cite{Ooms2}}, 1980) showed that the spectrum of the adjoint action of a principal element acting on its Frobenius Lie algebra is independent of the choice of principal element (see also \textbf{\cite{GG3}}, Theorem 3). Consequently, if $\mathfrak{g}$ is Frobenius and $\widehat{F}\in\mathfrak{g}$ is any principal element, then the multiset of eigenvalues of $ad_{\widehat{F}}~:~\mathfrak{g}\to\mathfrak{g}$ is an invariant of $\mathfrak{g}$, and so we call such a multiset the \textit{spectrum of} $\mathfrak{g}$. In the case that $\mathfrak{g}$ is a seaweed subalgebra of $\mathfrak{sl}(n),$ Gerstenhaber and Giaquinto (\textbf{\cite{GG3}}, 2009) asserted that the spectrum consists of an unbroken set of integers. Coll et al. (\textbf{\cite{unbroken}}, 2016) were able to prove  the unbrokenness property initially stated in \textbf{\cite{GG3}}, showing that the distinct eigenvalues in the spectrum of a Frobenius seaweed subalgebra of $\mathfrak{sl}(n)$ forms an interval of integers centered at $\frac{1}{2}$. Moreover, the authors of \textbf{\cite{unbroken}} conjectured that if the eigenvalues are arranged in increasing order, then the corresponding sequence of multiplicities form a unimodal sequence centered at $\frac{1}{2}$. The primary goal of this article is to establish this unimodality conjecture for certain families of seaweed subalgebras of $\mathfrak{sl}(n)$; we do so by finding explicit formulas for the spectra by combinatorial means.

The methods used in this paper make use of a family of graphs, called ``meanders," which can be associated to seaweed subalgebras of $\mathfrak{sl}(n)$ via the seaweed's defining compositions. See Section~\ref{sec:prelim} for the relationship between seaweed subalgebras of $\mathfrak{sl}(n)$ and compositions, as well as a detailed construction of a meander. The raison d'etre for the introduction of meanders into the theory of seaweed subalgebras was the ability to implement them in the computation of algebraic invariants, including index and spectrum. With this in mind, we are able to translate questions of spectrum to questions about graphs.

The outline of the paper is as follows. In Section~\ref{sec:prelim}, we outline the necessary preliminaries concerning seaweed subalgebras of $\mathfrak{sl}(n)$ and meanders. In Section~\ref{sec:mp}, we develop lemmas concerning the spectra of Frobenius seaweed subalgebras of $\mathfrak{sl}(n)$ associated with ordered pairs of compositions containing three parts in total; these lemmas are then used to find explicit formulas for the spectra of such algebras. In Section~\ref{sec:stable}, we consider the spectra of families of Frobenius seaweed subalgebras of $\mathfrak{sl}(n)$ parametrized by the number of occurrences of a fixed value in the defining compositions; for such families, we observe new behavior concerning the stability of the set of distinct eigenvalues.  Section~\ref{sec:epilogue} consists of a discussion of our results, a number of questions arising from our work, and directions for further research.

\section{Preliminaries}\label{sec:prelim}
Given a reductive Lie algebra $\mathfrak{r},$ a \textit{seaweed subalgebra of} $\mathfrak{r}$ is the intersection of two parabolic subalgebras $\mathfrak{p},\mathfrak{p}'\subset\mathfrak{r}$ satisfying $\mathfrak{p}+\mathfrak{p}'=\mathfrak{r}.$ While seaweed subalgebras are defined generally as above, when restricting to seaweed subalgebras of $\mathfrak{gl}(n,\mathbb{C})=\mathfrak{gl}(n)$ and $\mathfrak{sl}(n,\mathbb{C})=\mathfrak{sl}(n)$, an equivalent definition in terms of compositions is available. In particular, we define a \textit{seaweed subalgebra of} $\mathfrak{gl}(n)$, or simply a \textit{seaweed}, to be any matrix Lie algebra constructed from two compositions of $n$ as follows. Let $V$ be a complex $n$-dimensional vector space with basis $\{e_1,\dots,e_n\},$ let $\underline{a}=(a_1,\dots,a_m)$ and $\underline{b}=(b_1,\dots,b_t)$ be two compositions of $n,$ and consider the flags $$\mathscr{V}=\big\{\{0\}\subset V_1\subset \dots\subset V_{m-1}\subset V_m=V\big\}\text{  and  }\mathscr{W}=\big\{V=W_0\supset W_1\supset\dots\supset W_t=\{0\}\big\},$$ where $V_i=span\{e_1,\dots,e_{a_1+\dots +a_i}\}$ and $W_j=span\{e_{b_1+\dots+b_j+1},\dots,e_n\}.$ Then the seaweed $\mathfrak{p}(\underline{a}|\underline{b})=\mathfrak{p}\frac{a_1|\dots|a_m}{b_1|\dots|b_t}$ is the subalgebra of $\mathfrak{gl}(n)$ that preserves the flags $\mathscr{V}$ and $\mathscr{W}.$ Similarly, the seaweed subalgebra of $A_{n-1}=\mathfrak{sl}(n)$, or \textit{type-A seaweed}, $\mathfrak{p}^A(\underline{a}|\underline{b})=\mathfrak{p}^A\frac{a_1|\dots|a_m}{b_1|\dots|b_t}$ is the subalgebra of $\mathfrak{sl}(n)$ that preserves the flags $\mathscr{V}$ and $\mathscr{W}$. One special case is of note: if $\underline{a}=(n)$ and $\underline{b}=(b_1,b_2)$, then the type-A seaweeds $\mathfrak{p}^A(\underline{a}|\underline{b})$ and $\mathfrak{p}^A(\underline{b}|\underline{a})$ are called \textit{maximal parabolic}.

\begin{remark}
Typically, one includes $n$ in the notation for seaweed subalgebras of $\mathfrak{gl}(n)$ and $\mathfrak{sl}(n)$, writing $\mathfrak{p}_n(\underline{a}|\underline{b})=\mathfrak{p}_n\frac{a_1|\dots|a_m}{b_1|\dots|b_t}$ and $\mathfrak{p}^A_n(\underline{a}|\underline{b})=\mathfrak{p}^A_n\frac{a_1|\dots|a_m}{b_1|\dots|b_t}$ for the corresponding \textup(type-A\textup) seaweeds; however, since $n$ is encoded in the compositions $\underline{a}$ and $\underline{b},$ we omit it for ease of notation.
\end{remark}






The evocative ``seaweed'' is descriptive of the shape of the algebra when exhibited in matrix form. For example, the seaweed algebra $\mathfrak{p}^A\frac{2|4}{1|2|3}$ consists of traceless matrices of the form depicted in Figure~\ref{fig:seaweed} (left), where $\ast$'s indicate potential non-zero entries.

\begin{figure}[H]
$$\begin{tikzpicture}[scale=0.75]
\draw (0,0) -- (0,6);
\draw (0,6) -- (6,6);
\draw (6,6) -- (6,0);
\draw (6,0) -- (0,0);
\draw [line width=3](0,6) -- (0,4);
\draw [line width=3](0,4) -- (2,4);
\draw [line width=3](2,4) -- (2,0);
\draw [line width=3](2,0) -- (6,0);

\draw [line width=3](0,6) -- (1,6);
\draw [line width=3](1,6) -- (1,5);
\draw [line width=3](1,5) -- (3,5);
\draw [line width=3](3,5) -- (3,3);
\draw [line width=3](3,3) -- (6,3);
\draw [line width=3](6,3) -- (6,0);

\draw [dotted] (0,6) -- (6,0);

\node at (.5,5.4) {{\LARGE *}};
\node at (.5,4.4) {{\LARGE *}};
\node at (1.5,4.4) {{\LARGE *}};
\node at (2.5,4.4) {{\LARGE *}};
\node at (2.5,3.4) {{\LARGE *}};
\node at (2.5,2.4) {{\LARGE *}};
\node at (2.5,1.4) {{\LARGE *}};
\node at (2.5,0.4) {{\LARGE *}};
\node at (3.5,2.4) {{\LARGE *}};
\node at (3.5,1.4) {{\LARGE *}};
\node at (3.5,0.4) {{\LARGE *}};
\node at (4.5,2.4) {{\LARGE *}};
\node at (4.5,1.4) {{\LARGE *}};
\node at (5.5,2.4) {{\LARGE *}};
\node at (5.5,1.4) {{\LARGE *}};
\node at (4.5,0.4) {{\LARGE *}};
\node at (5.5,0.4) {{\LARGE *}};

\node at (.5,6.4) {1};
\node at (2,5.4) {2};
\node at (4.5,3.4) {3};
\node at (-0.5,4.9) {2};
\node at (1.5,1.9) {4};

\end{tikzpicture}\hspace{1.5cm}\begin{tikzpicture}[scale=1.3]
	\def\Node{\node [circle,  fill, inner sep=2pt]}
	\node at (0,0) {};
     \Node[label=left:$v_1$] (1) at (0,1.8) {};
	\Node[label=left:$v_2$] (2) at (1,1.8) {};
	\Node[label=left:$v_3$] (3) at (2,1.8) {};
	\Node[label=left:$v_4$] (4) at (3,1.8) {};
	\Node[label=left:$v_5$] (5) at (4,1.8) {};
	\Node[label=left:$v_6$] (6) at (5,1.8) {};
	\draw (1) to[bend left=50] (2);
	\draw (3) to[bend left=50] (6);
	\draw (4) to[bend left=50] (5);
	\draw (2) to[bend right=50] (3);
	\draw (4) to[bend right=50] (6);
\end{tikzpicture}$$
\caption{The seaweed $\mathfrak{p}\frac{2|4}{1|2|3}$ (left) and its associated meander (right)}\label{fig:seaweed}
\end{figure}
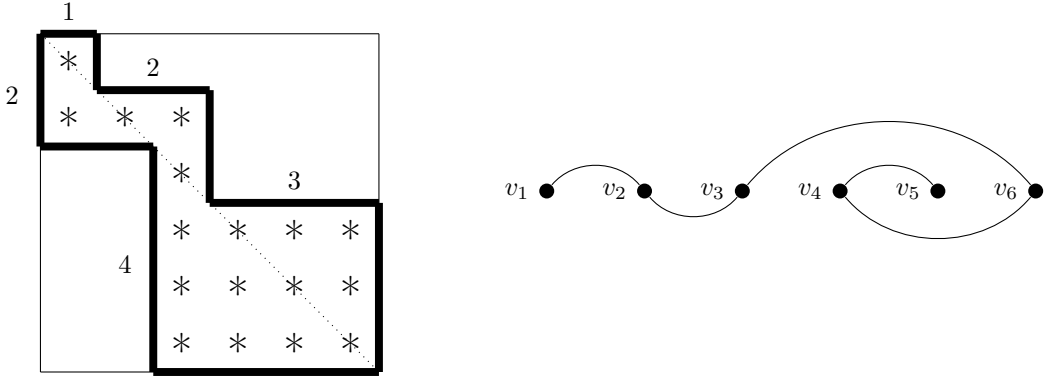

In \textbf{\cite{DK}}, Dergachev and Kirillov assign to each seaweed $\mathfrak{g}=\mathfrak{p}\frac{ a_1|\cdots|a_m}{b_1|\cdots|b_t}$ with $\sum_{i=1}^ma_i=\sum_{j=1}^tb_j=n$ a planar graph $M(\mathfrak{g})$ on $n$ vertices, called a \textit{meander}, as follows. Begin by placing $n$ labeled vertices $v_1, v_2, \dots , v_n$ from left to right along a horizontal line. Next, partition the vertices by grouping together the first $a_1$ vertices, then the next $a_2$ vertices, and so on, lastly grouping together the final $a_m$ vertices. We call each set of vertices formed a \textit{block}. For each block in the prescribed set partition, add a concave-down edge, called a \textit{top edge}, from the first vertex of the block to the last vertex of the block, then add a top edge between the second vertex of the block and the second-to-last vertex of the block, and so on within each block, assuming that the vertices being connected are distinct. In a similar way, partition the vertices according to the composition $\underline{b},$ and then place \textit{bottom edges}, i.e., concave-up edges, between vertices in each block. See Figure \ref{fig:seaweed} (right).

The main theorem of \textup{\textbf{\cite{DK}}} establishes the combinatorial formula for the index of a seaweed given in Theorem~\ref{rem:indexform} below.

\begin{theorem}\label{rem:indexform} If $\mathfrak{g}$ is a seaweed, then $$\ind \mathfrak{g} =2C + P,$$ where $C$ is the number of cycles and $P$ is the number of paths in $M(\mathfrak{g})$.
\end{theorem}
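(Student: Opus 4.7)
The plan is to exhibit an explicit linear form $F \in \mathfrak{g}^*$ whose Kirillov form $B_F$ has kernel of dimension exactly $2C + P$, then argue this value equals the minimum $\ind \mathfrak{g}$. Each edge of $M(\mathfrak{g})$ corresponds canonically to a matrix unit lying in $\mathfrak{g}$ (a top edge between $v_i$ and $v_j$ with $i < j$ corresponds to $e_{ij}$, and a bottom edge to $e_{ji}$), since meander edges are by construction placed at positions on the boundary of the seaweed's matrix shape. I would take $F$ to be the sum, over all edges of $M(\mathfrak{g})$, of the dual functionals of these matrix units, so that $F$ ``picks out'' precisely the edges of the meander.

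To compute $\ker B_F$, I would use $[e_{ab}, e_{cd}] = \delta_{bc} e_{ad} - \delta_{da} e_{cb}$ to expand $F([x, e_{kl}])$ for a general $x = \sum x_{ab} e_{ab} \in \mathfrak{g}$ and each basis element $e_{kl} \in \mathfrak{g}$. The resulting linear system on the coefficients $x_{ab}$ has a highly structured form: each equation relates a coefficient $x_{ab}$ to another $x_{a'b'}$ whose indices differ by a single meander-edge move. Consequently, the system decouples into independent subsystems indexed by the connected components of $M(\mathfrak{g})$. For a path component $v_{i_1} - v_{i_2} - \cdots - v_{i_r}$, the equations telescope so that the coupled coefficients are all determined by a single scalar (contributing $1$ to $\dim \ker B_F$); for a cycle component, the telescoping is consistent but leaves two free parameters (contributing $2$). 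Summing gives $\dim \ker B_F = 2C + P$. Upper semi-continuity of $\dim \ker B_F$ in $F$ then forces this to be the minimum, yielding $\ind \mathfrak{g} = 2C + P$.

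The main obstacle is the telescoping analysis: one must track, along each component, how the alternation between top edges (from $\underline{a}$) and bottom edges (from $\underline{b}$) interacts with the commutator, so that the path/cycle dichotomy drops out as a clean parity statement. If the direct computation becomes unwieldy, I would fall back on an inductive argument: apply the standard meander-reduction moves on the composition pair $(\underline{a}, \underline{b})$ (e.g., peel off matched outer blocks when $a_m = b_t$, or replace a larger outer block by its reflection otherwise), verify that each move preserves both $\ind \mathfrak{g}$ and $2C + P$, and reduce to the trivial base case of a one-vertex meander where $\mathfrak{g} \cong \mathbb{C}$ has index $1$ and the meander has one path.
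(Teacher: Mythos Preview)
The paper does not prove this theorem; it is quoted from Dergachev--Kirillov \textbf{[DK]} as the main result of that paper. So there is no proof in the paper to compare against, and your proposal is an attempt at an independent proof.

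Your outline has a genuine gap in the final step. Computing $\dim\ker B_F = 2C+P$ for the explicit edge-functional $F$ gives only the upper bound $\ind\mathfrak{g}\le 2C+P$. Your appeal to upper semi-continuity does not close the gap: upper semi-continuity of $F\mapsto\dim\ker B_F$ tells you that the minimum is attained on a Zariski-open set, but it says nothing about whether your particular $F$ lies in that set. You still need the reverse inequality $\ind\mathfrak{g}\ge 2C+P$, and that is the substantive part of the theorem. (In the Frobenius case $C=0$, $P=1$ this regularity of $F$ is exactly the content of Dougherty's thesis cited in the paper; it is not automatic.)

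Your fallback inductive argument via meander-reduction moves is closer to a workable strategy, but note that the nontrivial ingredient there is that each move preserves $\ind\mathfrak{g}$, not merely the graph invariant $2C+P$. Proving index-preservation under these moves is itself a theorem (Panyushev's inductive formulas, \textbf{[Pan2001]} in the paper's bibliography), so you would be trading one citation for another rather than giving a self-contained proof.
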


\noindent Here, a path is any acyclic connected component of a meander; that is, in addition to traditional path graphs, connected components consisting of a single vertex are considered paths. Utilizing Theorem~\ref{rem:indexform}, the corollary below follows immediately.

\begin{corollary}\label{cor:indA}
If $\mathfrak{g}$ is a type-A seaweed, then $$\ind\mathfrak{g}=2C+P-1,$$ where $C$ is the number of cycles and $P$ is the number of paths in $M(\mathfrak{g}).$
\end{corollary}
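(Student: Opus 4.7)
The plan is to reduce Corollary~\ref{cor:indA} to Theorem~\ref{rem:indexform} by comparing $\mathfrak{p}^A(\underline{a}|\underline{b})$ with its $\mathfrak{gl}(n)$-counterpart $\mathfrak{p}(\underline{a}|\underline{b})$. Write $\mathfrak{g} = \mathfrak{p}(\underline{a}|\underline{b})$ and $\mathfrak{g}^A = \mathfrak{p}^A(\underline{a}|\underline{b})$. Because the scalar matrix $I$ preserves every flag and has nonzero trace, one has the vector space decomposition $\mathfrak{g} = \mathfrak{g}^A \oplus \mathbb{C}I$, with $\mathbb{C}I$ lying in the center of $\mathfrak{g}$. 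The strategy is to show that this central line accounts for exactly one ``extra'' dimension in the kernel of every Kirillov form, so that $\ind\mathfrak{g}^A = \ind\mathfrak{g}-1$; combined with Theorem~\ref{rem:indexform}, this yields the claim.

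To carry this out, I would fix any $F\in\mathfrak{g}^*$ and analyze the kernel of $B_F$. Since $I$ is central, $B_F(I,-)\equiv 0$, so $\mathbb{C}I \subseteq \ker B_F$. Decomposing a general element $x = x' + cI$ with $x'\in\mathfrak{g}^A$ and using $[x,y] = [x',y']$ for $y = y'+dI$, one checks directly that
\[
\ker B_F \;=\; \ker B_{F|_{\mathfrak{g}^A}} \,\oplus\, \mathbb{C}I,
\]
where the right kernel is taken inside $\mathfrak{g}^A$ with respect to the restriction of $F$. In particular $\dim\ker B_F = \dim\ker B_{F|_{\mathfrak{g}^A}} + 1$ for every choice of $F$.

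Next, I would observe that the restriction map $\mathfrak{g}^*\to(\mathfrak{g}^A)^*$ is surjective (any linear functional on $\mathfrak{g}^A$ extends, for instance by sending $I$ to $0$). Hence minimizing over $F\in\mathfrak{g}^*$ on the left is equivalent to minimizing over $F'\in(\mathfrak{g}^A)^*$ on the right, giving $\ind\mathfrak{g} = \ind\mathfrak{g}^A + 1$. Substituting the formula from Theorem~\ref{rem:indexform} then produces $\ind\mathfrak{g}^A = 2C + P - 1$, as desired.

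The argument is essentially bookkeeping once the central line $\mathbb{C}I$ is identified, so there is no real obstacle; the only point requiring care is verifying the kernel decomposition above, and in particular that the extension of an arbitrary $F'\in(\mathfrak{g}^A)^*$ can always be chosen so that the kernel of $B_{F'}$ inside $\mathfrak{g}^A$ is realized as the non-central part of $\ker B_F$ inside $\mathfrak{g}$. This ensures the inequality $\ind\mathfrak{g}\ge \ind\mathfrak{g}^A+1$ from the trivial direction is actually an equality, which is the crux of reducing the type-A index formula to the $\mathfrak{gl}(n)$ case.
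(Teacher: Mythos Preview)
Your argument is correct and is exactly the standard justification behind the paper's one-line remark that the corollary ``follows immediately'' from Theorem~\ref{rem:indexform}; the paper provides no further details. You have simply spelled out the central-extension bookkeeping ($\mathfrak{g}=\mathfrak{g}^A\oplus\mathbb{C}I$ with $I$ central forces $\ind\mathfrak{g}=\ind\mathfrak{g}^A+1$) that the authors leave implicit.
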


Interestingly, in (\textbf{\cite{GG2}}, 2008) and \textbf{\cite{GG3}}, Gerstenhaber and Giaquinto show, in particular, that one can also determine the spectrum of a Frobenius, type-A seaweed $\mathfrak{g}$ using $M(\mathfrak{g})$. The result is based on the construction of a principal element $\widehat{F}\in\mathfrak{g}$ for which each matrix unit $e_{i,j}\in\mathfrak{g}$ and each $e_{i,i}-e_{i+1,i+1}\in\mathfrak{g}$ is an eigenvector of $\text{ad }\widehat{F}$. To describe their choice of $\widehat{F}$, we first need to decorate $M(\mathfrak{g})$ by adding an orientation to its edges: top edges are oriented from right to left, while bottom edges are oriented from left to right. We refer to a meander with such an orientation as the \textit{oriented meander}, denoted $\overrightarrow{M}(\mathfrak{g}).$ Considering Corollary~\ref{cor:indA}, $M(\mathfrak{g})$ must consist of a single path. Thus, for all $1\leq i,j\leq n,$ there is a unique path from vertex $v_i$ to vertex $v_j$ in $M(\mathfrak{g})$, denoted $P_{i,j}(\mathfrak{g})$. Define the \textit{weight} $w(P_{i,j}(\mathfrak{g}))$ of the path $P_{i,j}(\mathfrak{g})$ in $M(\mathfrak{g})$ from $v_i$ to $v_j$ to be the number of forward edges minus the number of backward edges encountered when moving along $P_{i,j}(\mathfrak{g})$ from $v_i$ to $v_j$ in $\overrightarrow{M}(\mathfrak{g})$.

The following results appear in \textbf{\cite{unbroken}} as a consequence of a result from Gerstenhaber and Giaquinto \textbf{\cite{GG2}}; however, we include proofs for completeness.

\begin{lemma}\label{lem:specmeander}
Let $\mathfrak{g}\subset\mathfrak{gl}(n)$ be a seaweed with oriented meander $\overrightarrow{M}(\mathfrak{g})=(V,E)$ consisting of one path and no cycles, where $V=\{v_1,\dots,v_n\}$ is the vertex set and $E$ is the edge set of $\overrightarrow{M}(\mathfrak{g}).$ If $$F=\sum_{(v_i,v_j)\in E}e_{i,j}^*\in\mathfrak{g}^*$$ and $$\widehat{F}=\sum_{i=1}^n w(P_{i,n}(\mathfrak{g}))e_{i,i}\in\mathfrak{g},$$ then $F\left(\left[\widehat{F},x\right]\right)=F(x),$ for all $x\in\mathfrak{g}.$
\end{lemma}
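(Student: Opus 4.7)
The plan is to verify the identity $F([\widehat{F},x])=F(x)$ on a convenient basis of $\mathfrak{g}$; since both sides are linear in $x$, it suffices to check this on the matrix units $e_{i,j}$ (and the appropriate diagonal combinations) that lie in $\mathfrak{g}$. The diagonal case is essentially free: because $\widehat{F}$ is diagonal, $[\widehat{F},e_{k,k}]=0$, and because $F$ is a sum of $e_{i,j}^{*}$ indexed by edges of $\overrightarrow{M}(\mathfrak{g})$ (whose endpoints are distinct), $F(e_{k,k})=0$ as well. So both sides vanish on diagonal basis elements.

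For an off-diagonal $e_{i,j}\in\mathfrak{g}$, a direct computation using the diagonality of $\widehat{F}$ gives
\[
[\widehat{F},e_{i,j}]=\bigl(w(P_{i,n}(\mathfrak{g}))-w(P_{j,n}(\mathfrak{g}))\bigr)\,e_{i,j},
\]
so that the identity becomes $\bigl(w(P_{i,n})-w(P_{j,n})\bigr)F(e_{i,j})=F(e_{i,j})$. If $(v_i,v_j)$ is not an oriented edge of $\overrightarrow{M}(\mathfrak{g})$, then $F(e_{i,j})=0$ by the definition of $F$ and both sides vanish. Thus the entire lemma reduces to showing that whenever $(v_i,v_j)\in E$, one has $w(P_{i,n})-w(P_{j,n})=1$.

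For this remaining weight identity I would use the hypothesis that $M(\mathfrak{g})$ consists of a single path and no cycles, which means the meander is a tree; in particular, removing the edge between $v_i$ and $v_j$ disconnects it into exactly two components, exactly one of which contains $v_n$. If $v_n$ lies on the $v_j$-side, then $P_{i,n}(\mathfrak{g})$ must begin by traversing $(v_i,v_j)$ forward (contributing $+1$) and then follow $P_{j,n}(\mathfrak{g})$, giving $w(P_{i,n})=1+w(P_{j,n})$. If instead $v_n$ lies on the $v_i$-side, then $P_{j,n}(\mathfrak{g})$ begins by traversing the same edge backward (contributing $-1$) and then follows $P_{i,n}(\mathfrak{g})$, giving $w(P_{j,n})=w(P_{i,n})-1$. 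Either way $w(P_{i,n})-w(P_{j,n})=1$.

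The only genuine obstacle is the weight computation in the last step, and the main thing to get right there is the bookkeeping of orientation when $v_n$ is on either side of the edge; the case split above handles this cleanly, and once it is established the identity holds on a basis of $\mathfrak{g}$, completing the proof by linearity.
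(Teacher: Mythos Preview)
Your proof is correct and follows essentially the same approach as the paper: check the identity on matrix units, reduce to the case of an oriented edge $(v_i,v_j)\in E$, and verify that $w(P_{i,n})-w(P_{j,n})=1$ there. The only cosmetic difference is that the paper packages your case split into the general additivity statement $w(P_{i,j})+w(P_{j,k})=w(P_{i,k})$ (together with $w(P_{i,j})=-w(P_{j,i})$), from which $w(P_{i,n})-w(P_{j,n})=w(P_{i,j})=1$ for an edge; your tree-component argument is exactly a proof of the particular instance of this additivity that is needed.
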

\begin{proof}
If $P_{i,j}(\mathfrak{g})$ is the unique path from $v_i$ to $v_j$ in $\overrightarrow{M}(\mathfrak{g}),$ then it immediately follows that $w(P_{i,j}(\mathfrak{g}))=-w(P_{j,i}(\mathfrak{g}))$ and $w(P_{i,j}(\mathfrak{g}))+w(P_{j,k}(\mathfrak{g}))=w(P_{i,k}(\mathfrak{g})),$ for all $1\leq i,j,k\leq n.$ Now, if $\widehat{F}=\sum_{i=1}^nw(P_{i,n}(\mathfrak{g}))e_{i,i},$ then we have that $$\left[\widehat{F},e_{i,j}\right]=\big(w(P_{i,n}(\mathfrak{g}))-w(P_{j,n}(\mathfrak{g}))\big)e_{i,j}=\big(w(P_{i,n}(\mathfrak{g}))+w(P_{n,j}(\mathfrak{g}))\big)e_{i,j},$$ for all $1\leq i,j\leq n$. Note that if $i=j,$ then $F\left(\left[\widehat{F},e_{i,j}\right]\right)=\left[\widehat{F},e_{i,j}\right]=0.$ On the other hand, if $i\neq j,$ then $$\left[\widehat{F},e_{i,j}\right]=\big(w(P_{i,n}(\mathfrak{g}))+w(P_{n,j}(\mathfrak{g}))\big)e_{i,j}=\big(w(P_{i,j}(\mathfrak{g}))\big)e_{i,j}.$$ Therefore, since $w(P_{i,j}(\mathfrak{g}))=1$ for all $(v_i,v_j)\in E,$ we have that $$F\left(\left[\widehat{F},e_{i,j}\right]\right)=\begin{cases}
1, & \text{ if } (v_i,v_j)\in E;\\
0, & \text{ otherwise}.
\end{cases}$$ The result follows from the linearity of $F$ and the fact that the set of all $e_{i,j}$ such that $(i,j)$ is a potentially nonzero entry in the matrix form of $\mathfrak{g}$ is a basis for $\mathfrak{g}.$ 
\end{proof}

\begin{theorem}\label{thm:specmeander}
Let $\mathfrak{g}\subset\mathfrak{sl}(n)$ be a Frobenius, type-A seaweed with oriented meander $\overrightarrow{M}(\mathfrak{g})=(V,E),$ where $V=\{v_1,\dots,v_n\}$ is the vertex set and $E$ is the edge set of $\overrightarrow{M}(\mathfrak{g}).$ If $$F=\sum_{(v_i,v_j)\in E}e_{i,j}^*\in\mathfrak{g}^*,$$ is a Frobenius form on $\mathfrak{g},$ then the principal element corresponding to $F$ is given by $$\widehat{F}=\sum_{i=1}^n\left(w(P_{i,n}(\mathfrak{g}))-\frac{\sum_{j=1}^n w(P_{j,n}(\mathfrak{g}))}{n}\right)e_{i,i}\in\mathfrak{g}.$$
\end{theorem}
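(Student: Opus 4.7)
The plan is to reduce the theorem to Lemma~\ref{lem:specmeander} via a single trace correction. The type-A seaweed $\mathfrak{g} \subset \mathfrak{sl}(n)$ is the traceless part of the ambient $\mathfrak{gl}(n)$ seaweed $\widetilde{\mathfrak{g}} = \mathfrak{p}(\underline{a}|\underline{b})$, and the two algebras share the same meander, the same edge set $E$, and the same path weights $w(P_{i,j}(\mathfrak{g}))$. Because every edge of the meander joins distinct vertices, the functional $F = \sum_{(v_i,v_j) \in E} e_{i,j}^*$ is supported off the diagonal, so it extends naturally from $\mathfrak{g}^*$ to a form on $\widetilde{\mathfrak{g}}$ given by the same formula.

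First, I would apply Lemma~\ref{lem:specmeander} to $\widetilde{\mathfrak{g}}$ to produce the diagonal element
$$\widehat{F}_0 = \sum_{i=1}^n w(P_{i,n}(\mathfrak{g}))\, e_{i,i} \in \widetilde{\mathfrak{g}},$$
which satisfies $F([\widehat{F}_0, x]) = F(x)$ for every $x \in \widetilde{\mathfrak{g}}$, and in particular for every $x \in \mathfrak{g}$. However, $\widehat{F}_0$ need not be traceless, so it need not lie in $\mathfrak{sl}(n)$. To land in $\mathfrak{g}$, I would shift by a scalar multiple of the identity: set $c = \frac{1}{n}\sum_{j=1}^n w(P_{j,n}(\mathfrak{g}))$ and define $\widehat{F} = \widehat{F}_0 - c\,I$. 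This is precisely the element claimed in the theorem; it is diagonal and traceless, and since $\mathfrak{g}$ contains every traceless diagonal matrix, $\widehat{F} \in \mathfrak{g}$.

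Next I would verify that $\widehat{F}$ satisfies the principal-element identity. Since $I$ is central in $\mathfrak{gl}(n)$, $[cI, x] = 0$ for every $x$, so $[\widehat{F}, x] = [\widehat{F}_0, x]$, and consequently $F([\widehat{F}, x]) = F([\widehat{F}_0, x]) = F(x)$ for all $x \in \mathfrak{g}$. Finally, because $F$ is a Frobenius form on $\mathfrak{g}$, the map $y \mapsto F([y, -])$ is a linear isomorphism $\mathfrak{g} \to \mathfrak{g}^*$, so the principal element attached to $F$ is unique; since $\widehat{F}$ satisfies the defining relation and lies in $\mathfrak{g}$, it must be that principal element. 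There is no real obstacle here: the whole argument is bookkeeping once Lemma~\ref{lem:specmeander} is available, and the only points worth checking carefully are that the shift by $cI$ neither disturbs the Kirillov identity (because $I$ is central) nor pushes the element out of $\mathfrak{g}$ (because we chose $c$ to enforce tracelessness).
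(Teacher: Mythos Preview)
Your proof is correct and follows essentially the same approach as the paper's own argument: apply Lemma~\ref{lem:specmeander} to the ambient $\mathfrak{gl}(n)$ seaweed, subtract a scalar multiple of the identity to force tracelessness without disturbing the Kirillov identity, and then invoke uniqueness of the principal element for the Frobenius form $F$. The only cosmetic difference is that the paper verifies $F([\widehat{F},e_{i,j}])=F(e_{i,j})$ by expanding the bracket with the shifted element directly, whereas you phrase the same step as ``$I$ is central, so $[\widehat{F},x]=[\widehat{F}_0,x]$.''
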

\begin{proof}
First, note that since $\mathfrak{g}$ is a Frobenius, type-A seaweed, the oriented meander $\overrightarrow{M}(\mathfrak{g})$ consists of one path and no cycles.
 Now, if we view $F=\displaystyle\sum_{(v_i,v_j)\in E}e_{i,j}^*\in\mathfrak{g}^*$ as an element of $(\mathfrak{gl}(n))^*,$ then we may apply Lemma~\ref{lem:specmeander} to conclude that 
 \begin{align*}
        F\left(\left[\widehat{F},e_{i,j}\right]\right)&=F\left(\left[\sum_{k=1}^n\left(w(P_{k,n}(\mathfrak{p}))-\frac{\sum_{\ell=1}^n w(P_{\ell,n}(\mathfrak{g}))}{n}\right)e_{k,k},e_{i,j}\right]\right)\\
        &=F\left(\left[\sum_{k=1}^n w(P_{k,n}(\mathfrak{g}))e_{k,k},e_{i,j}\right]\right)\\
        &=F(e_{i,j}),
 \end{align*}
for all $e_{i,j}$ such that $(i,j)$ is a potentially nonzero entry in the matrix form of $\mathfrak{g}.$
Therefore, $\widehat{F}$ is an element of $\mathfrak{g}$ -- in particular, $\widehat{F}$ has trace zero -- with the property that $F\left(\left[\widehat{F},x\right]\right)=F(x),$ for all $x\in\mathfrak{g}.$ Since $F$ is Frobenius, we have that $\widehat{F}$ is a principal element of $F.$ The result follows from uniqueness of the principal element.
\end{proof}

In (Dougherty \textbf{\cite{aria}}, 2019), it is shown that the $F$ in Theorem~\ref{thm:specmeander} is Frobenius. Consequently, to determine the spectrum of a Frobenius, type-A seaweed $\mathfrak{g}$, it suffices to compute the eigenvalues of $ad~\widehat{F}$, for $\widehat{F}$ as described in Theorem~\ref{thm:specmeander}. Since $\widehat{F}$ is diagonal, the eigenvalues corresponding to $e_{i,i}-e_{i+1,i+1}\in\mathfrak{g}$ are 0. As for basis elements of the form $e_{i,j}\in\mathfrak{g}$, the proof of Theorem~\ref{thm:specmeander} shows that the eigenvalue is given by $w(P_{i,j}(\mathfrak{g}))$. The collection of such values can be nicely organized to form the \textit{spectrum matrix} of a Frobenius, type-A seaweed $\mathfrak{g}$, denoted $\Sigma(\mathfrak{g})$. The matrix $\Sigma(\mathfrak{g})$ is the element of $\mathfrak{g}$ whose $i,j$ entry is equal to $w(P_{i,j}(\mathfrak{g})).$ See Example~\ref{ex:meandereigen}.

\begin{Ex}\label{ex:meandereigen}
In Figure~\ref{fig:specmat}, we illustrate the spectrum matrix \textup(left\textup) and oriented meander \textup(right\textup) corresponding to the Frobenius, type-A seaweed $\mathfrak{g}=\mathfrak{p}^A\frac{2|4}{1|2|3}$. Using $\Sigma(\mathfrak{g})$, we find that the spectrum of $\mathfrak{g}$ is equal to $\{-2,-1^2,0^5,1^5,2^2,3\}$.
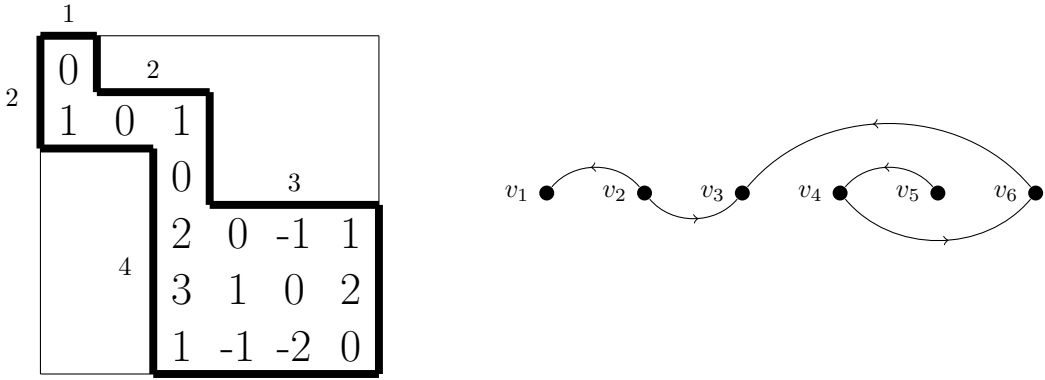
\begin{figure}[H]
$$\begin{tikzpicture}[scale=0.75]
\draw (0,0) -- (0,6);
\draw (0,6) -- (6,6);
\draw (6,6) -- (6,0);
\draw (6,0) -- (0,0);
\draw [line width=3](0,6) -- (0,4);
\draw [line width=3](0,4) -- (2,4);
\draw [line width=3](2,4) -- (2,0);
\draw [line width=3](2,0) -- (6,0);

\draw [line width=3](0,6) -- (1,6);
\draw [line width=3](1,6) -- (1,5);
\draw [line width=3](1,5) -- (3,5);
\draw [line width=3](3,5) -- (3,3);
\draw [line width=3](3,3) -- (6,3);
\draw [line width=3](6,3) -- (6,0);


\node at (.5,5.4) {{\LARGE 0}};
\node at (.5,4.5) {{\LARGE 1}};
\node at (1.5,4.5) {{\LARGE 0}};
\node at (2.5,4.5) {{\LARGE 1}};
\node at (2.5,3.5) {{\LARGE 0}};
\node at (2.5,2.5) {{\LARGE 2}};
\node at (2.5,1.5) {{\LARGE 3}};
\node at (2.5,0.5) {{\LARGE 1}};
\node at (3.5,2.5) {{\LARGE 0}};
\node at (3.5,1.5) {{\LARGE 1}};
\node at (3.5,0.5) {{\LARGE -1}};
\node at (4.5,2.5) {{\LARGE -1}};
\node at (4.5,1.5) {{\LARGE 0}};
\node at (5.5,2.5) {{\LARGE 1}};
\node at (5.5,1.5) {{\LARGE 2}};
\node at (4.5,0.5) {{\LARGE -2}};
\node at (5.5,0.5) {{\LARGE 0}};

\node at (.5,6.4) {1};
\node at (2,5.4) {2};
\node at (4.5,3.4) {3};
\node at (-0.5,4.9) {2};
\node at (1.5,1.9) {4};

\end{tikzpicture}\hspace{1.5cm}\begin{tikzpicture}[scale=1.3]
	\def\Node{\node [circle,  fill, inner sep=2pt]}
 \tikzset{->-/.style={decoration={
  markings,
  mark=at position .55 with {\arrow{>}}},postaction={decorate}}}
	\node at (0,0) {};
    \Node[label=left:$v_1$] (1) at (0,1.8) {};
	\Node[label=left:$v_2$] (2) at (1,1.8) {};
	\Node[label=left:$v_3$] (3) at (2,1.8) {};
	\Node[label=left:$v_4$] (4) at (3,1.8) {};
	\Node[label=left:$v_5$] (5) at (4,1.8) {};
	\Node[label=left:$v_6$] (6) at (5,1.8) {};
	\draw[->-] (2) to[bend right=50] (1);
	\draw[->-] (6) to[bend right=50] (3);
	\draw[->-] (5) to[bend right=50] (4);
	\draw[->-] (2) to[bend right=50] (3);
	\draw[->-] (4) to[bend right=50] (6);
\end{tikzpicture}$$
\caption{The spectrum matrix $\Sigma(\mathfrak{p})$ \textup(left\textup) and oriented meander $\protect\overrightarrow{M}(\mathfrak{p})$ \textup(right\textup)}\label{fig:seaweedeigen}
\label{fig:specmat}
\end{figure}
\end{Ex}

Given a Frobenius, type-A seaweed $\mathfrak{g}$, care must be taken when using $\Sigma(\mathfrak{g})$ to compute its spectrum. Since we are working with seaweed subalgebras of $\mathfrak{sl}(n)$, we do not have basis elements of the form $e_{i,i}$. Instead, basis elements corresponding to diagonal matrices are of the form $e_{i,i}-e_{i+1,i+1}$. Consequently, the multiset of values contained in $\Sigma(\mathfrak{g})$ is equal to the spectrum of $\mathfrak{g}$ with exactly one additional 0. It would be more precise to exclude the 0 in the $n,n$-location of $\Sigma(\mathfrak{g})$ and interpret a 0 in the $i,i$-location of $\Sigma(\mathfrak{g})$ as the eigenvalue corresponding to the basis element $e_{i,i}-e_{i+1,i+1}$, for $1\le i<n$. However, for the sake of arguments in the following sections, we leave the 0 in the bottom right corner. So that we may recall this subtlety later, we include the remark below.

\begin{remark}\label{rem:extra0}
Let $\mathfrak{g}$ be a Frobenius, type-A seaweed. Then the multiset of values contained in $\Sigma(\mathfrak{g})$ is equal to the spectrum of $\mathfrak{g}$ with exactly one additional value of 0.
\end{remark}

Utilizing the relationship between the spectrum of a Frobenius, type-A seaweed $\mathfrak{g}$ and its oriented meander, the authors of \textbf{\cite{unbroken}} were able to show that the set of distinct eigenvalues in the spectrum of $\mathfrak{g}$ consists of an unbroken sequence of integers centered at one-half. Moreover, the authors supplied the following conjecture.

\begin{conj}[Coll et al. \textbf{\cite{unbroken}}, 2016]\label{conj:uni}
Let $\mathfrak{g}$ be a Frobenius, type-A seaweed. If the eigenvalues in the spectrum of $\mathfrak{g}$ are written in increasing order, then the corresponding sequence of multiplicities forms a unimodal sequence about one-half.
\end{conj}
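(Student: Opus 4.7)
The plan is to reduce Conjecture~\ref{conj:uni} to a one-sided monotonicity statement about entries of $\Sigma(\mathfrak{g})$, and then to attack that monotonicity by computing the spectrum matrix explicitly on tractable families of seaweeds. Write $m_k$ for the multiplicity of the integer $k$ in the spectrum of $\mathfrak{g}$. The symmetry $m_k=m_{1-k}$ is in fact automatic from the Frobenius property: for eigenvectors $x\in\mathfrak{g}_\lambda$, $y\in\mathfrak{g}_\mu$ of $\operatorname{ad}(\widehat{F})$, the identity $F([\widehat{F},[x,y]])=F([x,y])$ forces $(\lambda+\mu-1)F([x,y])=0$, so the non-degenerate Kirillov form $B_F(x,y)=F([x,y])$ pairs $\mathfrak{g}_\lambda$ with $\mathfrak{g}_{1-\lambda}$. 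Combined with the unbrokenness result of \textbf{\cite{unbroken}}, it therefore suffices to prove $m_k\leq m_{k-1}$ for every $k\geq 1$.

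The natural combinatorial route is to construct, for each $k\geq 1$, an injection $S_k\hookrightarrow S_{k-1}$, where $S_k=\{(i,j):\Sigma(\mathfrak{g})_{i,j}=k\}$. Using the additivity $w(P_{i,j}(\mathfrak{g}))+w(P_{j,\ell}(\mathfrak{g}))=w(P_{i,\ell}(\mathfrak{g}))$, an appealing candidate is to send $(i,j)$ to $(i',j)$, where $v_{i'}$ is a neighbor of $v_i$ in $\overrightarrow{M}(\mathfrak{g})$ chosen so that the edge joining $v_{i'}$ to $v_i$ contributes $-1$ to the weight. The expected main obstacle is well-definedness and injectivity of such a map: the endpoints of the subpath $P_{i,j}(\mathfrak{g})$ may admit several candidate extensions along the unique Dergachev--Kirillov path in $M(\mathfrak{g})$, and without a canonical choice rule the map suffers collisions. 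Identifying a canonical extension appears to require a finer invariant of the meander than is supplied by Theorem~\ref{thm:specmeander}.

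To sidestep this obstacle on the families treated in Sections~\ref{sec:mp} and~\ref{sec:stable}, I would instead compute $\Sigma(\mathfrak{g})$ in closed form and verify unimodality --- and, when possible, the stronger log-concavity $m_k^2\geq m_{k-1}m_{k+1}$ --- directly from the resulting formula. For three-total-parts pairs, the meander consists of two or three layers of nested arcs whose interplay is controlled by a single affine relation on vertex indices, so $w(P_{i,n}(\mathfrak{g}))$ admits a piecewise-linear description in $i$; the identity $w(P_{i,j}(\mathfrak{g}))=w(P_{i,n}(\mathfrak{g}))-w(P_{j,n}(\mathfrak{g}))$ then turns the multiplicities into a sum of indicator counts that can be analyzed term by term. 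For the stable one-parameter families of Section~\ref{sec:stable}, one isolates a block of edges that grows linearly with the varying parameter and tracks the effect of this growth on $(m_k)$ inductively.

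The main obstacle I anticipate within the explicit-formula approach is the recursive bookkeeping when the meander's structure is governed by the Euclidean algorithm on the bottom composition, as happens for maximal parabolics $\mathfrak{p}^A(n|b_1,b_2)$ with $\gcd(b_1,b_2)=1$. There one must control how the generating function $\sum_k m_k q^k$ transforms under the reduction step $b_1\mapsto b_1\bmod b_2$ and verify that log-concavity is preserved. Once this is carried out for each targeted family, Conjecture~\ref{conj:uni} follows on that family; the global case, however, will presumably require the uniform injection sketched above.
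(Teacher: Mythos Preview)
The statement you are addressing is a \emph{conjecture} in the paper, not a theorem; the paper does not prove it in general and explicitly leaves it open (see the Epilogue). What the paper does prove is the conjecture for specific families by computing the spectrum in closed form via meander combinatorics (Theorems~\ref{thm:base1}, \ref{thm:k2}, \ref{thm:k1k}, \ref{thm:k+2k}, \ref{thm:2k1/12k}, \ref{thm:2k11/2k2}, \ref{thm:base1ext}, \ref{thm:221}, \ref{thm:base2ext}), in many cases obtaining the stronger log-concave spectrum property. Your fallback plan---compute $\Sigma(\mathfrak{g})$ explicitly on tractable families and read off unimodality or log-concavity---is exactly the paper's method, and your final sentence correctly identifies that only the family-by-family results go through.

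Your proposed general route differs from anything in the paper. The symmetry $m_k=m_{1-k}$ via the pairing $B_F:\mathfrak{g}_\lambda\times\mathfrak{g}_{1-\lambda}\to\mathbb{C}$ is valid and reduces the problem to $m_k\le m_{k-1}$ for $k\ge 1$; the paper does not isolate this reduction. However, the injection $S_k\hookrightarrow S_{k-1}$ you sketch has a genuine gap beyond the collision issue you flag: moving from $(i,j)$ to $(i',j)$ with $w(P_{i',j})=k-1$ need not keep $(i',j)$ inside the seaweed's shape (i.e., $e_{i',j}$ need not lie in $\mathfrak{g}$), since membership in the block structure is governed by the compositions, not by adjacency in $M(\mathfrak{g})$. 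Any uniform injection must simultaneously control the weight shift and the staircase constraints, and nothing in your outline addresses the latter. So as a proof of the full conjecture your proposal is incomplete for the same reason the paper's is: no argument covers the general case.
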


\noindent 
Recall that a sequence $a_0,\hdots,a_n$ of positive integers is called \textit{unimodal} if there exists $0\le i\le n$ such that $a_0\le a_1\le \hdots\le a_i\ge \hdots\ge a_{n-1}\ge a_n$ and is called \textit{log-concave} if $a_i^2\ge a_{i+1}a_{i-1}$, for $0<i<n$. Ongoing, we will say that a Frobenius, type-A seaweed $\mathfrak{g}$ satisfying Conjecture~\ref{conj:uni} has the \textit{unimodal spectrum property}. Similarly, if the sequence of mulitplicities, ordered as in Conjecture~\ref{conj:uni}, forms a log-concave sequence, then we say that $\mathfrak{g}$ has the \textit{log-concave spectrum property}. Note that the log-concave spectrum property implies the unimodal spectrum property.

\begin{Ex}
    Taking $\mathfrak{g}$ to be the type-A seaweed of Example~\ref{ex:meandereigen}, the sequence of multiplicities associated with the spectrum of $\mathfrak{g}$ is equal to $1,2,5,5,2,1$. Clearly, $\mathfrak{g}$ has the log-concave \textup(and, consequently, the unimodal\textup) spectrum property.
\end{Ex}

One of the main goals of this paper is to show that certain families of Frobenius, type-A seaweeds have the unimodal spectrum property. In this pursuit, an extended notion of spectrum will prove helpful. Extending the spectrum matrix to contain all weights of paths between vertices $v_i$ and $v_j$ of $M(\mathfrak{g})$ results in what we will call the \textit{extended spectrum matrix} of $\mathfrak{g}$, denoted $\widehat{\Sigma}(\mathfrak{g})$. Analogous to spectrum, excluding a single value of 0, we refer to the remaining multiset of values occurring in the extended spectrum matrix of a Frobenius, type-A seaweed $\mathfrak{g}$ as the \textit{extended spectrum} of $\mathfrak{g}$. See Example~\ref{ex:meanderexteigen}.

\begin{Ex}\label{ex:meanderexteigen}
In Figure~\ref{fig:extspec}, we illustrate the extended spectrum matrix corresponding to the Frobenius, type-A seaweed $\mathfrak{g}=\mathfrak{p}^A\frac{2|4}{1|2|3}$. Using $\widehat{\Sigma}(\mathfrak{g})$, we find that the extended spectrum of $\mathfrak{g}$ is equal to\\ $\{-3^2,-2^4,-1^7,0^9,1^7,2^4,3^2\}$.
\begin{figure}[H]
$$\begin{tikzpicture}[scale=0.75]
\draw (0,0) -- (0,6);
\draw (0,6) -- (6,6);
\draw (6,6) -- (6,0);
\draw (6,0) -- (0,0);
\draw [line width=3](0,6) -- (0,4);
\draw [line width=3](0,4) -- (2,4);
\draw [line width=3](2,4) -- (2,0);
\draw [line width=3](2,0) -- (6,0);

\draw [line width=3](0,6) -- (1,6);
\draw [line width=3](1,6) -- (1,5);
\draw [line width=3](1,5) -- (3,5);
\draw [line width=3](3,5) -- (3,3);
\draw [line width=3](3,3) -- (6,3);
\draw [line width=3](6,3) -- (6,0);


\node at (.5,5.5) {{\LARGE 0}};
\node at (1.5,5.5) {{\LARGE -1}};
\node at (2.5,5.5) {{\LARGE 0}};
\node at (3.5,5.5) {{\LARGE -2}};
\node at (4.5,5.5) {{\LARGE -3}};
\node at (5.5,5.5) {{\LARGE -1}};
\node at (.5,4.5) {{\LARGE 1}};
\node at (1.5,4.5) {{\LARGE 0}};
\node at (2.5,4.5) {{\LARGE 1}};
\node at (3.5,4.5) {{\LARGE -1}};
\node at (4.5,4.5) {{\LARGE -2}};
\node at (5.5,4.5) {{\LARGE 0}};
\node at (0.5,3.5) {{\LARGE 0}};
\node at (1.5,3.5) {{\LARGE -1}};
\node at (2.5,3.5) {{\LARGE 0}};
\node at (3.5,3.5) {{\LARGE -2}};
\node at (4.5,3.5) {{\LARGE -3}};
\node at (5.5,3.5) {{\LARGE -1}};
\node at (2.5,2.5) {{\LARGE 2}};
\node at (0.5,2.5) {{\LARGE 2}};
\node at (1.5,2.5) {{\LARGE 1}};
\node at (0.5,1.5) {{\LARGE 3}};
\node at (1.5,1.5) {{\LARGE 2}};
\node at (2.5,1.5) {{\LARGE 3}};
\node at (0.5,0.5) {{\LARGE 1}};
\node at (1.5,0.5) {{\LARGE 0}};
\node at (2.5,0.5) {{\LARGE 1}};
\node at (3.5,2.5) {{\LARGE 0}};
\node at (3.5,1.5) {{\LARGE 1}};
\node at (3.5,0.5) {{\LARGE -1}};
\node at (4.5,2.5) {{\LARGE -1}};
\node at (4.5,1.5) {{\LARGE 0}};
\node at (5.5,2.5) {{\LARGE 1}};
\node at (5.5,1.5) {{\LARGE 2}};
\node at (4.5,0.5) {{\LARGE -2}};
\node at (5.5,0.5) {{\LARGE 0}};
\end{tikzpicture}$$
\caption{The extended spectrum matrix $\widehat{\Sigma}(\mathfrak{p})$}\label{fig:seaweedexteigen}
\label{fig:extspec}
\end{figure}
\end{Ex}

\begin{remark}\label{rem:skew}
Let $\mathfrak{g}$ be a Frobenius, type-A seaweed. Since $w(P_{i,j}(\mathfrak{g}))=-w(P_{j,i}(\mathfrak{g}))$, it follows that $\widehat{\Sigma}(\mathfrak{g})$ is skew-symmetric.
\end{remark}

\section{Maximal parabolic}\label{sec:mp}

In this section, we consider the spectra of Frobenius, maximal parabolic, type-A seaweeds, i.e., Frobenius, type-A seaweeds of the form $\mathfrak{p}^A\frac{a|b}{a+b}$ or $\mathfrak{p}^A\frac{a+b}{a|b}$. It is well known that such algebras are Frobenius if and only if $\gcd(a,b)=1$.

Before calculating any spectra, we prove some structural lemmas. The first group, beginning with Lemma~\ref{lem:vflip} and ending with Corollary~\ref{cor:vhfes}, concern general Frobenius, type-A seaweeds. In particular, given a Frobenius, type-A seaweed, we show that switching and reversing the defining compositions does not affect the algebra's (extended) spectrum. The final group of structural lemmas -- consisting of Lemmas~\ref{lem:trbbase1} through~\ref{lem:trbgen}-- concern Frobenius, maximal parabolic, type-A seaweeds.

\begin{lemma}\label{lem:vflip}
Let $\mathfrak{g}_1=\mathfrak{p}^A\frac{a_1|\hdots|a_m}{b_1|\hdots|b_t}$ and $\mathfrak{g}_2=\mathfrak{p}^A\frac{b_1|\hdots|b_t}{a_1|\hdots|a_m}$ be Frobenius, type-A seaweeds, where $n=\sum_{i=1}^ma_i=\sum_{j=1}^tb_j$. Then $w(P_{i,j}(\mathfrak{g}_1))=w(P_{j,i}(\mathfrak{g}_2))$, for all $1\leq i,j\leq n.$
\end{lemma}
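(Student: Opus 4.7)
The plan is to exploit the fact that swapping the two defining compositions of a type-A seaweed has the effect of reversing every edge orientation in the associated oriented meander, while leaving the underlying unoriented meander unchanged. First I would unwind the construction of $M(\mathfrak{g})$ to observe that its edges split into top edges (determined by the top composition, drawn concave-down inside each block) and bottom edges (determined by the bottom composition, drawn concave-up inside each block). The nested placement of edges within a block depends only on the block itself, not on whether it comes from the top or bottom composition. Hence, swapping the roles of $\underline{a}$ and $\underline{b}$ interchanges the top and bottom edges of the meander but produces exactly the same set of vertex pairs. In other words, $M(\mathfrak{g}_1)$ and $M(\mathfrak{g}_2)$ coincide as unoriented graphs.

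Next, since the orientation convention sends top edges right-to-left and bottom edges left-to-right, the swap above reverses the orientation of every single edge of the common meander; that is, $\overrightarrow{M}(\mathfrak{g}_2)$ is obtained from $\overrightarrow{M}(\mathfrak{g}_1)$ by flipping every arrow. Now both algebras are Frobenius, so by Corollary~\ref{cor:indA} each meander consists of a single path, guaranteeing that $P_{i,j}(\mathfrak{g}_1)$ and $P_{i,j}(\mathfrak{g}_2)$ traverse the same sequence of vertices. Along this shared traversal from $v_i$ to $v_j$, the forward edges with respect to the $\mathfrak{g}_1$-orientation are precisely the backward edges with respect to the $\mathfrak{g}_2$-orientation, and vice versa. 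Counting forward minus backward edges therefore yields
\[
w(P_{i,j}(\mathfrak{g}_2)) \;=\; -\,w(P_{i,j}(\mathfrak{g}_1)).
\]

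Finally I would combine this with the elementary identity $w(P_{i,j}(\mathfrak{g})) = -w(P_{j,i}(\mathfrak{g}))$ (already recorded inside the proof of Lemma~\ref{lem:specmeander}, which follows from reversing a path's traversal direction relative to a fixed orientation). Applying this identity to $\mathfrak{g}_2$ gives
\[
w(P_{j,i}(\mathfrak{g}_2)) \;=\; -\,w(P_{i,j}(\mathfrak{g}_2)) \;=\; w(P_{i,j}(\mathfrak{g}_1)),
\]
which is the desired equality.

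The only real obstacle is the bookkeeping in the first step: one must be confident that the ``outside-in'' pairing rule for edges within each block is insensitive to whether that block is read from the top composition or from the bottom composition. Once this symmetry is acknowledged, everything else is a one-line orientation computation plus the path-reversal identity, and the Frobenius hypothesis is used only to ensure that the relevant paths $P_{i,j}$ exist and are unique.
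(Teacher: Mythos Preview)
Your proof is correct and follows essentially the same approach as the paper: both argue that swapping the compositions leaves the underlying meander unchanged while reversing every edge orientation (the paper phrases this as reflection across the horizontal line through the vertices), and then conclude via $w(P_{i,j}(\mathfrak{g}_2))=-w(P_{i,j}(\mathfrak{g}_1))=w(P_{j,i}(\mathfrak{g}_1))$. Your write-up is slightly more explicit about why the unoriented meanders coincide, but the argument is the same.
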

\begin{proof}
Take an arbitrary path $P_{i,j}(\mathfrak{g}_1)$ in $M(\mathfrak{g}_1)$. Recall that $P_{i,j}(\mathfrak{g}_1)$ is the unique path from $v_i$ to $v_j$ in $M(\mathfrak{g}_1)$. Let $l$ denote a line parallel to the line through the vertices of $M(\mathfrak{g}_1).$ Note that reflecting $M(\mathfrak{g}_1)$ about $l$ results in $M(\mathfrak{g}_2)$ with each vertex remaining fixed; that is, vertex $v_i$ of $M(\mathfrak{g}_1)$ becomes vertex $v_i$ of $M(\mathfrak{g}_2)$, for $1\le i\le n$. Moreover, reflecting $\overrightarrow{M}(\mathfrak{g}_1)$ about $l$ results in the meander $M(\mathfrak{g}_2)$ with each edge oriented in the opposite direction to that in $\overrightarrow{M}(\mathfrak{g}_2)$. Thus, $w(P_{i,j}(\mathfrak{g}_1))=-w(P_{i,j}(\mathfrak{g}_2))=w(P_{j,i}(\mathfrak{g}_2))$, for all $1\leq i,j\leq n.$
\end{proof}

\begin{corollary}
Let $\mathfrak{g}_1=\mathfrak{p}^A\frac{a_1|\hdots|a_m}{b_1|\hdots|b_t}$ and $\mathfrak{g}_2=\mathfrak{p}^A\frac{b_1|\hdots|b_t}{a_1|\hdots|a_m}$ be Frobenius, type-A seaweeds. Then $\Sigma(\mathfrak{g}_1)=\Sigma(\mathfrak{g}_2)^t$ and $\widehat{\Sigma}(\mathfrak{g}_1)=\widehat{\Sigma}(\mathfrak{g}_2)^t$.
\end{corollary}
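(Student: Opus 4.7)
The plan is to obtain the corollary as a nearly immediate consequence of Lemma~\ref{lem:vflip}, tracking what entries are contained in the spectrum matrix versus the extended spectrum matrix.

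First I would unpack the definitions. By definition, the $(i,j)$ entry of $\widehat{\Sigma}(\mathfrak{g})$ is $w(P_{i,j}(\mathfrak{g}))$ for every $1\le i,j\le n$, and $\Sigma(\mathfrak{g})$ is the same matrix but with only those entries $(i,j)$ that correspond to potentially nonzero positions in the matrix form of $\mathfrak{g}$. Applying Lemma~\ref{lem:vflip}, for all $1\le i,j\le n$ we have
\[
    \widehat{\Sigma}(\mathfrak{g}_1)_{i,j}=w(P_{i,j}(\mathfrak{g}_1))=w(P_{j,i}(\mathfrak{g}_2))=\widehat{\Sigma}(\mathfrak{g}_2)_{j,i}=\bigl(\widehat{\Sigma}(\mathfrak{g}_2)^t\bigr)_{i,j},
\]
which gives $\widehat{\Sigma}(\mathfrak{g}_1)=\widehat{\Sigma}(\mathfrak{g}_2)^t$.

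For the identity $\Sigma(\mathfrak{g}_1)=\Sigma(\mathfrak{g}_2)^t$, the only additional thing I would verify is that the set of indices recorded by $\Sigma(\mathfrak{g}_1)$ is the transpose of the set recorded by $\Sigma(\mathfrak{g}_2)$. This is a direct consequence of the definition of a seaweed subalgebra: swapping the top and bottom compositions $\underline{a}\leftrightarrow \underline{b}$ in $\mathfrak{p}^A(\underline{a}|\underline{b})$ interchanges the roles of the flags $\mathscr{V}$ and $\mathscr{W}$, which amounts to reflecting the shaded ``seaweed shape'' across the main diagonal. Hence $(i,j)$ is a potentially nonzero position of $\mathfrak{g}_1$ if and only if $(j,i)$ is a potentially nonzero position of $\mathfrak{g}_2$. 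Combining this observation with the weight identity above yields $\Sigma(\mathfrak{g}_1)=\Sigma(\mathfrak{g}_2)^t$.

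I do not anticipate any real obstacle here; the corollary is essentially a reformulation of Lemma~\ref{lem:vflip} in matrix language, and the only subtlety is the bookkeeping about which entries are recorded in $\Sigma$ (as opposed to $\widehat{\Sigma}$), which is handled by the observation that the matrix shape of $\mathfrak{p}^A\frac{\underline{b}}{\underline{a}}$ is the transpose of that of $\mathfrak{p}^A\frac{\underline{a}}{\underline{b}}$.
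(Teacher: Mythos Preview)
Your proposal is correct and follows essentially the same approach as the paper: apply Lemma~\ref{lem:vflip} entrywise to obtain $\widehat{\Sigma}(\mathfrak{g}_1)=\widehat{\Sigma}(\mathfrak{g}_2)^t$, and then use the fact that $e_{i,j}\in\mathfrak{g}_1$ if and only if $e_{j,i}\in\mathfrak{g}_2$ to restrict to $\Sigma$.
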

\begin{proof}
Applying Lemma~\ref{lem:vflip}, it follows that the $i,j$-entry of $\widehat{\Sigma}(\mathfrak{g}_1)$ is equal to the $j,i$-entry of $\widehat{\Sigma}(\mathfrak{g}_2)$, i.e., $\widehat{\Sigma}(\mathfrak{g}_1)=\widehat{\Sigma}(\mathfrak{g}_2)^t$. Consequently, since $e_{i,j}\in\mathfrak{g}_1$ if and only if $e_{j,i}\in\mathfrak{g}_2$, $\Sigma(\mathfrak{g}_1)=\Sigma(\mathfrak{g}_2)^t$.
\end{proof}

\begin{corollary}\label{cor:vfes}
Let $\mathfrak{g}_1=\mathfrak{p}^A\frac{a_1|\hdots|a_m}{b_1|\hdots|b_t}$ and $\mathfrak{g}_2=\mathfrak{p}^A\frac{b_1|\hdots|b_t}{a_1|\hdots|a_m}$ be Frobenius, type-A seaweeds. Then the \textup(extended\textup) spectra of $\mathfrak{g}_1$ and $\mathfrak{g}_2$ are equal.
\end{corollary}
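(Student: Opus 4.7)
The plan is to appeal directly to the preceding corollary, which asserts that $\Sigma(\mathfrak{g}_1) = \Sigma(\mathfrak{g}_2)^t$ and $\widehat{\Sigma}(\mathfrak{g}_1) = \widehat{\Sigma}(\mathfrak{g}_2)^t$. The key observation is that a matrix and its transpose have identical multisets of entries, since transposition merely permutes the positions of the entries without altering their values.

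More concretely, I would first invoke the previous corollary to see that the $i,j$-entries of $\widehat{\Sigma}(\mathfrak{g}_1)$ and $\widehat{\Sigma}(\mathfrak{g}_2)^t$ agree for every $1 \leq i,j \leq n$. Since the extended spectrum of a Frobenius, type-A seaweed $\mathfrak{g}$ is by definition the multiset of entries of $\widehat{\Sigma}(\mathfrak{g})$ with one value of $0$ removed (cf.\ Remark~\ref{rem:extra0} applied in the extended setting), and since the multiset of entries is invariant under transposition, the extended spectra of $\mathfrak{g}_1$ and $\mathfrak{g}_2$ coincide.

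For the (non-extended) spectrum, the same reasoning applies to $\Sigma(\mathfrak{g}_1) = \Sigma(\mathfrak{g}_2)^t$, using the description of the spectrum as the multiset of entries of the spectrum matrix with one extra $0$ removed, as noted in Remark~\ref{rem:extra0}. Since there is no genuine obstacle here — the corollary reduces entirely to the trivial observation that a matrix and its transpose share the same multiset of entries — I do not anticipate any difficulties beyond correctly citing the previous results and being careful about the bookkeeping of the extraneous $0$.
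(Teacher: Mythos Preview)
Your proposal is correct and matches the paper's approach: the corollary is stated without proof precisely because it follows immediately from the preceding result $\Sigma(\mathfrak{g}_1)=\Sigma(\mathfrak{g}_2)^t$ and $\widehat{\Sigma}(\mathfrak{g}_1)=\widehat{\Sigma}(\mathfrak{g}_2)^t$, together with the trivial observation that transposition preserves the multiset of entries.
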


\begin{lemma}\label{lem:hflip}
Let $\mathfrak{g}_1=\mathfrak{p}^A\frac{a_1|\hdots|a_m}{b_1|\hdots|b_t}$ and $\mathfrak{g}_2=\mathfrak{p}^A\frac{a_m|\hdots|a_1}{b_t|\hdots|b_1}$ be Frobenius, type-A seaweeds, where $n=\sum_{i=1}^ma_i=\sum_{j=1}^tb_j$. Then $w(P_{i,j}(\mathfrak{g}_1))=w(P_{n-j+1,n-i+1}(\mathfrak{g}_2))$, for all $1\leq i,j\leq n.$
\end{lemma}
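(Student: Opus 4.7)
The plan is to mimic the proof of Lemma~\ref{lem:vflip} but with a vertical reflection in place of a horizontal one. Specifically, I would let $\ell$ be a line perpendicular to the line through the vertices of $M(\mathfrak{g}_1)$ and passing through its midpoint, and then argue that reflecting $M(\mathfrak{g}_1)$ about $\ell$ produces $M(\mathfrak{g}_2)$ after appropriately relabeling vertices.

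First I would verify the vertex correspondence. The reflection sends vertex $v_k$ of $M(\mathfrak{g}_1)$ to the position of vertex $v_{n-k+1}$, so after relabeling we may regard the reflection as the map $v_k\mapsto v_{n-k+1}$. Next I would check that the edge set matches: reversing the composition $(a_1,\dots,a_m)$ to $(a_m,\dots,a_1)$ reverses the order of the blocks but leaves each block's internal size unchanged, and within a block the top edges pair first-with-last, second-with-second-to-last, etc., which is a reflection-invariant pattern. Hence the top edges of $M(\mathfrak{g}_1)$ are mapped bijectively onto the top edges of $M(\mathfrak{g}_2)$, and an analogous argument handles the bottom edges. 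Therefore the unique path $P_{i,j}(\mathfrak{g}_1)$ is carried by the reflection to the unique path $P_{n-i+1,\,n-j+1}(\mathfrak{g}_2)$ in $M(\mathfrak{g}_2)$.

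Next I would track orientations. The reflection across $\ell$ flips the left-right orientation of every edge; since top edges in $\overrightarrow{M}(\mathfrak{g}_2)$ are oriented right-to-left and bottom edges left-to-right (the same convention used for $\overrightarrow{M}(\mathfrak{g}_1)$), the image of $\overrightarrow{M}(\mathfrak{g}_1)$ under the reflection is $M(\mathfrak{g}_2)$ with every edge oriented opposite to its orientation in $\overrightarrow{M}(\mathfrak{g}_2)$. Traversing the image of $P_{i,j}(\mathfrak{g}_1)$ from $v_{n-i+1}$ to $v_{n-j+1}$ therefore converts every forward edge to a backward edge and vice versa, yielding
$$w(P_{i,j}(\mathfrak{g}_1)) = -w(P_{n-i+1,\,n-j+1}(\mathfrak{g}_2)).$$
Finally, reversing the traversal of a path negates its weight (as already observed in the proof of Lemma~\ref{lem:specmeander}), so
$$-w(P_{n-i+1,\,n-j+1}(\mathfrak{g}_2)) = w(P_{n-j+1,\,n-i+1}(\mathfrak{g}_2)),$$
which is the desired identity.

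The main obstacle here is the bookkeeping step of confirming that reversing a composition and reflecting the meander really do yield the same graph up to the relabeling $v_k\leftrightarrow v_{n-k+1}$. Once that is nailed down, the orientation-flipping and weight-negation steps are entirely parallel to Lemma~\ref{lem:vflip} and pose no further difficulty. I would also expect an immediate corollary, analogous to Corollary~\ref{cor:vfes}, stating that the (extended) spectra of $\mathfrak{g}_1$ and $\mathfrak{g}_2$ coincide, obtained by combining this lemma with Remark~\ref{rem:skew}.
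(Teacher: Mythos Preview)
Your proposal is correct and follows essentially the same approach as the paper's own proof: reflect the meander across a line perpendicular to the vertex line, observe that this sends $v_k$ to $v_{n-k+1}$ and reverses every edge's orientation relative to $\overrightarrow{M}(\mathfrak{g}_2)$, and conclude via $w(P_{i,j}(\mathfrak{g}_1))=-w(P_{n-i+1,n-j+1}(\mathfrak{g}_2))=w(P_{n-j+1,n-i+1}(\mathfrak{g}_2))$. Your write-up is somewhat more explicit about why reversing the compositions matches the reflected edge set, but otherwise the arguments coincide.
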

\begin{proof}
Take an arbitrary path $P_{i,j}(\mathfrak{g}_1)$ in $M(\mathfrak{g}_1)$. Let $l$ denote a line perpendicular to the line through the vertices of $M(\mathfrak{g}_1)$. Note that reflecting $M(\mathfrak{g}_1)$ about $l$ results in $M(\mathfrak{g}_2)$ with vertex $v_i$ becoming vertex $v_{n-i+1}$, for $1\le i\le n$. Moreover, reflecting $\overrightarrow{M}(\mathfrak{g}_1)$ about $l$ results in the meander $M(\mathfrak{g}_2)$ with each edge oriented in the opposite direction to that in $\overrightarrow{M}(\mathfrak{g}_2)$. Thus, $$w(P_{i,j}(\mathfrak{g}_1))=-w(P_{n-i+1,n-j+1}(\mathfrak{g}_2))=w(P_{n-j+1,n-i+1}(\mathfrak{g}_2)).$$
\end{proof}

\begin{corollary}
Let $\mathfrak{g}_1=\mathfrak{p}^A\frac{a_1|\dots|a_m}{b_1|\dots|b_t}$ and $\mathfrak{g}_2=\frac{a_m|\dots|a_1}{b_t|\dots|b_1}$ be Frobenius, type-A seaweeds. Then $\Sigma(\mathfrak{g}_1)=\Sigma(\mathfrak{g}_2)^{\tau}$ and $\widehat{\Sigma}(\mathfrak{g}_1)=\widehat{\Sigma}(\mathfrak{g}_2)^{\tau},$ where $\tau$ denotes transpose with respect to the antidiagonal.
\end{corollary}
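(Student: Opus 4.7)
The plan is to obtain both identities as almost immediate consequences of Lemma~\ref{lem:hflip}, with the main observation being that the transformation $(i,j)\mapsto(n-j+1,n-i+1)$ is precisely the action of antidiagonal transposition on matrix indices. First I would recall that by definition, the $(i,j)$-entry of $\widehat{\Sigma}(\mathfrak{g}_k)$ is $w(P_{i,j}(\mathfrak{g}_k))$ for $k=1,2$, and that the antidiagonal transpose $A^\tau$ of an $n\times n$ matrix $A$ is given by $(A^\tau)_{i,j}=A_{n-j+1,n-i+1}$. Plugging Lemma~\ref{lem:hflip} into the definition then gives
\[
\bigl(\widehat{\Sigma}(\mathfrak{g}_1)\bigr)_{i,j}=w(P_{i,j}(\mathfrak{g}_1))=w(P_{n-j+1,n-i+1}(\mathfrak{g}_2))=\bigl(\widehat{\Sigma}(\mathfrak{g}_2)\bigr)_{n-j+1,n-i+1}=\bigl(\widehat{\Sigma}(\mathfrak{g}_2)^\tau\bigr)_{i,j},
\]
for all $1\le i,j\le n$, which establishes the identity for the extended spectrum matrix.

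For $\Sigma$, the same entrywise computation works, but I need to also verify that the supports match: that is, that the $(i,j)$-position is a potentially nonzero entry of $\mathfrak{g}_1$ if and only if the $(n-j+1,n-i+1)$-position is a potentially nonzero entry of $\mathfrak{g}_2$. This reduces to the matrix-picture observation that reversing both defining compositions of a seaweed corresponds to reflecting its matrix shape about the antidiagonal; alternatively, it follows from the proof of Lemma~\ref{lem:hflip}, since reflecting $M(\mathfrak{g}_1)$ horizontally sends an edge $\{v_i,v_j\}$ to the edge $\{v_{n-i+1},v_{n-j+1}\}$ in $M(\mathfrak{g}_2)$, and edges of the meander correspond exactly to off-diagonal matrix positions forced to be potentially nonzero on one of the two sides. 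Once this is noted, the same chain of equalities as above yields $\Sigma(\mathfrak{g}_1)=\Sigma(\mathfrak{g}_2)^\tau$.

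I do not anticipate a genuine obstacle: the content is bookkeeping about how an index swap $(i,j)\mapsto(n-j+1,n-i+1)$ implements antidiagonal transposition and how compositional reversal reflects the seaweed's matrix shape. The cleanest writeup is probably to dispatch both claims in a single short paragraph, citing Lemma~\ref{lem:hflip} for the entries and a one-line remark about supports for the passage from $\widehat{\Sigma}$ to $\Sigma$.
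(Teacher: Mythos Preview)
Your proposal is correct and follows essentially the same route as the paper: apply Lemma~\ref{lem:hflip} entrywise to get $\widehat{\Sigma}(\mathfrak{g}_1)=\widehat{\Sigma}(\mathfrak{g}_2)^{\tau}$, then invoke the support observation $e_{i,j}\in\mathfrak{g}_1\iff e_{n-j+1,n-i+1}\in\mathfrak{g}_2$ to pass to $\Sigma$. One small caution: your \emph{alternative} support justification via meander edges is imprecise---edges of the meander correspond only to certain off-diagonal positions, not to the full block shape of the seaweed---so in the writeup you should rely on your first (and correct) observation that reversing both compositions reflects the matrix shape about the antidiagonal.
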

\begin{proof}
Applying Lemma~\ref{lem:hflip}, it follows that the $i,j$-entry of $\widehat{\Sigma}(\mathfrak{g}_1)$ is equal to the $n-j+1,n-i+1$-entry of $\widehat{\Sigma}(\mathfrak{g}_2),$ i.e., $\widehat{\Sigma}(\mathfrak{g}_1)=\widehat{\Sigma}(\mathfrak{g}_2).$ Consequently, since $e_{i,j}\in\mathfrak{g}$ if and only if $e_{n-j+1,n-i+1},$ $\Sigma(\mathfrak{g}_1)=\Sigma(\mathfrak{g}_2)^{\tau}.$
\end{proof}

\begin{corollary}\label{cor:hfes}
Let $\mathfrak{g}_1=\mathfrak{p}^A\frac{a_1|\hdots|a_m}{b_1|\hdots|b_t}$ and $\mathfrak{g}_2=\mathfrak{p}^A\frac{a_m|\hdots|a_1}{b_t|\hdots|b_1}$ be Frobenius, type-A seaweeds. Then the \textup(extended\textup) spectra of $\mathfrak{g}_1$ and $\mathfrak{g}_2$ are equal.
\end{corollary}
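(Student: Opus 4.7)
The plan is to deduce this corollary immediately from its predecessor, which already established the matrix-level identities $\Sigma(\mathfrak{g}_1)=\Sigma(\mathfrak{g}_2)^{\tau}$ and $\widehat{\Sigma}(\mathfrak{g}_1)=\widehat{\Sigma}(\mathfrak{g}_2)^{\tau}$. The key observation is that the operation $\tau$ of transposing about the antidiagonal is merely a bijection on the set of $n\times n$ matrix positions, so it preserves the multiset of matrix entries. In particular, the multiset of values appearing in $\Sigma(\mathfrak{g}_1)$ equals the multiset of values appearing in $\Sigma(\mathfrak{g}_2)$, and similarly the multiset of values in $\widehat{\Sigma}(\mathfrak{g}_1)$ equals that in $\widehat{\Sigma}(\mathfrak{g}_2)$.

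To convert these matrix-entry equalities into the desired spectrum equalities, I would invoke Remark~\ref{rem:extra0} (and the analogous convention for the extended spectrum stated just before Example~\ref{ex:meanderexteigen}), which asserts that the (extended) spectrum of a Frobenius, type-A seaweed $\mathfrak{g}$ is obtained from the multiset of entries of $\Sigma(\mathfrak{g})$ (respectively $\widehat{\Sigma}(\mathfrak{g})$) by deleting exactly one value of $0$. Since the multisets of entries of the two (extended) spectrum matrices coincide, deleting one $0$ from each produces the same multiset, and the (extended) spectra of $\mathfrak{g}_1$ and $\mathfrak{g}_2$ are therefore equal.

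There is no real obstacle here; the proof is pure bookkeeping, with the only point that needs to be stated being the trivial fact that antidiagonal transposition permutes matrix entries and hence preserves their multiset. (For emphasis, one could mirror the structure of the proof of Corollary~\ref{cor:vfes}, where the analogous reduction from $\Sigma(\mathfrak{g}_1)=\Sigma(\mathfrak{g}_2)^t$ to the equality of spectra is handled in the same way.)
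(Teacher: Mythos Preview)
Your proposal is correct and matches the paper's approach: the paper states Corollary~\ref{cor:hfes} without proof, treating it as immediate from the preceding corollary (the $\tau$-transpose identities for $\Sigma$ and $\widehat{\Sigma}$), which is exactly the reduction you carry out.
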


\begin{corollary}\label{cor:vhfes}
Let $\mathfrak{g}_1=\mathfrak{p}^A\frac{a_1|\hdots|a_m}{b_1|\hdots|b_t}$ and $\mathfrak{g}_2=\mathfrak{p}^A\frac{b_t|\hdots|b_1}{a_m|\hdots|a_1}$ be Frobenius, type-A seaweeds. Then the \textup(extended\textup) spectra of $\mathfrak{g}_1$ and $\mathfrak{g}_2$ are equal.
\end{corollary}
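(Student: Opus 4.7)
The plan is to obtain Corollary~\ref{cor:vhfes} as an immediate composition of the two preceding corollaries, Corollary~\ref{cor:vfes} and Corollary~\ref{cor:hfes}. Conceptually, passing from $\mathfrak{g}_1=\mathfrak{p}^A\frac{a_1|\hdots|a_m}{b_1|\hdots|b_t}$ to $\mathfrak{g}_2=\mathfrak{p}^A\frac{b_t|\hdots|b_1}{a_m|\hdots|a_1}$ is exactly the composition of a ``vertical flip'' (swapping top and bottom compositions) and a ``horizontal flip'' (reversing each composition), and each of those operations has already been shown to preserve the (extended) spectrum.

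Concretely, I would first introduce the intermediate seaweed $\mathfrak{g}'=\mathfrak{p}^A\frac{b_1|\hdots|b_t}{a_1|\hdots|a_m}$. Note that $\mathfrak{g}'$ is Frobenius because it has the same meander as $\mathfrak{g}_1$ (up to reflection through a horizontal line), so Theorem~\ref{rem:indexform} and Corollary~\ref{cor:indA} give the same index. By Corollary~\ref{cor:vfes} applied to $\mathfrak{g}_1$ and $\mathfrak{g}'$, the (extended) spectra of $\mathfrak{g}_1$ and $\mathfrak{g}'$ are equal.

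Next, observe that $\mathfrak{g}_2$ is obtained from $\mathfrak{g}'$ by reversing both of its defining compositions; that is, $\mathfrak{g}_2=\mathfrak{p}^A\frac{b_t|\hdots|b_1}{a_m|\hdots|a_1}$ is exactly the seaweed produced from $\mathfrak{g}'$ under the construction of Corollary~\ref{cor:hfes}. Again $\mathfrak{g}_2$ is Frobenius because its meander is the horizontal reflection of $M(\mathfrak{g}')$. Applying Corollary~\ref{cor:hfes} to $\mathfrak{g}'$ and $\mathfrak{g}_2$ yields that the (extended) spectra of $\mathfrak{g}'$ and $\mathfrak{g}_2$ coincide. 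Chaining the two equalities gives equality of the (extended) spectra of $\mathfrak{g}_1$ and $\mathfrak{g}_2$, completing the argument.

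There is no substantial obstacle here: the result is a formal consequence of the two previously established corollaries, and the only thing to check is that the intermediate seaweed is indeed Frobenius (so that its spectrum is defined). This follows either from the two reflection symmetries of the meander used in Lemmas~\ref{lem:vflip} and~\ref{lem:hflip}, both of which preserve the path/cycle count that determines the index via Corollary~\ref{cor:indA}, or directly from the fact that any (extended) spectrum equality implicitly assumes Frobeniusness of both algebras, which is transferred across each flip.
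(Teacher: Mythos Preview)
Your proof is correct and follows exactly the same approach as the paper, which simply states that the result follows upon combining Corollaries~\ref{cor:vfes} and~\ref{cor:hfes}. Your additional remark that the intermediate seaweed is Frobenius is a welcome bit of care, but otherwise there is nothing to add.
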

\begin{proof}
The result follows upon combining Corollaries~\ref{cor:vfes} and~\ref{cor:hfes}.
\end{proof}

\begin{remark}
One can show that, for fixed compositions $(a_1,\dots,a_m)$ and $(b_1,\dots,b_t)$ of $n,$ the type-A seaweeds $\mathfrak{p}^A\frac{a_1|\dots|a_m}{b_1|\dots|b_t},$ $\mathfrak{p}^A\frac{b_1|\dots|b_t}{a_1|\dots|a_m},$ $\mathfrak{p}^A\frac{a_m|\dots|a_1}{b_t|\dots|b_1}$, and $\mathfrak{p}^A\frac{b_t|\dots|b_1}{a_m|\dots|a_1}$ are isomorphic, regardless of their \textup(shared\textup) respective index values.
\end{remark}

Now, turning to Frobenius, maximal parabolic, type-A seaweeds, the following lemmas relate the spectra of ``bigger" algebras to naturally associated ``smaller" algebras. First, given such an algebra $\mathfrak{g}=\mathfrak{p}^A\frac{a|b}{n}$ with $a>b$, Lemma~\ref{lem:trbbase1} below relates the values in the top left $a\times a$ block of $\Sigma(\mathfrak{g})$ to the values contained in $\widehat{\Sigma}(\mathfrak{g}')$, where $\mathfrak{g}'$ is a seaweed subalgebra of $\mathfrak{sl}(a)$. To aid the proof of Lemma~\ref{lem:trbbase1}, we first illustrate the result in Example~\ref{ex:trbbase1}.

\begin{Ex}\label{ex:trbbase1}
Let $\mathfrak{g}=\mathfrak{p}^A\frac{5|2}{7}$ and $\mathfrak{g}'=\mathfrak{p}^A\frac{3|2}{5}$. See Figure~\ref{fig:trb1} for \textup{(a)} $\Sigma(\mathfrak{g})$ and \textup{(b)} $M(\mathfrak{g})$, and see Figure~\ref{fig:trb2} for \textup{(a)} $\widehat{\Sigma}(\mathfrak{g}')$ and \textup{(b)} $M(\mathfrak{g}')$. Note that the top left $5\times 5$ block of $\Sigma(\mathfrak{g})$ \textup(to the left of the vertical dashed line\textup) consists of the same multiset of values as $\widehat{\Sigma}(\mathfrak{g}')$. Moreover, note that there is a copy of $M(\mathfrak{g}')$ inside of $M(\mathfrak{g})$. In particular, identifying the vertices connected by dashed edges in Figure~\ref{fig:trb1} \textup{(b)} and then rotating the resulting graph by 180 degrees yields $M(\mathfrak{g}')$. 
\begin{figure}[H]
$$\begin{tikzpicture}[scale=0.7]
\draw (0,0) -- (0,7);
\draw (0,7) -- (7,7);
\draw (7,7) -- (7,0);
\draw (7,0) -- (0,0);
\draw [line width=3](0,7) -- (0,2);
\draw [line width=3](0,2) -- (5,2);
\draw [line width=3](5,2) -- (5,0);
\draw [line width=3](5,0) -- (7,0);
\draw [line width=3](7,0) -- (7,7);
\draw [line width=3](0,7) -- (7,7);

\node at (.5,6.5) {{\LARGE 0}};
\node at (1.5,6.5) {{\LARGE 1}};
\node at (2.5,6.5) {{\LARGE -2}};
\node at (3.5,6.5) {{\LARGE 0}};
\node at (4.5,6.5) {{\LARGE -1}};
\node at (5.5,6.5) {{\LARGE 2}};
\node at (6.5,6.5) {{\LARGE 1}};

\node at (.5,5.5) {{\LARGE -1}};
\node at (1.5,5.5) {{\LARGE 0}};
\node at (2.5,5.5) {{\LARGE -3}};
\node at (3.5,5.5) {{\LARGE -1}};
\node at (4.5,5.5) {{\LARGE -2}};
\node at (5.5,5.5) {{\LARGE 1}};
\node at (6.5,5.5) {{\LARGE 0}};

\node at (.5,4.5) {{\LARGE 2}};
\node at (1.5,4.5) {{\LARGE 3}};
\node at (2.5,4.5) {{\LARGE 0}};
\node at (3.5,4.5) {{\LARGE 2}};
\node at (4.5,4.5) {{\LARGE 1}};
\node at (5.5,4.5) {{\LARGE 4}};
\node at (6.5,4.5) {{\LARGE 3}};

\node at (.5,3.5) {{\LARGE 0}};
\node at (1.5,3.5) {{\LARGE 1}};
\node at (2.5,3.5) {{\LARGE -2}};
\node at (3.5,3.5) {{\LARGE 0}};
\node at (4.5,3.5) {{\LARGE -1}};
\node at (5.5,3.5) {{\LARGE 2}};
\node at (6.5,3.5) {{\LARGE 1}};

\node at (.5,2.5) {{\LARGE 1}};
\node at (1.5,2.5) {{\LARGE 2}};
\node at (2.5,2.5) {{\LARGE -1}};
\node at (3.5,2.5) {{\LARGE 1}};
\node at (4.5,2.5) {{\LARGE 0}};
\node at (5.5,2.5) {{\LARGE 3}};
\node at (6.5,2.5) {{\LARGE 2}};

\node at (5.5,1.5) {{\LARGE 0}};
\node at (6.5,1.5) {{\LARGE -1}};

\node at (5.5,0.5) {{\LARGE 1}};
\node at (6.5,0.5) {{\LARGE 0}};

\draw[dashed] (5,2)--(5,7);

\node at (3.5, -1.5) {(a)};
\end{tikzpicture}\hspace{1.5cm}\begin{tikzpicture}[scale=1.3]
	\def\Node{\node [circle,  fill, inner sep=2pt]}
	\node at (0,0) {};
    \Node[label=left:$v_1$] (1) at (0,2) {};
	\Node[label=left:$v_2$] (2) at (1,2)  {};
	\Node[label=left:$v_3$] (3) at (2,2)  {};
	\Node[label=left:$v_4$] (4) at (3,2)  {};
	\Node[label=left:$v_5$] (5) at (4,2)  {};
	\Node[label=left:$v_6$] (6) at (5,2)  {};
	\Node[label=left:$v_7$] (7) at (6,2)  {};
	\draw (1) to[bend left=50] (5);
	\draw (2) to[bend left=50] (4);
	\draw (6) to[bend left=50] (7);
	\draw[dashed] (1) to[bend right=50] (7);
	\draw[dashed] (2) to[bend right=50] (6);
	\draw (3) to[bend right=50] (5);
    \node at (3, -1) {(b)};
\end{tikzpicture}$$
\caption{(a) $\Sigma(\mathfrak{g})$ and (b) $M(\mathfrak{g})$}\label{fig:trb1}
\end{figure}

\begin{figure}[H]
$$\begin{tikzpicture}[scale=0.7]
\draw (0,2) -- (0,7);
\draw (0,7) -- (5,7);
\draw (5,7) -- (5,2);
\draw (5,2) -- (0,2);
\draw [line width=3](0,7) -- (5,7);
\draw [line width=3](5,7) -- (5,2);
\draw [line width=3](5,2) -- (3,2);
\draw [line width=3](3,2) -- (3,4);
\draw [line width=3](3,4) -- (0,4);
\draw [line width=3](0,4) -- (0,7);

\node at (.5,6.5) {\LARGE{0}};
\node at (1.5,6.5) {\LARGE{1}};
\node at (2.5,6.5) {\LARGE{-1}};
\node at (3.5,6.5) {\LARGE{2}};
\node at (4.5,6.5) {\LARGE{1}};
\node at (.5,5.5) {\LARGE{-1}};
\node at (1.5,5.5) {\LARGE{0}};
\node at (2.5,5.5) {\LARGE{-2}};
\node at (3.5,5.5) {\LARGE{1}};
\node at (4.5,5.5) {\LARGE{0}};
\node at (.5,4.5) {\LARGE{1}};
\node at (1.5,4.5) {\LARGE{2}};
\node at (2.5,4.5) {\LARGE{0}};
\node at (3.5,4.5) {\LARGE{3}};
\node at (4.5,4.5) {\LARGE{2}};
\node at (.5,3.5) {\LARGE{-2}};
\node at (1.5,3.5) {\LARGE{-1}};
\node at (2.5,3.5) {\LARGE{3}};
\node at (3.5,3.5) {\LARGE{0}};
\node at (4.5,3.5) {\LARGE{-1}};
\node at (.5,2.5) {\LARGE{-1}};
\node at (1.5,2.5) {\LARGE{0}};
\node at (2.5,2.5) {\LARGE{-2}};
\node at (3.5,2.5) {\LARGE{1}};
\node at (4.5,2.5) {\LARGE{0}};
\node at (2.5, 0) {(a)};

\end{tikzpicture}\hspace{1.5cm}\begin{tikzpicture}
	\def\Node{\node [circle,  fill, inner sep=2pt]}
	\tikzset{->-/.style={decoration={
  markings,
  mark=at position .55 with {\arrow{>}}},postaction={decorate}}}
	\node at (0,-0.3) {};
    \Node[label=left:\footnotesize {$v_1$}] (1) at (0,1.8) {};
	\Node[label=left:\footnotesize {$v_2$}] (2) at (1,1.8) {};
	\Node[label=left:\footnotesize {$v_3$}] (3) at (2,1.8) {};
	\Node[label=left:\footnotesize {$v_4$}] (4) at (3,1.8) {};
	\Node[label=left:\footnotesize {$v_5$}] (5) at (4,1.8) {};
	\draw (3) to[bend right=50] (1);
	\draw (5) to[bend right=50] (4);
	\draw (1) to[bend right=50] (5);
	\draw (2) to[bend right=50] (4);
    \node at (2, -1) {(b)};
\end{tikzpicture}$$
\caption{(a) $\widehat{\Sigma}(\mathfrak{g}')$ and (b) $M(\mathfrak{g}')$}\label{fig:trb2}
\end{figure}
\end{Ex}

\begin{lemma}\label{lem:trbbase1}
Let $k_1,k_2\in\mathbb{Z}_{>0}$ satisfy $\gcd(k_1,k_2)=1$. If $\mathfrak{g}_1=\mathfrak{p}^A\frac{mk_1+k_2|k_1}{(m+1)k_1+k_2}$ and $\mathfrak{g}_2=\mathfrak{p}^A\frac{(m-1)k_1+k_2|k_1}{mk_1+k_2}$, then the block corresponding to rows $\{1,\hdots,mk_1+k_2\}$ and columns $\{1,\hdots,mk_1+k_2\}$ of $\Sigma(\mathfrak{g}_1)$ consists of the same multiset of values as $\widehat{\Sigma}(\mathfrak{g}_2)$.
\end{lemma}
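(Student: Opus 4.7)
The plan is to exhibit an explicit weight-preserving bijection between the entries of the specified block of $\Sigma(\mathfrak{g}_1)$ and the entries of $\widehat{\Sigma}(\mathfrak{g}_2)$. Setting $a = mk_1 + k_2$ and $a' = (m-1)k_1 + k_2 = a-k_1$, I will prove the pointwise identity
$$w(P_{i,j}(\mathfrak{g}_1)) \;=\; w(P_{a+1-i,\,a+1-j}(\mathfrak{g}_2)) \qquad \text{for all } i,j \in \{1,\ldots,a\}.$$
Since $(i,j)\mapsto(a+1-i,a+1-j)$ is a bijection on $\{1,\ldots,a\}^2$, this identity immediately yields the claimed multiset equality.

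The heart of the argument is a ``folding'' map $\psi\colon V(M(\mathfrak{g}_1))\to V(M(\mathfrak{g}_2))$ defined by $\psi(v_i)=v_{a+1-i}$ for $i\le a$ and $\psi(v_{a+j})=v_{a'+j}$ for $1\le j\le k_1$. The edges of $M(\mathfrak{g}_1)$ split into four families: top edges in the first block, top edges in the second block, the \emph{crossing} bottom edges $(v_i,v_{a+k_1+1-i})$ with $i\le k_1$, and the bottom edges whose endpoints both lie in $\{v_1,\ldots,v_a\}$. A family-by-family calculation shows that $\psi$ sends every crossing bottom edge to a self-loop and carries the remaining three families bijectively onto, respectively, the bottom edges, the top second-block edges, and the top first-block edges of $M(\mathfrak{g}_2)$. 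A direct inspection of the right-to-left orientation of top edges and left-to-right orientation of bottom edges in both meanders then shows that the induced bijection on non-crossing edges \emph{preserves} orientations in $\overrightarrow{M}(\mathfrak{g}_1)$ and $\overrightarrow{M}(\mathfrak{g}_2)$.

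Fix $i,j\in\{1,\ldots,a\}$ and consider $P_{i,j}(\mathfrak{g}_1)$. Because the crossing bottom edges are the only edges joining first- and second-block vertices, every maximal second-block segment of $P_{i,j}(\mathfrak{g}_1)$ is entered along one crossing edge (contributing $+1$) and exited along another (contributing $-1$), so the crossing edges contribute a net $0$ to $w(P_{i,j}(\mathfrak{g}_1))$. Applying $\psi$ to the remaining edges of $P_{i,j}(\mathfrak{g}_1)$ and deleting the resulting self-loops produces a walk in $M(\mathfrak{g}_2)$ from $v_{a+1-i}$ to $v_{a+1-j}$ that traverses no edge twice, since $P_{i,j}(\mathfrak{g}_1)$ is simple and $\psi$ is injective on non-crossing edges. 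Because $\mathfrak{g}_2$ is Frobenius, $M(\mathfrak{g}_2)$ is a tree, so this walk coincides with the unique path $P_{a+1-i,\,a+1-j}(\mathfrak{g}_2)$, and orientation-preservation of $\psi$ forces the two weights to be equal.

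I expect the bulk of the work to be the bookkeeping in the second step: one must parametrize each of the four edge families carefully, verify that the image families and orientations match those prescribed by the top/bottom compositions of $\mathfrak{g}_2$, and handle the parity subtleties when $a$ or $k_1$ is odd (the middle vertex of an odd-sized top block has no partner, but these omitted edges are consistent under $\psi$). Once the edge correspondence is established, the path-level weight identity follows routinely from the balancing of crossing-edge contributions and the tree structure of both meanders.
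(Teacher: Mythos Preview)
Your proposal is correct and follows essentially the same approach as the paper. The paper first collapses the tail via a map $\mathcal{S}$ to an intermediate seaweed $\mathfrak{g}_3=\mathfrak{p}^A\frac{mk_1+k_2}{k_1|(m-1)k_1+k_2}$, shows $w(P_{i,j}(\mathfrak{g}_1))=w(P_{i,j}(\mathfrak{g}_3))$ by the same net-zero crossing-edge argument you give, and then invokes the separate vertical/horizontal flip lemmas to pass from $\mathfrak{g}_3$ to $\mathfrak{g}_2$; you instead compose the fold and the two flips into the single map $\psi$ and verify the orientation-preservation family by family, arriving at the identical pointwise identity $w(P_{i,j}(\mathfrak{g}_1))=w(P_{a+1-i,a+1-j}(\mathfrak{g}_2))$ (which is exactly the paper's final bijection $\phi$).
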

\begin{proof}
Define the sets $$H=\{1,\hdots,mk_1+k_2\}\quad\text{and}\quad T=\{mk_1+k_2+1,\hdots,(m+1)k_1+k_2\}.$$ Note that $\{v_i~|~i\in H\}$ and $\{v_i~|~i\in T\}$ form a partition of the vertices of $M(\mathfrak{g}_1)$. The block $\Delta$ of $\Sigma(\mathfrak{g}_1)$ corresponding to rows $r\in H$ and columns $c\in H$ consists of the weights of paths between the vertices of $\{v_i~|~i\in H\}$ in $M(\mathfrak{g}_1)$. Clearly, both $\Delta$ and $\widehat{\Sigma}(\mathfrak{g}_2)$ have the same number, i.e., $mk_1+k_2$, of 0's on their respective diagonals. Thus, we need to show that $\Delta$ and $\widehat{\Sigma}(\mathfrak{g}_2)$ have the same multisets of values corresponding to off-diagonal entries. To do so, we define a weight-preserving bijection 
$$\phi:~\{P_{i,j}(\mathfrak{g}_1)~|~i,j\in H,~i\neq j\}\to\{P_{i,j}(\mathfrak{g}_2)~|~i,j\in H,~i\neq j\},$$ i.e., a weight-preserving bijection from paths in $\overrightarrow{M}(\mathfrak{g}_1)$ between distinct vertices $v_i$ and $v_j$ with $i,j\in H$ to all paths between distinct vertices in $\overrightarrow{M}(\mathfrak{g}_2)$. To aid in defining $\phi$, we make use of a transformation $\mathcal{S}$ of $\overrightarrow{M}(\mathfrak{g}_1)$ which is defined as follows. If $v_h$ is adjacent to $v_t$ in $\overrightarrow{M}(\mathfrak{g}_1),$ for $h\in H$ and $t\in T,$ then identify $v_h$ and $v_t$ and remove any edges between them. Let $\mathfrak{g}_3=\mathfrak{p}^A\frac{mk_1+k_2}{k_1|(m-1)k_1+k_2}$. Note that $\mathcal{S}(\overrightarrow{M}(\mathfrak{g}_1))=\overrightarrow{M}(\mathfrak{g}_3)$ with $\mathcal{S}$ mapping vertices $v_{i}$ and $v_{(m+1)k_1+k_2-i+1}$ in $\overrightarrow{M}(\mathfrak{g}_1)$ to $v_{i}$ in $\overrightarrow{M}(\mathfrak{g}_3)$, for $1\le i\le k_1$, and vertices $v_{j}$ in $\overrightarrow{M}(\mathfrak{g}_1)$ to $v_{j}$ in $\overrightarrow{M}(\mathfrak{g}_3)$, for $k_1+1\le j\le mk_1+k_2$. We claim that $w(P_{i,j}(\mathfrak{g}_1))=w(P_{i,j}(\mathfrak{g}_3))$, for all distinct $i,j\in H$.

To establish the claim, take distinct $i,j\in H$. 
If $P_{i,j}(\mathfrak{g}_1)$ contains no $v_{t}$, for $t\in T$, then $P_{i,j}(\mathfrak{g}_1)$ along with its orientation in $\overrightarrow{M}(\mathfrak{g}_1)$ remain invariant under $\mathcal{S}$; that is, $w(P_{i,j}(\mathfrak{g}_1))=w(P_{i,j}(\mathfrak{g}_3))$. Otherwise, $P_{i,j}(\mathfrak{g}_1)$ contains a vertex $v_{t}$, for $t\in T$. In this case, $P_{i,j}(\mathfrak{g}_1)$ contains subpaths $P_{h_1,h_2}(\mathfrak{g}_1)$ defined by sequences of vertices $v_{h_1},v_{t_1},v_{t_2},v_{h_2}$, where
\begin{itemize}
    \item $h_1,h_2\in H$ and $t_1,t_2\in T$ are distinct,
    \item $v_{h_1}$ is adjacent to $v_{t_1}$ via a bottom edge directed from $v_{h_1}$ to $v_{t_1}$ in $\overrightarrow{M}(\mathfrak{g}_1)$,
    \item $v_{t_1}$ is adjacent to $v_{t_2}$ via a top edge in $\overrightarrow{M}(\mathfrak{g}_1)$, and
    \item $v_{t_2}$ is adjacent to $v_{h_2}$ via a bottom edge directed from $v_{h_2}$ to $v_{t_2}$ in $\overrightarrow{M}(\mathfrak{g}_1)$.
\end{itemize}
To establish the claim in this case, it suffices to show that the weights of such subpaths are invariant under $\mathcal{S}$. Note that
\begin{align*}
    w(P_{h_1,h_2}(\mathfrak{g}_1))&=w(P_{h_1,t_1}(\mathfrak{g}_1))+w(P_{t_1,t_2}(\mathfrak{g}_1))+w(P_{t_2,h_2}(\mathfrak{g}_1))\\
    &=1+w(P_{t_1,t_2}(\mathfrak{g}_1))-1\\
    &=w(P_{t_1,t_2}(\mathfrak{g}_1))\\
    &=w(P_{h_1,h_2}(\mathfrak{g}_3))\\
    &=w(\mathcal{S}(P_{h_1,h_2}(\mathfrak{g}_1))),
\end{align*}
where for the penultimate equality we have used the fact that $\mathcal{S}$ maps the vertices $v_{t_i}$ to $v_{h_i}$, for $i=1,2$, and the edge connecting $v_{t_1}$ and $v_{t_2}$ in $\overrightarrow{M}(\mathfrak{g}_1)$ to the edge connecting $v_{h_1}$ and $v_{h_2}$ in $\overrightarrow{M}(\mathfrak{g}_3)$, preserving their orientations. Thus, $w(P_{h_1,h_2}(\mathfrak{g}_1))=w(\mathcal{S}(P_{h_1,h_2}(\mathfrak{g}_1)))$, and we may conclude that $w(P_{i,j}(\mathfrak{g}_1))=w(P_{i,j}(\mathfrak{g}_3))$. Consequently, $w(P_{i,j}(\mathfrak{g}_1))=w(P_{i,j}(\mathfrak{g}_3))$, for all distinct $i,j\in H$, as claimed.

Now, applying Lemmas~\ref{lem:vflip} and~\ref{lem:hflip}, it follows that $w(P_{i,j}(\mathfrak{g}_1))=w(P_{mk_1+k_2-i+1,mk_1+k_2-j+1}(\mathfrak{g}_2))$, for all distinct $i,j\in H$. Considering our work above, it follows that we can form our desired weight-preserving bijection $\phi$ by mapping the path $P_{i,j}(\mathfrak{g}_1)$ to $P_{mk_1+k_2-i+1,mk_1+k_2-j+1}(\mathfrak{g}_2)$, for all distinct $i,j\in H$. The result follows.
\end{proof}

\begin{remark}
In fact, for $\mathfrak{g}_1$ and $\mathfrak{g}_2$ as in Lemma~\ref{lem:trbbase1}, the block corresponding to rows $\{1,\hdots,k_1+k_2\}$ and columns $\{1,\hdots,k_1+k_2\}$ of $\Sigma(\mathfrak{g}_1)$ is equal to $-\widehat{\Sigma}(\mathfrak{g}_2)^t$.
\end{remark}

Next, given a Frobenius, maximal parabolic, type-A seaweed $\mathfrak{g}=\mathfrak{p}^A\frac{a|b}{n}$ with $a>b$, we show how the values in the bottom right $b\times b$ block of $\Sigma(\mathfrak{g})$ can be related to the values contained in $\widehat{\Sigma}(\mathfrak{g}')$, where $\mathfrak{g}'$ is a seaweed subalgebra of $\mathfrak{sl}(b)$. Unlike in Lemma~\ref{lem:trbbase1}, we have to consider two separate cases corresponding to $\lfloor\frac{a}{b}\rfloor=1$ and $\lfloor\frac{a}{b}\rfloor>1$. Lemma~\ref{lem:blbbase1} addresses the case $\lfloor\frac{a}{b}\rfloor=1$. An illustration of Lemma~\ref{lem:blbbase1} is given in Example~\ref{ex:brb1}.

\begin{Ex}\label{ex:brb1}
Let $\mathfrak{g}=\mathfrak{p}^A\frac{5|3}{8}$ and $\mathfrak{g}'=\mathfrak{p}^A\frac{1|2}{3}.$ See Figure~\ref{fig:brb1big} for \textup{(a)} $\Sigma(\mathfrak{g})$ and \textup{(b)} $M(\mathfrak{g}),$ and see Figure~\ref{fig:brb1small} for \textup{(a)} $\widehat{\Sigma}(\mathfrak{g}')$ and \textup{(b)} $M(\mathfrak{g}').$ Note that the bottom right $3\times 3$ block \textup(below the horizontal dashed line\textup) of $\Sigma(\mathfrak{g})$ consists of the same multiset of values as $\widehat{\Sigma}(\mathfrak{g}').$ Moreover, note that there is a copy of $M(\mathfrak{g}')$ inside of $M(\mathfrak{g}).$ In particular, identifying the vertices connected by dashed edges in Figure~\ref{fig:brb1big} \textup{(b)} and then rotating the resulting graph by 180 degrees yields $M(\mathfrak{g}').$
\end{Ex}

\begin{figure}[H]
$$\begin{tikzpicture}[scale=0.7]
\draw (0,2) -- (0,10);
\draw (0,10) -- (8,10);
\draw (8,10) -- (8,2);
\draw (8,2) -- (0,2);
\draw [line width=3](0,10) -- (8,10);
\draw [line width=3](8,10) -- (8,2);
\draw [line width=3](8,2) -- (5,2);
\draw [line width=3](5,2) -- (5,5);
\draw [line width=3](5,5) -- (0,5);
\draw [line width=3](0,5) -- (0,10);

\draw [dashed](5,5)--(8,5);

\node at (.5,9.5) {\LARGE{0}};
\node at (1.5,9.5) {\LARGE{-1}};
\node at (2.5,9.5) {\LARGE{1}};
\node at (3.5,9.5) {\LARGE{-2}};
\node at (4.5,9.5) {\LARGE{-1}};
\node at (5.5,9.5) {\LARGE{2}};
\node at (6.5,9.5) {\LARGE{0}};
\node at (7.5,9.5) {\LARGE{1}};
\node at (.5,8.5) {\LARGE{1}};
\node at (1.5,8.5) {\LARGE{0}};
\node at (2.5,8.5) {\LARGE{2}};
\node at (3.5,8.5) {\LARGE{-1}};
\node at (4.5,8.5) {\LARGE{0}};
\node at (5.5,8.5) {\LARGE{3}};
\node at (6.5,8.5) {\LARGE{1}};
\node at (7.5,8.5) {\LARGE{2}};
\node at (.5,7.5) {\LARGE{-1}};
\node at (1.5,7.5) {\LARGE{-2}};
\node at (2.5,7.5) {\LARGE{0}};
\node at (3.5,7.5) {\LARGE{-3}};
\node at (4.5,7.5) {\LARGE{-2}};
\node at (5.5,7.5) {\LARGE{1}};
\node at (6.5,7.5) {\LARGE{-1}};
\node at (7.5,7.5) {\LARGE{0}};
\node at (.5,6.5) {\LARGE{2}};
\node at (1.5,6.5) {\LARGE{1}};
\node at (2.5,6.5) {\LARGE{3}};
\node at (3.5,6.5) {\LARGE{0}};
\node at (4.5,6.5) {\LARGE{1}};
\node at (5.5,6.5) {\LARGE{4}};
\node at (6.5,6.5) {\LARGE{2}};
\node at (7.5,6.5) {\LARGE{3}};
\node at (.5,5.5) {\LARGE{1}};
\node at (1.5,5.5) {\LARGE{0}};
\node at (2.5,5.5) {\LARGE{2}};
\node at (3.5,5.5) {\LARGE{-1}};
\node at (4.5,5.5) {\LARGE{0}};
\node at (5.5,5.5) {\LARGE{3}};
\node at (6.5,5.5) {\LARGE{1}};
\node at (7.5,5.5) {\LARGE{2}};
\node at (5.5,4.5) {\LARGE{0}};
\node at (6.5,4.5) {\LARGE{-2}};
\node at (7.5,4.5) {\LARGE{-1}};
\node at (5.5,3.5) {\LARGE{2}};
\node at (6.5,3.5) {\LARGE{0}};
\node at (7.5,3.5) {\LARGE{1}};
\node at (5.5,2.5) {\LARGE{1}};
\node at (6.5,2.5) {\LARGE{-1}};
\node at (7.5,2.5) {\LARGE{0}};
\node at (4,0) {(a)};

\end{tikzpicture}\hspace{1.5cm}\begin{tikzpicture}
	\def\Node{\node [circle,  fill, inner sep=2pt]}
	\tikzset{->-/.style={decoration={
  markings,
  mark=at position .55 with {\arrow{>}}},postaction={decorate}}}
	\node at (0,-0.3) {};
    \Node[label=left:\footnotesize {$v_1$}] (1) at (0,3.2) {};
	\Node[label=left:\footnotesize {$v_2$}] (2) at (1,3.2) {};
	\Node[label=left:\footnotesize {$v_3$}] (3) at (2,3.2) {};
	\Node[label=left:\footnotesize {$v_4$}] (4) at (3,3.2) {};
	\Node[label=left:\footnotesize {$v_5$}] (5) at (4,3.2) {};
	\Node[label=left:\footnotesize {$v_6$}] (6) at (5,3.2) {};
	\Node[label=left:\footnotesize {$v_7$}] (7) at (6,3.2) {};
	\Node[label=left:\footnotesize {$v_8$}] (8) at (7,3.2) {};
	\draw[dashed] (5) to[bend right=50] (1);
	\draw[dashed] (4) to[bend right=50] (2);
	\draw[dashed] (1) to[bend right=50] (8);
	\draw[dashed] (2) to[bend right=50] (7);
	\draw[dashed] (3) to[bend right=50] (6);
	\draw (4) to[bend right=50] (5);
	\draw (8) to[bend right=50] (6);
    \node at (3.5, -1) {(b)};
\end{tikzpicture}$$
    \caption{(a) $\Sigma(\mathfrak{g})$ and (b) $M(\mathfrak{g})$}
    \label{fig:brb1big}
\end{figure}
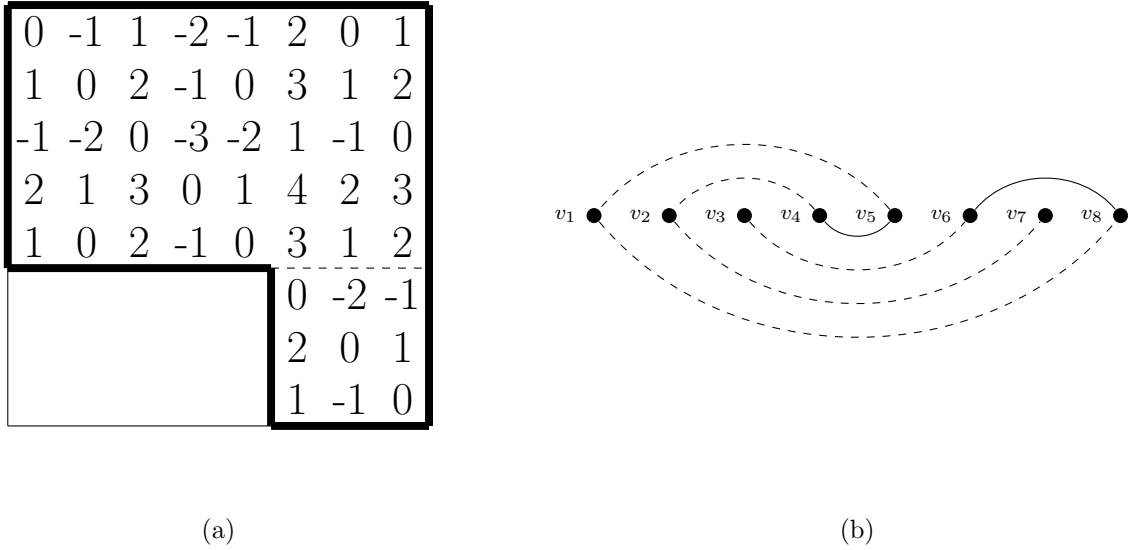

\begin{figure}[H]
$$\begin{tikzpicture}[scale=0.7]
\draw (2,2) -- (2,5);
\draw (2,5) -- (5,5);
\draw (5,5) -- (5,2);
\draw (5,2) -- (2,2);
\draw [line width=3](2,5) -- (5,5);
\draw [line width=3](5,5) -- (5,2);
\draw [line width=3](5,2) -- (3,2);
\draw [line width=3](3,2) -- (3,4);
\draw [line width=3](3,4) -- (2,4);
\draw [line width=3](2,4) -- (2,5);
\node at (2.5,4.5) {\LARGE{0}};
\node at (3.5,4.5) {\LARGE{2}};
\node at (4.5,4.5) {\LARGE{1}};
\node at (2.5,3.5) {\LARGE{-2}};
\node at (3.5,3.5) {\LARGE{0}};
\node at (4.5,3.5) {\LARGE{-1}};
\node at (2.5,2.5) {\LARGE{-1}};
\node at (3.5,2.5) {\LARGE{1}};
\node at (4.5,2.5) {\LARGE{0}};
\node at (3.5, 0) {(a)};

\end{tikzpicture}\hspace{1.5cm}\begin{tikzpicture}
	\def\Node{\node [circle,  fill, inner sep=2pt]}
	\tikzset{->-/.style={decoration={
  markings,
  mark=at position .55 with {\arrow{>}}},postaction={decorate}}}
	\node at (0,-0.3) {};
    \Node[label=left:\footnotesize {$v_1$}] (1) at (0,1.8) {};
	\Node[label=left:\footnotesize {$v_2$}] (2) at (1,1.8) {};
	\Node[label=left:\footnotesize {$v_3$}] (3) at (2,1.8) {};
	\draw (1) to[bend right=50] (3);
	\draw (3) to[bend right=50] (2);
    \node at (0.5, -0.5) {(b)};
\end{tikzpicture}$$
    \caption{(a) $\widehat{\Sigma}(\mathfrak{g}')$ and (b) $M(\mathfrak{g}')$}
    \label{fig:brb1small}
\end{figure}

\begin{lemma}\label{lem:blbbase1}
Let $k_1,k_2\in\mathbb{Z}_{>0}$ satisfy $k_1>k_2$ and $\gcd(k_1,k_2)=1$. If $\mathfrak{g}_1=\mathfrak{p}^A\frac{k_1+k_2|k_1}{2k_1+k_2}$ and $\mathfrak{g}_2=\mathfrak{p}^A\frac{k_1-k_2|k_2}{k_1}$, then the block corresponding to rows $\{k_1+k_2+1,\hdots,2k_1+k_2\}$ and columns $\{k_1+k_2+1,\hdots,2k_1+k_2\}$ in $\Sigma(\mathfrak{g}_1)$ consists of the same multiset of values as $\widehat{\Sigma}(\mathfrak{g}_2)$.
\end{lemma}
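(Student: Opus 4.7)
The plan is to adapt the strategy of the proof of Lemma~\ref{lem:trbbase1}. Let $H=\{k_1+k_2+1,\ldots,2k_1+k_2\}$ and $T=\{1,\ldots,k_1+k_2\}$ partition the vertex indices of $\overrightarrow{M}(\mathfrak{g}_1)$. Since $|H|=k_1$ equals the number of vertices of $M(\mathfrak{g}_2)$, and both $\Sigma(\mathfrak{g}_1)$ restricted to rows and columns in $H$ and $\widehat{\Sigma}(\mathfrak{g}_2)$ have exactly $k_1$ diagonal zeros, it suffices to construct a weight-preserving bijection between paths in $\overrightarrow{M}(\mathfrak{g}_1)$ joining distinct $v_h,v_{h'}$ with $h,h'\in H$ and paths in $\overrightarrow{M}(\mathfrak{g}_2)$ joining distinct vertices.

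I will first collect the relevant structural observations about $\overrightarrow{M}(\mathfrak{g}_1)$. Each $v_h$ with $h\in H$ has a unique bottom-edge partner $v_{\pi(h)}$, where $\pi(h)=2k_1+k_2+1-h\in\{1,\ldots,k_1\}\subseteq T$. Top edges within $H$ pair $v_{k_1+k_2+i}$ with $v_{2k_1+k_2+1-i}$ for $1\le i\le\lfloor k_1/2\rfloor$, and under $\pi$ are carried to oriented edges pairing $v_i$ with $v_{k_1+1-i}$ whose orientations match those of the bottom edges of composition $(k_1)$. The subgraph $\mathfrak{h}$ of $\overrightarrow{M}(\mathfrak{g}_1)$ induced on $T$ consists of the top edges of top block 1 of $\mathfrak{g}_1$ together with the bottom edges within the ``middle'' set $\{v_{k_1+1},\ldots,v_{k_1+k_2}\}$; a degree count then shows that the connected component in $\mathfrak{h}$ of any $v_a\in\{v_1,\ldots,v_{k_1}\}$ is either the single top edge $v_{k_2+j}-v_{k_1+1-j}$ when $a\in\{v_{k_2+1},\ldots,v_{k_1}\}$, or the length-three path $v_a-v_{k_1+k_2+1-a}-v_{k_1+a}-v_{k_2+1-a}$ when $a\in\{v_1,\ldots,v_{k_2}\}$, with degenerate singletons arising from parity.

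The main obstacle, distinguishing this situation from Lemma~\ref{lem:trbbase1}, is that an excursion of a path $P_{h,h'}(\mathfrak{g}_1)$ through $T$ may traverse two middle vertices instead of only one, so there is no single top edge within $T$ to ``inherit.'' To handle this, I will decompose $P_{h,h'}(\mathfrak{g}_1)$ into alternating top edges in $H$ and excursions into $T$, and then carry out a direct edge-by-edge weight computation. The two outer bottom edges of any excursion (between $H$ and $T$) contribute $-1+1=0$; the internal $\mathfrak{h}$-traversal yields weight $-1$ when moving from the smaller- to larger-indexed endpoint and $+1$ otherwise, both for the single-top-edge case and for the length-three case (since the latter accumulates $-1-1+1=-1$). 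Thus each excursion from $v_h$ to $v_{h''}$ contributes exactly what the corresponding top edge of composition $(k_2,k_1-k_2)$ between $v_{\pi(h)}$ and $v_{\pi(h'')}$ on $\{v_1,\ldots,v_{k_1}\}$ would contribute.

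Combining the above, the map $v_h\mapsto v_{\pi(h)}$ furnishes a weight-preserving bijection between $H$-to-$H$ paths in $\overrightarrow{M}(\mathfrak{g}_1)$ and paths between distinct vertices of $\overrightarrow{M}(\mathfrak{p}^A\frac{k_2|k_1-k_2}{k_1})$. The desired multiset equality then follows from Corollary~\ref{cor:hfes} applied to $\mathfrak{p}^A\frac{k_2|k_1-k_2}{k_1}$ and $\mathfrak{g}_2=\mathfrak{p}^A\frac{k_1-k_2|k_2}{k_1}$, which share the same extended spectrum.
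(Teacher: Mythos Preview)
Your proof is correct and follows essentially the same approach as the paper. Both arguments reduce to the intermediate seaweed $\mathfrak{p}^A\frac{k_2|k_1-k_2}{k_1}$ and split into the same two cases---your ``single top edge'' and ``length-three path'' excursions are exactly the paper's Case~2 (through $H_2$) and Case~1 (through $H_1,H_3$), respectively---before invoking the horizontal-flip symmetry to reach $\mathfrak{g}_2$. The only cosmetic difference is that the paper packages the collapse as an explicit transformation $\mathcal{S}$ on the meander, whereas you work directly with the induced subgraph on $T$ and its connected components.
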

\begin{proof}
Define the sets $$H_1=\{1,\hdots,k_2\}, \quad H_2=\{k_2+1,\hdots,k_1\}, \quad H_3=\{k_1+1,\hdots,k_1+k_2\},\quad\text{and}\quad T=\{k_1+k_2+1,\hdots,2k_1+k_2\}.$$ Note that $\{v_i~|~i\in H_1\},\{v_i~|~i\in H_2\},\{v_i~|~i\in H_3\},$ and $\{v_i~|~i\in T\}$ form a partition of the vertices of $M(\mathfrak{g}_1)$ and are defined in such a way that top edges in $M(\mathfrak{g}_1)$ only connect pairs of vertices $u$ and $v$ with
\begin{itemize}
    \item  $u\in \{v_i~|~i\in H_1\}$ and $v\in \{v_i~|~i\in H_3\}$,
    \item $u,v\in \{v_i~|~i\in H_2\}$, or
    \item $u,v\in \{v_i~|~i\in T\}$
\end{itemize}
and bottom edges in $M(\mathfrak{g}_1)$ only connect pairs of vertices $u$ and $v$ with
\begin{itemize}
    \item $u\in\{v_i~|~i\in H_1\cup H_2\}$ and $v\in\{v_i~|~i\in T\}$ or
    \item $u,v\in \{v_i~|~i\in H_3\}$.
\end{itemize}
Now, the block $\Delta$ of $\Sigma(\mathfrak{g}_1)$ corresponding to rows $r\in T$ and columns $c\in T$ consists of the values $w(P_{i,j}(\mathfrak{g}_1))$, for $i,j\in T$. Clearly, both $\Delta$ and $\widehat{\Sigma}(\mathfrak{g}_2)$ have the same number, i.e., $k_1$, of 0's on their respective diagonals. Thus, we need to show that $\Delta$ and $\widehat{\Sigma}(\mathfrak{g}_2)$ have the same multisets of values corresponding to off-diagonal entries. To do so, we define a weight-preserving bijection 
$$\phi:~\{P_{i,j}(\mathfrak{g}_1)~|~i,j\in T,~i\neq j\}\to\{P_{i,j}(\mathfrak{g}_2)~|~i,j\in H_1\cup H_2,~i\neq j\},$$ i.e., a weight-preserving bijection from paths in $\overrightarrow{M}(\mathfrak{g}_1)$ between distinct vertices $u,v\in\{v_i~|~i\in T\}$ to all paths between distinct vertices in $\overrightarrow{M}(\mathfrak{g}_2)$. To aid in defining our bijection, we make use of a transformation $\mathcal{S}$ of $\overrightarrow{M}(\mathfrak{g}_1)$ which is defined as follows. 
\begin{enumerate}
    \item If $u\in \{v_i~|~i\in T\}$ is adjacent to $v\in \{v_i~|~i\in H_1\cup H_2\},$ then identify $u$ and $v$ while removing any edges between them, and then
    \item if $u\in \{v_i~|~i\in H_3\}$ is adjacent to $v\in \{v_i~|~i\in H_1\}$, then identify $u$ and $v$ while removing any edges between them.
\end{enumerate}
Let $\mathfrak{g}_3=\mathfrak{p}^A\frac{k_2|k_1-k_2}{k_1}.$ Note that $\mathcal{S}(\overrightarrow{M}(\mathfrak{g}_1))=\overrightarrow{M}(\mathfrak{g}_3)$ with $\mathcal{S}$ mapping vertices $v_{i}$ and $v_{2k_1+k_2-i+1}$ in $M(\mathfrak{g}_1)$ to $v_{i}$ in $M(\mathfrak{g}_3)$, for $1\le i\le k_1$, and vertices $v_{k_1+k_2-j+1}$ in $M(\mathfrak{g}_1)$ to $v_{j}$ in $M(\mathfrak{g}_3)$, for $1\le j\le k_2$. We claim that $w(P_{i,j}(\mathfrak{g}_1))=w(P_{2k_1+k_2-i+1,2k_1+k_2-j+1}(\mathfrak{g}_3))$, for distinct $i,j\in T$.

To establish the claim, take distinct $i,j\in T$. If $v_{i}$ and $v_{j}$ are adjacent in $M(\mathfrak{g}_1)$, then $\mathcal{S}$ maps $P_{i,j}(\mathfrak{g}_1)$ along with its orientation in $\overrightarrow{M}(\mathfrak{g}_1)$ to $P_{2k_1+k_2-i+1,2k_1+k_2-j+1}(\mathfrak{g}_3)$ along with its orientation in $\overrightarrow{M}(\mathfrak{g}_3)$; that is, $w(P_{i,j}(\mathfrak{g}_1))=w(P_{i,j}(\mathfrak{g}_3)).$ On the other hand, if $v_{i}$ and $v_{j}$ are not adjacent in $M(\mathfrak{g}_1)$, then $P_{i,j}(\mathfrak{g}_1)$ contains subpaths $P_{t_1,t_2}(\mathfrak{g}_1)$ defined by sequences of vertices $v_{t_1},v_{h_1},\hdots,v_{h_{\ell}},v_{t_2}$ with $t_1,t_2\in T$ and $h_1,\hdots,h_{\ell}\in H_1\cup H_2\cup H_3$. To establish the claim, it suffices to show that the weights of such subpaths are invariant under $\mathcal{S}$. There are two cases.
\bigskip

\noindent
\textbf{Case 1:}  Subpaths $P_{t_1,t_2}$ defined by a sequence of vertices $v_{t_1},v_{h_1^1},v_{h_1^3},v_{h_2^3},v_{h_2^1},v_{t_2}$, where
\begin{itemize}
    \item $t_1,t_2\in T$, $h_1^1,h_2^1\in H_1$, and $h_1^3,h_2^3\in H_3$,
    \item $v_{t_1}$ is adjacent to $v_{h_1^1}$ via a bottom edge directed from $v_{h_1^1}$ to $v_{t_1}$ in $\overrightarrow{M}(\mathfrak{g}_1)$,
    \item $v_{h_1^1}$ is adjacent to $v_{h_1^3}$ via a top edge directed from $v_{h_1^3}$ to $v_{h^1_1}$ in $\overrightarrow{M}(\mathfrak{g}_1)$,
    \item $v_{h_1^3}$ is adjacent to $v_{h^3_2}$ via a bottom edge in $\overrightarrow{M}(\mathfrak{g}_1)$,
    \item $v_{h^3_2}$ is adjacent to $v_{h^1_2}$ via a top edge directed from $v_{h^3_2}$ to $v_{h^1_2}$ in $\overrightarrow{M}(\mathfrak{g}_1)$, and
    \item $v_{h^1_2}$ is adjacent to $v_{t_2}$ via a bottom edge directed from $v_{h^1_2}$ to $v_{t_2}$ in $\overrightarrow{M}(\mathfrak{g}_1)$.
\end{itemize}
Note that
\begin{align*}
    w(P_{t_1,t_2}(\mathfrak{g}_1))&=w(P_{t_1,h_1^1}(\mathfrak{g}_1))+w(P_{h^1_1,h^3_1}(\mathfrak{g}_1))+w(P_{h^3_1,h^3_2}(\mathfrak{g}_1))+w(P_{h^3_2,h^1_2}(\mathfrak{g}_1))+w(P_{h^1_2,t_2}(\mathfrak{g}_1))\\
    &=-1-1+w(P_{h^3_1,h^3_2}(\mathfrak{g}_1))+1+1\\
    &=w(P_{h^3_1,h^3_2}(\mathfrak{g}_1))\\
    &=w(P_{h^1_1,h^1_2}(\mathfrak{g}_3))\\
    &=w(\mathcal{S}(P_{t_1,t_2}(\mathfrak{g}_1))),
\end{align*}
where for the penultimate equality we have used the fact that $\mathcal{S}$ maps $v_{h^3_i}$ to $v_{h^1_i}$, for $i=1,2$, as well as the edge connecting $v_{h^3_1}$ and $v_{h^3_2}$ in $\overrightarrow{M}(\mathfrak{g}_1)$ to the edge connecting $v_{h^1_1}$ and $v_{h^1_2}$ in $\overrightarrow{M}(\mathfrak{g}_1)$, preserving their orientations. Thus, it follows that $w(P_{t_1,t_2}(\mathfrak{g}_1))$ is invariant under $\mathcal{S}$.
\bigskip

\noindent
\textbf{Case 2:} Subpaths $P_{t_1,t_2}(\mathfrak{g}_1)$ defined by a sequence of vertices $v_{t_1},v_{h_1^2},v_{h_2^2},v_{t_2}$, where
\begin{itemize}
    \item $t_1,t_2\in T$ and $h_1^2,h_2^2\in H_2$,
    \item $v_{t_1}$ is adjacent to $v_{h_1^2}$ via a bottom edge directed from $v_{h_1^2}$ to $v_{t_1}$ in $\overrightarrow{M}(\mathfrak{g}_1)$,
    \item $v_{h_1^2}$ is adjacent to $v_{h_2^2}$ via a top edge in $\overrightarrow{M}(\mathfrak{g}_1)$, and
    \item $v_{h_2^2}$ is adjacent to $v_{t_2}$ via a bottom edge directed from $v_{h_2^2}$ to $v_{t_2}$ in $\overrightarrow{M}(\mathfrak{g}_1)$.
\end{itemize}
Note that 
\begin{align*}
    w(P_{t_1,t_2}(\mathfrak{g}_1))&=w(P_{t_1,h_1^2}(\mathfrak{g}_1))+w(P_{h^2_1,h^2_2}(\mathfrak{g}_1))+w(P_{h^2_2,t_2}(\mathfrak{g}_1))\\
    &=-1+w(P_{h^2_1,h^2_2}(\mathfrak{g}_1))+1\\
    &=w(P_{h^2_1,h^2_2}(\mathfrak{g}_1))\\
    &=w(P_{h^2_1,h^2_2}(\mathfrak{g}_3))\\
    &=w(\mathcal{S}(P_{t_1,t_2}(\mathfrak{g}_1))).
\end{align*}
Thus, it follows that $w(P_{t_1,t_2}(\mathfrak{g}_1))$ is invariant under $\mathcal{S}$. 
\bigskip

\noindent
Consequently, $w(P_{i,j}(\mathfrak{g}_1))=w(P_{2k_1+k_2-i+1,2k_1+k_2-j+1}(\mathfrak{g}_3))$, for all distinct $i,j\in T$, as claimed. 

Now, applying Lemma~\ref{lem:hflip}, it follows that $$w(P_{i,j}(\mathfrak{g}_1))=w(P_{j-k_1-k_2,i-k_1-k_2}(\mathfrak{g}_2)),$$ for all distinct $i,j\in T$. Therefore, the desired weight-preserving bijection $\phi$ is given by mapping the path $P_{i,j}(\mathfrak{g}_1)$ to $P_{j-k_1-k_2,i-k_1-k_2}(\mathfrak{g}_2)$, for all distinct $i,j\in T.$ The result follows.
\end{proof}

\begin{remark}
In fact, for $\mathfrak{g}_1$ and $\mathfrak{g}_2$ as in Lemma~\ref{lem:blbbase1}, the block corresponding to rows $\{k_1+k_2+1,\hdots,2k_1+k_2\}$ and columns $\{k_1+k_2+1,\hdots,2k_1+k_2\}$ of $\Sigma(\mathfrak{g}_1)$ is the same as $\widehat{\Sigma}(\mathfrak{g}_2)^t$.
\end{remark}

Now, in Lemma~\ref{lem:blbgen} below, we extend Lemma~\ref{lem:blbbase1} so that it applies to the bottom right $b\times b$ block of $\Sigma(\mathfrak{g})$ when $\lfloor\frac{a}{b}\rfloor>1$. The following example provides an illustration of Lemma~\ref{lem:blbgen}.

\begin{Ex}
Let $\mathfrak{g}=\mathfrak{p}^A\frac{5|2}{7}$ and $\mathfrak{g}'=\mathfrak{p}^A\frac{1|1}{2}$. See Figure~\ref{fig:trb1} above for \textup{(a)} $\Sigma(\mathfrak{g})$ and \textup{(b)} $M(\mathfrak{g})$ and Figure~\ref{fig:brbtiny} for \textup{(a)} $\widehat{\Sigma}(\mathfrak{g}')$ and \textup{(b)} $M(\mathfrak{g}')$. Note that the bottom right $2\times 2$ block of $\Sigma(\mathfrak{g})$ consists of the same multiset of values as $\widehat{\Sigma}(\mathfrak{g}')$. Moreover, note that there is a copy of $M(\mathfrak{g}')$ inside of $M(\mathfrak{g})$. In particular, identifying vertices connected by all edges except $\{v_6,v_7\}$ and then rotating the resulting graph by 180 degrees yields $M(\mathfrak{g}')$.
\end{Ex}
\begin{figure}[H]
$$\begin{tikzpicture}[scale=0.7]
\draw (2,2) -- (2,4);
\draw (2,4) -- (4,4);
\draw (4,4) -- (4,2);
\draw (4,2) -- (2,2);
\draw [line width=3](2,4) -- (4,4);
\draw [line width=3](4,4) -- (4,2);
\draw [line width=3](4,2) -- (3,2);
\draw [line width=3](3,2) -- (3,3);
\draw [line width=3](3,3) -- (2,3);
\draw [line width=3](2,3) -- (2,4);
\node at (2.5,3.5) {\LARGE{0}};
\node at (3.5,3.5) {\LARGE{1}};
\node at (2.5,2.5) {\LARGE{-1}};
\node at (3.5,2.5) {\LARGE{0}};
\node at (3, 0) {(a)};

\end{tikzpicture}\hspace{1.5cm}\begin{tikzpicture}
	\def\Node{\node [circle,  fill, inner sep=2pt]}
	\tikzset{->-/.style={decoration={
  markings,
  mark=at position .55 with {\arrow{>}}},postaction={decorate}}}
	\node at (0,-0.3) {};
    \Node[label=left:\footnotesize {$v_1$}] (1) at (0,1.5) {};
	\Node[label=left:\footnotesize {$v_2$}] (2) at (1,1.5) {};
	\draw (1) to[bend right=50] (2);
    \node at (0.5, -0.5) {(b)};
\end{tikzpicture}$$
    \caption{(a) $\widehat{\Sigma}(\mathfrak{g}')$ and (b) $M(\mathfrak{g}')$}
    \label{fig:brbtiny}
\end{figure}
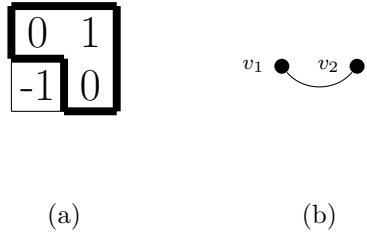

\begin{lemma}\label{lem:blbgen}
Let $k_1,k_2\in\mathbb{Z}_{>0}$ satisfy $k_1>k_2$ and $\gcd(k_1,k_2)=1$. If $\mathfrak{g}_1=\mathfrak{p}^A\frac{mk_1+k_2|k_1}{(m+1)k_1+k_2}$ and \\ $\mathfrak{g}_2=\mathfrak{p}^A\frac{k_1-k_2|k_2}{k_1}$, then the block corresponding to rows $\{mk_1+k_2+1,\hdots,(m+1)k_1+k_2\}$ and columns $\{mk_1+k_2+1,\hdots,(m+1)k_1+k_2\}$ of $\Sigma(\mathfrak{g}_1)$ consists of the same values as $\widehat{\Sigma}(\mathfrak{g}_2)$.
\end{lemma}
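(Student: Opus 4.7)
The plan is to argue by induction on $m$, with base case $m=1$ being precisely Lemma~\ref{lem:blbbase1}. For the inductive step ($m\geq 2$), it suffices to show that the bottom-right $k_1\times k_1$ block of $\Sigma(\mathfrak{g}_1^{(m)})$, where $\mathfrak{g}_1^{(m)}=\mathfrak{p}^A\frac{mk_1+k_2|k_1}{(m+1)k_1+k_2}$, agrees as a multiset with the corresponding block of $\Sigma(\mathfrak{g}_1^{(m-1)})$. Combined with the inductive hypothesis applied to $\mathfrak{g}_1^{(m-1)}$, this yields the stated equality with the values in $\widehat{\Sigma}(\mathfrak{g}_2)$.

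To reduce $m$ to $m-1$, I would construct a weight-preserving bijection between paths in $\overrightarrow{M}(\mathfrak{g}_1^{(m)})$ joining distinct vertices in $T^{(m)}=\{mk_1+k_2+1,\dots,(m+1)k_1+k_2\}$ and paths in $\overrightarrow{M}(\mathfrak{g}_1^{(m-1)})$ joining distinct vertices in $T^{(m-1)}=\{(m-1)k_1+k_2+1,\dots,mk_1+k_2\}$. The bijection is obtained from a reduction transformation $\mathcal{S}$ on $\overrightarrow{M}(\mathfrak{g}_1^{(m)})$ in the spirit of Lemmas~\ref{lem:trbbase1} and~\ref{lem:blbbase1}: for every pair $v_h,v_t$ with $h\in H^{(m)}=\{1,\dots,mk_1+k_2\}$ and $t\in T^{(m)}$ joined by a bottom edge, identify $v_h$ with $v_t$ and delete the bottom edge between them. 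Since the bottom block of $\mathfrak{g}_1^{(m)}$ has size $(m+1)k_1+k_2$ and these bottom edges are exactly of the form $\{v_i,v_{(m+1)k_1+k_2+1-i}\}$, applying $\mathcal{S}$ peels off the outer $k_1$ such pairs, and a direct inspection of the resulting top/bottom block structure (followed by the horizontal/vertical flip symmetries of Lemmas~\ref{lem:vflip} and~\ref{lem:hflip}) identifies the reduced oriented meander with $\overrightarrow{M}(\mathfrak{g}_1^{(m-1)})$, with each $v_t\in T^{(m)}$ corresponding to a vertex in $T^{(m-1)}$.

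To see that $\mathcal{S}$ preserves weights of paths between vertices in $T^{(m)}$, I would mimic the subpath-weight bookkeeping carried out in the proof of Lemma~\ref{lem:blbbase1}. A path $P_{t_\alpha,t_\beta}(\mathfrak{g}_1^{(m)})$ decomposes into subpaths of the form $v_{t_1},v_{h_1},\dots,v_{h_\ell},v_{t_2}$ where $v_{t_1}$-$v_{h_1}$ and $v_{h_\ell}$-$v_{t_2}$ are bottom edges (whose oriented weights are $-1$ and $+1$, respectively, when traversed in this direction), and the middle portion consists of top and bottom edges among the vertices $v_{h_1},\dots,v_{h_\ell}\in H^{(m)}$. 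The outer $\pm 1$ contributions cancel, so the subpath's weight equals that of the interior segment, and the interior segment is left unchanged by $\mathcal{S}$ aside from relabeling induced by the identifications. Thus the weight of every such subpath, and hence of the whole path, is preserved, giving the desired bijection.

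The main obstacle will be the combinatorial verification that the reduced meander is exactly $\overrightarrow{M}(\mathfrak{g}_1^{(m-1)})$ (up to the flip symmetries), since one must track how the top block of size $k_1$ and the peeled-off $k_1$ outermost vertices of the bottom block fuse to become the new top block of size $k_1$ in $\mathfrak{g}_1^{(m-1)}$, while the interior bottom block shrinks from $(m+1)k_1+k_2$ to $mk_1+k_2$ in exactly the right way. Once this structural identification and the weight-preservation argument are in place, the inductive conclusion and hence the lemma follow.
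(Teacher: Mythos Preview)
Your proposal is correct and follows essentially the same route as the paper: induction on $m$ with base case Lemma~\ref{lem:blbbase1}, the same contraction $\mathcal{S}$ identifying each $v_t$ ($t\in T^{(m)}$) with its bottom-edge partner in $\{1,\dots,k_1\}$, and the same subpath cancellation $(-1)+w(\text{interior})+1$ for weight preservation. The ``main obstacle'' you flag dissolves once you name the intermediate algebra explicitly: $\mathcal{S}(\overrightarrow{M}(\mathfrak{g}_1^{(m)}))=\overrightarrow{M}(\mathfrak{g}_4)$ with $\mathfrak{g}_4=\mathfrak{p}^A\frac{mk_1+k_2}{k_1|(m-1)k_1+k_2}$, after which Lemmas~\ref{lem:vflip} and~\ref{lem:hflip} give $w(P_{i,j}(\mathfrak{g}_1^{(m)}))=w(P_{i-k_1,j-k_1}(\mathfrak{g}_1^{(m-1)}))$ for $i,j\in T^{(m)}$, landing the paths exactly in $T^{(m-1)}$ as you anticipated.
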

\begin{proof}
Note that if $m=1$, then the result follows by Lemma~\ref{lem:blbbase1}. So, assume that $m>1$. Define the sets of vertices $$H=\{1,\hdots,mk_1+k_2\},\quad H_1=\{1,\hdots,k_1\},\quad T_1=\{mk_1+k_2+1,\hdots,(m+1)k_1+k_2\},$$ and $$T_2=\{(m-1)k_1+k_2+1,\hdots,mk_1+k_2\}.$$
Let $\mathfrak{g}_3=\mathfrak{p}^A\frac{(m-1)k_1+k_2|k_1}{mk_1+k_2}$. First, we show that the block $\Delta_1$ of $\Sigma(\mathfrak{g}_1)$ corresponding to rows $r\in T_1$ and columns $c\in T_1$ contains the same values as the block $\Delta_2$ of $\Sigma(\mathfrak{g}_3)$ corresponding to rows $r\in T_2$ and columns $c\in T_2$. The block $\Delta_1$ of $\Sigma(\mathfrak{g}_1)$ consists of the values $w(P_{i,j}(\mathfrak{g}_1))$, for $i,j\in T_1$, and the block $\Delta_2$ of $\Sigma(\mathfrak{g}_3)$ consists of the values $w(P_{i,j}(\mathfrak{g}_3))$, for $i,j\in T_2$. Clearly, $\Delta_1$ and $\Delta_2$ have the same number, i.e., $k_1$, of 0's on their respective diagonals. Thus, we need to show that $\Delta_1$ and $\Delta_2$ have the same multisets of values corresponding to off-diagonal entries. To do so, we define a weight-preserving bijection 
$$\phi:~\{P_{i,j}(\mathfrak{g}_1)~|~i,j\in T_1,~i\neq j\}\to \{P_{i,j}(\mathfrak{g}_3)~|~i,j\in T_2,~i\neq j\}.$$ To aid in defining $\phi$, we make use of a transformation $\mathcal{S}$ of $\overrightarrow{M}(\mathfrak{g}_1)$ which is defined as follows. Note that in $M(\mathfrak{g}_1)$ (resp., $M(\mathfrak{g}_3)$), each vertex of $u\in\{v_i~|~i\in H_1\}$ is adjacent to a unique vertex of $v\in\{v_i~|~i\in T_1\}$ (resp., $v\in\{v_i~|~i\in T_2\}$) via an arc below. If $u\in \{v_i~|~i\in H_1\}$ is adjacent to $v\in \{v_i~|~i\in T_1\},$ then $\mathcal{S}$ identifies $u$ and $v$ while removing any edges between them. Let $\mathfrak{g}_4=\mathfrak{p}^A\frac{mk_1+k_2}{k_1|(m-1)k_1+k_2}$. Note that $\mathcal{S}(\overrightarrow{M}(\mathfrak{g}_1))=\overrightarrow{M}(\mathfrak{g}_4)$ with  $\mathcal{S}$ mapping vertices $v_{i}$ and $v_{(m+1)k_1+k_2-i+1}$ in  $\overrightarrow{M}(\mathfrak{g}_1)$ to $v_{i}$ in $\overrightarrow{M}(\mathfrak{g}_4)$, for $1\le i\le k_1$, and vertices $v_i$ in $\overrightarrow{M}(\mathfrak{g}_1)$ to $v_i$ in $\overrightarrow{M}(\mathfrak{g}_4)$, for $k_1+1\le i\le mk_1+k_2$. We claim that $w(P_{i,j}(\mathfrak{g}_1))=w(P_{(m+1)k_1+k_2-i+1,(m+1)k_1+k_2-j+1}(\mathfrak{g}_4))$, for distinct $i,j\in T_1$.

To establish the claim, take distinct $i,j\in T_1$. If $v_i$ and $v_j$ are adjacent in $M(\mathfrak{g}_1)$, then $\mathcal{S}$ maps $P_{i,j}(\mathfrak{g}_1)$ along with its orientation in $\overrightarrow{M}(\mathfrak{g}_1)$ to $P_{(m+1)k_1+k_2-i+1,(m+1)k_1+k_2-j+1}(\mathfrak{g}_4)$ along with its orientation in $\overrightarrow{M}(\mathfrak{g}_4)$; that is, $w(P_{i,j}(\mathfrak{g}_1))=w(P_{i,j}(\mathfrak{g}_4))$. On the other hand, if $v_i$ and $v_j$ are not adjacent in $M(\mathfrak{g}_1)$, then $P_{i,j}(\mathfrak{g}_1)$ contains subpaths of the form $v_{t_1},v_{h_1},\hdots,v_{h_{\ell}},v_{t_2}$ with $t_1,t_2\in T_1$ and $h_1,\hdots,h_{\ell}\in H$. To establish the claim, it suffices to show that the weights of such subpaths $P_{t_1,t_2}(\mathfrak{g}_1)$ are invariant under $\mathcal{S}.$ Since
\begin{align*}
w(P_{t_1,t_2}(\mathfrak{g}_1))&=w(P_{t_1,h_1}(\mathfrak{g}_1))+w(P_{h_1,h_{\ell}}(\mathfrak{g}_1))+w(P_{h_{\ell},t_2}(\mathfrak{g}_1)) \\
&=-1+w(P_{h_1,h_{\ell}}(\mathfrak{g}_1))+1 \\
&=w(P_{h_1,h_{\ell}}(\mathfrak{g}_1)) \\
&=w(P_{h_1,h_{\ell}}(\mathfrak{g}_4)) \\
&=w(\mathcal{S}(P_{t_1,t_2}(\mathfrak{g}_1))),
\end{align*}
the claim follows.

Now, applying Lemmas~\ref{lem:vflip} and~\ref{lem:hflip}, it follows that $w(P_{i,j}(\mathfrak{g}_1))=w(P_{i-k_1,j-k_1}(\mathfrak{g}_3))$, for all distinct $i,j\in T_1$. Consequently, our desired weight-preserving bijection sends the path $P_{i,j}(\mathfrak{g}_1)$ to $P_{i-k_1,j-k_1}(\mathfrak{g}_3)$, for all distinct $i,j\in T_1$. As this argument can be repeated until $\mathfrak{g}_3=\mathfrak{p}^A\frac{k_1+k_2|k_1}{2k_1+k_2}$, i.e., the case $m=1$, the result follows from Lemma~\ref{lem:blbbase1}.
\end{proof}

Having now addressed the top left $a\times a$ block and bottom right $b\times b$ block of $\Sigma(\mathfrak{g}),$ where $a>b$ and $\mathfrak{g}=\mathfrak{p}^A\frac{a|b}{n}$ is Frobenius, we proceed to the top right $a\times b$ block of $\Sigma(\mathfrak{g}).$ As was the case with the bottom right $b\times b$ block, we have two cases: one corresponding to $\lfloor\frac{a}{b}\rfloor=1$ and another corresponding to $\lfloor\frac{a}{b}\rfloor>1.$ The former of the two cases is covered by Lemma~\ref{lem:tlbbase1}, and the latter by Lemma~\ref{lem:trbgen}. Examples~\ref{ex:trb1} and \ref{ex:trb2} illustrate each result respectively.

\begin{Ex}\label{ex:trb1}
Let $\mathfrak{g}=\mathfrak{p}^A\frac{3|2}{5}$ and $\mathfrak{g}'=\mathfrak{p}^A\frac{2|1}{3}$. See Figure~\ref{fig:trb5} for \textup{(a)} $\Sigma(\mathfrak{g})$ and \textup{(b)} $M(\mathfrak{g})$, and see Figure~\ref{fig:trb6} for \textup{(a)} $\Sigma(\mathfrak{g}')$ and \textup{(b)} $M(\mathfrak{g}')$. Note that the top right $3\times 2$ block \textup(outlined by dashed lines\textup) of $\Sigma(\mathfrak{g})$ consists of the same multiset of values as the top $2\times 3$ block of $\Sigma(\mathfrak{g}')$ with each value incremented by 1. Moreover, note that there is a copy of $M(\mathfrak{g}')$ inside of $M(\mathfrak{g})$. In particular, identifying the vertices connected by dashed edges in Figure~\ref{fig:trb5} \textup{(b)} and then reflecting the resulting graph across the horizontal line through the vertices yields $M(\mathfrak{g}')$.
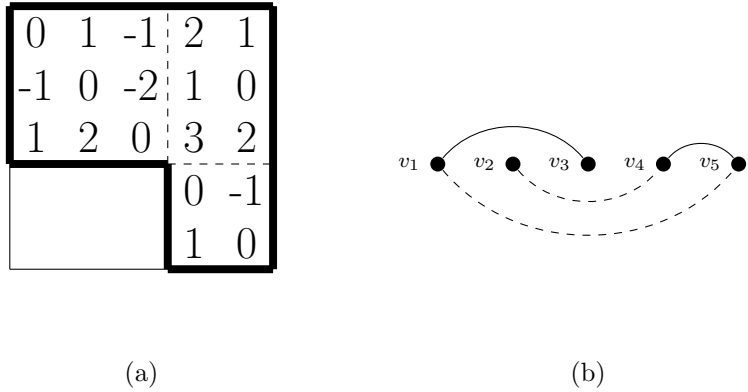
\begin{figure}[H]
$$\begin{tikzpicture}[scale=0.7]
\draw (0,2) -- (0,7);
\draw (0,7) -- (5,7);
\draw (5,7) -- (5,2);
\draw (5,2) -- (0,2);
\draw [line width=3](0,7) -- (5,7);
\draw [line width=3](5,7) -- (5,2);
\draw [line width=3](5,2) -- (3,2);
\draw [line width=3](3,2) -- (3,4);
\draw [line width=3](3,4) -- (0,4);
\draw [line width=3](0,4) -- (0,7);

\draw[dashed] (3,4)--(5,4);
\draw[dashed] (3,4)--(3,7);

\node at (.5,6.5) {\LARGE{0}};
\node at (1.5,6.5) {\LARGE{1}};
\node at (2.5,6.5) {\LARGE{-1}};
\node at (3.5,6.5) {\LARGE{2}};
\node at (4.5,6.5) {\LARGE{1}};
\node at (.5,5.5) {\LARGE{-1}};
\node at (1.5,5.5) {\LARGE{0}};
\node at (2.5,5.5) {\LARGE{-2}};
\node at (3.5,5.5) {\LARGE{1}};
\node at (4.5,5.5) {\LARGE{0}};
\node at (.5,4.5) {\LARGE{1}};
\node at (1.5,4.5) {\LARGE{2}};
\node at (2.5,4.5) {\LARGE{0}};
\node at (3.5,4.5) {\LARGE{3}};
\node at (4.5,4.5) {\LARGE{2}};
\node at (3.5,3.5) {\LARGE{0}};
\node at (4.5,3.5) {\LARGE{-1}};
\node at (3.5,2.5) {\LARGE{1}};
\node at (4.5,2.5) {\LARGE{0}};
\node at (2.5, 0) {(a)};

\end{tikzpicture}\hspace{1.5cm}\begin{tikzpicture}
	\def\Node{\node [circle,  fill, inner sep=2pt]}
	\tikzset{->-/.style={decoration={
  markings,
  mark=at position .55 with {\arrow{>}}},postaction={decorate}}}
	\node at (0,-0.3) {};
    \Node[label=left:\footnotesize {$v_1$}] (1) at (0,1.8) {};
	\Node[label=left:\footnotesize {$v_2$}] (2) at (1,1.8) {};
	\Node[label=left:\footnotesize {$v_3$}] (3) at (2,1.8) {};
	\Node[label=left:\footnotesize {$v_4$}] (4) at (3,1.8) {};
	\Node[label=left:\footnotesize {$v_5$}] (5) at (4,1.8) {};
	\draw (3) to[bend right=50] (1);
	\draw (5) to[bend right=50] (4);
	\draw[dashed] (1) to[bend right=50] (5);
	\draw[dashed] (2) to[bend right=50] (4);
    \node at (2, -1) {(b)};
\end{tikzpicture}$$
\caption{(a) $\Sigma(\mathfrak{g})$ and (b) $M(\mathfrak{g})$}\label{fig:trb5}
\end{figure}

\begin{figure}[H]
$$\begin{tikzpicture}[scale=0.7]
\draw (0,0) -- (3,0);
\draw (3,0) -- (3,3);
\draw (3,3) -- (0,3);
\draw (0,3) -- (0,0);
\draw [line width=3](2,0) -- (2,1);
\draw [line width=3](2,1) -- (0,1);
\draw [line width=3](0,1) -- (0,3);
\draw [line width=3](0,3) -- (3,3);
\draw [line width=3](3,3) -- (3,0);
\draw [line width=3](3,0) -- (2,0);
\node at (2.5,0.5) {\LARGE{0}};
\node at (0.5,1.5) {\LARGE{1}};
\node at (1.5,1.5) {\LARGE{0}};
\node at (2.5,1.5) {\LARGE{2}};
\node at (0.5,2.5) {\LARGE{0}};
\node at (1.5,2.5) {\LARGE{-1}};
\node at (2.5,2.5) {\LARGE{1}};
\node at (1.5, -1) {(a)};

\end{tikzpicture}\hspace{1.5cm}\begin{tikzpicture}
	\def\Node{\node [circle,  fill, inner sep=2pt]}
	\tikzset{->-/.style={decoration={
  markings,
  mark=at position .55 with {\arrow{>}}},postaction={decorate}}}
	\node at (0,-0.3) {};
    \Node[label=left:\footnotesize {$v_1$}] (1) at (0,1.8) {};
	\Node[label=left:\footnotesize {$v_2$}] (2) at (1,1.8) {};
    \Node[label=left:\footnotesize {$v_3$}] (3) at (2,1.8) {};
	\draw (1) to[bend right=50] (3);
    \draw (2) to[bend right=50] (1);
    \node at (1, 0) {(b)};
\end{tikzpicture}$$
\caption{(a) $\Sigma(\mathfrak{g}')$ and (b) $M(\mathfrak{g}')$}\label{fig:trb6}
\end{figure}
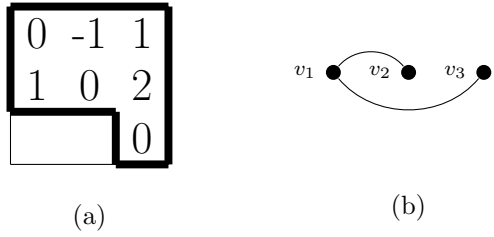
\end{Ex}

\begin{lemma}\label{lem:tlbbase1}
Let $k_1,k_2\in\mathbb{Z}_{>0}$ satisfy $k_1>k_2$ and $\gcd(k_1,k_2)=1$. If $\mathfrak{g}_1=\mathfrak{p}^A\frac{k_1+k_2|k_1}{2k_1+k_2}$ and $\mathfrak{g}_2=\mathfrak{p}^A\frac{k_1|k_2}{k_1+k_2}$, then the block corresponding to rows $\{1,\hdots,k_1+k_2\}$ and columns $\{k_1+k_2+1,\hdots,2k_1+k_2\}$ of $\Sigma(\mathfrak{g}_1)$ consists of the same multiset of values of entries as the block corresponding to rows $\{1,\hdots,k_1\}$ and columns $\{1,\hdots,k_1+k_2\}$ of $\Sigma(\mathfrak{g}_2)$ with each value incremented by 1.
\end{lemma}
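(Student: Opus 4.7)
The plan is to exhibit an explicit bijection that increases weights by exactly $1$, in the same spirit as Lemmas~\ref{lem:trbbase1} and~\ref{lem:blbbase1}. Set
$$H=\{1,\dots,k_1+k_2\},\ L_1=\{1,\dots,k_1\},\ L_2=\{k_1+1,\dots,k_1+k_2\},\ T=\{k_1+k_2+1,\dots,2k_1+k_2\}.$$
The key observation is that in $\overrightarrow{M}(\mathfrak{g}_1)$ every vertex $v_j$ with $j\in T$ is joined to exactly one vertex $v_{j'}\in L_1$ by a bottom edge oriented $v_{j'}\to v_j$, where $j'=2k_1+k_2+1-j$. Since weights are additive along paths and the unique path between adjacent vertices has weight $\pm 1$, this yields
$$w(P_{i,j}(\mathfrak{g}_1))=w(P_{i,j'}(\mathfrak{g}_1))+w(P_{j',j}(\mathfrak{g}_1))=w(P_{i,j'}(\mathfrak{g}_1))+1$$
for every $i\in H$ and $j\in T$.

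The task therefore reduces to matching the multisets $\{w(P_{i,j'}(\mathfrak{g}_1)):i\in H,\ j'\in L_1\}$ and $\{w(P_{i,j}(\mathfrak{g}_2)):i\in L_1,\ j\in H\}$. Mirroring the transformation argument used in Lemma~\ref{lem:blbbase1}, I would define $\mathcal{S}$ on $\overrightarrow{M}(\mathfrak{g}_1)$ to identify every $v_j\in T$ with its bottom-edge partner $v_{j'}\in L_1$ and delete the edges collapsed to loops. A direct check of each edge family (with careful tracking of orientations) shows that $\mathcal{S}(\overrightarrow{M}(\mathfrak{g}_1))=\overrightarrow{M}(\mathfrak{g}_2')$, where $\mathfrak{g}_2'=\mathfrak{p}^A\frac{k_1+k_2}{k_1|k_2}$: top edges of $\mathfrak{g}_1$ in the left block become top edges of $\mathfrak{g}_2'$, the surviving bottom edges within $L_2$ become bottom edges of $\mathfrak{g}_2'$ in its second block, and the top edges of $\mathfrak{g}_1$ in the right block become bottom edges of $\mathfrak{g}_2'$ in its first block under the index changes $(k_1+k_2+i)'=k_1-i+1$ and $(2k_1+k_2-i+1)'=i$. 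As in Lemma~\ref{lem:blbbase1}, every excursion of $P_{i,j'}(\mathfrak{g}_1)$ into $T$ must have the form $v_{h_1},v_{t_1},v_{t_2},v_{h_2}$ with $v_{h_1}v_{t_1}$ and $v_{h_2}v_{t_2}$ bottom edges of $\mathfrak{g}_1$ and $v_{t_1}v_{t_2}$ a top edge inside $T$; its net weight is $(+1)+w(P_{t_1,t_2}(\mathfrak{g}_1))+(-1)=w(P_{t_1,t_2}(\mathfrak{g}_1))$, which matches the weight of the image edge under $\mathcal{S}$ in $\overrightarrow{M}(\mathfrak{g}_2')$. Hence $w(P_{i,j'}(\mathfrak{g}_1))=w(P_{i,j'}(\mathfrak{g}_2'))$ for all distinct $i,j'\in H$.

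To finish, apply Lemma~\ref{lem:vflip} to the pair $\mathfrak{g}_2',\mathfrak{g}_2$ (which differ only by swapping $\underline{a}$ and $\underline{b}$) to conclude $w(P_{i,j'}(\mathfrak{g}_2'))=w(P_{j',i}(\mathfrak{g}_2))$. Chaining all equalities gives $w(P_{i,j}(\mathfrak{g}_1))=w(P_{j',i}(\mathfrak{g}_2))+1$ for every $(i,j)\in H\times T$, and the map $(i,j)\mapsto(j',i)$ is manifestly a bijection from $H\times T$ onto $L_1\times H$, yielding the desired multiset identity. The main technical nuisance I anticipate is the orientation bookkeeping needed to verify $\mathcal{S}(\overrightarrow{M}(\mathfrak{g}_1))=\overrightarrow{M}(\mathfrak{g}_2')$, since the top edges inside $T$ must reappear as \emph{bottom} edges of $\mathfrak{g}_2'$ carrying the correct left-to-right orientation; parity cases where $k_1$ or $k_2$ is odd leave a central vertex of $T$ or $L_2$ unpaired, but the corresponding would-be edge collapses to a deleted loop under $\mathcal{S}$ and contributes nothing to the bijection.
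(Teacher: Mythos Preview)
Your proposal is correct and follows essentially the same strategy as the paper: you use the identical contraction $\mathcal{S}$ collapsing each $v_j\in T$ onto its bottom-edge partner $v_{j'}\in L_1$, land on the same intermediate seaweed $\mathfrak{g}_2'=\mathfrak{p}^A\frac{k_1+k_2}{k_1|k_2}$ (the paper's $\mathfrak{g}_3$), invoke Lemma~\ref{lem:vflip}, and arrive at the same bijection $(i,j)\mapsto(2k_1+k_2+1-j,\,i)$. Your use of the cocycle identity $w(P_{i,j})=w(P_{i,j'})+w(P_{j',j})$ to extract the ``$+1$'' up front is in fact slightly cleaner than the paper's argument, which instead splits into two cases according to whether the edge $(v_{j'},v_j)$ lies on $P_{i,j}(\mathfrak{g}_1)$; both routes reduce to the weight-preservation claim $w(P_{i,j'}(\mathfrak{g}_1))=w(P_{i,j'}(\mathfrak{g}_2'))$, which the paper simply cites from the proof of Lemma~\ref{lem:trbbase1} rather than re-deriving via the excursion analysis. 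Your closing remark about parity and unpaired central vertices is unnecessary (every vertex in $T$ has a unique bottom-edge partner in $L_1$ regardless of parity), but it does no harm.
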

\begin{proof}
Define the sets $$H_1=\{1,\hdots,k_1+k_2\},\quad H_2=\{1,\hdots,k_1\},\quad\text{and}\quad T=\{k_1+k_2+1,\hdots,2k_1+k_2\}.$$ Note that $\{v_i~|~i\in H_1\}$ and $\{v_i~|~i\in T\}$ form a partition of the vertices of $\overrightarrow{M}(\mathfrak{g}_1)$. Also, note that $H_1$, $H_2$, and $T$ are defined in such a way that top edges in $M(\mathfrak{g}_1)$ only connect pairs of vertices $u$ and $v$ with
\begin{itemize}
    \item $u,v\in \{v_i~|~i\in H_1\}$ or
    \item $u,v\in \{v_i~|~i\in T\}$
\end{itemize}
and bottom edges in $M(\mathfrak{g}_1)$ only connect pairs of vertices $u$ and $v$ with
\begin{itemize}
    \item $u\in\{v_i~|~i\in H_2\}$ and $v\in\{v_i~|~i\in T\}$ or
    \item $u,v\in \{v_i~|~i\in H_1\backslash H_2\}$.
\end{itemize}
Now, the block of $\Sigma(\mathfrak{g}_1)$ corresponding to rows $r\in H_1$ and columns $c\in T$ consists of the values $w(P_{i,j}(\mathfrak{g}_1))$, for $i\in H_1$ and $j\in T$, and the block of $\Sigma(\mathfrak{g}_2)$ corresponding to rows $r\in H_2$ and columns $c\in H_1$ consists of the values $w(P_{i,j}(\mathfrak{g}_2))$, for $i\in H_2$ and $j\in H_1$. To establish the result, we define a bijection $$\phi:\{P_{i,j}(\mathfrak{g}_1)~|~i\in H_1,~j\in T\}\to\{P_{i,j}(\mathfrak{g}_2)~|~i\in H_2,~j\in H_1\}$$ such that $w(P_{i,j}(\mathfrak{g}_1))=w(\phi(P_{i,j}(\mathfrak{g}_1)))+1$. To aid in defining $\phi$, we make use of a transformation $\mathcal{S}$ of $\overrightarrow{M}(\mathfrak{g}_1)$ which is defined as follows. If $u\in \{v_i~|~i\in H_2\}$ is adjacent to $v\in\{v_i~|~i\in T\}$, then identify $u$ and $v$ while removing edges between them. Let $\mathfrak{g}_3=\mathfrak{p}^A\frac{k_1+k_2}{k_1|k_2}$. Note that $\mathcal{S}(\overrightarrow{M}(\mathfrak{g}_1))=\overrightarrow{M}(\mathfrak{g}_3)$ with $\mathcal{S}$ mapping vertices $v_i$ and $v_{2k_1+k_2-i+1}$ in $\overrightarrow{M}(\mathfrak{g}_1)$ to $v_i$ in $\overrightarrow{M}(\mathfrak{g}_3)$, for $i\in H_2$, and vertices $v_i$ in $\overrightarrow{M}(\mathfrak{g}_1)$ to $v_i$ in $\overrightarrow{M}(\mathfrak{g}_3)$, for $i\in H_1\backslash H_2$. We claim that $w(P_{h,t}(\mathfrak{g}_1))=w(\mathcal{S}(P_{h,t}(\mathfrak{g}_1)))+1$, for all $h\in H_1$ and $t\in T$.

To establish the claim, take $h\in H_1$ and $t\in T$. There must exist a unique $h'\in H_1$ such that $(v_{h'},v_t)$ is an edge of $\overrightarrow{M}(\mathfrak{g}_1)$ and $\mathcal{S}(P_{h,t}(\mathfrak{g}_1))=P_{h,h'}(\mathfrak{g}_3)$. Recall that, in the proof of Lemma~\ref{lem:trbbase1}, it is shown that $w(P_{h,h'}(\mathfrak{g}_1))=w(P_{h,h'}(\mathfrak{g}_3))$. Now, if $(v_{h'},v_t)$ belongs to $P_{h,t}(\mathfrak{g}_1)$, then 
\begin{align*}
    w(P_{h,t}(\mathfrak{g}_1))&=w(P_{h,h'}(\mathfrak{g}_1))+w(P_{h',t}(\mathfrak{g}_1))\\
    &=w(P_{h,h'}(\mathfrak{g}_1))+1\\
    &=w(P_{h,h'}(\mathfrak{g}_3))+1.
\end{align*}
Otherwise, $P_{h,h'}(\mathfrak{g}_1)$ consists of $P_{h,t}(\mathfrak{g}_1)$ along with the edge $(v_{h'},v_t)$. Consequently,
\begin{align*}
    w(P_{h,t}(\mathfrak{g}_1))&=w(P_{h,h'}(\mathfrak{g}_1))-w(P_{t,h'}(\mathfrak{g}_1))\\
    &=w(P_{h,h'}(\mathfrak{g}_1))+1\\
    &=w(P_{h,h'}(\mathfrak{g}_3))+1.
\end{align*}
Thus, in either case, $$w(P_{h,t}(\mathfrak{g}_1))=w(\mathcal{S}(P_{h,t}(\mathfrak{g}_1)))+1=w(P_{h,h'}(\mathfrak{g}_3))+1,$$ establishing the claim. 

Now, applying Lemma~\ref{lem:vflip}, we have $w(P_{h,t}(\mathfrak{g}_1))=w(P_{h',h}(\mathfrak{g}_2))+1$. Therefore, the desired bijection $\phi$ is given by mapping the path $P_{i,j}(\mathfrak{g}_1)$ to $P_{2k_1+k_2-j+1,i}(\mathfrak{g}_2),$ for all $i\in H_1$ and $j\in T.$ The result follows.
\end{proof}

\begin{remark}
In fact, for $\mathfrak{g}_1$ and $\mathfrak{g}_2$ as in Lemma~\ref{lem:tlbbase1}, the block corresponding to rows $\{1,\hdots,k_1+k_2\}$ and columns $\{k_1+k_2+1,\hdots,2k_1+k_2\}$ of $\Sigma(\mathfrak{g}_1)$ is the same as the block corresponding to rows $\{1,\hdots,k_1\}$ and columns $\{1,\hdots,k_1+k_2\}$ of $\Sigma(\mathfrak{g}_2)$ with every value incremented by 1 and rotated 90 degrees clockwise.
\end{remark}

\begin{Ex}\label{ex:trb2}
Let $\mathfrak{g}=\mathfrak{p}^A\frac{5|2}{7}$ and $\mathfrak{g}'=\mathfrak{p}^A\frac{3|2}{5}$. See Figure~\ref{fig:trb7} for \textup{(a)} $\Sigma(\mathfrak{g})$ and \textup{(b)} $M(\mathfrak{g})$, and see Figure~\ref{fig:trb8} for \textup{(a)} $\Sigma(\mathfrak{g}')$ and \textup{(b)} $M(\mathfrak{g}')$. Note that the top right $5\times 2$ block \textup(outlined by dashed lines\textup) of $\Sigma(\mathfrak{g})$ consists of the same multiset of values as the rightmost $5\times 2$ block of $\Sigma(\mathfrak{g}')$ with each value incremented by 1. Moreover, note that there is a copy of $M(\mathfrak{g}')$ inside of $M(\mathfrak{g})$. In particular, identifying the vertices connected by dashed edges in Figure~\ref{fig:trb7} \textup{(b)} and then rotating the resulting graph by 180 degrees yields $M(\mathfrak{g}')$.
\begin{figure}[H]
$$\begin{tikzpicture}[scale=0.7]
\draw (0,0) -- (0,7);
\draw (0,7) -- (7,7);
\draw (7,7) -- (7,0);
\draw (7,0) -- (0,0);
\draw [line width=3](0,7) -- (0,2);
\draw [line width=3](0,2) -- (5,2);
\draw [line width=3](5,2) -- (5,0);
\draw [line width=3](5,0) -- (7,0);
\draw [line width=3](7,0) -- (7,7);
\draw [line width=3](0,7) -- (7,7);
\draw[dashed] (5,2)--(5,7);
\draw[dashed] (5,2)--(7,2);

\node at (.5,6.5) {{\LARGE 0}};
\node at (1.5,6.5) {{\LARGE 1}};
\node at (2.5,6.5) {{\LARGE -2}};
\node at (3.5,6.5) {{\LARGE 0}};
\node at (4.5,6.5) {{\LARGE -1}};
\node at (5.5,6.5) {{\LARGE 2}};
\node at (6.5,6.5) {{\LARGE 1}};

\node at (.5,5.5) {{\LARGE -1}};
\node at (1.5,5.5) {{\LARGE 0}};
\node at (2.5,5.5) {{\LARGE -3}};
\node at (3.5,5.5) {{\LARGE -1}};
\node at (4.5,5.5) {{\LARGE -2}};
\node at (5.5,5.5) {{\LARGE 1}};
\node at (6.5,5.5) {{\LARGE 0}};

\node at (.5,4.5) {{\LARGE 2}};
\node at (1.5,4.5) {{\LARGE 3}};
\node at (2.5,4.5) {{\LARGE 0}};
\node at (3.5,4.5) {{\LARGE 2}};
\node at (4.5,4.5) {{\LARGE 1}};
\node at (5.5,4.5) {{\LARGE 4}};
\node at (6.5,4.5) {{\LARGE 3}};

\node at (.5,3.5) {{\LARGE 0}};
\node at (1.5,3.5) {{\LARGE 1}};
\node at (2.5,3.5) {{\LARGE -2}};
\node at (3.5,3.5) {{\LARGE 0}};
\node at (4.5,3.5) {{\LARGE -1}};
\node at (5.5,3.5) {{\LARGE 2}};
\node at (6.5,3.5) {{\LARGE 1}};

\node at (.5,2.5) {{\LARGE 1}};
\node at (1.5,2.5) {{\LARGE 2}};
\node at (2.5,2.5) {{\LARGE -1}};
\node at (3.5,2.5) {{\LARGE 1}};
\node at (4.5,2.5) {{\LARGE 0}};
\node at (5.5,2.5) {{\LARGE 3}};
\node at (6.5,2.5) {{\LARGE 2}};

\node at (5.5,1.5) {{\LARGE 0}};
\node at (6.5,1.5) {{\LARGE -1}};

\node at (5.5,0.5) {{\LARGE 1}};
\node at (6.5,0.5) {{\LARGE 0}};

\node at (3.5, -1.5) {(a)};
\end{tikzpicture}\hspace{1.5cm}\begin{tikzpicture}[scale=1.3]
	\def\Node{\node [circle,  fill, inner sep=2pt]}
	\node at (0,0) {};
    \Node[label=left:$v_1$] (1) at (0,2) {};
	\Node[label=left:$v_2$] (2) at (1,2)  {};
	\Node[label=left:$v_3$] (3) at (2,2)  {};
	\Node[label=left:$v_4$] (4) at (3,2)  {};
	\Node[label=left:$v_5$] (5) at (4,2)  {};
	\Node[label=left:$v_6$] (6) at (5,2)  {};
	\Node[label=left:$v_7$] (7) at (6,2)  {};
	\draw (1) to[bend left=50] (5);
	\draw (2) to[bend left=50] (4);
	\draw (6) to[bend left=50] (7);
	\draw[dashed] (1) to[bend right=50] (7);
	\draw[dashed] (2) to[bend right=50] (6);
	\draw (3) to[bend right=50] (5);
    \node at (3, -1) {(b)};
\end{tikzpicture}$$
\caption{(a) $\Sigma(\mathfrak{g})$ and (b) $M(\mathfrak{g})$}\label{fig:trb7}
\end{figure}

\begin{figure}[H]
$$\begin{tikzpicture}[scale=0.7]
\draw (0,2) -- (0,7);
\draw (0,7) -- (5,7);
\draw (5,7) -- (5,2);
\draw (5,2) -- (0,2);
\draw [line width=3](0,7) -- (5,7);
\draw [line width=3](5,7) -- (5,2);
\draw [line width=3](5,2) -- (3,2);
\draw [line width=3](3,2) -- (3,4);
\draw [line width=3](3,4) -- (0,4);
\draw [line width=3](0,4) -- (0,7);

\node at (.5,6.5) {\LARGE{0}};
\node at (1.5,6.5) {\LARGE{1}};
\node at (2.5,6.5) {\LARGE{-1}};
\node at (3.5,6.5) {\LARGE{2}};
\node at (4.5,6.5) {\LARGE{1}};
\node at (.5,5.5) {\LARGE{-1}};
\node at (1.5,5.5) {\LARGE{0}};
\node at (2.5,5.5) {\LARGE{-2}};
\node at (3.5,5.5) {\LARGE{1}};
\node at (4.5,5.5) {\LARGE{0}};
\node at (.5,4.5) {\LARGE{1}};
\node at (1.5,4.5) {\LARGE{2}};
\node at (2.5,4.5) {\LARGE{0}};
\node at (3.5,4.5) {\LARGE{3}};
\node at (4.5,4.5) {\LARGE{2}};
\node at (3.5,3.5) {\LARGE{0}};
\node at (4.5,3.5) {\LARGE{-1}};
\node at (3.5,2.5) {\LARGE{1}};
\node at (4.5,2.5) {\LARGE{0}};
\node at (2.5, 0) {(a)};

\end{tikzpicture}\hspace{1.5cm}\begin{tikzpicture}
	\def\Node{\node [circle,  fill, inner sep=2pt]}
	\tikzset{->-/.style={decoration={
  markings,
  mark=at position .55 with {\arrow{>}}},postaction={decorate}}}
	\node at (0,-0.3) {};
    \Node[label=left:\footnotesize {$v_1$}] (1) at (0,1.8) {};
	\Node[label=left:\footnotesize {$v_2$}] (2) at (1,1.8) {};
	\Node[label=left:\footnotesize {$v_3$}] (3) at (2,1.8) {};
	\Node[label=left:\footnotesize {$v_4$}] (4) at (3,1.8) {};
	\Node[label=left:\footnotesize {$v_5$}] (5) at (4,1.8) {};
	\draw (3) to[bend right=50] (1);
	\draw (5) to[bend right=50] (4);
	\draw (1) to[bend right=50] (5);
	\draw (2) to[bend right=50] (4);
    \node at (2, -1) {(b)};
\end{tikzpicture}$$
\caption{(a) $\Sigma(\mathfrak{g}')$ and (b) $M(\mathfrak{g}')$}\label{fig:trb8}
\end{figure}
\end{Ex}

\begin{lemma}\label{lem:trbgen}
Let $k_1,k_2\in\mathbb{Z}_{>0}$ satisfy $k_1>k_2$ and $\gcd(k_1,k_2)=1$. If $\mathfrak{g}_1=\mathfrak{p}^A\frac{mk_1+k_2|k_1}{(m+1)k_1+k_2}$ and \\ $\mathfrak{g}_2=\mathfrak{p}^A\frac{(m-1)k_1+k_2|k_1}{mk_1+k_2}$, then the block corresponding to rows $\{1,\hdots,mk_1+k_2\}$ and columns \\ $\{mk_1+k_2+1,\hdots,(m+1)k_1+k_2\}$ of $\Sigma(\mathfrak{g}_1)$ consists of the same values as the block corresponding to rows $\{1,\hdots,mk_1+k_2\}$ and columns $\{(m-1)k_1+k_2+1,\hdots,mk_1+k_2\}$ of $\Sigma(\mathfrak{g}_2)$ with each value incremented by 1.
\end{lemma}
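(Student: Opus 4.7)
The strategy is to combine the vertex identification $\mathcal{S}$ used in the proof of Lemma~\ref{lem:trbbase1} with the two-case terminal-edge argument of Lemma~\ref{lem:tlbbase1}, thereby lifting the latter's base case ($m=1$) to arbitrary $m$. Partition the vertices of $\overrightarrow{M}(\mathfrak{g}_1)$ as
\[
H=\{1,\dots,mk_1+k_2\},\quad H_1=\{1,\dots,k_1\},\quad T_1=\{mk_1+k_2+1,\dots,(m+1)k_1+k_2\},
\]
and observe that each $t\in T_1$ is joined by a single bottom arc to the vertex $h'_t:=(m+1)k_1+k_2+1-t\in H_1$. Letting $\mathcal{S}$ identify each such pair $v_{h'_t},v_t$ and delete the edges between them produces $\overrightarrow{M}(\mathfrak{g}_4)$, where $\mathfrak{g}_4=\mathfrak{p}^A\frac{mk_1+k_2}{k_1|(m-1)k_1+k_2}$. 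The proof of Lemma~\ref{lem:trbbase1} already establishes that $w(P_{i,j}(\mathfrak{g}_1))=w(P_{i,j}(\mathfrak{g}_4))$ for all distinct $i,j\in H$; this will be invoked freely.

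The first key step is to show that, for all $h\in H$ and $t\in T_1$,
\[
w(P_{h,t}(\mathfrak{g}_1))=w(P_{h,h'_t}(\mathfrak{g}_1))+1.
\]
This follows from the same two-case analysis as in Lemma~\ref{lem:tlbbase1}: either the bottom edge $(v_{h'_t},v_t)$ is the terminal edge of $P_{h,t}(\mathfrak{g}_1)$, so $w(P_{h,t}(\mathfrak{g}_1))=w(P_{h,h'_t}(\mathfrak{g}_1))+w(P_{h'_t,t}(\mathfrak{g}_1))=w(P_{h,h'_t}(\mathfrak{g}_1))+1$, or else this edge does not lie on $P_{h,t}(\mathfrak{g}_1)$, in which case $P_{h,h'_t}(\mathfrak{g}_1)$ is $P_{h,t}(\mathfrak{g}_1)$ extended by $(v_{h'_t},v_t)$ and the computation $w(P_{h,t}(\mathfrak{g}_1))=w(P_{h,h'_t}(\mathfrak{g}_1))-w(P_{t,h'_t}(\mathfrak{g}_1))=w(P_{h,h'_t}(\mathfrak{g}_1))+1$ applies. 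Combining with the reduction from the previous paragraph gives $w(P_{h,t}(\mathfrak{g}_1))=w(P_{h,h'_t}(\mathfrak{g}_4))+1$.

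To translate the $\mathfrak{g}_4$-weight into a $\mathfrak{g}_2$-weight, note that $\mathfrak{g}_2=\mathfrak{p}^A\frac{(m-1)k_1+k_2|k_1}{mk_1+k_2}$ is obtained from $\mathfrak{g}_4$ by swapping and then reversing both defining compositions, so successive applications of Lemmas~\ref{lem:vflip} and~\ref{lem:hflip} yield
\[
w(P_{h,h'_t}(\mathfrak{g}_4))=w(P_{n-h+1,\,n-h'_t+1}(\mathfrak{g}_2)),\qquad n:=mk_1+k_2.
\]
Substituting $h'_t=(m+1)k_1+k_2+1-t$ gives $n-h'_t+1=t-k_1\in T_2:=\{(m-1)k_1+k_2+1,\dots,mk_1+k_2\}$, so the assignment $(h,t)\mapsto(n-h+1,\,t-k_1)$ is a bijection from $H\times T_1$ onto $H\times T_2$ satisfying $w(P_{h,t}(\mathfrak{g}_1))=w(P_{n-h+1,\,t-k_1}(\mathfrak{g}_2))+1$. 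This bijection witnesses the claimed multiset equality after incrementing each value of the $\Sigma(\mathfrak{g}_2)$-block by $1$.

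I expect the main obstacle to be the careful execution of the two-case terminal-edge analysis at $v_t$ above; once one verifies, using that $v_t$ has degree at most $2$ in the single-path meander $M(\mathfrak{g}_1)$, that the only two configurations for the bottom edge $(v_{h'_t},v_t)$ relative to $P_{h,t}(\mathfrak{g}_1)$ are those treated there, the remainder reduces to bookkeeping against Lemmas~\ref{lem:trbbase1},~\ref{lem:vflip}, and~\ref{lem:hflip}.
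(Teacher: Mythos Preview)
Your proof is correct and follows essentially the same route as the paper's: the same vertex identification $\mathcal{S}$ producing $\overrightarrow{M}(\mathfrak{g}_4)$ (the paper's $\mathfrak{g}_3$), the same two-case terminal-edge argument yielding the $+1$ shift, and the same composition of Lemmas~\ref{lem:vflip} and~\ref{lem:hflip} to pass to $\mathfrak{g}_2$, arriving at the identical bijection $(h,t)\mapsto(n-h+1,\,t-k_1)$. Your write-up is in fact slightly more explicit about the formula $h'_t=(m+1)k_1+k_2+1-t$ and the verification $n-h'_t+1=t-k_1$.
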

\begin{proof}
Define the sets $$H_1=\{1,\hdots,k_1\},\quad H_2=\{k_1+1,\hdots,mk_1+k_2\},\quad T_1=\{mk_1+k_2+1,\hdots,(m+1)k_1+k_2\},$$ and $$T_2=\{(m-1)k_1+k_2+1,\hdots,mk_1+k_2\}.$$ Note that $\{v_i~|~i\in H_1\},\{v_i~|~i\in H_2\},$ and $\{v_i~|~i\in T_1\}$ form a partition of the vertices of $M(\mathfrak{g}_1)$ and are defined in such a way that top edges in $M(\mathfrak{g}_1)$ only connect pairs of vertices $u$ and $v$ with
\begin{itemize}
    \item $u,v\in \{v_i~|~i\in H_1\cup H_2\}$ or
    \item $u,v\in \{v_i~|~i\in T_1\}$
\end{itemize}
and bottom edges in $M(\mathfrak{g}_1)$ only connect pairs of vertices $u$ and $v$ with
\begin{itemize}
    \item $u\in\{v_i~|~i\in H_1\}$ and $v\in\{v_i~|~i\in T_1\}$ or
    \item $u,v\in \{v_i~|~i\in H_2\}$.
\end{itemize}
Now, the block of $\Sigma(\mathfrak{g}_1)$ corresponding to rows $r\in H_1\cup H_2$ and columns $c\in T_1$ consists of the values $w(P_{i,j}(\mathfrak{g}_1))$, for $i\in H_1\cup H_2$ and $j\in T_1$, and the block of $\Sigma(\mathfrak{g}_2)$ corresponding to rows $r\in H_1\cup H_2$ and columns $c\in T_2$ consists of the values $w(P_{i,j}(\mathfrak{g}_2))$, for $i\in H_1\cup H_2$ and $j\in T_2$. To establish the result, we define a bijection $$\phi:\{P_{i,j}(\mathfrak{g}_1)~|~i\in H_1\cup H_2,~j\in T_1\}\to\{P_{i,j}(\mathfrak{g}_2)~|~i\in H_1\cup H_2,~j\in T_2\}$$ such that $w(P_{i,j}(\mathfrak{g}_1))=w(\phi(P_{i,j}(\mathfrak{g}_1)))+1$. To aid in defining $\phi$, we make use of a transformation $\mathcal{S}$ of $\overrightarrow{M}(\mathfrak{g}_1)$ which is defined as follows. If $u\in \{v_i~|~i\in H_1\}$ is adjacent to $v\in \{v_i~|~i\in T_1\}$, then identify $u$ and $v$ while removing edges between them. Let $\mathfrak{g}_3=\mathfrak{p}^A\frac{mk_1+k_2}{k_1|(m-1)k_1+k_2}$. Note that $\mathcal{S}(\overrightarrow{M}(\mathfrak{g}_1))=\overrightarrow{M}(\mathfrak{g}_3)$ with $\mathcal{S}$ mapping vertices $v_i$ and $v_{(m+1)k_1+k_2-i+1}$ in $\overrightarrow{M}(\mathfrak{g}_1)$ to $v_i$ in $\overrightarrow{M}(\mathfrak{g}_3)$, for $i\in H_1$, and vertices $v_i$ in $\overrightarrow{M}(\mathfrak{g}_1)$ to $v_i$ in $\overrightarrow{M}(\mathfrak{g}_3)$, for $i\in H_2$. We claim that $w(P_{h,t}(\mathfrak{g}_1))=w(\mathcal{S}(P_{h,t}(\mathfrak{g}_1)))+1$, for $h\in H_1\cup H_2$ and $t\in T_1$. 

To establish the claim, take $h\in H_1\cup H_2$ and $t\in T_1$. There must exist a unique $h'\in H_1$ such that $(v_{h'},v_t)$ is an edge of $\overrightarrow{M}(\mathfrak{g}_1)$ and $\mathcal{S}(P_{h,t}(\mathfrak{g}_1))=P_{h,h'}(\mathfrak{g}_3)$. Note that in the proof of Lemma~\ref{lem:trbbase1} it is shown that $w(P_{h,h'}(\mathfrak{g}_1))=w(P_{h,h'}(\mathfrak{g}_3))$. Now, if $(v_{h'},v_t)$ belongs to $P_{h,t}(\mathfrak{g}_1)$, then 
\begin{align*}
    w(P_{h,t}(\mathfrak{g}_1))&=w(P_{h,h'}(\mathfrak{g}_1))+w(P_{h',t}(\mathfrak{g}_1))\\
    &=w(P_{h,h'}(\mathfrak{g}_1))+1\\
    &=w(P_{h,h'}(\mathfrak{g}_3))+1.
\end{align*}
Otherwise, $P_{h,h'}(\mathfrak{g}_1)$ consists of $P_{h,t}(\mathfrak{g}_1)$ along with the edge $(v_{h'},v_t)$. Consequently,
\begin{align*}
    w(P_{h,t}(\mathfrak{g}_1))&=w(P_{h,h'}(\mathfrak{g}_1))-w(P_{t,h'}(\mathfrak{g}_1))\\
    &=w(P_{h,h'}(\mathfrak{g}_1))+1\\
    &=w(P_{h,h'}(\mathfrak{g}_3))+1.
\end{align*}
Thus, in either case, $$w(P_{h,t}(\mathfrak{g}_1))=w(\mathcal{S}(P_{h,t}(\mathfrak{g}_1)))+1=w(P_{h,h'}(\mathfrak{g}_3))+1,$$ establishing the claim. 

Now, applying Lemmas~\ref{lem:vflip} and \ref{lem:hflip}, we have $$w(P_{h,t}(\mathfrak{g}_1))=w(P_{mk_1+k_2-h+1,mk_1+k_2-h'+1}(\mathfrak{g}_2))+1.$$ Therefore, the desired bijection $\phi$ is given by mapping the path $P_{i,j}(\mathfrak{g}_1)$ to $P_{mk_1+k_2-i+1,j-k_1}(\mathfrak{g}_2),$ for all $i\in H_1\cup H_2$ and $j\in T_1.$ The result follows.
\end{proof}

In the following subsections, we utilize the structural lemmas above to inductively determine closed formulas for the spectra of certain families of Frobenius, maximal parabolic, type-A seaweeds.

\subsection{$\mathfrak{p}^A\frac{k|1}{k+1}$}

In this subsection, we compute the (extended) spectra of Frobenius, type-A seaweed algebras of the form $\mathfrak{p}^A\frac{k|1}{k+1}$, for $k\ge 1$. See Figure~\ref{fig:g1p1} for illustrations of the oriented meanders corresponding to $\mathfrak{p}^A\frac{k|1}{k+1}$ for $k=1,2,$ and 3.

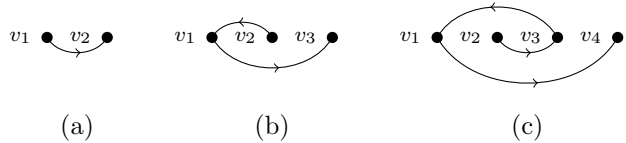
\begin{figure}[H]
    \centering
    \begin{tikzpicture}[scale=0.8]
\def\Node{\node [circle, fill, inner sep=1.5pt]}
\tikzset{->-/.style={decoration={
  markings,
  mark=at position .55 with {\arrow{>}}},postaction={decorate}}}
  \Node[label=left:\footnotesize {$v_1$}] at (0,0) {};
  \Node[label=left:\footnotesize {$v_2$}] at (1,0) {};
  \draw[->-] (0,0) to[bend right=60] (1,0);
  \node at (0.5, -1.5) {(a)};
\end{tikzpicture}\quad\quad \begin{tikzpicture}[scale=0.8]
\def\Node{\node [circle, fill, inner sep=1.5pt]}
\tikzset{->-/.style={decoration={
  markings,
  mark=at position .55 with {\arrow{>}}},postaction={decorate}}}
  \Node[label=left:\footnotesize {$v_1$}] at (0,0) {};
  \Node[label=left:\footnotesize {$v_2$}] at (1,0) {};
  \Node[label=left:\footnotesize {$v_3$}] at (2,0) {};
  \draw[->-] (0,0) to[bend right=60] (2,0);
  \draw[->-] (1,0) to[bend right=60] (0,0);
  \node at (1, -1.5) {(b)};
\end{tikzpicture}\quad\quad \begin{tikzpicture}[scale=0.8]
\def\Node{\node [circle, fill, inner sep=1.5pt]}
\tikzset{->-/.style={decoration={
  markings,
  mark=at position .55 with {\arrow{>}}},postaction={decorate}}}
  \Node[label=left:\footnotesize {$v_1$}] at (0,0) {};
  \Node[label=left:\footnotesize {$v_2$}] at (1,0) {};
  \Node[label=left:\footnotesize {$v_3$}] at (2,0) {};
  \Node[label=left:\footnotesize {$v_4$}] at (3,0) {};
  \draw[->-] (0,0) to[bend right=60] (3,0);
  \draw[->-] (1,0) to[bend right=60] (2,0);
  \draw[->-] (2,0) to[bend right=60] (0,0);
  \node at (1.5, -1.5) {(c)};
\end{tikzpicture}
    \caption{Oriented meanders of (a) $\mathfrak{p}^A\frac{1|1}{2}$, (b) $\mathfrak{p}^A\frac{2|1}{3}$, and (c) $\mathfrak{p}^A\frac{3|1}{4}$}
    \label{fig:g1p1}
\end{figure}

To establish our result, we require the following lemma which determines the multiset of values contained in the rightmost $(k+1)\times 1$ block of $\Sigma(\mathfrak{g})$, for $\mathfrak{g}=\mathfrak{p}^A\frac{k|1}{k+1}$.

\begin{lemma}\label{lem:tailk1}
Let $\mathfrak{g}_k=\mathfrak{p}^A\frac{k|1}{k+1}$, for $k\ge 1$. Then $$\{w(P_{i,k+1}(\mathfrak{g}_k))~|~1\le i\le k+1\}=\{0,1,\hdots,k\}$$ as multisets.
\end{lemma}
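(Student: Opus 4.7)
The plan is to trace the unique Hamilton path of $M(\mathfrak{g}_k)$ starting from $v_{k+1}$ and observe that every edge runs against the direction of travel. Since the top composition is $(k,1)$ and the bottom composition is $(k+1)$, the top edges of $M(\mathfrak{g}_k)$ are $(v_i,v_{k+1-i})$ for $1\le i\le \lfloor k/2\rfloor$ and the bottom edges are $(v_i,v_{k+2-i})$ for $1\le i\le \lfloor(k+1)/2\rfloor$. In particular, $v_{k+1}$ is a pendant whose only incident edge is the bottom edge to $v_1$.

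By Corollary~\ref{cor:indA}, $M(\mathfrak{g}_k)$ is a single path, and I would label its vertices along this path as $u_0 = v_{k+1},u_1,\ldots,u_k$. Each internal vertex of $M(\mathfrak{g}_k)$ has exactly one top and one bottom incident edge, so the path must alternate between the two edge types; starting from the bottom edge $(v_{k+1},v_1)$ and tracing, one gets the order $v_{k+1},v_1,v_k,v_2,v_{k-1},v_3,\ldots$, i.e., $u_{2m-1}=v_m$ and $u_{2m}=v_{k+1-m}$ for each valid $m\ge 1$, with $(u_{2m},u_{2m+1})$ always a bottom edge and $(u_{2m-1},u_{2m})$ always a top edge.

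The main step is then to verify that each such edge is oriented $u_{j+1}\to u_j$ in $\overrightarrow{M}(\mathfrak{g}_k)$. On a bottom edge $(u_{2m},u_{2m+1})=(v_{k+1-m},v_{m+1})$, the inequality $m+1<k+1-m$ (which holds whenever $u_{2m+1}$ exists, since $2m+1\le k$) places $u_{2m+1}$ on the left; because bottom edges are oriented left-to-right, the orientation is $u_{2m+1}\to u_{2m}$. On a top edge $(u_{2m-1},u_{2m})=(v_m,v_{k+1-m})$, the vertex $u_{2m}$ carries the larger index, so the right-to-left orientation of top edges gives $u_{2m}\to u_{2m-1}$. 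Hence every edge, traversed in the direction of increasing $j$, contributes $-1$ to the path weight, which yields $w(P_{u_0,u_j}(\mathfrak{g}_k))=-j$ and therefore $w(P_{u_j,u_0}(\mathfrak{g}_k))=j$ for $0\le j\le k$. Since $\{u_0,\ldots,u_k\}=\{v_1,\ldots,v_{k+1}\}$ with $u_0=v_{k+1}$, the multiset $\{w(P_{i,k+1}(\mathfrak{g}_k)):1\le i\le k+1\}$ equals $\{0,1,\ldots,k\}$, as required.

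The only delicate point is the index comparison at the endpoints of the path (e.g.\ when $k$ is even versus odd), but in both cases the strict inequalities $m+1<k+1-m$ and $m<k+1-m$ hold for the full range of $m$ for which the corresponding edge appears, so no special cases arise.
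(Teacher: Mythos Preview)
Your proof is correct and takes a genuinely different route from the paper's. The paper argues by induction on $k$: it observes that deleting the edge $(v_1,v_{k+1})$ and the vertex $v_{k+1}$ from $\overrightarrow{M}(\mathfrak{g}_k)$ leaves $\overrightarrow{M}\!\left(\mathfrak{p}^A\frac{k}{1|k-1}\right)$, then invokes Lemmas~\ref{lem:vflip} and~\ref{lem:hflip} to match this with $\overrightarrow{M}(\mathfrak{g}_{k-1})$ and apply the inductive hypothesis. By contrast, you give a direct argument: you parametrize the unique path explicitly as $u_0=v_{k+1},\,u_1=v_1,\,u_2=v_k,\,u_3=v_2,\ldots$, verify the zigzag pattern $u_{2m-1}=v_m$, $u_{2m}=v_{k+1-m}$, and check via the index inequalities that every edge is traversed against its orientation, so $w(P_{u_j,u_0}(\mathfrak{g}_k))=j$. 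Your approach is more elementary and self-contained, requiring none of the flip lemmas; the paper's approach, while less transparent for this particular lemma, rehearses the inductive machinery (reduction to a smaller meander plus flip symmetries) that drives all the main theorems of Section~\ref{sec:mp}.
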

\begin{proof}
By induction on $k$. For $k=1$, using Figure~\ref{fig:g1p1} (a) we compute that $$\{w(P_{i,k+1}(\mathfrak{g}_k))~|~1\le i\le k+1\}=\{w(P_{1,2}(\mathfrak{g}_1)),w(P_{2,2}(\mathfrak{g}_1))\}=\{0,1\}.$$ Assume the result holds for $k-1\ge 1$. Evidently, $w(P_{k+1,k+1}(\mathfrak{g}_k))=0$. Note that the path $P_{i,k+1}(\mathfrak{g}_1)$ in $\overrightarrow{M}(\mathfrak{g}_k)$, for $1\le i<k+1$, must contain the edge $(v_1,v_{k+1})$. Thus, $$w(P_{i,k+1}(\mathfrak{g}_k))=w(P_{i,1}(\mathfrak{g}_k))+w(P_{1,k+1}(\mathfrak{g}_k))=w(P_{i,1}(\mathfrak{g}_k))+1.$$ Consequently, it suffices to compute the multiset $$\{w(P_{i,1}(\mathfrak{g}_k))~|~1\le i< k+1\}.$$ Let $\mathfrak{g}=\mathfrak{p}^A\frac{k}{1|k-1}$. Note that removing the edge $(v_1,v_{k+1})$ and vertex $v_{k+1}$ from $\overrightarrow{M}(\mathfrak{g}_k)$ results in $\overrightarrow{M}(\mathfrak{g})$. Therefore, since no path $P_{i,1}(\mathfrak{g}_k)$, for $1\le i<k+1$, contains the edge $(v_1,v_k+1)$, it follows that $$\{w(P_{i,1}(\mathfrak{g}_k))~|~1\le i<k+1\}=\{w(P_{i,1}(\mathfrak{g}))~|~1\le i<k+1\}.$$ Applying Lemmas~\ref{lem:vflip} and~\ref{lem:hflip} along with our induction hypothesis, we find that $$\{w(P_{i,1}(\mathfrak{g}))~|~1\le i<k+1\}=\{w(P_{k-i+1,k}(\mathfrak{g}_{k-1}))~|~1\le i<k+1\}=\{0,1,\hdots,k-1\};$$ that is, $$\{w(P_{i,k+1}(\mathfrak{g}_{k}))~|~1\le i<k+1\}=\{1,2,\hdots,k\}.$$ The result follows.
\end{proof}

\begin{theorem}\label{thm:base1}
Let $\mathfrak{g}_k=\mathfrak{p}^A\frac{k|1}{k+1}$, for $k\ge 1$. The spectrum of $\mathfrak{g}_k$ is equal to
$$\bigcup_{i=1}^k\left\{(-k+i)^i,(k-i+1)^i\right\}$$ and the extended spectrum is equal to $$\{0^k\}\cup\bigcup_{i=0}^{k-1}\left\{(-k+i)^{i+1},(k-i)^{i+1}\right\}.$$
\end{theorem}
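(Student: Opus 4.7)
The plan is to argue by induction on $k$. The base case $k=1$ is handled by direct inspection of $\overrightarrow{M}(\mathfrak{g}_1)$ in Figure~\ref{fig:g1p1}(a): the only nonzero path weights are $w(P_{1,2}(\mathfrak{g}_1))=1$ and $w(P_{2,1}(\mathfrak{g}_1))=-1$, so $\widehat{\Sigma}(\mathfrak{g}_1)$ has multiset $\{-1,0,0,1\}$, while the $(2,1)$ entry is forced to zero in $\Sigma(\mathfrak{g}_1)$, giving multiset $\{0,0,1\}$; removing one zero per Remark~\ref{rem:extra0} matches both stated formulas.

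For $k\ge 2$, I will decompose $\widehat{\Sigma}(\mathfrak{g}_k)$ into a top-left $k\times k$ block $A$, a top-right length-$k$ column $B$ (rows $1,\dots,k$, column $k+1$), a bottom-left length-$k$ row $C$ (row $k+1$, columns $1,\dots,k$), and the $(k+1,k+1)$ corner, which equals $0$. Since the composition $(k,1)$ forces zeros only in row $k+1$ of $\mathfrak{g}_k$, the block $A$ coincides with the top-left $k\times k$ block of $\Sigma(\mathfrak{g}_k)$; applying Lemma~\ref{lem:trbbase1} with $k_1=k_2=1$ and $m=k-1$---so that its $\mathfrak{g}_1$ is our $\mathfrak{g}_k$ and its $\mathfrak{g}_2$ is our $\mathfrak{g}_{k-1}$---identifies the multiset of $A$ with that of $\widehat{\Sigma}(\mathfrak{g}_{k-1})$. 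Lemma~\ref{lem:tailk1} next gives $\{w(P_{i,k+1}(\mathfrak{g}_k)):1\le i\le k+1\}=\{0,1,\dots,k\}$; the zero here is the corner, so $B=\{1,2,\dots,k\}$. Finally, the skew-symmetry recorded in Remark~\ref{rem:skew} forces $C=\{-1,-2,\dots,-k\}$.

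By Remark~\ref{rem:extra0}, the spectrum of $\mathfrak{g}_k$ is the multiset of $\Sigma(\mathfrak{g}_k)=A\cup B\cup\{0\}$ with one zero deleted, and the extended spectrum is the multiset of $\widehat{\Sigma}(\mathfrak{g}_k)=A\cup B\cup C\cup\{0\}$ with one zero deleted. Invoking the inductive hypothesis together with another application of Remark~\ref{rem:extra0} to describe the multiset of $\widehat{\Sigma}(\mathfrak{g}_{k-1})$ (which equals the multiset of $A$), the claim reduces to a telescoping multiplicity count: adjoining $B$ increments the multiplicity of each positive eigenvalue $j\in\{1,\dots,k-1\}$ from $k-j$ to $k-j+1$ and introduces the eigenvalue $k$ with multiplicity one, while in the extended-spectrum case adjoining $C$ performs the mirror-image shift on the negative side. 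The main obstacle is the bookkeeping of zeros, which enter from the diagonal of $A$, the corner entry, and two separate invocations of Remark~\ref{rem:extra0} (once to interpret the IH and once to extract the final answer); these contributions must reconcile so that $0$ appears with multiplicity exactly $k$ in both the spectrum and the extended spectrum of $\mathfrak{g}_k$.
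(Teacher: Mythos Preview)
Your proposal is correct and follows essentially the same approach as the paper's own proof: induction on $k$, with the top-left $k\times k$ block identified via Lemma~\ref{lem:trbbase1} (you are explicit about the parameters $k_1=k_2=1$, $m=k-1$, which the paper leaves implicit), the last column handled by Lemma~\ref{lem:tailk1}, and the last row recovered by skew-symmetry. Your more careful accounting of the zero contributions is a welcome elaboration of the paper's terse ``putting these contributions together, the result follows.''
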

\begin{proof}
By induction on $k.$ Let $\mathfrak{g}_k=\mathfrak{p}^A\frac{k|1}{k+1}$. For the base case, constructing the (extended) spectrum matrix of the seaweed algebra $\mathfrak{g}_1=\mathfrak{p}^A\frac{1|1}{2}$ directly from $\overrightarrow{M}(\mathfrak{g}_1)$ (see Figure~\ref{fig:g1p1} (a)), we find $$\Sigma(\mathfrak{g}_1)=\begin{bmatrix}
0 & 1\\
 & 0
\end{bmatrix}\quad\quad\text{and}\quad\quad \widehat{\Sigma}(\mathfrak{g}_1)=\begin{bmatrix}
0 & 1\\
-1 & 0
\end{bmatrix}.$$ Thus, the spectrum of $\mathfrak{g}_1$ is equal to $\{0,1\}$ and the extended spectrum is equal to $\{-1,0,1\}$.

Now, assume the result holds for $k-1\ge 1$. Note that $$\Sigma(\mathfrak{g}_k)=\begin{bmatrix} B_1 & B_2 \\  & 0 \end{bmatrix},$$ where
\begin{itemize}
    \item $B_1$ is the $k\times k$ block consisting of the values $w(P_{i,j}(\mathfrak{g}_k))$, for $1\le i,j\le k$, and
    \item $B_2$ is the $k\times 1$ block consisting of the values $w(P_{i,k+1}(\mathfrak{g}_k))$, for $1\le i\le k$.
\end{itemize}
Consequently, considering Remark~\ref{rem:skew}, it follows that $$\widehat{\Sigma}(\mathfrak{g}_k)=\begin{bmatrix} B_1 & B_2 \\ -B_2^t & 0 \end{bmatrix}.$$ Now, by Lemma~\ref{lem:trbbase1}, $B_1$ contains the same multiset of values as $\widehat{\Sigma}(\mathfrak{g}_{k-1})$. Thus, applying our inductive hypothesis -- and keeping Remark~\ref{rem:extra0} in mind -- it follows that $B_1$ contributes $$\{0^k\}\cup\bigcup_{i=1}^{k-1}\{(-k+i)^i,(k-i)^i\}$$ to the multiset of values contained in the (extended) spectrum matrix of $\mathfrak{g}_k$. It then follows from Lemma~\ref{lem:tailk1} that $B_2$ contributes $\{1,\hdots,k\}$ to the multiset of values contained in the (extended) spectrum of $\mathfrak{g}_k$ and $-B_2^t$ contributes $\{-k,\hdots,-1\}$ to the multiset of values contained in the extended spectrum of $\mathfrak{g}_k$. Putting these contributions together, the result follows.
\end{proof}

Considering the spectrum formula of Theorem~\ref{thm:base1}, we are immediately led to the following.

\begin{corollary}\label{cor:k1log}
For $k\ge 1$, if $\mathfrak{g}=\mathfrak{p}^A\frac{k|1}{k+1}$, then $\mathfrak{g}$ has the log-concave spectrum property.
\end{corollary}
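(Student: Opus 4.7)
The plan is to read off the sequence of multiplicities directly from the formula in Theorem~\ref{thm:base1} and then verify the log-concave inequality by an elementary case check. From the spectrum
$$\bigcup_{i=1}^k\left\{(-k+i)^i,(k-i+1)^i\right\},$$
one sees that the distinct eigenvalues are $-k+1,-k+2,\dots,0,1,\dots,k$, and that the multiplicity of the eigenvalue $-k+i$ is $i$ (for $1\le i\le k$), while the multiplicity of $k-i+1$ is also $i$ (for $1\le i\le k$). Writing $a_0,a_1,\dots,a_{2k-1}$ for the sequence of multiplicities indexed by eigenvalues listed in increasing order, this gives
$$a_j=\begin{cases} j+1, & 0\le j\le k-1,\\ 2k-j, & k\le j\le 2k-1,\end{cases}$$
so the full sequence is $1,2,3,\dots,k-1,k,k,k-1,\dots,2,1$, which is visibly symmetric and peaks at the pair of indices corresponding to the eigenvalues $0$ and $1$, i.e., symmetric about $\tfrac12$.

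Next, I would verify the log-concave inequality $a_i^2\ge a_{i-1}a_{i+1}$ by splitting into four cases based on where $i$ sits relative to the peak. For $1\le i\le k-2$, substituting $a_{i-1}=i$, $a_i=i+1$, $a_{i+1}=i+2$ reduces the inequality to $(i+1)^2\ge i(i+2)$, which is $1\ge 0$. Symmetrically, for $k+1\le i\le 2k-2$, substituting $a_{i-1}=2k-i+1$, $a_i=2k-i$, $a_{i+1}=2k-i-1$ gives $(2k-i)^2\ge (2k-i)^2-1$. The two remaining indices, $i=k-1$ and $i=k$, straddle the flat top: in both the inequality becomes $k^2\ge k(k-1)$, which holds. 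This concludes the verification of log-concavity.

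There is essentially no obstacle here: once Theorem~\ref{thm:base1} is in hand, the whole argument is a short computation with the explicit formula for the multiplicity sequence, and a purely elementary case-by-case check. If anything needs a bit of care, it is just the bookkeeping at the two indices $i=k-1$ and $i=k$ where the sequence transitions from strictly increasing to strictly decreasing through a repeated value; but as noted above, both cases collapse to the same trivially true inequality $k^2\ge k^2-k$.
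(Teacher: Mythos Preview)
Your argument is correct and follows exactly the route the paper intends: the paper simply states that the corollary is ``immediately'' implied by the spectrum formula of Theorem~\ref{thm:base1}, and you have written out that immediate verification explicitly by reading off the multiplicity sequence $1,2,\dots,k,k,\dots,2,1$ and checking the log-concave inequality in each case. There is nothing to add.
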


\begin{remark}
    Combining Theorem~\ref{thm:base1} and Corollary~\ref{cor:k1log} with Corollaries~\ref{cor:vfes},~\ref{cor:hfes}, and~\ref{cor:vhfes}, we obtain similar results to Theorem~\ref{thm:base1} and Corollary~\ref{cor:k1log} for related families of Frobenius, type-A seaweeds. In particular, the \textup(extended\textup) spectrum of each of $\mathfrak{p}^A\frac{k+1}{k|1}$, $\mathfrak{p}^A\frac{1|k}{k+1}$, and $\mathfrak{p}^A\frac{k+1}{1|k},$ for $k\geq 1,$ is given in Theorem~\ref{thm:base1}, and each algebra possesses the log-concave spectrum property.
\end{remark}

\subsection{$\mathfrak{p}^A\frac{k|2}{k+2}$}

In this section, we consider the (extended) spectra of Frobenius, type-A seaweed algebras of the form $\mathfrak{p}^A\frac{k|2}{k+2}$, for $k>1$ odd. See Figure~\ref{fig:k2} for illustrations of the oriented meanders of $\mathfrak{p}^A\frac{k|2}{k+2}$, for $k=3$ and 5.

\begin{figure}[H]
    \centering
    \begin{tikzpicture}[scale=0.8]
\def\Node{\node [circle, fill, inner sep=1.5pt]}
\tikzset{->-/.style={decoration={
  markings,
  mark=at position .55 with {\arrow{>}}},postaction={decorate}}}
  \Node[label=left:\footnotesize {$v_1$}] at (0,0) {};
  \Node[label=left:\footnotesize {$v_2$}] at (1,0) {};
  \Node[label=left:\footnotesize {$v_3$}] at (2,0) {};
  \Node[label=left:\footnotesize {$v_4$}] at (3,0) {};
  \Node[label=left:\footnotesize {$v_5$}] at (4,0) {};
  \draw[->-] (0,0) to[bend right=60] (4,0);
  \draw[->-] (1,0) to[bend right=60] (3,0);
  \draw[->-] (2,0) to[bend right=60] (0,0);
  \draw[->-] (4,0) to[bend right=60] (3,0);
  \node at (2, -2.5) {(a)};
\end{tikzpicture}\quad\quad \begin{tikzpicture}[scale=0.8]
\def\Node{\node [circle, fill, inner sep=1.5pt]}
\tikzset{->-/.style={decoration={
  markings,
  mark=at position .55 with {\arrow{>}}},postaction={decorate}}}
  \Node[label=left:\footnotesize {$v_1$}] at (0,0) {};
  \Node[label=left:\footnotesize {$v_2$}] at (1,0) {};
  \Node[label=left:\footnotesize {$v_3$}] at (2,0) {};
  \Node[label=left:\footnotesize {$v_4$}] at (3,0) {};
  \Node[label=left:\footnotesize {$v_5$}] at (4,0) {};
  \Node[label=left:\footnotesize {$v_5$}] at (5,0) {};
  \Node[label=left:\footnotesize {$v_5$}] at (6,0) {};
  \draw[->-] (0,0) to[bend right=60] (6,0);
  \draw[->-] (1,0) to[bend right=60] (5,0);
  \draw[->-] (2,0) to[bend right=60] (4,0);
  \draw[->-] (4,0) to[bend right=60] (0,0);
  \draw[->-] (3,0) to[bend right=60] (1,0);
  \draw[->-] (6,0) to[bend right=60] (5,0);
  \node at (3, -2.5) {(b)};
\end{tikzpicture}
    \caption{Oriented meanders of (a) $\mathfrak{p}^A\frac{3|2}{5}$ and (b) $\mathfrak{p}^A\frac{5|2}{7}$}
    \label{fig:k2}
\end{figure}
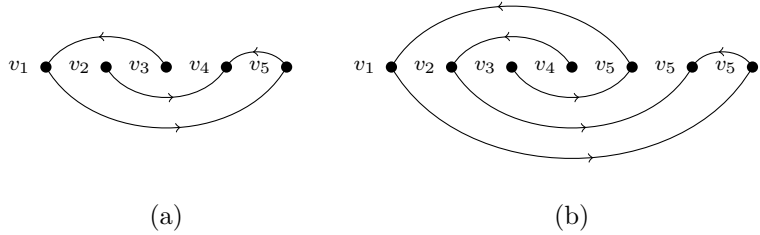

Analogous to the case of type-A seaweeds of the form $\mathfrak{p}^A\frac{k|1}{k+1}$, we require the following lemma, which determines the multiset of values contained in the top right $(k+2)\times 2$ block of $\Sigma(\mathfrak{g})$, for $\mathfrak{g}=\mathfrak{p}^A\frac{k|2}{k+2}$.

\begin{lemma}\label{lem:blbbase2}
Let $\mathfrak{g}_k=\mathfrak{p}^A\frac{k|2}{k+2}$ for $k=2m-1>5$. Then the multiset of values contained in the block of $\Sigma(\mathfrak{g}_k)$ corresponding to rows $\{1,\hdots,k\}$ and columns $\{k+1,k+2\}$ is equal to $$\{0,1^3,(m-1)^3,m^2,m+1\}\cup\bigcup_{i=2}^{m-2}\{i^4\}.$$ Moreover, the multiset of values contained in the block of $\Sigma(\mathfrak{g}_k)$ corresponding to rows $\{1,\hdots,k+2\}$ and columns $\{k+1,k+2\}$ is equal to $$\{-1,0^3,(m-1)^3,m^2,m+1\}\cup\bigcup_{i=1}^{m-2}\{i^4\}.$$
\end{lemma}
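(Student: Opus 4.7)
The plan is a simultaneous induction on $m$ on both multiset identities, reducing $\mathfrak{g}_k$ to $\mathfrak{g}_{k-2}$. Observe first a clean structural decomposition: since $\Sigma(\mathfrak{g}_k)$ is upper triangular, the block in rows $\{1,\dots,k+2\}$ and columns $\{k+1,k+2\}$ splits as a disjoint union of multisets into the top-right $k \times 2$ block (rows $\{1,\dots,k\}$) and the bottom-right $2 \times 2$ block (rows $\{k+1,k+2\}$). Consequently, once the top-right $k\times 2$ block and the four entries of the bottom-right $2\times 2$ block are in hand, the second identity will be forced by the first.

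The bottom-right $2 \times 2$ block will be obtained from Lemma~\ref{lem:blbgen} applied with $k_1 = 2$, $k_2 = 1$, and the lemma's $m$-parameter equal to $m-1$ (so that $mk_1 + k_2 = k$ and $(m+1)k_1 + k_2 = k+2$); its entries coincide as a multiset with those of $\widehat{\Sigma}(\mathfrak{p}^A\frac{1|1}{2})$. By Theorem~\ref{thm:base1} at $k=1$ together with Remark~\ref{rem:extra0}, or by direct inspection of Figure~\ref{fig:g1p1}(a), this multiset is $\{-1, 0, 0, 1\}$.

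For the top-right $k \times 2$ block, apply Lemma~\ref{lem:trbgen} with the same parameters; its $\mathfrak{g}_1$ is $\mathfrak{g}_k$ and its $\mathfrak{g}_2$ is $\mathfrak{g}_{k-2} = \mathfrak{p}^A\frac{k-2|2}{k}$. The conclusion identifies the top-right $k \times 2$ block of $\Sigma(\mathfrak{g}_k)$ with the entire rightmost $k \times 2$ block of $\Sigma(\mathfrak{g}_{k-2})$, each entry incremented by $1$. The induction hypothesis (the second identity at $m-1$) gives this block's multiset as $\{-1, 0^3, (m-2)^3, (m-1)^2, m\} \cup \bigcup_{i=1}^{m-3}\{i^4\}$; incrementing by $1$ yields exactly $\{0, 1^3, (m-1)^3, m^2, m+1\} \cup \bigcup_{i=2}^{m-2}\{i^4\}$, as claimed. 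Adjoining the bottom-right contribution $\{-1,0,0,1\}$ then produces the second identity. The base case is $m = 3$ ($k = 5$): although this lies just below the stated range, the required multisets $\{0, 1^3, 2^3, 3^2, 4\}$ and $\{-1, 0^3, 1^4, 2^3, 3^2, 4\}$ can be read off the rightmost two columns of $\Sigma(\mathfrak{p}^A\frac{5|2}{7})$ in Figure~\ref{fig:trb1}(a) and agree with the formulas evaluated at $m=3$, so they provide the input needed to start the induction at $m=4$.

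The main obstacle is the bookkeeping for the $+1$ shift: one must verify that after incrementing, the boundary terms $-1, 0^3, (m-2)^3, (m-1)^2, m$ become $0, 1^3, (m-1)^3, m^2, m+1$, that the shifted union $\bigcup_{i=1}^{m-3}\{i^4\}$ becomes $\bigcup_{i=2}^{m-2}\{i^4\}$, and that adjoining $\{-1, 0, 0, 1\}$ fills the gap between the first and second multisets precisely (restoring the $-1$, promoting $0$ to $0^3$, and promoting $1^3$ to $1^4$). All graph-theoretic work is already packaged in Lemmas~\ref{lem:trbgen} and~\ref{lem:blbgen}, so what remains is this arithmetic verification.
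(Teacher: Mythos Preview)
Your proposal is correct and follows essentially the same approach as the paper: induction on $m$, with Lemma~\ref{lem:trbgen} supplying the top-right $k\times 2$ block as the shifted rightmost block of $\Sigma(\mathfrak{g}_{k-2})$ and Lemma~\ref{lem:blbgen} supplying the bottom-right $2\times 2$ block as $\{-1,0^2,1\}$. The only difference is cosmetic: the paper takes $m=4$ as its base case and verifies both identities there by direct computation, whereas you verify the second identity at $m=3$ (outside the stated range, from Figure~\ref{fig:trb1}(a)) and let the inductive step produce $m=4$; both are valid anchors.
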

\begin{proof}
By induction on $m$. For $m=4$ (or $k=7$) one can compute directly that the multiset of values of $\Sigma(\mathfrak{g}_7)$ contained in rows $\{1,2,3,4,5,6,7\}$ and columns $\{8,9\}$ is $$\{0,1^3,2^4,3^3,4^2,5\}$$ and the multiset of values of $\Sigma(\mathfrak{g}_7)$ contained in rows $\{1,2,3,4,5,6,7,8,9\}$ and columns $\{8,9\}$ is $$\{-1,0^3,1^4,2^4,3^3,4^2,5\}.$$ Now, assume the result holds for $m-1\ge 4$ (or $k-2=2(m-1)-1=2m-3\geq 7$). By Lemma~\ref{lem:trbgen}, the multiset of values of $\Sigma(\mathfrak{g}_k)$ contained in rows $\{1,\hdots,k\}$ and columns $\{k+1,k+2\}$ is equal to the multiset of values of $\Sigma(\mathfrak{g}_{k-2})$ contained in rows $\{1,\hdots,k\}$ and columns $\{k-1,k\}$ with all values incremented by 1; that is, by the inductive hypothesis, the multiset of values of $\Sigma(\mathfrak{g}_k)$ contained in rows $\{1,\hdots,k\}$ and columns $\{k+1,k+2\}$ is equal to
\begin{align*}
    M&=\{-1+1,(0+1)^3,(m-2+1)^3,(m-1+1)^2,m+1\}\cup\bigcup_{i=1}^{m-3}\{(i+1)^4\}\\
    &=\{0,1^3,(m-1)^3,m^2,m+1\}\cup\bigcup_{i=2}^{m-2}\{i^4\},
\end{align*}
as desired.
As for the multiset of values contained in rows $\{1,\hdots,k+2\}$ and columns $\{k+1,k+2\}$ of $\Sigma(\mathfrak{g}_k)$, we simply need to add the values contained in rows $\{k+1,k+2\}$ and columns $\{k+1,k+2\}$ to $M$. By Lemma~\ref{lem:blbgen}, the multiset of values contained in rows $\{k+1,k+2\}$ and columns $\{k+1,k+2\}$ of $\Sigma(\mathfrak{g}_k)$ is equal to the extended spectrum of $\mathfrak{p}^A\frac{1|1}{2}$, i.e., $\{-1,0^2,1\}$. The result follows by induction.
\end{proof}

\begin{theorem}\label{thm:k2}
Let $\mathfrak{g}_k=\mathfrak{p}^A\frac{k|2}{k+2}$, for $k=2m-1>1$. Then the spectrum of $\mathfrak{g}_k$ is equal to $\{-2,-1^3,0^5,1^5,2^3,3\}$ if $m=2$ \textup(or $k=3$\textup) and is equal to $$\{-m,(-m+1)^3,0^{2k-1},1^{2k-1},m^3,m+1\}\cup\bigcup_{i=2}^{m-1}\{(-m+i)^{4i-2},(m-i+1)^{4i-2}\},$$ for $m>2$ \textup(or $k>3$\textup). Furthermore, the extended spectrum of $\mathfrak{g}_k$ is equal to $\{-3,-2^3,-1^5,0^6,1^5,2^3,3\}$ if $m=2$ \textup(or $k=3$\textup) and is equal to 
$$\{-m-1,-m^3,-1^{2k-1},0^{2k},1^{2k-1},m^3,m+1\}\cup\bigcup_{i=1}^{m-2}\{(-m+i)^{4i+2},(m-i)^{4i+2}\},$$ for $m>2$ \textup(or $k>3$\textup).
\end{theorem}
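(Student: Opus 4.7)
The proof will follow the same inductive scheme as Theorem~\ref{thm:base1}, with the block decomposition of $\Sigma(\mathfrak{g}_k)$ providing recursive access to the spectrum. The base cases $m = 2$ (i.e., $k = 3$) and $m = 3$ (i.e., $k = 5$) need to be verified by direct computation of the (extended) spectrum matrix from the corresponding oriented meanders, since Lemma~\ref{lem:blbbase2} requires $k > 5$.

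For the inductive step with $m \geq 4$, note that because the matrix form of $\mathfrak{g}_k = \mathfrak{p}^A\frac{k|2}{k+2}$ has zero entries only in the lower-left $2 \times k$ block, the spectrum matrix $\Sigma(\mathfrak{g}_k)$ decomposes into three pieces: a top-left $k \times k$ block $B_1$, a top-right $k \times 2$ block $B_2$, and a bottom-right $2 \times 2$ block $B_3$. Each piece will be handled by an already-established structural lemma. By Lemma~\ref{lem:trbbase1} with $k_1 = 2$, $k_2 = 1$, and its parameter $m$ replaced by $m-1$, the multiset of entries of $B_1$ coincides with that of $\widehat{\Sigma}(\mathfrak{g}_{k-2})$, which by Remark~\ref{rem:extra0} equals the extended spectrum of $\mathfrak{g}_{k-2}$ together with one additional $0$; the inductive hypothesis then supplies this multiset explicitly. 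By Lemma~\ref{lem:blbbase2}, the multiset of entries of $B_2$ equals $\{0, 1^3, (m-1)^3, m^2, m+1\} \cup \bigcup_{i=2}^{m-2}\{i^4\}$. Finally, by Lemma~\ref{lem:blbgen} with the same $k_1, k_2$, the multiset of entries of $B_3$ coincides with that of $\widehat{\Sigma}(\mathfrak{p}^A\frac{1|1}{2})$, namely $\{-1, 0^2, 1\}$.

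Summing these three multisets yields the multiset of all entries of $\Sigma(\mathfrak{g}_k)$; removing a single $0$ per Remark~\ref{rem:extra0} then produces the spectrum of $\mathfrak{g}_k$. For the extended spectrum, Remark~\ref{rem:skew} tells us that $\widehat{\Sigma}(\mathfrak{g}_k)$ differs from $\Sigma(\mathfrak{g}_k)$ only by adjoining the lower-left block $-B_2^t$, so the multiset of entries of $\widehat{\Sigma}(\mathfrak{g}_k)$ is obtained from that of $\Sigma(\mathfrak{g}_k)$ by further adjoining the negations of the entries of $B_2$, after which one $0$ is again removed.

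The main obstacle will be the multiset bookkeeping across the inductive step: one must reindex the union $\bigcup_{i=1}^{m-3}\{(-m+i+1)^{4i+2},(m-1-i)^{4i+2}\}$ coming from the extended spectrum of $\mathfrak{g}_{k-2}$ (via the substitution $j = i+1$, which produces $\bigcup_{j=2}^{m-2}\{(-m+j)^{4j-2},(m-j)^{4j-2}\}$ and extends the indexing range to match the target formula) and then verify that the boundary multiplicities --- particularly at $\pm 1$, $\pm(m-1)$, $\pm m$, and $\pm(m+1)$ --- receive the correct combined contributions from $B_1$, $B_2$, $B_3$, and (in the extended case) $-B_2^t$. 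These multiplicity calculations are straightforward but must be carried out carefully to confirm exact agreement with both the spectrum and extended spectrum formulas stated in the theorem.
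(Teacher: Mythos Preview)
Your proposal is correct and follows essentially the same approach as the paper's proof: induction on $m$ with base cases $m=2,3$ verified directly, the block decomposition $\Sigma(\mathfrak{g}_k)=\left[\begin{smallmatrix}B_1 & B_2\\ & B_3\end{smallmatrix}\right]$, Lemma~\ref{lem:trbbase1} for $B_1$, Lemma~\ref{lem:blbbase2} for $B_2$, and the skew-symmetry of $\widehat{\Sigma}$ via $-B_2^t$ for the extended spectrum. The only cosmetic difference is that you handle $B_3$ separately via Lemma~\ref{lem:blbgen} (yielding $\{-1,0^2,1\}$) whereas the paper invokes the second conclusion of Lemma~\ref{lem:blbbase2}, which already packages $B_2\cup B_3$ together as $\{-1,0^3,(m-1)^3,m^2,m+1\}\cup\bigcup_{i=1}^{m-2}\{i^4\}$; the two computations are equivalent.
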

\begin{proof}
By induction on $m$. For $m=2$ (or $k=3$), computing directly we find that the spectrum of $\mathfrak{g}_3$ is equal to $$\{-2,-1^3,0^5,1^5,2^3,3\},$$ while the extended spectrum is equal to $$\{-3,-2^3,-1^5,0^6,1^5,2^3,3\}.$$ Similarly, for $m=3$ (or $k=5$), we find that the spectrum of $\mathfrak{g}_5$ is equal to $$\{-3,-2^3,-1^6,0^9,1^9,2^6,3^3,4\},$$ while the extended spectrum is equal to $$\{-4,-3^3,-2^6,-1^9,0^{10},1^9,2^6,3^3,4\}.$$ Now, assume the result holds for $m-1\ge 3$ (or $k-2=2(m-1)-1=2m-3\geq 5$). Note that $$\Sigma(\mathfrak{g}_k)=\begin{bmatrix} B_1 & B_2 \\ & B_3 \end{bmatrix},$$ where
\begin{itemize}
    \item $B_1$ is the $k\times k$ block consisting of the values $w(P_{i,j}(\mathfrak{g}_k))$, for $1\le i,j\le k$,
    \item $B_2$ is the $k\times 2$ block consisting of the values $w(P_{i,k+1}(\mathfrak{g}_k))$ and $w(P_{i,k+2}(\mathfrak{g}_k))$, for $1\le i\le k$,
    \item and $B_3$ is the $2\times 2$ block consisting of the values $w(P_{k+1,k+1}(\mathfrak{g}_k))$, $w(P_{k+1,k+2}(\mathfrak{g}_k))$, $w(P_{k+2,k+1}(\mathfrak{g}_k))$, and $w(P_{k+2,k+2}(\mathfrak{g}_k))$.
\end{itemize}
Consequently, considering Remark~\ref{rem:skew}, it follows that $$\widehat{\Sigma}(\mathfrak{g}_k)=\begin{bmatrix} B_1 & B_2 \\ -B_2^t & B_3 \end{bmatrix}.$$ Now, by Lemma~\ref{lem:trbbase1}, $B_1$ contains the same multiset of values as $\widehat{\Sigma}(\mathfrak{g}_{k-2})$. Thus, applying our induction hypothesis -- and keeping Remark~\ref{rem:extra0} in mind -- it follows that $B_1$ contributes 

\begin{equation}\label{eq:k2B1}
\{-m,(-m+1)^3,-1^{2k-5},0^{2k-3},1^{2k-5},(m-1)^3,m\}\cup\bigcup_{i=1}^{m-3}\{(-m+i+1)^{4i+2},(m-i-1)^{4i+2}\}
\end{equation}
to the multiset of values contained in the (extended) spectrum matrix of $\mathfrak{g}_k$. It then follows from Lemma~\ref{lem:blbbase2} that $B_2$ and $B_3$ together contribute 
\begin{equation}\label{eq:k2B2}
    \{-1,0^3,(m-1)^3,m^2,m+1\}\cup\bigcup_{i=1}^{m-2}\{i^4\}
\end{equation}
to the multiset of values contained in the (extended) spectrum matrix of $\mathfrak{g}_k$. Thus, combining the contributions (\ref{eq:k2B1}) and (\ref{eq:k2B2}), the multiset of values contained in $\Sigma(\mathfrak{g}_k)$ is equal to $$\{-m,(-m+1)^3,0^{2k},1^{2k-1},m^3,m+1\}\cup\bigcup_{i=2}^{m-1}\{(-m+i)^{4i-2},(m-i+1)^{4i-2}\}.$$ Considering Remark~\ref{rem:extra0}, we have established the claimed form of the spectrum of $\mathfrak{g}_k$.

As for the extended spectrum, it remains to add the multiset of values contained in $-B_2^t$ to the spectrum of $\mathfrak{g}_k$. Applying Lemma~\ref{lem:blbbase2}, it follows that $-B_2^t$ contains the multiset of values 
$$\{-m-1,-m^2,(-m+1)^3,-1^3,0\}\cup\bigcup_{i=2}^{m-2}\{-i^4\};$$ that is, the extended spectrum of $\mathfrak{g}_k$ is equal to $$\{-m-1,-m^3,-1^{2k-1},0^{2k},1^{2k-1},m^3,m+1\}\cup\bigcup_{i=1}^{m-2}\{(-m+i)^{4i+2},(m-i)^{4i+2}\},$$ as desired. The result follows by induction.
\end{proof}

Considering the spectrum formula of Theorem~\ref{thm:k2}, we are immediately led to the following.

\begin{corollary}\label{cor:k2log}
For $k\ge 1$ odd, if $\mathfrak{g}=\mathfrak{p}^A\frac{k|2}{k+2}$, then $\mathfrak{g}$ has the log-concave spectrum property.
\end{corollary}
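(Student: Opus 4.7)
The plan is to read the multiplicity sequence directly off the explicit spectrum formula given in Theorem~\ref{thm:k2} and then verify the log-concavity inequality $a_i^2 \ge a_{i-1}a_{i+1}$ at every interior position. Since the spectrum is manifestly symmetric about $\frac{1}{2}$, only half of the inequalities need to be checked.

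First I would dispose of the base case $k=3$ ($m=2$) by hand: the spectrum $\{-2,-1^3,0^5,1^5,2^3,3\}$ yields the multiplicity sequence $1,3,5,5,3,1$, and $3^2\ge 1\cdot 5$, $5^2\ge 3\cdot 5$, $5^2\ge 5\cdot 3$ all hold. Then, for $k=2m-1>3$, arranging eigenvalues in increasing order from $-m$ up to $m+1$, Theorem~\ref{thm:k2} gives the multiplicity sequence
\[
1,\ 3,\ 4(2)-2,\ 4(3)-2,\ \ldots,\ 4(m-1)-2,\ 4m-3,\ 4m-3,\ 4(m-1)-2,\ \ldots,\ 4(2)-2,\ 3,\ 1.
\]

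Next I would verify the log-concavity inequalities, grouped according to where in the sequence the consecutive triple lives. The generic interior inequality is
\[
(4i-2)^2 \ge (4(i-1)-2)\,(4(i+1)-2)\quad\text{for } 2\le i\le m-2,
\]
which expands to $16i^2-16i+4 \ge 16i^2-16i-12$, i.e., $4\ge -12$. The remaining checks live at the boundaries where the formula switches:
\[
3^2 \ge 1\cdot 6\quad\text{(triple $1,3,6$)},\qquad
6^2 \ge 3\cdot 10\quad\text{(triple $3,6,10$)},
\]
\[
(4m-6)^2 \ge (4m-10)(4m-3)\quad\text{(entering the central plateau)},
\]
\[
(4m-3)^2 \ge (4m-6)(4m-3)\quad\text{(at the central plateau itself)}.
\]
The first two are numerical, and the latter two simplify to $4m+6\ge 0$ and $4m-3\ge 4m-6$, both obvious. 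By the symmetry of the multiplicity sequence, every inequality on the right half is identical to one already checked on the left, completing the verification.

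The only real obstacle is bookkeeping, specifically making sure that every consecutive triple is accounted for even though the multiplicities obey three distinct formulas: the extremal values $1$ and $3$, the generic $4i-2$ pattern, and the doubled central value $4m-3$. Once the transitions between these regimes are isolated as the four boundary inequalities above, the algebra is entirely routine, and the conclusion (which implies Conjecture~\ref{conj:uni} for this family) follows immediately.
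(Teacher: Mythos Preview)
Your approach is the paper's: the paper simply asserts that the corollary follows immediately from the spectrum formula in Theorem~\ref{thm:k2}, and you have written out that verification explicitly. Two bookkeeping points, however. First, the corollary is stated for $k\ge 1$ odd, but Theorem~\ref{thm:k2} only covers $k>1$; you should also dispose of $k=1$ (the spectrum of $\mathfrak{p}^A\frac{1|2}{3}$ is $\{-1,0^2,1^2,2\}$, trivially log-concave). Second, your four boundary checks tacitly assume $m\ge 4$: when $m=3$ the multiplicity sequence is $1,3,6,9,9,6,3,1$, so the triple $(3,6,10)$ never occurs, and your ``entering the plateau'' check $(4m-6)^2\ge(4m-10)(4m-3)$ uses left neighbour $4m-10=2$ rather than the actual value $3$. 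The required inequality $6^2\ge 3\cdot 9$ of course holds, but a one-line separate treatment of $m=3$ (and, relatedly, starting your generic range at $i=3$ rather than $i=2$) is needed to make the case analysis airtight.
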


\begin{remark}
    Combining Theorem~\ref{thm:k2} and Corollary~\ref{cor:k2log} with Corollaries~\ref{cor:vfes},~\ref{cor:hfes}, and~\ref{cor:vhfes}, we obtain similar results to Theorem~\ref{thm:k2} and Corollary~\ref{cor:k2log} for related families of Frobenius, type-A seaweeds. In particular, the \textup(extended\textup) spectrum of each of $\mathfrak{p}^A\frac{k+2}{k|2}$, $\mathfrak{p}^A\frac{2|k}{k+2}$, and $\mathfrak{p}^A\frac{k+2}{2|k}$, for $k\ge 1$ odd, is given in Theorem~\ref{thm:k2}, and each algebra possesses the log-concave spectrum property.
\end{remark}

As displayed by Theorems~\ref{thm:base1} and~\ref{thm:k2}, the structural lemmas established at the beginning of this section allow for the inductive determination of formulas for the spectra of Frobenius, maximal parabolic, type-A seaweeds $\mathfrak{p}^A\frac{a|b}{n}$ with $a>b$ for fixed $b$. Unfortunately, each value of $b$ seems to require its own supporting lemma analogous to Lemma~\ref{lem:blbbase2}; that is, it does not seem possible to extend the inductive procedure above to obtain explicit formulae -- or establish log-concavity or unimodality -- for the spectra of all Frobenius, maximal parabolic, type-A seaweeds. Consequently, we do not proceed by finding the spectra of Frobenius, type-A seaweeds of the form $\mathfrak{p}^A\frac{k|3}{k+3}$.

Instead, to finish this section, we use Theorems~\ref{thm:base1} and \ref{thm:k2} to determine the spectra of Frobenius, type-A seaweeds of the form $\mathfrak{p}^A\frac{a|b}{n}=\mathfrak{p}^A\frac{k+1|k}{2k+1}$ and $\mathfrak{p}^A\frac{k+2|k}{2k+2}$, where, unlike in our previous examples, neither $a$ nor $b$ is fixed.

\subsection{$\mathfrak{p}^A\frac{k+1|k}{2k+1}$}

In this subsection, we compute the spectra of Frobenius, type-A seaweed algebras of the form $\mathfrak{p}^A\frac{k+1|k}{2k+1}$, for $k\ge 1$. See Figure~\ref{fig:k+1k} for illustrations of the oriented meanders corresponding to $\mathfrak{p}^A\frac{k+1|k}{2k+1}$ for $k=1,2,$ and 3.

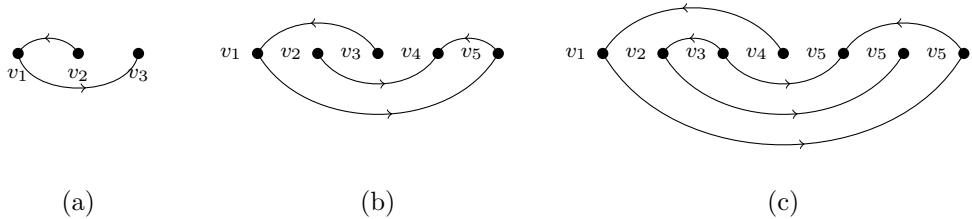
\begin{figure}[H]
    \centering
    \begin{tikzpicture}[scale=0.8]
\def\Node{\node [circle, fill, inner sep=1.5pt]}
\tikzset{->-/.style={decoration={
  markings,
  mark=at position .55 with {\arrow{>}}},postaction={decorate}}}
 \Node[label=below:\footnotesize {$v_1$}] at (0,0) {};
  \Node[label=below:\footnotesize {$v_2$}] at (1,0) {};
  \Node[label=below:\footnotesize {$v_3$}] at (2,0) {};
  \draw[->-] (1,0) to[bend right=60] (0,0);
  \draw[->-] (0,0) to[bend right=80] (2,0);
  \node at (1, -2.5) {(a)};
\end{tikzpicture}\quad\quad \begin{tikzpicture}[scale=0.8]
\def\Node{\node [circle, fill, inner sep=1.5pt]}
\tikzset{->-/.style={decoration={
  markings,
  mark=at position .55 with {\arrow{>}}},postaction={decorate}}}
  \Node[label=left:\footnotesize {$v_1$}] at (0,0) {};
  \Node[label=left:\footnotesize {$v_2$}] at (1,0) {};
  \Node[label=left:\footnotesize {$v_3$}] at (2,0) {};
  \Node[label=left:\footnotesize {$v_4$}] at (3,0) {};
  \Node[label=left:\footnotesize {$v_5$}] at (4,0) {};
  \draw[->-] (0,0) to[bend right=60] (4,0);
  \draw[->-] (1,0) to[bend right=60] (3,0);
  \draw[->-] (2,0) to[bend right=60] (0,0);
  \draw[->-] (4,0) to[bend right=60] (3,0);
  \node at (2, -2.5) {(b)};
\end{tikzpicture}\quad\quad \begin{tikzpicture}[scale=0.8]
\def\Node{\node [circle, fill, inner sep=1.5pt]}
\tikzset{->-/.style={decoration={
  markings,
  mark=at position .55 with {\arrow{>}}},postaction={decorate}}}
  \Node[label=left:\footnotesize {$v_1$}] at (0,0) {};
  \Node[label=left:\footnotesize {$v_2$}] at (1,0) {};
  \Node[label=left:\footnotesize {$v_3$}] at (2,0) {};
  \Node[label=left:\footnotesize {$v_4$}] at (3,0) {};
  \Node[label=left:\footnotesize {$v_5$}] at (4,0) {};
  \Node[label=left:\footnotesize {$v_5$}] at (5,0) {};
  \Node[label=left:\footnotesize {$v_5$}] at (6,0) {};
  \draw[->-] (0,0) to[bend right=60] (6,0);
  \draw[->-] (1,0) to[bend right=60] (5,0);
  \draw[->-] (2,0) to[bend right=60] (4,0);
  \draw[->-] (3,0) to[bend right=60] (0,0);
  \draw[->-] (2,0) to[bend right=60] (1,0);
  \draw[->-] (6,0) to[bend right=60] (4,0);
  \node at (3, -2.5) {(c)};
\end{tikzpicture}
    \caption{Oriented meanders of (a) $\mathfrak{p}^A\frac{2|1}{3}$, (b) $\mathfrak{p}^A\frac{3|2}{5}$, and (c) $\mathfrak{p}^A\frac{4|3}{7}$}
    \label{fig:k+1k}
\end{figure}

\begin{remark}
    In the preceding two sections, to compute the spectra of the algebras of interest it was necessary to also compute the extended spectra. This is not true of the seaweeds considered for the remainder of this paper, so we omit discussion about extended spectra ongoing.
\end{remark}

\begin{theorem}\label{thm:k1k}
Let $\mathfrak{g}_k=\mathfrak{p}^A\frac{k+1|k}{2k+1}$, for $k\ge 1$. The spectrum of $\mathfrak{g}_k$ is equal to $\{-1,0^2,1^2,2\}$ if $k=1$ and is equal to $$\{-k,0^{3k-1},1^{3k-1},k+1\}\cup\bigcup_{i=1}^{k-1}\{(-k+i)^{3i},(k-i+1)^{3i}\},$$ for $k>1$.
\end{theorem}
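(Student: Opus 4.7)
The strategy is to mirror the block-decomposition argument used in the proof of Theorem~\ref{thm:base1}: I decompose $\Sigma(\mathfrak{g}_k)$ into pieces that conform to the matrix shape of $\mathfrak{g}_k=\mathfrak{p}^A\frac{k+1|k}{2k+1}$ and then apply the structural lemmas of this section with parameters $k_1=k$, $k_2=1$, $m=1$ to identify each piece's multiset of values with data coming from the smaller Frobenius, type-A seaweeds $\mathfrak{p}^A\frac{k|1}{k+1}$ and $\mathfrak{p}^A\frac{k-1|1}{k}$, whose (extended) spectra are already known explicitly by Theorem~\ref{thm:base1}. The case $k=1$ is immediate, since $\mathfrak{p}^A\frac{2|1}{3}$ is the algebra $\mathfrak{p}^A\frac{k'|1}{k'+1}$ with $k'=2$, whose spectrum is given by Theorem~\ref{thm:base1} and equals $\{-1,0^2,1^2,2\}$ as required.

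For $k>1$, since $\mathfrak{g}_k$ has the seaweed shape depicted by the composition $\underline{a}=(k+1,k)$ and $\underline{b}=(2k+1)$, I write
$$\Sigma(\mathfrak{g}_k)=\begin{bmatrix} B_1 & B_2 \\ & B_3 \end{bmatrix},$$
where $B_1$ is the top-left $(k+1)\times(k+1)$ block containing the weights $w(P_{i,j}(\mathfrak{g}_k))$ with $1\le i,j\le k+1$, $B_2$ is the top-right $(k+1)\times k$ block with $1\le i\le k+1$ and $k+2\le j\le 2k+1$, and $B_3$ is the bottom-right $k\times k$ block with $k+2\le i,j\le 2k+1$. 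I then compute each block's multiset as follows:
\begin{itemize}
\item Lemma~\ref{lem:trbbase1}, applied with $k_1=k$, $k_2=1$, $m=1$, identifies the multiset of $B_1$ with the multiset of all entries of $\widehat{\Sigma}(\mathfrak{p}^A\frac{1|k}{k+1})$. By Corollary~\ref{cor:vfes}, this algebra has the same extended spectrum as $\mathfrak{p}^A\frac{k|1}{k+1}$, which is given in closed form by Theorem~\ref{thm:base1}.
\item Lemma~\ref{lem:blbbase1}, which requires $k_1>k_2$ and hence $k>1$, identifies the multiset of $B_3$ with the multiset of all entries of $\widehat{\Sigma}(\mathfrak{p}^A\frac{k-1|1}{k})$, again provided by Theorem~\ref{thm:base1}.
\item Lemma~\ref{lem:tlbbase1} identifies the multiset of $B_2$ with the multiset of entries in rows $\{1,\dots,k\}$ and columns $\{1,\dots,k+1\}$ of $\Sigma(\mathfrak{p}^A\frac{k|1}{k+1})$, each value incremented by $1$. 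This sub-block of $\Sigma(\mathfrak{p}^A\frac{k|1}{k+1})$ admits a further decomposition, exactly as in the proof of Theorem~\ref{thm:base1}: its left $k\times k$ part has multiset equal to the entries of $\widehat{\Sigma}(\mathfrak{p}^A\frac{k-1|1}{k})$ (by Lemma~\ref{lem:trbbase1}), while its right $k\times 1$ column has multiset $\{1,2,\dots,k\}$ (by Lemma~\ref{lem:tailk1}, after removing the bottom entry $w(P_{k+1,k+1})=0$).
\end{itemize}

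Combining the three multisets -- together with the two copies of values from the skew-symmetry built into $\widehat{\Sigma}$ not being relevant here because $\Sigma(\mathfrak{g}_k)$ (unlike $\widehat{\Sigma}(\mathfrak{g}_k)$) omits the bottom-left block -- and then removing one extra $0$ in accordance with Remark~\ref{rem:extra0}, yields the spectrum of $\mathfrak{g}_k$. The main obstacle is not conceptual but bookkeeping: I must carefully combine the three explicit multisets
$$\widehat{\Sigma}\!\left(\mathfrak{p}^A{\tfrac{k|1}{k+1}}\right),\qquad \widehat{\Sigma}\!\left(\mathfrak{p}^A{\tfrac{k-1|1}{k}}\right),\qquad \widehat{\Sigma}\!\left(\mathfrak{p}^A{\tfrac{k-1|1}{k}}\right)+\mathbf{1}\ \cup\ \{2,3,\dots,k+1\},$$
keep track of which 0 must be removed, and verify term-by-term that the resulting multiplicities match the claimed formula $\{-k,0^{3k-1},1^{3k-1},k+1\}\cup\bigcup_{i=1}^{k-1}\{(-k+i)^{3i},(k-i+1)^{3i}\}$. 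A consistency check is that the total multiplicity $3k(k+1)$ equals $\dim\mathfrak{g}_k$, which can be read off the shape of the seaweed.
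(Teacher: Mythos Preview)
Your proposal is correct and follows essentially the same block decomposition as the paper's own proof: split $\Sigma(\mathfrak{g}_k)$ into $B_1$, $B_2$, $B_3$, then apply Lemmas~\ref{lem:trbbase1}, \ref{lem:tlbbase1}, and \ref{lem:blbbase1} with $k_1=k$, $k_2=1$, $m=1$, and read off the pieces using Theorem~\ref{thm:base1}. Two small remarks: the corollary you need to pass from $\widehat{\Sigma}(\mathfrak{p}^A\frac{1|k}{k+1})$ to $\widehat{\Sigma}(\mathfrak{p}^A\frac{k|1}{k+1})$ is Corollary~\ref{cor:hfes} (horizontal flip), not Corollary~\ref{cor:vfes}; and your further decomposition of the source block for $B_2$ is unnecessary, since rows $\{1,\dots,k\}$ and columns $\{1,\dots,k+1\}$ of $\Sigma(\mathfrak{p}^A\frac{k|1}{k+1})$ already constitute the full spectrum matrix minus the single bottom-right $0$, i.e.\ exactly the spectrum of $\mathfrak{p}^A\frac{k|1}{k+1}$, so you can shift that formula by $1$ directly as the paper does.
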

\begin{proof}
By induction on $k$. The cases $k=1$ and 2 were considered in Theorems~\ref{thm:base1} and~\ref{thm:k2}, respectively. So, assume the result holds for $k-1\geq 2$. Note that $$\Sigma(\mathfrak{g}_k)=\begin{bmatrix} B_1 & B_2 \\  & B_3 \end{bmatrix},$$ where 
\begin{itemize}
    \item $B_1$ is the $(k+1)\times (k+1)$ block consisting of the values $w(P_{i,j}(\mathfrak{g}_k))$, for $1\le i,j\le k+1$,
    \item $B_2$ is the $(k+1)\times k$ block consisting of the values $w(P_{i,j}(\mathfrak{g}_k))$, for $1\le i\le k+1$ and $k+2\le j\le 2k+1$, and
    \item $B_3$ is the $k\times k$ block consisting of the values $w(P_{i,j}(\mathfrak{g}_k))$, for $k+2\le i,j\le 2k+1$.
\end{itemize}
Now, if  $\mathfrak{p}_k=\mathfrak{p}^A\frac{k|1}{k+1}$, then by Lemma~\ref{lem:trbbase1} combined with Corollary~\ref{cor:hfes}, $B_1$ contains the same multiset of values as $\widehat{\Sigma}(\mathfrak{p}_k)$. Thus, applying Theorem~\ref{thm:base1} -- and keeping Remark~\ref{rem:extra0} in mind -- it follows that $B_1$ contributes 
\begin{equation}\label{eq:k1kB1}
    \{0^{k+1}\}\cup\bigcup_{i=0}^{k-1}\left\{(-k+i)^{i+1},(k-i)^{i+1}\right\}
\end{equation}
to the multiset of values contained in $\Sigma(\mathfrak{g}_k)$. It then follows from Lemma~\ref{lem:tlbbase1} that the multiset of values contained in $B_2$ is equal to that of $\Sigma(\mathfrak{p}_k)$ with each value increased by 1; that is, applying Theorem~\ref{thm:base1}, $B_2$ contributes 
\begin{equation}\label{eq:k1kB2}
    \bigcup_{i=2}^{k+1}\left\{(-k+i)^{i-1},(k-i+3)^{i-1}\right\}
\end{equation}
to the multiset of values contained in $\Sigma(\mathfrak{g}_k)$.  Finally, by Lemma~\ref{lem:blbbase1}, it follows that the multiset of values contained in $B_3$ is equal to that of $\widehat{\Sigma}(\mathfrak{p}_{k-1})$; that is, applying Theorem~\ref{thm:base1} -- and keeping Remark~\ref{rem:extra0} in mind -- $B_3$ contributes 
\begin{equation}\label{eq:k1kB3}
    \{0^{k}\}\cup\bigcup_{i=0}^{k-2}\left\{(-k+i+1)^{i+1},(k-i-1)^{i+1}\right\}
\end{equation}
to the multiset of values contained in $\Sigma(\mathfrak{g}_k)$. Therefore, putting contributions (\ref{eq:k1kB1}), (\ref{eq:k1kB2}), and (\ref{eq:k1kB3}) together -- keeping Remark~\ref{rem:extra0} in mind -- we find that the spectrum of $\mathfrak{g}_k$ is equal to$$\{-k,0^{3k-1},1^{3k-1},k+1\}\cup\bigcup_{i=1}^{k-1}\{(-k+i)^{3i},(k-i+1)^{3i}\},$$ as desired.
\end{proof}

Considering the spectrum formula of Theorem~\ref{thm:k1k}, we are immediately led to the following.

\begin{corollary}\label{cor:logk1k}
For $k\ge 1$, if $\mathfrak{g}=\mathfrak{p}^A\frac{k+1|k}{2k+1}$, then $\mathfrak{g}$ has the log-concave spectrum property.
\end{corollary}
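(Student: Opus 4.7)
The corollary follows essentially by inspection of the explicit formula in Theorem~\ref{thm:k1k}, so the plan is to write out the multiplicity sequence and verify the log-concavity inequality $a_i^2 \ge a_{i-1}a_{i+1}$ directly. For $k = 1$ the sequence is $(1,2,2,1)$, which is trivially log-concave. For $k > 1$, arranging the eigenvalues $-k, -k+1, \ldots, k+1$ in increasing order, Theorem~\ref{thm:k1k} yields the palindromic multiplicity sequence
\[ (a_0, a_1, \ldots, a_{2k+1}) = (1,\, 3,\, 6,\, \ldots,\, 3(k-1),\, 3k-1,\, 3k-1,\, 3(k-1),\, \ldots,\, 6,\, 3,\, 1), \]
symmetric about the two central entries.

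Exploiting the symmetry, I would check $a_i^2 \ge a_{i-1} a_{i+1}$ only for $1 \le i \le k$, which splits into a small number of cases according to the shape of the consecutive triple. Interior triples of the form $(3(i-1), 3i, 3(i+1))$ with $2 \le i \le k-2$ give the trivial inequality $9i^2 \ge 9(i^2-1)$. The leftmost triple $(1, 3, 6)$ gives $9 \ge 6$. The transition triple $(3(k-2), 3(k-1), 3k-1)$ requires $9(k-1)^2 \ge 3(k-2)(3k-1)$, which after expansion reduces to $3k + 3 \ge 0$. Finally, the central triple $(3(k-1), 3k-1, 3k-1)$ gives $(3k-1)^2 \ge 3(k-1)(3k-1)$, i.e., $3k-1 \ge 3k-3$.

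There is no significant obstacle: all the real content lies in Theorem~\ref{thm:k1k}, and the corollary is a routine verification. The only care needed is at the two transition points where the arithmetic progression $3, 6, 9, \ldots$ meets the boundary value $1$ on the left and the plateau $3k-1$ in the middle; the sporadic small-$k$ cases (where some of the triple types above degenerate or coincide, e.g.\ $k = 2$) can be handled by direct computation.
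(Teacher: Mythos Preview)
Your proposal is correct and follows exactly the approach the paper intends: the paper's own proof is simply the sentence ``Considering the spectrum formula of Theorem~\ref{thm:k1k}, we are immediately led to the following,'' and your write-up spells out the routine case analysis that this sentence omits. There is nothing to add.
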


\begin{remark}
    Combining Theorem~\ref{thm:k1k} and Corollary~\ref{cor:logk1k} with Corollaries~\ref{cor:vfes},~\ref{cor:hfes}, and~\ref{cor:vhfes}, we obtain similar results to Theorem~\ref{thm:k1k} and Corollary~\ref{cor:logk1k} for related families of Frobenius, type-A seaweeds. In particular, the spectrum of each of $\mathfrak{p}^A\frac{2k+1}{k+1|k}$, $\mathfrak{p}^A\frac{k|k+1}{2k+1}$, and $\mathfrak{p}^A\frac{2k+1}{k|k+1}$, for $k\ge 1$, is given in Theorem~\ref{thm:k1k}, and each algebra possesses the log-concave spectrum property.
\end{remark}

\subsection{$\mathfrak{p}^A\frac{k+2|k}{2k+2}$}
In this subsection, we compute the spectra of Frobenius, type-A seaweed algebras of the form $\mathfrak{p}^A\frac{k+2|k}{2k+2}$, for $k\geq 1$ odd. See Figure~\ref{fig:k+2k} for illustrations of the oriented menaders corresponding to $\mathfrak{p}^A\frac{k+2|k}{2k+2}$ for $k=1$ and $3.$

\begin{figure}[H]
    \centering
    \begin{tikzpicture}[scale=0.8]
\def\Node{\node [circle, fill, inner sep=1.5pt]}
\tikzset{->-/.style={decoration={
  markings,
  mark=at position .55 with {\arrow{>}}},postaction={decorate}}}
 \Node[label=left:\footnotesize {$v_1$}] at (0,0) {};
  \Node[label=left:\footnotesize {$v_2$}] at (1,0) {};
  \Node[label=left:\footnotesize {$v_3$}] at (2,0) {};
  \Node[label=left:\footnotesize {$v_4$}] at (3,0) {};
  \draw[->-] (2,0) to[bend right=60] (0,0);
  \draw[->-] (0,0) to[bend right=60] (3,0);
  \draw[->-] (1,0) to[bend right=80] (2,0);
  \node at (1.5, -2.5) {(a)};
\end{tikzpicture}\quad\quad \begin{tikzpicture}[scale=0.8]
\def\Node{\node [circle, fill, inner sep=1.5pt]}
\tikzset{->-/.style={decoration={
  markings,
  mark=at position .55 with {\arrow{>}}},postaction={decorate}}}
  \Node[label=left:\footnotesize {$v_1$}] at (0,0) {};
  \Node[label=left:\footnotesize {$v_2$}] at (1,0) {};
  \Node[label=left:\footnotesize {$v_3$}] at (2,0) {};
  \Node[label=left:\footnotesize {$v_4$}] at (3,0) {};
  \Node[label=left:\footnotesize {$v_5$}] at (4,0) {};
  \Node[label=left:\footnotesize {$v_6$}] at (5,0) {};
  \Node[label=left:\footnotesize {$v_7$}] at (6,0) {};
  \Node[label=left:\footnotesize {$v_8$}] at (7,0) {};
  \draw[->-] (4,0) to[bend right=60] (0,0);
  \draw[->-] (3,0) to[bend right=60] (1,0);
  \draw[->-] (7,0) to[bend right=60] (5,0);
  \draw[->-] (0,0) to[bend right=60] (7,0);
  \draw[->-] (1,0) to[bend right=60] (6,0);
  \draw[->-] (2,0) to[bend right=60] (5,0);
  \draw[->-] (3,0) to[bend right=60] (4,0);
  \node at (3.5, -2.5) {(b)};
\end{tikzpicture}
    \caption{Oriented meanders of (a) $\mathfrak{p}^A\frac{3|1}{4}$ and (b) $\mathfrak{p}^A\frac{5|3}{8}$}
    \label{fig:k+2k}
\end{figure}
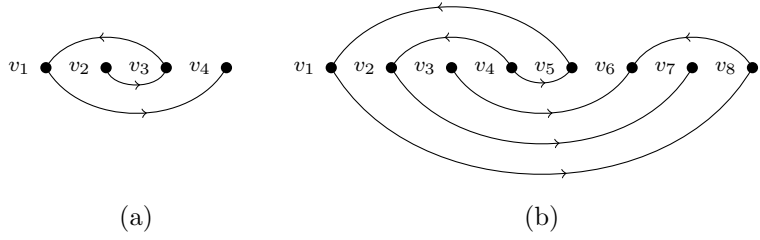

\begin{theorem}\label{thm:k+2k}
Let $\mathfrak{g}_k=\mathfrak{p}^A\frac{k+2|k}{2k+2},$ for $k=2m-1\ge 1.$ The spectrum of $\mathfrak{g}_k$ is equal to $$\{-2,-1^2,0^3,1^3,2^2,3\}$$ if $k=1$,  $$\{-3,-2^4,-1^8,0^{11},1^{11},2^8,3^4,4\}$$ if $k=3$, $$\{-4,-3^4,-2^{10},-1^{17},0^{22},1^{22},2^{17},3^{10},4^4,5\}$$ if $k=5$, $$\{-5,-4^4,-3^{10},-2^{19},-1^{28},0^{34},1^{34},2^{28},3^{19},4^{10},5^4,6\}$$ if $k=7$, and
$$\{-m-1,-m^4,(-m+1)^{10},(-m+2)^{19},-1^{6k-14},0^{6k-8},1^{6k-8},2^{6k-14},(m-1)^{19},m^{10},(m+1)^4,m+2\}$$ $$\cup\bigcup_{i=1}^{m-4}\{(-m+i+2)^{12i+18},(m-i-1)^{12i+18}\}$$ if $k>7$.
\end{theorem}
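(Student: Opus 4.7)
The approach mirrors the inductive strategy used for Theorems~\ref{thm:base1},~\ref{thm:k2}, and~\ref{thm:k1k}: decompose $\Sigma(\mathfrak{g}_k)$ into the three natural sub-blocks dictated by the seaweed's matrix shape, identify each sub-block's multiset via the structural lemmas of Section~\ref{sec:mp} combined with Theorems~\ref{thm:base1} and~\ref{thm:k2}, and sum the contributions. I would induct on the odd integer $k=2m-1$, taking $k\in\{1,3,5,7\}$ as base cases verified by direct computation on the oriented meanders, and handling $k\ge 9$ (equivalently $m\ge 5$) by the inductive step described below.

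For the inductive step, write
$$\Sigma(\mathfrak{g}_k)=\begin{bmatrix} B_1 & B_2 \\ 0 & B_3 \end{bmatrix},$$
where $B_1$ is the top-left $(k+2)\times(k+2)$ block, $B_2$ is the top-right $(k+2)\times k$ block, and $B_3$ is the bottom-right $k\times k$ block. By Remark~\ref{rem:extra0}, the spectrum of $\mathfrak{g}_k$ is the combined multiset of entries of $B_1, B_2, B_3$ with one extra $0$ removed. Set $k_1=k$ and $k_2=2$ in the notation of the structural lemmas; since $k$ is odd, $\gcd(k_1,k_2)=1$. Then Lemma~\ref{lem:trbbase1} (with outer parameter $m=1$), combined with Corollary~\ref{cor:hfes} to exchange $\mathfrak{p}^A\frac{2|k}{k+2}$ for $\mathfrak{p}^A\frac{k|2}{k+2}$, identifies the multiset of $B_1$ with the multiset of entries of $\widehat{\Sigma}(\mathfrak{p}^A\frac{k|2}{k+2})$; by Theorem~\ref{thm:k2} this is the extended spectrum of $\mathfrak{p}^A\frac{k|2}{k+2}$ together with one extra $0$. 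Lemma~\ref{lem:blbbase1} identifies the multiset of $B_3$ with that of $\widehat{\Sigma}(\mathfrak{p}^A\frac{k-2|2}{k})$, giving (by Theorem~\ref{thm:k2}) the extended spectrum of $\mathfrak{p}^A\frac{k-2|2}{k}$ with one extra $0$. Finally, Lemma~\ref{lem:tlbbase1} identifies the multiset of $B_2$ with that of the top $k\times(k+2)$ block of $\Sigma(\mathfrak{p}^A\frac{k|2}{k+2})$ with every value shifted by $+1$; since this top block is the complement of the $2\times 2$ bottom-right corner (which by Lemma~\ref{lem:blbgen} applied to $\widehat{\Sigma}(\mathfrak{p}^A\frac{1|1}{2})$ has multiset $\{-1,0,0,1\}$), the multiset of $B_2$ equals the spectrum of $\mathfrak{p}^A\frac{k|2}{k+2}$ minus $\{-1,0,1\}$, with each element then incremented by $1$.

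The main obstacle is the combinatorial bookkeeping required to sum the three explicit multisets (via Theorem~\ref{thm:k2}) and verify that the result matches the stated closed form. At the extremes, $B_1$ supplies the terms $-(m+1)^1, -m^3$ and their positive mirrors, $B_3$ supplies the inward-shifted extreme pairs $-m^1, -(m-1)^3$, and $B_2$ supplies pairs shifted outward by $+1$ from the smaller spectrum; these combine to produce the advertised multiplicities $1,4,10,19$ at $-m-1, -m, -m+1, -m+2$. In the bulk, one verifies that the contributions at $0$ and $+1$ each sum to $6k-8$ (for example at $0$: $(2k+1)+(4m-7)+(2k-3)-1=6k-8$), those at $-1$ and $+2$ each sum to $6k-14$, and the interior indices $\pm i$ receive $12i+18$ copies by the analogous sum. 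Case distinctions at $k=5$ and $k=7$ are the source of the separate formulas listed for those values, since $-m+2$ coincides with $-1$ when $m=3$ and $-m+3$ coincides with $-1$ when $m=4$; the general union $\bigcup_{i=1}^{m-4}$ is empty until $m\ge 5$. Once this bookkeeping is complete the induction closes and the theorem follows; an analogue of Corollary~\ref{cor:logk1k} asserting log-concavity can then be read off from the resulting multiplicity sequence.
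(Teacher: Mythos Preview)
Your proposal is correct and follows essentially the same argument as the paper: the three-block decomposition of $\Sigma(\mathfrak{g}_k)$, identification of $B_1$ and $B_3$ via Lemmas~\ref{lem:trbbase1} and~\ref{lem:blbbase1} with Theorem~\ref{thm:k2}, and identification of $B_2$ via Lemma~\ref{lem:tlbbase1}. The only difference is a bookkeeping shortcut for $B_2$: the paper computes the top $k\times(k+2)$ block of $\Sigma(\mathfrak{p}^A\frac{k|2}{k+2})$ by splitting it into its $k\times k$ and $k\times 2$ pieces (using the proof of Theorem~\ref{thm:k2} and Lemma~\ref{lem:blbbase2} respectively), whereas you obtain the same multiset as the full spectrum-matrix multiset minus the bottom $2\times2$ corner $\{-1,0,0,1\}$---a slightly cleaner route to the identical answer.
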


\begin{proof}
By induction on $m.$ The spectra of $\mathfrak{g}_1,$ $\mathfrak{g}_3,$ $\mathfrak{g}_5,$ and $\mathfrak{g}_7$ can be easily verified by direct calculation. For $m=5$ (or $k=9$), computing directly we find that the spectrum of $\mathfrak{g}_9$ is equal to $$\{-6,-5^4,-4^{10},-3^{19},-2^{30},-1^{40},0^{46},1^{46},2^{40},3^{30},4^{19},5^{10},6^4,7\}.$$ So, assume the result holds for $m-1\geq 5.$ Note that $$\Sigma(\mathfrak{g}_k)=\begin{bmatrix}
B_1 & B_2\\
 & B_3
\end{bmatrix},$$ where 
\begin{itemize}
    \item $B_1$ is the $(k+2)\times(k+2)$ block consisting of the values $w(P_{i,j}(\mathfrak{g}_k)),$ for $1\leq i,j\leq k+2,$
    \item $B_2$ is the $(k+2)\times k$ block consisting of the values $w(P_{i,j}(\mathfrak{g}_k)),$ for $1\leq i\leq k+2$ and $k+3\leq j\leq 2k+2,$ and
    \item $B_3$ is the $k\times k$ block consisting of the values $w(P_{i,j}(\mathfrak{g}_k)),$ for $k+3\leq i,j\leq 2k+2.$
\end{itemize}
Now, if $\mathfrak{p}_k=\mathfrak{p}^A\frac{k|2}{k+2},$ then by Lemma~\ref{lem:trbbase1} combined with Corollary~\ref{cor:hfes}, $B_1$ contains the same multiset of values of entries as $\widehat{\Sigma}(\mathfrak{p}_k).$ Thus, applying Theorem~\ref{thm:k2} -- and keeping Remark~\ref{rem:extra0} in mind -- it follows that $B_1$ contributes
\begin{equation}\label{eqn:k+2kB1}
\{-m-1,-m^3,-1^{2k-1},0^{2k+1},1^{2k-1},m^3,m+1\}\cup\bigcup_{i=1}^{m-2}\{(-m+i)^{4i+2},(m-i)^{4i+2}\}
\end{equation} to the multiset of values contained in $\Sigma(\mathfrak{g}_k).$ It then follows from Lemma~\ref{lem:tlbbase1} that the multiset of values contained in $B_2$ is equal to that of rows $\{1,\dots,k\}$ and columns $\{1,\dots,k+2\}$ in $\Sigma(\mathfrak{p}_k)$ with each value increased by 1. By the proof of Theorem~\ref{thm:k2}, the multiset of values contained in rows $\{1,\dots,k\}$ and columns $\{1,\dots,k\}$ of $\mathfrak{p}_k$ is $$\{-m,(-m+1)^3,-1^{2k-5},0^{2k-3},1^{2k-5},(m-1)^3,m\}\cup\bigcup_{i=1}^{m-3}\{(-m+i+1)^{4i+2},(m-i-1)^{4i+2}\},$$ and by Lemma~\ref{lem:blbbase2}, the multiset of values contained in rows $\{1,\dots,k\}$ and columns $\{k+1,k+2\}$ of $\mathfrak{p}_k$ is $$\{0,1^3,(m-1)^3,m^2,m+1\}\cup\bigcup_{i=2}^{m-2}\{i^4\}.$$ Thus, $B_2$ contributes 
\begin{equation}\label{eqn:k+2kB2}
\begin{aligned}
\{-m+1&,(-m+2)^3,0^{2k-5},1^{2k-2},2^{2k-2},m^6,(m+1)^3,m+2\}\\
&\cup\bigcup_{i=1}^{m-3}\{(-m+i+2)^{4i+2},(m-i)^{4i+6}\}
\end{aligned}
\end{equation}
to the multiset of values contained in $\Sigma(\mathfrak{g}_k).$ Finally, by Lemma~\ref{lem:blbbase1}, it follows that the multiset of values contained in $B_3$ is equal to that of $\widehat{\Sigma}(\mathfrak{p}_{k-2});$ that is, applying Theorem~\ref{thm:k2}, $B_3$ contributes
\begin{equation}\label{eqn:k+2kB3}
\{-m,(-m+1)^3,-1^{2k-5},0^{2k-3},1^{2k-5},(m-1)^3,m\}
\cup\bigcup_{i=1}^{m-3}\{(-m+i+1)^{4i+2},(m-i-1)^{4i+2}\}
\end{equation}
to the multiset of values contained in $\Sigma(\mathfrak{g}_k).$ Therefore, putting contributions (\ref{eqn:k+2kB1}), (\ref{eqn:k+2kB2}), and (\ref{eqn:k+2kB3}) together -- keeping Remark~\ref{rem:extra0} in mind -- we find that the spectrum of $\mathfrak{g}_k$ is equal to
$$\{-m-1,-m^4,(-m+1)^{10},(-m+2)^{19},-1^{6k-14},0^{6k-8},1^{6k-8},2^{6k-14},(m-1)^{19},m^{10},(m+1)^4,m+2\}$$
$$\cup\bigcup_{i=1}^{m-4}\{(-m+i+2)^{12i+18},(m-i-1)^{12i+18}\}$$ as desired.
\end{proof}

Considering the spectrum formula of Theorem~\ref{thm:k+2k}, we are immediately led to the following.

\begin{corollary}\label{cor:logk2k}
For $k\ge 1$, if $\mathfrak{g}=\mathfrak{p}^A\frac{k+2|k}{2k+2}$, then $\mathfrak{g}$ has the log-concave spectrum property.
\end{corollary}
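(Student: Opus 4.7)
The plan is to leverage the fully explicit spectrum formula from Theorem~\ref{thm:k+2k} and verify log-concavity position by position, exploiting the simple arithmetic structure of the multiplicity sequence. First, I would handle the small cases $k=1,3,5,7$ by direct finite verification: each yields a short symmetric multiplicity sequence ($1,2,3,3,2,1$; $1,4,8,11,11,8,4,1$; $1,4,10,17,22,22,17,10,4,1$; $1,4,10,19,28,34,34,28,19,10,4,1$), and log-concavity $a_i^2 \ge a_{i-1}a_{i+1}$ is checked in finitely many inequalities.

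For $k = 2m-1 > 7$, arranging the multiplicities in increasing order of eigenvalue, Theorem~\ref{thm:k+2k} gives the symmetric sequence
\[
1,\ 4,\ 10,\ 19,\ 30,\ 42,\ \dots,\ 12m-30,\ 12m-20,\ 12m-14,\ 12m-14,\ 12m-20,\ 12m-30,\ \dots,\ 42,\ 30,\ 19,\ 10,\ 4,\ 1,
\]
where the interior values $12i+18$ for $i=1,\dots,m-4$ form an arithmetic progression of common difference $12$. By the symmetry of the sequence, it suffices to check log-concavity from the left endpoint up through the central plateau.

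The plan then splits into three regimes. In the leftmost regime, I would verify the inequalities involving the terms $1,4,10,19,30$ as isolated numerical checks ($4^2 \ge 1\cdot 10$, $10^2 \ge 4\cdot 19$, $19^2 = 361 \ge 10\cdot 30 = 300$). In the linear-growth regime (interior values $12(i-1)+18,\ 12i+18,\ 12(i+1)+18$), I would apply the identity
\[
(12i+c)^2 - \bigl(12(i-1)+c\bigr)\bigl(12(i+1)+c\bigr) = 144 > 0,
\]
which establishes log-concavity uniformly across all interior arithmetic triples. Finally, at the transition into the peak plateau, I would check the three slightly irregular triples
\[
(12m-30,\ 12m-20,\ 12m-14),\quad (12m-20,\ 12m-14,\ 12m-14),\quad (12m-14,\ 12m-14,\ 12m-20)
\]
by explicit expansion; for instance, $(12m-20)^2 - (12m-30)(12m-14) = 48m - 20 > 0$ and $(12m-14)^2 - (12m-20)(12m-14) = 6(12m-14) > 0$, with the third triple handled by symmetry.

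The main obstacle, such as it is, lies not in any single difficult inequality but in the bookkeeping at the boundary between the ``small endpoint'' portion ($1,4,10,19$) and the linear regime ($30,42,\dots$), and at the boundary between the linear regime and the twice-repeated peak value $12m-14$. The algebraic identity $(12i+c)^2 - (12(i-1)+c)(12(i+1)+c)=144$ is what makes the bulk of the argument uniform; once that is in hand, only a bounded number of boundary triples remain, each of which is a short direct computation. Since log-concavity implies unimodality, this simultaneously recovers the unimodal spectrum property for this family.
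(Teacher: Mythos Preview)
Your proposal is correct and takes the same approach as the paper: the paper simply states that the corollary follows immediately from the explicit spectrum formula of Theorem~\ref{thm:k+2k}, and your plan is precisely the detailed verification of log-concavity from that formula. You have carefully identified the finite boundary checks (the endpoint terms $1,4,10,19$, the transition into the arithmetic progression, and the transition into the peak plateau) and the uniform interior argument via $(12i+c)^2-(12(i-1)+c)(12(i+1)+c)=144$; there is nothing missing beyond routine bookkeeping, including the boundary triples such as $(19,30,42)$ and $(12m-42,\,12m-30,\,12m-20)$ that you correctly flagged as the places requiring care.
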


\begin{remark}
    Combining Theorem~\ref{thm:k+2k} and Corollary~\ref{cor:logk2k} with Corollaries~\ref{cor:vfes},~\ref{cor:hfes}, and~\ref{cor:vhfes}, we obtain similar results to Theorem~\ref{thm:k+2k} and Corollary~\ref{cor:logk2k} for related families of Frobenius, type-A seaweeds. In particular, the specctrum of each of $\mathfrak{p}^A\frac{2k+2}{k+2|k}$, $\mathfrak{p}^A\frac{k|k+2}{2k+2}$, and $\mathfrak{p}^A\frac{2k+2}{k|k+2}$, for $k\ge 1$ odd, is given in Theorem~\ref{thm:k+2k}, and each algebra possesses the log-concave spectrum property.
\end{remark}

One quality that makes the family of maximal parabolic, type-A seaweeds a desirable candidate family in our investigations is Elashvili's (\textbf{\cite{Elashvili}}, 1990) simple closed-form index formula $$\ind\mathfrak{p}^A\frac{a|b}{n}=\gcd(a,b)-1,$$ allowing for quick identification of Frobenius such algebras. Outside of this family, there are two others known with similar index formulas: the families of type-A seaweeds of the form $\mathfrak{p}^A\frac{a|b|c}{a+b+c}$ and of the form $\mathfrak{p}^A\frac{a|b}{c|d}$ which both have index given by $\gcd(a+b,b+c)-1$ (see \textbf{\cite{meanders2}}). For the sake of comparison, we provide the formulas for the spectra of two such families of seaweeds below. The proofs are left to the interested reader.

\begin{theorem}\label{thm:2k1/12k}
Let $\mathfrak{g}_k=\mathfrak{p}^A\frac{2k|1}{1|2k}$, for $k\ge 1$. Then the spectrum of $\mathfrak{g}_k$ is equal to 
$$\bigcup_{i=1}^k\{(-k+i)^{4i-2},(k-i+1)^{4i-2}\}.$$ Moreover, $\mathfrak{g}_k$ has the log-concave spectrum property.
\end{theorem}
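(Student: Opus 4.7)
My plan is to analyze the meander $M(\mathfrak{g}_k)$ directly and read off the spectrum from its pairwise path weights. The defining compositions give top edges $\{v_i, v_{2k+1-i}\}$ for $1\le i\le k$ and bottom edges $\{v_{i+1}, v_{2k+2-i}\}$ for $1\le i\le k$. A degree check shows $v_1$ and $v_{2k+1}$ each have degree $1$, all other vertices have degree $2$, and (since $\mathfrak{g}_k$ is Frobenius) $M(\mathfrak{g}_k)$ is a single path. Tracing this path from $v_1$ yields
$$v_1,\; v_{2k},\; v_3,\; v_{2k-2},\; v_5,\; \ldots,\; v_{2k-1},\; v_2,\; v_{2k+1},$$
with edges alternating strictly between top and bottom. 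Denote by $u_t$ the vertex at position $t$ along this path, so $u_{2s}=v_{2s+1}$ and $u_{2s+1}=v_{2k-2s}$.

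I will next compute $w(P_{1,u_t})$. Under the standard orientation, a short case analysis (on the parity of $t$ and whether the starting vertex of the edge between $u_t$ and $u_{t+1}$ has the larger or smaller index) shows that this edge is traversed backward for $t<k$ and forward for $t\ge k$, giving the ``tent'' identity
$$w(P_{1,u_t}) = -\min(t,\, 2k-t) =: -f(t).$$
Additivity of path weights then gives $w(P_{i,j}) = f(\pi^{-1}(i)) - f(\pi^{-1}(j))$, where $\pi^{-1}(v_m)$ denotes the position of $v_m$ along the path. The function $f$ attains each value $\ell\in\{0,1,\ldots,k-1\}$ on the two-element set $\{\ell,\,2k-\ell\}$ and the value $k$ only at $t=k$.

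Next, I will count, for each $\lambda\in\mathbb{Z}$, the valid matrix entries $(i,j)$ with $w(P_{i,j})=\lambda$. From the defining compositions, $\mathfrak{p}^A\frac{2k|1}{1|2k}$ has zeros in row $1$ off of position $(1,1)$ and in row $2k+1$ off of position $(2k+1,2k+1)$; translating to position-space, the invalid pairs are $(0,t)$ with $t\neq 0$ and $(2k,t)$ with $t\neq 2k$. Enumerating pairs $(s,t)$ with $f(s)-f(t)=\lambda$ via the level-set sizes of $f$, subtracting the invalid contributions, and removing the single extra $0$ from Remark~\ref{rem:extra0} will give multiplicity $4i-2$ at each of $\lambda=-k+i$ and $\lambda=k-i+1$ for $i=1,\ldots,k$, which is exactly the claimed spectrum.

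Log-concavity then follows at once: the multiplicity sequence, read from $\lambda=-k+1$ to $\lambda=k$, is $2,6,10,\ldots,4k-2,4k-2,\ldots,10,6,2$, and the check $(4i-2)^2-(4(i-1)-2)(4(i+1)-2) = 16 > 0$ handles every interior index, with the central plateau $(4k-2)^2 \geq (4k-2)^2$ being trivial. The main obstacle will be the bookkeeping in the third paragraph: carefully matching each pair $(s,t)$ with its weight $f(s)-f(t)$, then subtracting exactly the right counts for the two families of invalid pairs at $s=0$ and $s=2k$ (whose weights happen to be distributed symmetrically across the negative eigenvalues). The computation is elementary but demands attention.
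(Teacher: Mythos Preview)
Your proposal is correct. The paper explicitly leaves the proof of this theorem to the reader, so there is no written argument to compare against; however, the surrounding machinery of Section~\ref{sec:mp} (Lemmas~\ref{lem:trbbase1}, \ref{lem:blbbase1}, \ref{lem:blbgen}, \ref{lem:tlbbase1}, \ref{lem:trbgen}) suggests the authors had in mind an inductive block-decomposition of $\Sigma(\mathfrak{g}_k)$ analogous to the proofs of Theorems~\ref{thm:base1}--\ref{thm:k+2k}. Your route is genuinely different and more direct: you exploit the special structure of the meander to write down the entire path explicitly, observe that the cumulative weight function along the path is the tent $f(t)=\min(t,2k-t)$, and then reduce the spectrum count to elementary level-set combinatorics of $f$. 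This bypasses the recursive lemmas entirely and makes the computation essentially a one-shot calculation rather than an induction on $k$; the trade-off is that it is tailored to this particular family and does not obviously generalize in the way the paper's block lemmas do.

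One small imprecision: your remark that ``the central plateau $(4k-2)^2\ge(4k-2)^2$ is trivial'' is not quite the relevant inequality. At $\lambda=0$ (and symmetrically at $\lambda=1$) the neighbors are $4k-6$ and $4k-2$, so the check is $(4k-2)^2\ge(4k-6)(4k-2)$, i.e.\ $4k-2\ge 4k-6$, which is of course still trivial. Everything else --- the identification of the invalid entries with positions $s=0$ and $s=2k$, the observation that all invalid weights are nonpositive, and the resulting multiplicity count --- checks out.
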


\begin{theorem}\label{thm:2k11/2k2}
Let $\mathfrak{g}_k=\mathfrak{p}^A\frac{2k|1|1}{2k+2}$, for $k\ge 1$. Then the spectrum of $\mathfrak{g}_k$ is equal to $$\{-k,k+1\}\cup\bigcup_{i=1}^k\{(-k+i)^{4i},(k-i+1)^{4i}\}.$$ Moreover, $\mathfrak{g}_k$ has the log-concave spectrum property.
\end{theorem}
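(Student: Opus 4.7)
The plan is to analyze the meander $M(\mathfrak{g}_k)$ of $\mathfrak{g}_k=\mathfrak{p}^A\frac{2k|1|1}{2k+2}$ directly, compute $\lambda_i:=w(P_{i,n}(\mathfrak{g}_k))$ in closed form for $n=2k+2$, and then tally the multiset of entries $\lambda_i-\lambda_j$ of $\Sigma(\mathfrak{g}_k)$ over the potentially nonzero positions of $\mathfrak{g}_k$. The top arcs of $M(\mathfrak{g}_k)$ are $\{v_i,v_{2k+1-i}\}$ for $1\le i\le k$ (the singleton top blocks $\{v_{2k+1}\}$, $\{v_{2k+2}\}$ contribute none), and the bottom arcs are $\{v_j,v_{n+1-j}\}$ for $1\le j\le k+1$. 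Thus $v_{2k+1}$ and $v_{2k+2}$ are the only leaves of the meander, and the walk starting at $v_n=v_{2k+2}$ and alternating bottom, top, bottom, $\ldots$ visits the vertices in the order
$$
v_{2k+2},\; v_1,\; v_{2k},\; v_3,\; v_{2k-2},\; v_5,\;\ldots,\; v_{k+1},\;\ldots,\; v_2,\; v_{2k+1},
$$
terminating at the other leaf. This is an immediate induction using that the bottom partner of $v_j$ is $v_{n+1-j}$ and the top partner of $v_j$ (for $j\le 2k$) is $v_{2k+1-j}$; in particular, $M(\mathfrak{g}_k)$ is a single Hamiltonian path and $\mathfrak{g}_k$ is Frobenius.

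Next I would show $\lambda_i=\min(i,n-i)$ for $1\le i\le n$. Writing $u_0=v_n,u_1,\ldots,u_{n-1}$ for the walk above (so odd positions $2s+1$ carry $v_{2s+1}$ and even positions $2s$ carry $v_{n-2s}$), a short case check of bottom/top edge orientations shows that for every step $p$ with $1\le p\le k+1$ the oriented edge $(u_{p-1},u_p)$ points from $u_p$ to $u_{p-1}$, while for $k+2\le p\le n-1$ it points the other way. Consequently $\lambda_{u_p}=p$ for $0\le p\le k+1$ and $\lambda_{u_p}=n-p$ for $k+1\le p\le n-1$, which translates back to $\lambda_i=\min(i,n-i)$.

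The potentially nonzero positions of $\mathfrak{g}_k$ are $(i,j)\in[1,2k]\times[1,2k]$, together with $(i,2k+1)$ for $i\le 2k+1$ and $(i,n)$ for $i\le n$. Let $L=\{\lambda_1,\ldots,\lambda_n\}$, which as a multiset is $\{0,1^2,2^2,\ldots,k^2,k+1\}$. Convolving multiplicities, the number of pairs $(i,j)\in[1,n]^2$ with $\lambda_i-\lambda_j=c$ equals $4(k-|c|+1)$ for $1\le|c|\le k$, equals $1$ for $|c|=k+1$, and equals $4k+2$ for $c=0$. Subtract the contributions of the excluded positions---those with $i\in\{2k+1,2k+2\}$ and $j\in[1,2k]$, plus $(2k+2,2k+1)$---whose multiset, a direct tally using $\lambda_{2k+1}=1$ and $\lambda_{2k+2}=0$, is concentrated on $c\le 0$ with multiplicity $1$ at $c=0$, $4$ at each $c\in\{-1,\ldots,-(k-1)\}$, $3$ at $c=-k$, and $1$ at $c=-(k+1)$. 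The resulting multiset is that of $\Sigma(\mathfrak{g}_k)$; removing one $0$ per Remark~\ref{rem:extra0} produces exactly
$$
\{-k,k+1\}\cup\bigcup_{i=1}^{k}\bigl\{(-k+i)^{4i},\,(k-i+1)^{4i}\bigr\}.
$$

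Finally, the multiplicity sequence ordered from $-k$ to $k+1$ is $1,4,8,\ldots,4(k-1),4k,4k,4(k-1),\ldots,8,4,1$, and log-concavity $a_i^2\ge a_{i-1}a_{i+1}$ is a mechanical check: consecutive triples of the form $4(i-1),4i,4(i+1)$ give $(4i)^2=16i^2\ge 16(i^2-1)$, and the boundary triples $1,4,8$ and $4(k-1),4k,4k$ (together with their mirror images) are immediate by inspection. The principal obstacle is the walk/orientation analysis establishing $\lambda_i=\min(i,n-i)$; once that identity is in hand, the convolution/exclusion count and the log-concavity check are both routine.
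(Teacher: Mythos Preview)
Your proof is correct. The walk description, the orientation analysis yielding $\lambda_i=\min(i,\,n-i)$, the convolution count on $L=\{0,1^2,\ldots,k^2,k+1\}$, the subtraction of the $4k+1$ excluded positions, and the final log-concavity check all go through; the only places you gloss (``short case check'', ``direct tally'') are genuinely routine and I verified them.

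The paper does not actually prove this theorem: it states the result and explicitly leaves the proof to the reader. The methodology implied by the surrounding Section~\ref{sec:mp}, however, is quite different from yours. There the authors decompose $\Sigma(\mathfrak{g}_k)$ into blocks and use the structural Lemmas~\ref{lem:trbbase1}--\ref{lem:trbgen} to identify each block with (shifted) extended spectra of smaller seaweeds, then induct on the parameter. Your approach bypasses that machinery entirely: because for this particular family the path in $\overrightarrow{M}(\mathfrak{g}_k)$ admits a clean closed form for $\lambda_i$, you can compute the full spectrum in one shot by a multiplicity convolution and a subtraction. This is more elementary and self-contained, at the cost of being specific to the shape $\frac{2k|1|1}{2k+2}$; the paper's block-reduction lemmas, by contrast, are reusable across many maximal-parabolic families but require tracking extended spectra through an induction.
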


Note that all families of Frobenius, type-A seaweeds considered above have been defined by compositions with a fixed number of parts, while the sizes of the parts vary. In the next section, we consider families of Frobenius, type-A seaweeds parametrized by the number of parts of a fixed size in their defining compositions. In contrast to the spectra of this section -- where both the set of distinct eigenvalues and the sequence of multiplicities varied with the parameter -- the seaweeds discussed in Section~\ref{sec:stable} have spectra whose sets of distinct eigenvalues exhibit a suprising stability property.

\section{Stability}\label{sec:stable}

In this section, we consider families of Frobenius, type-A seaweeds that are parametrized by the number of 2's (Section~\ref{sec:add2's}) and by the number of 4's (Section~\ref{sec:add4's}) in the defining compositions. For such families, we find that the sets of distinct eigenvalues in the spectra stabilize. At the end of Section~\ref{sec:add4's}, we conjecture that such stabilization occurs among more general families as well. Note that this behavior stands in sharp contrast to that of the Frobenius seaweeds $\mathfrak{p}^A\frac{k|1}{k+1}$, $\mathfrak{p}^A\frac{k|2}{k+2}$, $\mathfrak{p}^A\frac{k+1|k}{2k+1}$, $\mathfrak{p}^A\frac{k+2|k}{2k+2}$, $\mathfrak{p}^A\frac{2k|1}{1|2k}$, and $\mathfrak{p}^A\frac{2k|1|1}{2k+2}$, where the number of eigenvalues contained in the spectrum strictly increased with $k$.

\begin{remark}
We do not consider families of Frobenius, type-A seaweeds parametrized by the number of occurrences of an odd integer in the defining compositions because such families do not exist. For a given type-A seaweed $\mathfrak{g}$, each odd part in the defining composition of $\mathfrak{g}$ contributes a vertex of degree 1 to $M(\mathfrak{g})$. Consequently, if $\mathfrak{g}$ is to be Frobenius, i.e., if $M(\mathfrak{g})$ consists of a single path, then $\mathfrak{g}$ can have at most two odd parts in its defining composition.
\end{remark}

\subsection{Parameterized by number of 2's}\label{sec:add2's}

In this subsection, we consider families of Frobenius, type-A seaweeds which are parameterized by the number of 2's in the defining compositions. More specifically, we determine the spectra of type-A seaweeds of the form $$\mathfrak{p}^A\frac{k|2|\cdots|2}{k+1|2|\cdots|2|1},\quad\quad\mathfrak{p}^A\frac{k|2|\cdots|2|1}{k+1|2|\cdots|2},\quad\quad\text{and}\quad\quad \mathfrak{p}^A\frac{2|\cdots|2|1}{2r+1}.$$

To start, we establish a general result concerning the relationship between the spectra of the Frobenius, type-A seaweeds $$\mathfrak{g}=\mathfrak{p}^A\frac{a_1|\hdots|a_m|1}{b_1|\hdots|b_t},\quad \mathfrak{g}'=\mathfrak{p}^A\frac{a_1|\hdots|a_m|\overbrace{2|\cdots|2}^r}{b_1|\hdots|b_t|\underbrace{2|\cdots|2}_{r-1}|1},\quad\text{and}\quad \mathfrak{g}''=\mathfrak{p}^A\frac{a_1|\hdots|a_m|\overbrace{2|\cdots|2}^r|1}{b_1|\hdots|b_t|\underbrace{2|\cdots|2}_{r}},$$ for $r\ge 1$. For example, taking $\mathfrak{g}=\frac{3|1}{4}$ and $r=1$, we have $\mathfrak{g}'=\frac{3|2}{4|1}$ and  $\mathfrak{g}''=\frac{3|2|1}{4|2}$. The oriented meanders of these type-A seaweeds are illustrated in Figure~\ref{fig:2s}.

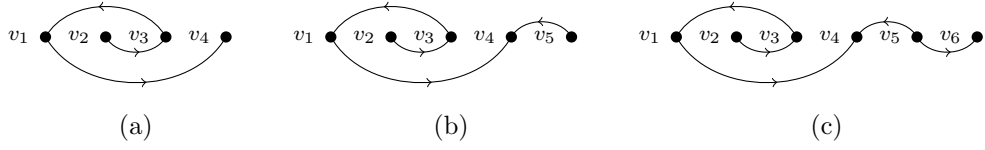
\begin{figure}[H]
    \centering
    \begin{tikzpicture}[scale=0.8]
\def\Node{\node [circle, fill, inner sep=1.5pt]}
\tikzset{->-/.style={decoration={
  markings,
  mark=at position .55 with {\arrow{>}}},postaction={decorate}}}
  \Node[label=left:\footnotesize {$v_1$}] at (0,0) {};
  \Node[label=left:\footnotesize {$v_2$}] at (1,0) {};
  \Node[label=left:\footnotesize {$v_3$}] at (2,0) {};
  \Node[label=left:\footnotesize {$v_4$}] at (3,0) {};
  \draw[->-] (0,0) to[bend right=60] (3,0);
  \draw[->-] (1,0) to[bend right=60] (2,0);
  \draw[->-] (2,0) to[bend right=60] (0,0);
  \node at (1.5, -1.5) {(a)};
\end{tikzpicture}\quad\quad \begin{tikzpicture}[scale=0.8]
\def\Node{\node [circle, fill, inner sep=1.5pt]}
\tikzset{->-/.style={decoration={
  markings,
  mark=at position .55 with {\arrow{>}}},postaction={decorate}}}
  \Node[label=left:\footnotesize {$v_1$}] at (0,0) {};
  \Node[label=left:\footnotesize {$v_2$}] at (1,0) {};
  \Node[label=left:\footnotesize {$v_3$}] at (2,0) {};
  \Node[label=left:\footnotesize {$v_4$}] at (3,0) {};
  \Node[label=left:\footnotesize {$v_5$}] at (4,0) {};
  \draw[->-] (0,0) to[bend right=60] (3,0);
  \draw[->-] (1,0) to[bend right=60] (2,0);
  \draw[->-] (2,0) to[bend right=60] (0,0);
  \draw[->-] (4,0) to[bend right=60] (3,0);
  \node at (2, -1.5) {(b)};
\end{tikzpicture}\quad\quad \begin{tikzpicture}[scale=0.8]
\def\Node{\node [circle, fill, inner sep=1.5pt]}
\tikzset{->-/.style={decoration={
  markings,
  mark=at position .55 with {\arrow{>}}},postaction={decorate}}}
  \Node[label=left:\footnotesize {$v_1$}] at (0,0) {};
  \Node[label=left:\footnotesize {$v_2$}] at (1,0) {};
  \Node[label=left:\footnotesize {$v_3$}] at (2,0) {};
  \Node[label=left:\footnotesize {$v_4$}] at (3,0) {};
  \Node[label=left:\footnotesize {$v_5$}] at (4,0) {};
  \Node[label=left:\footnotesize {$v_6$}] at (5,0) {};
  \draw[->-] (0,0) to[bend right=60] (3,0);
  \draw[->-] (1,0) to[bend right=60] (2,0);
  \draw[->-] (2,0) to[bend right=60] (0,0);
  \draw[->-] (4,0) to[bend right=60] (3,0);
  \draw[->-] (4,0) to[bend right=60] (5,0);
  \node at (2.5, -1.5) {(c)};
\end{tikzpicture}
    \caption{Oriented meanders of (a) $\mathfrak{p}^A\frac{3|1}{4}$, (b) $\mathfrak{p}^A\frac{3|2}{4|1}$, and (c) $\mathfrak{p}^A\frac{3|2|1}{4|2}$}
    \label{fig:2s}
\end{figure}

\begin{lemma}\label{lem:add2s}
If $\mathfrak{g}=\mathfrak{p}^A\frac{a_1|\hdots|a_m|1}{b_1|\hdots|b_t}$ is a Frobenius, type-A seaweed with spectrum $S$, then $\mathfrak{g}'=\mathfrak{p}^A\frac{a_1|\hdots|a_m|2}{b_1|\hdots|b_t|1}$ is a Frobenius, type-A seaweed with spectrum $S\cup\{0,1\}$.
\end{lemma}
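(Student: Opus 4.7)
The plan is to analyze how the meander and the shape of the seaweed change when passing from $\mathfrak{g}$ to $\mathfrak{g}'$. Set $n=a_1+\cdots+a_m+1=b_1+\cdots+b_t$. The $(m+1)$-st top block of $\mathfrak{g}$ is the singleton $\{v_n\}$ (contributing no top edges), whereas in $\mathfrak{g}'$ this block becomes $\{v_n,v_{n+1}\}$ (contributing the single top edge between $v_n$ and $v_{n+1}$); meanwhile the bottom-block partition restricted to $v_1,\dots,v_n$ is identical in the two algebras, and $v_{n+1}$ forms its own bottom singleton in $\mathfrak{g}'$. It follows that $M(\mathfrak{g}')$ is obtained from $M(\mathfrak{g})$ by attaching a pendant vertex $v_{n+1}$ to $v_n$ via a single top edge. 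Since $M(\mathfrak{g})$ consists of a single path, so does $M(\mathfrak{g}')$, and Corollary~\ref{cor:indA} then guarantees that $\mathfrak{g}'$ is Frobenius.

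Next, I would compare the allowed entries of $\Sigma(\mathfrak{g})$ and $\Sigma(\mathfrak{g}')$. Using the description of a seaweed as the stabilizer of the two flags $\mathscr{V},\mathscr{W}$, the position $(i,j)$ is allowed in a seaweed exactly when $v_i$ lies in a top block no later than $v_j$'s and in a bottom block no earlier than $v_j$'s. For indices $i,j\le n$ the top- and bottom-block labels assigned to $v_i$ and $v_j$ coincide in $\mathfrak{g}$ and $\mathfrak{g}'$, so these allowed positions agree. For positions involving $v_{n+1}$ in $\mathfrak{g}'$, a position $(n+1,j)$ forces $v_j$ to lie in the last top block, i.e.\ $j\in\{n,n+1\}$, and a position $(i,n+1)$ forces $v_i$ to lie in the last bottom block, i.e.\ $i=n+1$. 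Therefore, the only positions appearing in $\Sigma(\mathfrak{g}')$ but not in $\Sigma(\mathfrak{g})$ are $(n+1,n)$ and $(n+1,n+1)$.

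Finally, I would compute the relevant weights and collate. Because $v_{n+1}$ is a pendant in $M(\mathfrak{g}')$, any path in $\overrightarrow{M}(\mathfrak{g}')$ between vertices $v_i,v_j$ with $i,j\le n$ lies entirely inside $\overrightarrow{M}(\mathfrak{g})$, so $w(P_{i,j}(\mathfrak{g}'))=w(P_{i,j}(\mathfrak{g}))$; the entry at $(n+1,n+1)$ is $0$, and $P_{n+1,n}(\mathfrak{g}')$ is the single top edge oriented from $v_{n+1}$ to $v_n$, contributing weight $+1$. Hence the multiset of entries of $\Sigma(\mathfrak{g}')$ equals that of $\Sigma(\mathfrak{g})$ together with $\{0,1\}$, and applying Remark~\ref{rem:extra0} to discard one $0$ from each side yields that the spectrum of $\mathfrak{g}'$ is $S\cup\{0,1\}$. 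The only delicate point is the case check identifying the two new allowed positions, but this is a short direct verification from the flag-stabilizer description.
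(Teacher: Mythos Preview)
Your proof is correct and follows essentially the same approach as the paper's: both recognize that $\overrightarrow{M}(\mathfrak{g}')$ is $\overrightarrow{M}(\mathfrak{g})$ with a pendant vertex $v_{n+1}$ attached to $v_n$ by a top edge directed from $v_{n+1}$ to $v_n$, deduce Frobeniusness via Corollary~\ref{cor:indA}, observe that all path weights between old vertices are unchanged, and identify the two new spectrum contributions as the diagonal $0$ and $w(P_{n+1,n}(\mathfrak{g}'))=1$. The only cosmetic difference is that the paper writes the spectra directly as block-indexed unions of path weights (partitioning the off-diagonal entries into ``strictly below the diagonal within a top block'' and ``strictly above the diagonal within a bottom block''), whereas you verify via the flag-stabilizer description that the only new allowed positions are $(n+1,n)$ and $(n+1,n+1)$; these are equivalent bookkeeping choices.
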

\begin{proof}
Let $n=1+\sum_{i=1}^ma_i=\sum_{j=1}^tb_j$. Evidently, $\overrightarrow{M}(\mathfrak{g}')$ is equal to $\overrightarrow{M}(\mathfrak{g})$ with the addition of a vertex $v_{n+1}$ and a directed edge $(v_{n+1},v_n)$. Considering Corollary~\ref{cor:indA}, it follows that $\mathfrak{g}'$ is Frobenius. Now, setting $a_0=b_0=0$, note that the spectrum of $\mathfrak{g}$ is the multiset \begin{align*}
S=\bigcup_{k=1}^m&\left\{w(P_{i,j}(\mathfrak{g}))~|~1+\sum_{l=0}^{k-1}a_{l}\le j< i\le \sum_{l=1}^ka_l\right\} \\
    &\cup \bigcup_{k=1}^t\left\{w(P_{i,j}(\mathfrak{g}))~|~1+\sum_{l=0}^{k-1}b_{l}\le i< j\le \sum_{l=1}^kb_l\right\} \cup\{0^{n-1}\},
    \end{align*}
    and the spectrum of $\mathfrak{g}'$ is the multiset
\begin{align*}
    S'=\bigcup_{k=1}^m&\left\{w(P_{i,j}(\mathfrak{g}'))~|~1+\sum_{l=0}^{k-1}a_{l}\le j< i\le \sum_{l=1}^ka_l\right\} \\
    &\cup \bigcup_{k=1}^t\left\{w(P_{i,j}(\mathfrak{g}'))~|~1+\sum_{l=0}^{k-1}b_{l}\le i< j\le \sum_{l=1}^kb_l\right\} \cup\{0^{n},w(P_{n+1,n}(\mathfrak{g}'))\}.
\end{align*}
Given the relationship between $\overrightarrow{M}(\mathfrak{g}')$ and $\overrightarrow{M}(\mathfrak{g})$  outlined above, it follows that 
$$S'=S\cup\{0,w(P_{n+1,n}(\mathfrak{g}'))\}=S\cup\{0,1\}.$$
\end{proof}

\begin{theorem}\label{thm:addm2s}
If $\mathfrak{g}=\mathfrak{p}^A\frac{a_1|\hdots|a_m|1}{b_1|\hdots|b_t}$ is a Frobenius, type-A seaweed with spectrum $S$, then
\begin{enumerate}
    \item[\textup{(1)}] $$\mathfrak{g}'=\mathfrak{p}^A\frac{a_1|\hdots|a_m|\overbrace{2|\cdots|2}^r}{b_1|\hdots|b_t|\underbrace{2|\cdots|2}_{r-1}|1},$$ for $r\ge 1$, is a Frobenius, type-A seaweed with spectrum $S\cup\{0^{2r-1},1^{2r-1}\}$; and
    \item[\textup{(2)}] $$\mathfrak{g}''=\mathfrak{p}^A\frac{a_1|\hdots|a_m|\overbrace{2|\cdots|2}^r|1}{b_1|\hdots|b_t|\underbrace{2|\cdots|2}_{r}},$$ for $r\ge 1$, is a Frobenius, type-A seaweed with spectrum $S\cup\{0^{2r},1^{2r}\}$.
\end{enumerate}
\end{theorem}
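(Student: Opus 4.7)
The plan is to derive parts~(1) and~(2) simultaneously by a single induction on $r$, using Lemma~\ref{lem:add2s} as the atomic step and Corollary~\ref{cor:vfes} as the tool for repositioning a trailing $1$. The structural observation driving everything is that Lemma~\ref{lem:add2s} converts a seaweed whose top composition ends in $1$ into one whose bottom composition ends in $1$, while adjoining $\{0,1\}$ to the spectrum. To re-apply the lemma, one therefore first invokes Corollary~\ref{cor:vfes}, which swaps top and bottom while preserving spectrum, restoring the trailing $1$ to the top.

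Concretely, I would define a sequence of Frobenius, type-A seaweeds $\mathfrak{g}_0, \mathfrak{g}_1, \mathfrak{g}_2,\ldots$ starting from $\mathfrak{g}_0 = \mathfrak{g}$, indexed so that the odd-indexed $\mathfrak{g}_{2r-1}$ equals the algebra $\mathfrak{g}'$ of part~(1) for parameter $r$, while the even-indexed $\mathfrak{g}_{2r}$ equals the algebra $\mathfrak{g}''$ of part~(2) for parameter $r$. I would then prove by induction that the spectrum of $\mathfrak{g}_k$ is $S\cup\{0^k,1^k\}$. The base case $\mathfrak{g}_0\to\mathfrak{g}_1$ is precisely Lemma~\ref{lem:add2s}. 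For the transition $\mathfrak{g}_{2r}\to\mathfrak{g}_{2r+1}$, since $\mathfrak{g}_{2r}$ has its top composition ending in $1$, I apply Lemma~\ref{lem:add2s} directly: the trailing $1$ becomes a $2$, a $1$ is appended to the bottom, and the spectrum grows by $\{0,1\}$. For the transition $\mathfrak{g}_{2r-1}\to\mathfrak{g}_{2r}$, since $\mathfrak{g}_{2r-1}$ has its bottom composition ending in $1$, I first apply Corollary~\ref{cor:vfes} to swap top and bottom, then apply Lemma~\ref{lem:add2s}, and finally apply Corollary~\ref{cor:vfes} once more to swap back, yielding the composition claimed for $\mathfrak{g}''$ in part~(2).

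The only real work is bookkeeping: at each step, I must verify that the resulting compositions genuinely match the prescribed form in the theorem --- that the correct number of $2$'s appears in the correct positions on both top and bottom after each flip-apply-flip. A quick check for $r=1$ confirms the pattern: starting from $\mathfrak{g}_0$ with top ending $\dots|a_m|1$, one application gives $\mathfrak{g}_1$ with top $\dots|a_m|2$ and bottom $\dots|b_t|1$; flipping, applying the lemma, and flipping back then produces $\mathfrak{g}_2$ with top $\dots|a_m|2|1$ and bottom $\dots|b_t|2$, as required. The spectrum count follows immediately since Corollary~\ref{cor:vfes} preserves spectrum and each application of Lemma~\ref{lem:add2s} contributes exactly $\{0,1\}$. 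I anticipate no conceptual obstacle; once the alternation pattern between direct application and flip-apply-flip is identified, the induction closes at once.
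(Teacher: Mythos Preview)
Your proposal is correct and is essentially the same argument as the paper's own proof: induction driven by Lemma~\ref{lem:add2s} as the atomic step, with Corollary~\ref{cor:vfes} used to swap top and bottom so the trailing $1$ is back on top before each application. The only difference is cosmetic---you index the sequence by a single parameter $k$ alternating between the two cases, while the paper inducts on $r$ and handles the pair $(\mathfrak{g}',\mathfrak{g}'')$ together at each step.
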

\begin{proof}
By induction on $r$. An application of Lemma~\ref{lem:add2s} establishes the result for $r=1$ in (1). Then applying Corollary~\ref{cor:vfes}, followed by Lemma~\ref{lem:add2s}, and then Corollary~\ref{cor:vfes} again, to $$\mathfrak{p}^A\frac{a_1|\hdots|a_m|2}{b_1|\hdots|b_t|1}$$ establishes the case $r=1$ for (2). Assume the result holds for $r-1\ge 1$. In particular, assume that the algebra $$\mathfrak{p}^A\frac{a_1|\hdots|a_m|\overbrace{2|\cdots|2}^{r-1}|1}{b_1|\hdots|b_t|\underbrace{2|\cdots|2}_{r-1}}$$ is Frobenius with spectrum equal to $S\cup \{0^{2r-2},1^{2r-2}\}$. Applying Lemma~\ref{lem:add2s}, we find that the algebra $$\mathfrak{g}'=\mathfrak{p}^A\frac{a_1|\hdots|a_m|\overbrace{2|\cdots|2}^r}{b_1|\hdots|b_t|\underbrace{2|\cdots|2}_{r-1}|1}$$ is Frobenius with spectrum equal to $S\cup \{0^{2r-1},1^{2r-1}\}$, as desired. Now, applying Corollary~\ref{cor:vfes} followed by Lemma~\ref{lem:add2s} and then a second application of Corollary~\ref{cor:vfes} to $\mathfrak{g}'$, we find that the algebra $$\mathfrak{g}''=\mathfrak{p}^A\frac{a_1|\hdots|a_m|\overbrace{2|\cdots|2}^r|1}{b_1|\hdots|b_t|\underbrace{2|\cdots|2}_{r}}$$ is Frobenius with spectrum equal to $S\cup\{0^{2r},1^{2r}\}$, as desired. The result follows by induction.
\end{proof}

Considering the spectrum formulas of Theorem~\ref{thm:addm2s}, we are immediately led to the following.

\begin{corollary}\label{cor:222}
Let $\mathfrak{g}=\mathfrak{p}^A\frac{a_1|\hdots|a_m|1}{b_1|\hdots|b_t}$ be a Frobenius, type-A seaweed with the unimodal spectrum property. If
$$\mathfrak{g}'=\mathfrak{p}^A\frac{a_1|\hdots|a_m|\overbrace{2|\cdots|2}^r}{b_1|\hdots|b_t|\underbrace{2|\cdots|2}_{r-1}|1}\quad\text{and}\quad \mathfrak{g}''=\mathfrak{p}^A\frac{a_1|\hdots|a_m|\overbrace{2|\cdots|2}^r|1}{b_1|\hdots|b_t|\underbrace{2|\cdots|2}_{r}},$$ then $\mathfrak{g}'$ and $\mathfrak{g}''$ have the unimodal spectrum property.
\end{corollary}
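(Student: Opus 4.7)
The plan is a direct verification: Theorem~\ref{thm:addm2s} tells us exactly how the spectrum changes when passing from $\mathfrak{g}$ to $\mathfrak{g}'$ or $\mathfrak{g}''$, and it remains only to check that this change preserves unimodality. By Theorem~\ref{thm:addm2s}(1), the spectrum of $\mathfrak{g}'$ is $S\cup\{0^{2r-1},1^{2r-1}\}$, and by Theorem~\ref{thm:addm2s}(2), the spectrum of $\mathfrak{g}''$ is $S\cup\{0^{2r},1^{2r}\}$. Thus, the multiplicity sequence of $\mathfrak{g}'$ (resp.\ $\mathfrak{g}''$) is obtained from that of $\mathfrak{g}$ by incrementing the multiplicities at the eigenvalues $0$ and $1$ by the same positive integer $q=2r-1$ (resp.\ $q=2r$), while leaving every other multiplicity unchanged.

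Write the multiplicity sequence for $\mathfrak{g}$ as $(a_{-k},a_{-k+1},\ldots,a_0,a_1,\ldots,a_{k+1})$, indexed by the unbroken integer sequence of eigenvalues (which, by the result of \textbf{\cite{unbroken}} recalled in the introduction, is centered at $\tfrac{1}{2}$, so that $0$ and $1$ sit immediately adjacent to the center). Since $\mathfrak{g}$ has the unimodal spectrum property, the sequence is unimodal about $\tfrac{1}{2}$; in particular, the maximum is attained at index $0$ and/or index $1$, so that $a_{-k}\le\cdots\le a_{-1}\le a_0$ and $a_1\ge a_2\ge\cdots\ge a_{k+1}$.

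Adding $q$ to both $a_0$ and $a_1$ preserves each of these chains of inequalities, since $a_{-1}\le a_0\le a_0+q$ and $a_1+q\ge a_1\ge a_2$, and the relative order between $a_0+q$ and $a_1+q$ matches the original relative order between $a_0$ and $a_1$. Hence the modified multiplicity sequence is still unimodal, with its peak still lying at $0$ and/or $1$. This establishes the unimodal spectrum property for both $\mathfrak{g}'$ and $\mathfrak{g}''$.

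The main obstacle I anticipate is being precise about the phrase ``unimodal about one-half'': if one reads it in the loose sense of merely ``unimodal,'' one would additionally need to invoke the symmetry $a_i=a_{1-i}$ of the spectrum of a Frobenius Lie algebra (a consequence of the duality furnished by the Kirillov form, and consistent with every explicit spectrum computed in Section~\ref{sec:mp}) to force the mode to sit at the indices $0$ and $1$. Once that identification of the mode is secured, the monotonicity check above applies without modification.
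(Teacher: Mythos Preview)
Your proposal is correct and is essentially a spelled-out version of what the paper leaves implicit: the paper simply states that the corollary follows immediately from the spectrum formulas of Theorem~\ref{thm:addm2s}, and your argument makes that immediacy explicit by observing that incrementing only the central multiplicities $a_0$ and $a_1$ by the same amount preserves unimodality about $\tfrac12$. Your caveat about the phrase ``unimodal about one-half'' is well taken but ultimately unnecessary here, since the paper uses that phrase precisely to mean the mode sits at $0$ and $1$ (and, as you note, the eigenvalue pairing $\lambda\leftrightarrow 1-\lambda$ forces this in any case).
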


It is important to note that ``unimodal" cannot be strengthened to ``log-concave" in the conclusion of Corollary~\ref{cor:222}. A counterexample is provided by the following theorem (see Remark~\ref{rem:nlcc}) which is a corollary of Theorems~\ref{thm:base1} and \ref{thm:addm2s}.

\begin{theorem}\label{thm:base1ext}
\begin{enumerate}
    \item[\textup{(1)}] For $k\ge 1$, if $$\mathfrak{g}_k=\mathfrak{p}^A\frac{k|\overbrace{2|\cdots|2}^r}{k+1|\underbrace{2|\cdots|2}_{r-1}|1},$$ then $\mathfrak{g}_k$ has the unimodal spectrum property with spectrum equal to 
$$\{0^{k+2r-1},1^{k+2r-1}\}\cup\bigcup_{i=1}^{k-1}\{(-k+i)^i,(k-i)^i\}.$$
\item[\textup{(2)}] For $k\ge 1$, if $$\mathfrak{g}_k=\mathfrak{p}^A\frac{k|\overbrace{2|\cdots|2}^r|1}{k+1|\underbrace{2|\cdots|2}_{r}},$$ then $\mathfrak{g}_k$ has the unimodal spectrum property with spectrum equal to $$\{0^{k+2r},1^{k+2r}\}\cup\bigcup_{i=1}^{k-1}\{(-k+i)^i,(k-i)^i\}.$$
\end{enumerate}
\end{theorem}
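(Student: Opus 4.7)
The proof plan is to derive both parts as immediate corollaries of Theorems~\ref{thm:base1} and~\ref{thm:addm2s}. Observe that the seaweed $\mathfrak{p}^A\frac{k|1}{k+1}$ fits the template $\mathfrak{p}^A\frac{a_1|\cdots|a_m|1}{b_1|\cdots|b_t}$ required by Theorem~\ref{thm:addm2s}, taking $m=t=1$, $a_1=k$, and $b_1=k+1$. Hence the seaweeds $\mathfrak{g}_k$ appearing in parts (1) and (2) are precisely the algebras $\mathfrak{g}'$ and $\mathfrak{g}''$ produced by this construction, and the Frobenius condition together with the recursive spectrum description is already handed to us.

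For part (1), I would apply Theorem~\ref{thm:addm2s}(1) with the base algebra $\mathfrak{p}^A\frac{k|1}{k+1}$. Writing $S_k$ for its spectrum, Theorem~\ref{thm:base1} gives
\[
S_k \;=\; \bigcup_{i=1}^{k}\bigl\{(-k+i)^i,(k-i+1)^i\bigr\}
\;=\;\{0^k,1^k\}\;\cup\;\bigcup_{i=1}^{k-1}\bigl\{(-k+i)^i,(k-i+1)^i\bigr\},
\]
where the final equality simply peels off the $i=k$ term. Theorem~\ref{thm:addm2s}(1) then asserts that the spectrum of $\mathfrak{g}_k$ is $S_k \cup \{0^{2r-1},1^{2r-1}\}$, and bundling the additional $0$'s and $1$'s into the pulled-out $\{0^k,1^k\}$ block yields the claimed expression, with the $0$ and $1$ multiplicities both inflated to $k+2r-1$. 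Part (2) is handled in the same way, invoking Theorem~\ref{thm:addm2s}(2) so that the supplement is $\{0^{2r},1^{2r}\}$ and the central multiplicity becomes $k+2r$.

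Finally, the unimodal spectrum property is read off directly from the formula. Arranging the eigenvalues in increasing order, the multiplicity sequence is
\[
1,\,2,\,\ldots,\,k-1,\;k+2r-1,\;k+2r-1,\,k-1,\,\ldots,\,2,\,1
\]
in case (1), and similarly with $k+2r$ in the central two positions in case (2). Both are weakly increasing up to the pair of central terms at $0$ and $1$ and weakly decreasing after, so each is unimodal about $\tfrac{1}{2}$. There is no real obstacle to the argument: essentially the entire content is locating the right template to feed into Theorem~\ref{thm:addm2s}. The only subtlety worth mentioning is that the jump from multiplicity $k-1$ (at $-1$ and $2$) to $k+2r-1$ (at $0$ and $1$) prevents strengthening the conclusion from unimodal to log-concave once $r \geq 1$ and $k$ is moderately large, which is consistent with the fact that the theorem claims only the weaker property.
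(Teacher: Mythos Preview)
Your proof is correct and matches the paper's approach exactly: the paper presents this result as an immediate corollary of Theorems~\ref{thm:base1} and~\ref{thm:addm2s}, and you spell out precisely that deduction together with the unimodality check. One minor note: your decomposition of $S_k$ correctly has $(k-i+1)^i$ on the positive side, whereas the printed statement reads $(k-i)^i$; the latter would double-count eigenvalue $1$ and omit eigenvalue $k$, so this is a typographical slip in the statement rather than a flaw in your argument.
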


\begin{remark}\label{rem:nlcc}
    Utilizing Theorem~\ref{thm:base1ext}, we can construct examples of Frobenius, type-A seaweeds which do not have the log-concave spectrum property. For example, let $\mathfrak{g}=\mathfrak{p}^A\frac{3|1}{4}$ and $\mathfrak{g}'=\mathfrak{p}^A\frac{3|2|2}{4|2|1}.$ Recall from Theorem~\ref{thm:base1} that the spectrum of $\mathfrak{g}$ is $\{-2,-1^2,0^3,1^3,2^2,3\}$. On the other hand, by Theorem~\ref{thm:addm2s}, we find that the spectrum of $\mathfrak{g}'$ is $\{-2,-1^2,0^6,1^6,2^2,3\}.$ Clearly, $\mathfrak{g}'$ does not have the log-concave spectrum property.
\end{remark}

\begin{remark}
Note that for fixed $k$ and varying values of $r$, the collection of distinct eigenvalues in the spectra of the Frobenius, type-A seaweeds considered in Theorem~\ref{thm:base1ext} is fixed.
\end{remark}

\begin{remark}\label{rem:posets}
Note that one can also apply Corollary~\ref{cor:222} to the Frobenius, type-A seaweed $\mathfrak{p}^A\frac{1|1}{2}$ to determine the spectrum of type-A seaweeds of the form $\mathfrak{g}_r=\mathfrak{p}^A\frac{1|2|\cdots|2}{2|\cdots|2|1}$, where $r\geq 1$ is the number of $2$'s in the numerator and denominator. See Figure~\ref{fig:21over12} for some example meanders of such type-A seaweeds. In particular, it is straightforward to prove that the spectrum of $\mathfrak{g}_r$ is $\{0^{2r},1^{2r}\}$. Now, one can show that type-A seaweeds of the form $\mathfrak{p}^A\frac{1|2|\cdots|2}{2|\cdots|2|1}$ are related to algebras of another combinatorially defined family of Lie subalgebras of $\mathfrak{sl}(n)$. In particular, $\mathfrak{p}^A\frac{1|2|\cdots|2}{2|\cdots|2|1}$ is isomorphic to a type-A Lie poset algebra \textup(see \textup{\textbf{\cite{seriesA}}}\textup). Type-A Lie poset algebras are subalgebras of $\mathfrak{sl}(n)$ parametrized by posets on $\{1,\hdots,n\}$. It is known that the spectra of Frobenius, type-A Lie poset algebras corresponding to posets with chains of cardinality at most two must consist of an equal number of 0's and 1's \textup(see \textup{\textbf{\cite{Binary}}}\textup). It is conjectured that this is true in general for Frobenius Lie poset algebras \textup(see \textup{\textbf{\cite{Binary,BCD}}}\textup).
\end{remark}

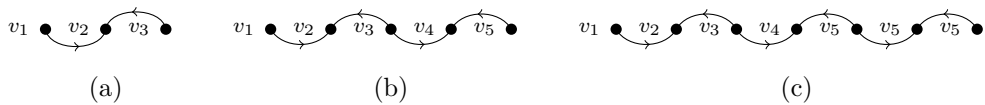
\begin{figure}[H]
    \centering
    \begin{tikzpicture}[scale=0.8]
\def\Node{\node [circle, fill, inner sep=1.5pt]}
\tikzset{->-/.style={decoration={
  markings,
  mark=at position .55 with {\arrow{>}}},postaction={decorate}}}
  \Node[label=left:\footnotesize {$v_1$}] at (0,0) {};
  \Node[label=left:\footnotesize {$v_2$}] at (1,0) {};
  \Node[label=left:\footnotesize {$v_3$}] at (2,0) {};
  \draw[->-] (0,0) to[bend right=80] (1,0);
  \draw[->-] (2,0) to[bend right=80] (1,0);
  \node at (1, -1) {(a)};
\end{tikzpicture}\quad\quad \begin{tikzpicture}[scale=0.8]
\def\Node{\node [circle, fill, inner sep=1.5pt]}
\tikzset{->-/.style={decoration={
  markings,
  mark=at position .55 with {\arrow{>}}},postaction={decorate}}}
  \Node[label=left:\footnotesize {$v_1$}] at (0,0) {};
  \Node[label=left:\footnotesize {$v_2$}] at (1,0) {};
  \Node[label=left:\footnotesize {$v_3$}] at (2,0) {};
  \Node[label=left:\footnotesize {$v_4$}] at (3,0) {};
  \Node[label=left:\footnotesize {$v_5$}] at (4,0) {};
  \draw[->-] (2,0) to[bend right=60] (1,0);
  \draw[->-] (4,0) to[bend right=60] (3,0);
  \draw[->-] (0,0) to[bend right=60] (1,0);
  \draw[->-] (2,0) to[bend right=60] (3,0);
  \node at (2, -1) {(b)};
\end{tikzpicture}\quad\quad \begin{tikzpicture}[scale=0.8]
\def\Node{\node [circle, fill, inner sep=1.5pt]}
\tikzset{->-/.style={decoration={
  markings,
  mark=at position .55 with {\arrow{>}}},postaction={decorate}}}
  \Node[label=left:\footnotesize {$v_1$}] at (0,0) {};
  \Node[label=left:\footnotesize {$v_2$}] at (1,0) {};
  \Node[label=left:\footnotesize {$v_3$}] at (2,0) {};
  \Node[label=left:\footnotesize {$v_4$}] at (3,0) {};
  \Node[label=left:\footnotesize {$v_5$}] at (4,0) {};
  \Node[label=left:\footnotesize {$v_5$}] at (5,0) {};
  \Node[label=left:\footnotesize {$v_5$}] at (6,0) {};
  \draw[->-] (2,0) to[bend right=60] (1,0);
  \draw[->-] (4,0) to[bend right=60] (3,0);
  \draw[->-] (6,0) to[bend right=60] (5,0);
  \draw[->-] (0,0) to[bend right=60] (1,0);
  \draw[->-] (2,0) to[bend right=60] (3,0);
  \draw[->-] (4,0) to[bend right=60] (5,0);
  \node at (3, -1) {(c)};
\end{tikzpicture}
    \caption{Oriented meanders of (a) $\mathfrak{p}^A\frac{1|2}{2|1}$, (b) $\mathfrak{p}^A\frac{1|2|2}{2|2|1}$, and (c) $\mathfrak{p}^A\frac{1|2|2|2}{2|2|2|1}$}
    \label{fig:21over12}
\end{figure}

\begin{remark}
    Additionally, one can apply Theorem~\ref{thm:addm2s} to the Frobenius, type-A seaweeds of Theorems~\ref{thm:2k1/12k} and~\ref{thm:2k11/2k2}. We leave such applications as exercises for interested readers.
\end{remark}

We now consider Frobenius, type-A seaweeds of the form $\mathfrak{p}^A\frac{2|\dots|2|1}{2r+1}.$ Although such algebras are parametrized by the number of 2's in their defining compositions, they are parabolic and so descriptions of their spectra do not follow from Corollary~\ref{cor:222}. See Figure~\ref{fig:2...21} for some example meanders of such type-A seaweeds.

\begin{figure}[H]
    \centering
    \begin{tikzpicture}[scale=0.8]
\def\Node{\node [circle, fill, inner sep=1.5pt]}
\tikzset{->-/.style={decoration={
  markings,
  mark=at position .55 with {\arrow{>}}},postaction={decorate}}}
  \Node[label=left:\footnotesize {$v_1$}] at (0,0) {};
  \Node[label=left:\footnotesize {$v_2$}] at (1,0) {};
  \Node[label=left:\footnotesize {$v_3$}] at (2,0) {};
  \draw[->-] (1,0) to[bend right=60] (0,0);
  \draw[->-] (0,0) to[bend right=80] (2,0);
  \node at (1, -2.5) {(a)};
\end{tikzpicture}\quad\quad \begin{tikzpicture}[scale=0.8]
\def\Node{\node [circle, fill, inner sep=1.5pt]}
\tikzset{->-/.style={decoration={
  markings,
  mark=at position .55 with {\arrow{>}}},postaction={decorate}}}
  \Node[label=left:\footnotesize {$v_1$}] at (0,0) {};
  \Node[label=left:\footnotesize {$v_2$}] at (1,0) {};
  \Node[label=left:\footnotesize {$v_3$}] at (2,0) {};
  \Node[label=left:\footnotesize {$v_4$}] at (3,0) {};
  \Node[label=left:\footnotesize {$v_5$}] at (4,0) {};
  \draw[->-] (0,0) to[bend right=60] (4,0);
  \draw[->-] (1,0) to[bend right=60] (3,0);
  \draw[->-] (1,0) to[bend right=60] (0,0);
  \draw[->-] (3,0) to[bend right=60] (2,0);
  \node at (2, -2.5) {(b)};
\end{tikzpicture}\quad\quad \begin{tikzpicture}[scale=0.8]
\def\Node{\node [circle, fill, inner sep=1.5pt]}
\tikzset{->-/.style={decoration={
  markings,
  mark=at position .55 with {\arrow{>}}},postaction={decorate}}}
  \Node[label=left:\footnotesize {$v_1$}] at (0,0) {};
  \Node[label=left:\footnotesize {$v_2$}] at (1,0) {};
  \Node[label=left:\footnotesize {$v_3$}] at (2,0) {};
  \Node[label=left:\footnotesize {$v_4$}] at (3,0) {};
  \Node[label=left:\footnotesize {$v_5$}] at (4,0) {};
  \Node[label=left:\footnotesize {$v_5$}] at (5,0) {};
  \Node[label=left:\footnotesize {$v_5$}] at (6,0) {};
  \draw[->-] (0,0) to[bend right=60] (6,0);
  \draw[->-] (1,0) to[bend right=60] (5,0);
  \draw[->-] (2,0) to[bend right=60] (4,0);
  \draw[->-] (1,0) to[bend right=60] (0,0);
  \draw[->-] (3,0) to[bend right=60] (2,0);
  \draw[->-] (5,0) to[bend right=60] (4,0);
  \node at (3, -2.5) {(c)};
\end{tikzpicture}
    \caption{Oriented meanders of (a) $\mathfrak{p}^A\frac{2|1}{3}$, (b) $\mathfrak{p}^A\frac{2|2|1}{5}$, and (c) $\mathfrak{p}^A\frac{2|2|2|1}{7}$}
    \label{fig:2...21}
\end{figure}
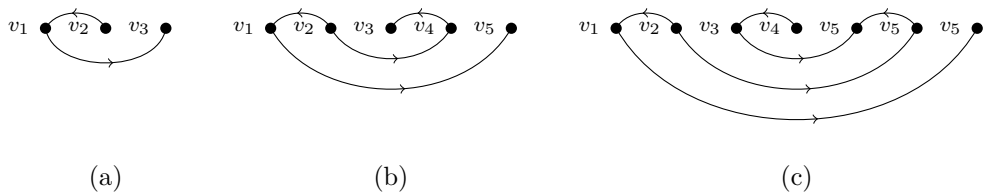

To determine the form of the spectrum of the Frobenius, type-A seaweed $\mathfrak{p}^A\frac{2|\cdots|2|1}{2r+1}$, we require the following lemma.

\begin{lemma}\label{lem:2...21}
Let $\mathfrak{g}_r=\mathfrak{p}^A\frac{2|\cdots|2|1}{2r+1}$, for $r\ge 1$. Then $$\{w(P_{i,2r+1}(\mathfrak{g}_r))~|~1\le i< 2r+1\}=\{2^{\lceil\frac{r}{2}\rceil},1^r,0^{\lceil\frac{r-1}{2}\rceil}\}.$$
\end{lemma}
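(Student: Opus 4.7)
The plan is to analyze $M(\mathfrak{g}_r)$ directly. From the defining compositions, the top edges of $\overrightarrow{M}(\mathfrak{g}_r)$ are $\{v_{2i-1},v_{2i}\}$ for $1\le i\le r$ (oriented from the larger to the smaller index), and the bottom edges are $\{v_j,v_{2r+2-j}\}$ for $1\le j\le r$ (oriented from the smaller to the larger index). The vertices $v_{r+1}$ (which has no bottom neighbor) and $v_{2r+1}$ (which has no top neighbor) are the only ones of degree $1$, so by Corollary~\ref{cor:indA} they must be the endpoints of the unique path in $M(\mathfrak{g}_r)$. Since every other vertex carries exactly one top edge and one bottom edge, the path alternates between the two edge types.

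Starting at $v_{r+1}$ and recording the sign $\varepsilon_j\in\{\pm 1\}$ contributed by the $j$th edge of the traversal (a $+1$ when the traversal agrees with the orientation on that edge, a $-1$ otherwise), I would show that the sequence $(\varepsilon_j)_{j=0}^{2r-1}$ is purely periodic of period $4$, with pattern $(+1,+1,-1,-1)$ when $r$ is odd and $(-1,-1,+1,+1)$ when $r$ is even. The starting block of two equal signs comes from pairing a top edge with the unique bottom edge it is then forced to chain with: after traversing an edge of each type, the path crosses the ``middle'' of the graph and emerges on the opposite side of $v_{r+1}$, flipping the local orientation convention. This can be verified by a small two-step local argument, or equivalently by an induction on the step count. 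The parity case split arises because $v_{r+1}$ is paired under its top edge with $v_r$ or $v_{r+2}$ according to the parity of $r+1$.

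Given this sign pattern, let $S_k=\varepsilon_0+\cdots+\varepsilon_{k-1}$. The partial sums $S_k$ are then also periodic of period $4$, cycling through $\{0,\pm 1,\pm 2,\pm 1\}$. If $v_i$ occupies position $k$ on the path, then
$$
w(P_{i,2r+1}(\mathfrak{g}_r))=S_{2r}-S_k,
$$
so the multiset of weights $\{w_k:0\le k\le 2r-1\}$ is obtained by tallying a period-$4$ pattern on the alphabet $\{0,1,2\}$: the values cycle as $(2,1,0,1)$ when $r$ is odd and as $(0,1,2,1)$ when $r$ is even. A direct count, treating the two parity classes separately (the residual terms after complete periods differ), yields $\lceil r/2\rceil$ copies of $2$, exactly $r$ copies of $1$, and $\lceil(r-1)/2\rceil$ copies of $0$, matching the claimed multiset.

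The main obstacle is the second step: rigorously establishing the period-$4$ sign pattern. The underlying reason is intuitive, but the bookkeeping is subtle because the ``side'' on which the path currently lies with respect to the central vertex $v_{r+1}$ controls whether the next top edge is crossed with or against its orientation. Once this structural fact is pinned down, the rest of the argument is a routine partial-sum computation and a parity-separated count.
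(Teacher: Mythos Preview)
Your approach is correct and genuinely different from the paper's. The paper argues by induction on $r$: it observes that deleting the vertices $v_1,v_{2r+1}$ and the two edges $(v_2,v_1),(v_1,v_{2r+1})$ from $\overrightarrow{M}(\mathfrak{g}_r)$ leaves a relabelled copy of $\overrightarrow{M}\bigl(\mathfrak{p}^A\frac{1|2|\cdots|2}{2r-1}\bigr)$, then invokes the horizontal-flip lemma (Lemma~\ref{lem:hflip}) to turn this into $\mathfrak{g}_{r-1}$ with all weights negated, applies the inductive hypothesis, shifts everything by $+2$, and re-inserts the two easily computed values $w(P_{1,2r+1})=1$ and $w(P_{2,2r+1})=2$.

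Your direct path-tracing is more explicit: you never need Lemma~\ref{lem:hflip}, and once the period-$4$ claim is in hand the rest is arithmetic. The step you flag as the obstacle is real but not deep. Writing $T(a)=a+(-1)^{a+1}$ for the top-edge map and $B(a)=2r+2-a$ for the bottom-edge map, a one-line computation shows that $(B\circ T)^2(a)=a-2$ when $a$ is even and $(B\circ T)^2(a)=a+2$ when $a$ is odd; in particular the four-step composite shifts the index by a fixed amount without changing parity, which forces the sign pattern to repeat with period $4$. With this established your partial-sum count goes through exactly as written, and the parity-separated tallies match the claimed multiset. The paper's route is shorter because it recycles existing machinery; yours is self-contained and exposes the underlying path geometry more transparently.
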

\begin{proof}
By induction on $r$. For $r=1$, we can compute directly from $\overrightarrow{M}(\mathfrak{g}_1)$ (see Figure~\ref{fig:2...21}(a)) that $$w(P_{1,3}(\mathfrak{g}_1))=1\quad\text{and}\quad w(P_{2,3}(\mathfrak{g}_1))=2.$$ Thus, $$\{w(P_{i,3}(\mathfrak{g}_1))~|~1\le i< 3\}=\{2,1\}=\{2^{\lceil\frac{1}{2}\rceil},1^1,0^{\lceil\frac{1-1}{2}\rceil}\}.$$ Now, assume that the result holds for $r-1\ge 1$. Let $\mathfrak{p}_{r-1}=\mathfrak{p}^A\frac{1|2|\cdots|2}{2r-1}$. Note that removing the edges $(v_1,v_{2r+1})$ and $(v_2,v_1)$ and the vertices $v_1$ and $v_{2r+1}$ from $\overrightarrow{M}(\mathfrak{g}_r)$ yields $\overrightarrow{M}(\mathfrak{p}_{r-1}),$ with $v_{i+1}$ in place of $v_i$, for $1\le i\le 2r-1$. Thus,
\begin{align*}
    w(P_{i,2r+1}(\mathfrak{g}_r))&=w(P_{i,2}(\mathfrak{g}_r))+w(P_{2,1}(\mathfrak{g}_r))+w(P_{1,2r+1}(\mathfrak{g}_r)) \\
    &=w(P_{i-1,1}(\mathfrak{p}_{r-1}))+2,
\end{align*}
for $2\le i\le 2r$. Applying Lemma~\ref{lem:hflip}, it follows that $$w(P_{i,2r+1}(\mathfrak{g}_k))=w(P_{i-1,1}(\mathfrak{p}_{r-1}))+2=-w(P_{2r-i+1,2r-1}(\mathfrak{g}_{r-1}))+2,$$ for $2\le i\le 2r$. Therefore, applying the induction hypothesis, $$\{w(P_{i,2r+1}(\mathfrak{g}_r))~|~3\le i\le 2r\}=\{-w(P_{i,2r-1}(\mathfrak{g}_{r-1}))+2~|~1\le i< 2r-1\}=\{0^{\lceil\frac{r-1}{2}\rceil},1^{r-1},2^{\lceil\frac{r-2}{2}\rceil}\}.$$ As $w(P_{1,2r+1}(\mathfrak{g}_r))=1$ and $w(P_{2,2r+1}(\mathfrak{g}_r))=2$, it follows that $$\{w(P_{i,2r+1}(\mathfrak{g}_r))~|~1\le i< 2r+1\}=\{0^{\lceil\frac{r-1}{2}\rceil},1^{r},2^{\lceil\frac{r}{2}\rceil}\},$$ as desired.
\end{proof}

\begin{theorem}\label{thm:221}
Let $\mathfrak{g}_r=\mathfrak{p}^A\frac{2|\cdots|2|1}{2r+1}$, for $r\ge 1$. There exists positive integers $a_r>b_r$ such that the spectrum of $\mathfrak{g}_r$ is given by $$\{-1^{b_r},0^{a_r},1^{a_r},2^{b_r}\}.$$
\end{theorem}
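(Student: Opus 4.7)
The plan is to trace the unique path in $\overrightarrow{M}(\mathfrak{g}_r)$ directly. The meander has top edges $(v_{2k-1},v_{2k})$ oriented $v_{2k}\to v_{2k-1}$ for $k=1,\ldots,r$, and bottom edges $(v_i,v_{2r+2-i})$ oriented $v_i\to v_{2r+2-i}$ for $i=1,\ldots,r$; its unique path has endpoints $v_{r+1}$ and $v_{2r+1}$. Starting at $v_{2r+1}$ and alternating bottom and top edges, the path visits the vertices in the order
\[ v_{2r+1},\,v_1,\,v_2,\,v_{2r},\,v_{2r-1},\,v_3,\,v_4,\,v_{2r-2},\,v_{2r-3},\,v_5,\,v_6,\,\ldots, \]
and its edge-sign sequence (forward versus backward relative to the orientation) follows the periodic pattern $-,-,+,+,-,-,+,+,\ldots$. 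Hence the function $f(v_k):=w(P_{v_{2r+1},v_k}(\mathfrak{g}_r))$, which equals the partial sum of the first $k$ signs, takes only the values $0,-1,-2$, and consequently every weight $w(P_{v_i,v_j}(\mathfrak{g}_r))=f(v_j)-f(v_i)$ lies in $\{-2,-1,0,1,2\}$.

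Next, I partition the vertices into $A=f^{-1}(0)$, $B=f^{-1}(-1)$, and $C=f^{-1}(-2)$. A direct reading of the path (handling the parity of $r$ in two cases) shows that every vertex of $A$ has strictly larger index than every vertex of $C$. Therefore, no $-2$ appears among upper-triangular entries of $\Sigma(\mathfrak{g}_r)$; and since the $r$ mirror-symmetric lower-triangular entries come from top edges and each equal $+1$, the spectrum of $\mathfrak{g}_r$ lies in $\{-1,0,1,2\}$.

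To match multiplicities, let $\alpha_{X,Y}=|\{(i,j):v_i\in X,\,v_j\in Y,\,i<j\}|$. The multiplicity of $+2$ in the spectrum of $\mathfrak{g}_r$ equals $\alpha_{C,A}=|A|\cdot|C|$ (since $\min A>\max C$), while the multiplicity of $-1$ equals $\alpha_{A,B}+\alpha_{B,C}$. Using the explicit descriptions of $A$, $B$, $C$ in terms of vertex indices, a casewise count gives $\alpha_{A,B}+\alpha_{B,C}=|A|\cdot|C|$, and an analogous computation shows that the multiplicity of $0$ equals that of $+1$ (where the $r$ extra $+1$'s from the mirror-symmetric lower entries balance against the $n-1$ diagonal zeros and the $\alpha_{X,X}$ contributions). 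Setting $b_r=|A|\cdot|C|$ and $a_r$ equal to this common multiplicity yields the claimed form of the spectrum; positivity of $b_r$ is immediate (as $|A|,|C|\ge 1$ for $r\ge 1$), and the explicit formulas produced by the count ($a_r=3b_r$ when $r$ is even and $a_r=3b_r-1$ when $r$ is odd) force $a_r>b_r$.

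The main obstacle will be the parity-dependent bookkeeping in the final counting step, but in each case the identity $\alpha_{A,B}+\alpha_{B,C}=|A|\cdot|C|$ collapses to a standard triangular sum of the form $\sum_{k=1}^{m}k$; the conceptual heart of the proof — the period-$4$ sign pattern along the path and the resulting separation $\min A>\max C$ — is uniform in $r$.
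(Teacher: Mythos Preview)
Your approach is correct and genuinely different from the paper's. The paper proceeds by induction on $r$: it peels off the outermost bottom arc and the top edge $(v_2,v_1)$ to relate $\overrightarrow{M}(\mathfrak{g}_r)$ to $\overrightarrow{M}(\mathfrak{g}_{r-1})$ (via $\mathfrak{p}_{r-1}=\mathfrak{p}^A\frac{1|2|\cdots|2}{2r-1}$ and Corollary~\ref{cor:hfes}), invokes a preliminary lemma (Lemma~\ref{lem:2...21}) computing the multiset $\{w(P_{i,2r+1}(\mathfrak{g}_r))\}$, and assembles the spectrum from three pieces $G_1,G_2,G_3$. The resulting recursion is $b_r=b_{r-1}+\lceil r/2\rceil$ and $a_r=a_{r-1}+r+\lceil(r-1)/2\rceil+1$, which one checks satisfies $a_r>b_r$.

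Your argument is instead a direct, one-shot computation: the period-$4$ sign pattern along the path from $v_{2r+1}$ forces $f\in\{0,-1,-2\}$, and the explicit identification $A=\{v_{r+1},v_{r+3},\ldots,v_{2r+1}\}$ (for $r$ even) or $A=\{v_{r+2},\ldots,v_{2r+1}\}$ (for $r$ odd), with $C$ consisting of small even indices, gives $\min A>\max C$ cleanly. Your count $\alpha_{A,B}+\alpha_{B,C}=|A|\cdot|C|$ does indeed reduce to a triangular-sum identity in each parity (for $r=2m$ both $\alpha$'s equal $\binom{m+1}{2}$; for $r=2m{+}1$ they are $\binom{m+1}{2}$ and $\binom{m+2}{2}$), and the equality of the multiplicities of $0$ and $1$ follows from $|A|+|B|+|C|=2r{+}1$ together with these same counts. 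What you gain over the paper is closed forms: $b_r=m(m{+}1)$ for $r=2m$ and $b_r=(m{+}1)^2$ for $r=2m{+}1$, with $a_r=3b_r$ or $3b_r-1$ accordingly. What the paper's inductive framework buys is uniformity with the other families treated in the article, where the same ``unwind one layer'' mechanism drives Theorems~\ref{thm:base1}, \ref{thm:k2}, \ref{thm:k1k}, and \ref{thm:k+2k}.
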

\begin{proof}
By induction on $r$. For $r=1$, calculating directly using $\overrightarrow{M}(\mathfrak{g}_1)$ (see Figure~\ref{fig:2...21}(a)) we find that the spectrum of $\mathfrak{g}_1$ is $$\{-1^1,0^2,1^2,2^1\}.$$ Assume the result holds for $r-1\ge 1$. We break the spectrum of $\mathfrak{g}_r$ into three groups: $$G_1=\{w(P_{i,j}(\mathfrak{g}_r))~|~2\le i\le j\le 2r\}\cup\{w(P_{2j,2j-1}(\mathfrak{g}_r))~|~1\le j\le r\},$$ $$G_2=\{w(P_{i,2r+1}(\mathfrak{g}_r))~|~1\le i< 2r+1\},$$ and $$G_3=\{w(P_{1,i}(\mathfrak{g}_r))~|~1\le i\le 2r\}.$$  Let $\mathfrak{p}_{r-1}=\mathfrak{p}^A\frac{1|2|\cdots|2}{2r-1}$. Note that removing the edges $(v_1,v_{2r+1})$ and $(v_2,v_1)$ and the vertices $v_1$ and $v_{2r+1}$ from $\overrightarrow{M}(\mathfrak{g}_r)$ yields $\overrightarrow{M}(\mathfrak{p}_{r-1})$ with $v_{i+1}$ in place of $v_i$, for $1\le i\le 2r-1$. Thus, $$w(P_{i,j}(\mathfrak{g}_r))=w(P_{i-1,j-1}(\mathfrak{p}_{r-1})),$$ for $2\le i\le j\le 2r$, and $$w(P_{2j,2j-1}(\mathfrak{g}_r))=w(P_{2j-1,2j-2}(\mathfrak{p}_{r-1})),$$ for $2\le j\le r$. Consequently, letting $S$ denote the spectrum of of $\mathfrak{p}_{r-1}$, we have that
$$G_1=S\cup \{w(P_{2r,2r}(\mathfrak{g}_r)),w(P_{2,1}(\mathfrak{g}_r))\}=S\cup \{0,1\}.$$ Applying Corollary~\ref{cor:hfes}, the spectrum of $\mathfrak{g}_{r-1}$ is also $S$; that is, applying our induction hypothesis, $$G_1=\{-1^{b_{r-1}},0^{a_{r-1}+1},1^{a_{r-1}+1},2^{b_{r-1}}\}.$$ Now, considering Lemma~\ref{lem:2...21}, we have that $$G_2=\{2^{\lceil\frac{r}{2}\rceil},1^r,0^{\lceil\frac{r-1}{2}\rceil}\}.$$ As for $G_3$, note that
\begin{align*}
    w(P_{1,i}(\mathfrak{g}_r))&=-w(P_{i,1}(\mathfrak{g}_r)) \\
    &=-(w(P_{i,2r+1}(\mathfrak{g}_r))-w(P_{1,2r+1}(\mathfrak{g}_r))) \\
    &=-w(P_{i,2r+1}(\mathfrak{g}_r))+1,
\end{align*}
for $1\le i\le 2r$. Thus, applying Lemma~\ref{lem:2...21}, it follows that $$G_3=\{-1^{\lceil\frac{r}{2}\rceil},0^r,1^{\lceil\frac{r-1}{2}\rceil}\}.$$ Putting everything together, we find that the spectrum of $\mathfrak{g}_r$ is given by $$\{-1^{b_{r-1}+\lceil\frac{r}{2}\rceil},0^{a_{r-1}+r+\lceil\frac{r-1}{2}\rceil+1},1^{a_{r-1}+r+\lceil\frac{r-1}{2}\rceil+1},2^{b_{r-1}+\lceil\frac{r}{2}\rceil}\}.$$ The result follows.
\end{proof}

Considering the spectrum formula of Theorem~\ref{thm:221}, we are immediately led to the following.

\begin{corollary}\label{cor:221}
For $r\ge 1$, if $\mathfrak{g}_r=\mathfrak{p}^A\frac{2|\cdots|2|1}{2r+1}$, then $\mathfrak{g}_r$ has the log-concave spectrum property.
\end{corollary}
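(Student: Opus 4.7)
The plan is to deduce the corollary directly from the explicit spectrum formula in Theorem~\ref{thm:221}. By that theorem, the spectrum of $\mathfrak{g}_r$ takes the form $\{-1^{b_r}, 0^{a_r}, 1^{a_r}, 2^{b_r}\}$ for positive integers $a_r > b_r$. Thus, arranging the eigenvalues $-1 < 0 < 1 < 2$ in increasing order, the associated multiplicity sequence is
$$
(m_0, m_1, m_2, m_3) = (b_r,\; a_r,\; a_r,\; b_r).
$$

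To verify log-concavity, I would simply check the inequality $m_i^2 \geq m_{i-1} m_{i+1}$ at each interior index. At $i=1$, this becomes $a_r^2 \geq b_r \cdot a_r$, which reduces to $a_r \geq b_r$; at $i=2$, it becomes $a_r^2 \geq a_r \cdot b_r$, which again reduces to $a_r \geq b_r$. Since Theorem~\ref{thm:221} guarantees the strict inequality $a_r > b_r$, both conditions are satisfied, and the multiplicity sequence is log-concave.

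There is no real obstacle here, since the work has already been carried out in the proof of Theorem~\ref{thm:221}: the inductive construction there produces $a_r = a_{r-1} + r + \lceil (r-1)/2 \rceil + 1$ and $b_r = b_{r-1} + \lceil r/2 \rceil$, starting from $a_1 = 2$ and $b_1 = 1$, so a one-line induction (or merely the base inequality plus the observation that $a_r$ grows strictly faster than $b_r$) confirms $a_r > b_r$ for all $r \geq 1$. Consequently, $\mathfrak{g}_r$ has the log-concave spectrum property, and in particular the unimodal spectrum property.
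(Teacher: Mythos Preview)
Your proof is correct and matches the paper's approach: the paper simply notes that the corollary follows immediately from the spectrum formula of Theorem~\ref{thm:221}, and you have spelled out precisely why.
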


\begin{remark}
    Combining Theorems~\ref{thm:addm2s}, ~\ref{thm:base1ext}, and~\ref{thm:221} and Corollaries~\ref{cor:221} and~\ref{cor:222} with Corollaries~\ref{cor:vfes},~\ref{cor:hfes}, and~\ref{cor:vhfes}, we obtain similar results for related families of Frobenius, type-A seaweeds.
\end{remark}

\subsection{Parameterized by number of 4's}\label{sec:add4's}

In this subsection, we consider families of Frobneius, type-A seaweeds which are parameterized by the number of 4's in the defining compositions. More specifically, we determine the spectra of type-A seaweeds of the form $$\mathfrak{p}^A\frac{k|4|\cdots|4}{k+2|4|\cdots|4|2}\quad\quad\text{and}\quad\quad \mathfrak{p}^A\frac{k|4|\cdots|4|2}{k+2|4|\cdots|4}.$$

Similar to Section~\ref{sec:add2's}, we start by establishing a general result concerning the relationship between the spectra of the Frobenius, type-A seaweeds $$\mathfrak{g}=\mathfrak{p}^A\frac{a_1|\hdots|a_m|2}{b_1|\hdots|b_t},\quad \mathfrak{g}'=\mathfrak{p}^A\frac{a_1|\hdots|a_m|\overbrace{4|\cdots|4}^r}{b_1|\hdots|b_t|\underbrace{4|\cdots|4}_{r-1}|2},\quad\text{and}\quad \mathfrak{g}''=\mathfrak{p}^A\frac{a_1|\hdots|a_m|\overbrace{4|\cdots|4}^r|2}{b_1|\hdots|b_t|\underbrace{4|\cdots|4}_{r}},$$ for $r\ge 1$. For example, taking $\mathfrak{g}=\frac{1|2}{3}$ and $r=1$, we have $\mathfrak{g}'=\frac{1|4}{3|2}$ and  $\mathfrak{g}''=\frac{1|4|2}{3|4}$. The oriented meanders of these type-A seaweeds are illustrated in Figure~\ref{fig:4s}.

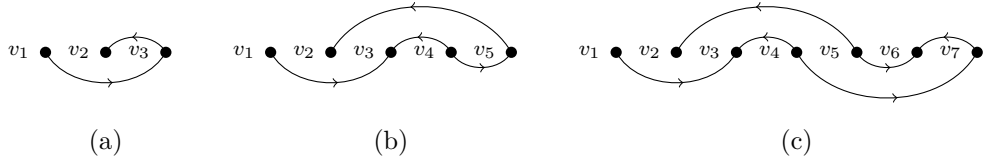
\begin{figure}[H]
    \centering
    \begin{tikzpicture}[scale=0.8]
\def\Node{\node [circle, fill, inner sep=1.5pt]}
\tikzset{->-/.style={decoration={
  markings,
  mark=at position .55 with {\arrow{>}}},postaction={decorate}}}
  \Node[label=left:\footnotesize {$v_1$}] at (0,0) {};
  \Node[label=left:\footnotesize {$v_2$}] at (1,0) {};
  \Node[label=left:\footnotesize {$v_3$}] at (2,0) {};
  \draw[->-] (0,0) to[bend right=60] (2,0);
  \draw[->-] (2,0) to[bend right=60] (1,0);
  \node at (1, -1.5) {(a)};
\end{tikzpicture}\quad\quad  \begin{tikzpicture}[scale=0.8]
\def\Node{\node [circle, fill, inner sep=1.5pt]}
\tikzset{->-/.style={decoration={
  markings,
  mark=at position .55 with {\arrow{>}}},postaction={decorate}}}
  \Node[label=left:\footnotesize {$v_1$}] at (0,0) {};
  \Node[label=left:\footnotesize {$v_2$}] at (1,0) {};
  \Node[label=left:\footnotesize {$v_3$}] at (2,0) {};
   \Node[label=left:\footnotesize {$v_4$}] at (3,0) {};
  \Node[label=left:\footnotesize {$v_5$}] at (4,0) {};
  \draw[->-] (0,0) to[bend right=60] (2,0);
  \draw[->-] (4,0) to[bend right=60] (1,0);
  \draw[->-] (3,0) to[bend right=60] (2,0);
   \draw[->-] (3,0) to[bend right=60] (4,0);
  \node at (2, -1.5) {(b)};
\end{tikzpicture}\quad\quad \begin{tikzpicture}[scale=0.8]
\def\Node{\node [circle, fill, inner sep=1.5pt]}
\tikzset{->-/.style={decoration={
  markings,
  mark=at position .55 with {\arrow{>}}},postaction={decorate}}}
  \Node[label=left:\footnotesize {$v_1$}] at (0,0) {};
  \Node[label=left:\footnotesize {$v_2$}] at (1,0) {};
  \Node[label=left:\footnotesize {$v_3$}] at (2,0) {};
   \Node[label=left:\footnotesize {$v_4$}] at (3,0) {};
  \Node[label=left:\footnotesize {$v_5$}] at (4,0) {};
   \Node[label=left:\footnotesize {$v_6$}] at (5,0) {};
  \Node[label=left:\footnotesize {$v_7$}] at (6,0) {};
  \draw[->-] (0,0) to[bend right=60] (2,0);
  \draw[->-] (4,0) to[bend right=60] (1,0);
  \draw[->-] (3,0) to[bend right=60] (2,0);
   \draw[->-] (3,0) to[bend right=60] (6,0);
    \draw[->-] (4,0) to[bend right=60] (5,0);
  \draw[->-] (6,0) to[bend right=60] (5,0);
  \node at (3, -1.5) {(c)};
\end{tikzpicture}
    \caption{Oriented meanders of (a) $\mathfrak{p}^A\frac{1|2}{3}$, (b) $\mathfrak{p}^A\frac{1|4}{3|2}$, and (c) $\mathfrak{p}^A\frac{1|4|2}{3|4}$}
    \label{fig:4s}
\end{figure}

\begin{lemma}\label{lem:add4s}
If $\mathfrak{g}=\mathfrak{p}^A\frac{a_1|\hdots|a_m|2}{b_1|\hdots|b_t}$ is a Frobenius, type-A seaweed with spectrum $S$, then $\mathfrak{g}'=\mathfrak{p}^A\frac{a_1|\hdots|a_m|4}{b_1|\hdots|b_t|2}$ is a Frobenius, type-A seaweed with spectrum $S\cup\{-1,0^3,1^3,2\}$.
\end{lemma}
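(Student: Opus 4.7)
The plan is to mimic the approach of Lemma~\ref{lem:add2s} and compare $\overrightarrow{M}(\mathfrak{g})$ and $\overrightarrow{M}(\mathfrak{g}')$ directly. Set $n=2+\sum_{i=1}^m a_i=\sum_{j=1}^t b_j$. The meander $\overrightarrow{M}(\mathfrak{g})$ has a single top edge $(v_n,v_{n-1})$ arising from the trailing part of size $2$ in the numerator. In $\mathfrak{g}'$, the trailing top part grows from size $2$ to size $4$ and a new bottom part of size $2$ is appended, so the last top block becomes $\{v_{n-1},v_n,v_{n+1},v_{n+2}\}$ with top edges $(v_{n+2},v_{n-1})$ and $(v_{n+1},v_n)$, and the new bottom block $\{v_{n+1},v_{n+2}\}$ contributes the bottom edge $(v_{n+1},v_{n+2})$. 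Thus $\overrightarrow{M}(\mathfrak{g}')$ is obtained from $\overrightarrow{M}(\mathfrak{g})$ by deleting $(v_n,v_{n-1})$ and inserting the three-edge zig-zag $v_{n-1}\leftarrow v_{n+2}\leftarrow v_{n+1}\rightarrow v_n$ (with appropriate orientations). Since this operation subdivides a single edge into a single path, $M(\mathfrak{g}')$ still consists of one path and no cycles, so by Corollary~\ref{cor:indA}, $\mathfrak{g}'$ is Frobenius.

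Next, I would argue that $w(P_{i,j}(\mathfrak{g}'))=w(P_{i,j}(\mathfrak{g}))$ for all $1\le i,j\le n$. Any path in $\overrightarrow{M}(\mathfrak{g})$ that does not use $(v_n,v_{n-1})$ is unchanged in $\overrightarrow{M}(\mathfrak{g}')$. Any path that does use $(v_n,v_{n-1})$ (contributing $+1$ to the weight when traversed left-to-right, $-1$ otherwise) is replaced in $\mathfrak{g}'$ by the zig-zag through $v_{n+1},v_{n+2}$, whose net weight is $-1+1+1=+1$ in the same direction. Hence these old path weights all contribute the multiset $S\cup\{0\}$ (the extra $0$ from Remark~\ref{rem:extra0}) to $\Sigma(\mathfrak{g}')$, as they do for $\Sigma(\mathfrak{g})$.

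The bookkeeping step is to enumerate the new entries of the spectrum matrix, i.e., the pairs $(i,j)$ with $e_{i,j}\in\mathfrak{g}'$ and $\{i,j\}\cap\{n+1,n+2\}\ne\emptyset$. A flag-preservation check shows that $v_{n+1}$ and $v_{n+2}$ both lie in the last top block (index $m+1$) and the new last bottom block (index $t+1$), while every $v_k$ with $k\le n$ lies in a strictly smaller bottom block. Hence $e_{i,j}\in\mathfrak{g}'$ with $j\in\{n+1,n+2\}$ forces $i\in\{n+1,n+2\}$, and $e_{i,j}\in\mathfrak{g}'$ with $i\in\{n+1,n+2\}$ forces $j\in\{n-1,n,n+1,n+2\}$. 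This leaves exactly the $8$ pairs $(i,j)$ with $i\in\{n+1,n+2\}$ and $j\in\{n-1,n,n+1,n+2\}$. Their weights can be read off from the new edges:
$$w(P_{n+1,n-1})=2,\ \ w(P_{n+1,n})=1,\ \ w(P_{n+1,n+1})=0,\ \ w(P_{n+1,n+2})=1,$$
$$w(P_{n+2,n-1})=1,\ \ w(P_{n+2,n})=0,\ \ w(P_{n+2,n+1})=-1,\ \ w(P_{n+2,n+2})=0,$$
contributing the multiset $\{-1,0^3,1^3,2\}$. Combined with the preserved contribution $S\cup\{0\}$ and the removal of the single extra $0$ dictated by Remark~\ref{rem:extra0} (now located at position $(n+2,n+2)$ rather than $(n,n)$), the spectrum of $\mathfrak{g}'$ equals $S\cup\{-1,0^3,1^3,2\}$.

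The main obstacle is the identification of the $8$ new basis elements, since one must simultaneously track the top- and bottom-flag constraints in the enlarged algebra; the weight computation itself is immediate from the local picture of the inserted edges. Everything else is a direct transcription of the argument used for Lemma~\ref{lem:add2s}.
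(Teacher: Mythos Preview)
Your proposal is correct and follows essentially the same approach as the paper's own proof: both describe the meander modification (delete the top edge $(v_n,v_{n-1})$ and insert the three-edge detour through $v_{n+1},v_{n+2}$), verify that $\mathfrak{g}'$ is Frobenius via Corollary~\ref{cor:indA}, establish $w(P_{i,j}(\mathfrak{g}'))=w(P_{i,j}(\mathfrak{g}))$ for $1\le i,j\le n$ by the same two-case argument, and then enumerate the new spectrum-matrix entries involving $v_{n+1}$ or $v_{n+2}$ to obtain the multiset $\{-1,0^3,1^3,2\}$. The only difference is organizational: the paper writes out the full formula for $S'$ and simplifies, while you directly list the eight new positions and their weights; the content is the same.
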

\begin{proof}
Let $n=2+\sum_{i=1}^ma_i=\sum_{j=1}^tb_j$. Evidently, $\overrightarrow{M}(\mathfrak{g}')$ is equal to $\overrightarrow{M}(\mathfrak{g})$ with the removal of the directed edge $(v_{n},v_{n-1})$, the addition of vertices $v_{n+1}$ and $v_{n+2}$, and the addition of directed edges $(v_{n+1},v_n)$, $(v_{n+2},v_{n-1})$, and $(v_{n+1},v_{n+2})$. Considering Corollary~\ref{cor:indA}, it follows that $\mathfrak{g}'$ is Frobenius. Now, setting $a_0=b_0=0$ and $a_{m+1}=2$, note that the spectrum of $\mathfrak{g}'$ is the multiset
\begin{align*}
    S'=\bigcup_{k=1}^{m+1}&\left\{w(P_{i,j}(\mathfrak{g}'))~|~1+\sum_{l=0}^{k-1}a_{l}\le j< i\le \sum_{l=1}^ka_l\right\} \\
    &\cup \bigcup_{k=1}^t\left\{w(P_{i,j}(\mathfrak{g}'))~|~1+\sum_{l=0}^{k-1}b_{l}\le i< j\le \sum_{l=1}^kb_l\right\} \\
    &\cup\{0^{n+1},w(P_{n+1,n-1}(\mathfrak{g}')),w(P_{n+1,n}(\mathfrak{g}')),w(P_{n+1,n+2}(\mathfrak{g}'))\}\\
    &\cup\{w(P_{n+2,n-1}(\mathfrak{g}')),w(P_{n+2,n}(\mathfrak{g}')),w(P_{n+2,n+1}(\mathfrak{g}'))\};
\end{align*}
that is, 
\begin{align*}
    S'=\bigcup_{k=1}^{m+1}&\left\{w(P_{i,j}(\mathfrak{g}'))~|~1+\sum_{l=0}^{k-1}a_{l}\le j< i\le \sum_{l=1}^ka_l\right\} \\
    &\cup \bigcup_{k=1}^t\left\{w(P_{i,j}(\mathfrak{g}'))~|~1+\sum_{l=0}^{k-1}b_{l}\le i< j\le \sum_{l=1}^kb_l\right\} \\
    &\cup\{-1,0^{n+2},1^3,2\}.
\end{align*}
We claim that $w(P_{i,j}(\mathfrak{g}'))=w(P_{i,j}(\mathfrak{g}))$, for $1\le i\neq j\le n$. To establish the claim, we break it into two cases.
\bigskip

\noindent
\textbf{Case 1:} $\{v_{n-1},v_n\}$ is not an edge in $P_{i,j}(\mathfrak{g})$. Considering the relationship between $\overrightarrow{M}(\mathfrak{g})$ and $\overrightarrow{M}(\mathfrak{g}')$ outlined above, we have that $P_{i,j}(\mathfrak{g})=P_{i,j}(\mathfrak{g}')$, and the claim follows. 
\bigskip

\noindent
\textbf{Case 2:} $\{v_{n-1},v_n\}$ is an edge in $P_{i,j}(\mathfrak{g})$. In this case $P_{i,j}(\mathfrak{g})$ can be decomposed into three (possibly trivial) subpaths as follows: either
\begin{itemize}
    \item $P_{i,j}(\mathfrak{g})$ is equal to the concatenation of the paths $P_{i,n-1}(\mathfrak{g})$, $P_{n-1,n}(\mathfrak{g})$, and $P_{n,j}(\mathfrak{g})$, or 
    \item $P_{i,j}(\mathfrak{g})$ is equal to the concatenation of the paths $P_{i,n}(\mathfrak{g})$, $P_{n,n-1}(\mathfrak{g})$, and $P_{n-1,j}(\mathfrak{g})$.
\end{itemize}
Assume that $P_{i,j}(\mathfrak{g})$ is equal to the concatenation of the paths $P_{i,n-1}(\mathfrak{g})$, $P_{n-1,n}(\mathfrak{g})$, and $P_{n,j}(\mathfrak{g})$; the other case follows via a similar argument. Considering the relationship between $\overrightarrow{M}(\mathfrak{g})$ and $\overrightarrow{M}(\mathfrak{g}')$ outlined above, it follows that $P_{i,j}(\mathfrak{g}')$ is the concatenation of the paths $P_{i,n-1}(\mathfrak{g}')$, $P_{n-1,n+2}(\mathfrak{g}')$, $P_{n+2,n+1}(\mathfrak{g}')$, $P_{n+1,n}(\mathfrak{g}')$, and $P_{n,j}(\mathfrak{g}')$, where $P_{i,n-1}(\mathfrak{g}'))=P_{i,n-1}(\mathfrak{g}))$ and $P_{n,j}(\mathfrak{g}'))=P_{n,j}(\mathfrak{g}))$. Thus,
\begin{align*}
    w(P_{i,j}(\mathfrak{g}'))&=w(P_{i,n-1}(\mathfrak{g}'))+w(P_{n-1,n+2}(\mathfrak{g}'))+w(P_{n+2,n+1}(\mathfrak{g}'))+w(P_{n+1,n}(\mathfrak{g}'))+w(P_{n,j}(\mathfrak{g}')) \\
    &=w(P_{i,n-1}(\mathfrak{g}'))-1-1+1+w(P_{n,j}(\mathfrak{g}')) \\
    &=w(P_{i,n-1}(\mathfrak{g}))-1+w(P_{n,j}(\mathfrak{g})) \\
    &=w(P_{i,n-1}(\mathfrak{g}))+w(P_{n-1,n}(\mathfrak{g}))+w(P_{n,j}(\mathfrak{g})) \\
    &=w(P_{i,j}(\mathfrak{g})),
\end{align*}
establishing the claim.
\bigskip

\noindent
Therefore,
\begin{align*}
    S'=\bigcup_{k=1}^{m+1}&\left\{w(P_{i,j}(\mathfrak{g}))~|~1+\sum_{l=0}^{k-1}a_{l}\le j< i\le \sum_{l=1}^ka_l\right\} \\
    &\cup \bigcup_{k=1}^t\left\{w(P_{i,j}(\mathfrak{g}))~|~1+\sum_{l=0}^{k-1}b_{l}\le i< j\le \sum_{l=1}^kb_l\right\} \\
    &\cup\{-1,0^{n+2},1^3,2\};
\end{align*}
that is, $$S'=S\cup\{-1,0^3,1^3,2\}.$$
\end{proof}

\begin{theorem}\label{thm:addm4s}
If $\mathfrak{g}=\mathfrak{p}^A\frac{a_1|\hdots|a_m|2}{b_1|\hdots|b_t}$ is a Frobenius, type-A seaweed with spectrum $S$, then
\begin{enumerate}
    \item[\textup{(1)}] $$\mathfrak{g}'=\mathfrak{p}^A\frac{a_1|\hdots|a_m|\overbrace{4|\cdots|4}^r}{b_1|\hdots|b_t|\underbrace{4|\cdots|4}_{r-1}|2},$$ for $r\ge 1$, is a Frobenius, type-A seaweed with spectrum $S\cup\{-1^{2r-1},0^{6r-3},1^{6r-3},2^{2r-1}\}$; and
    \item[\textup{(2)}] $$\mathfrak{g}'=\mathfrak{p}^A\frac{a_1|\hdots|a_m|\overbrace{4|\cdots|4}^r|2}{b_1|\hdots|b_t|\underbrace{4|\cdots|4}_{r}},$$ for $r\ge 1$, is a Frobenius, type-A seaweed with spectrum $S\cup\{-1^{2r},0^{6r},1^{6r},2^{2r}\}$.
\end{enumerate}
\end{theorem}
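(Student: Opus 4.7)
The plan is to mirror the inductive argument used in Theorem~\ref{thm:addm2s}, substituting Lemma~\ref{lem:add4s} for the role that Lemma~\ref{lem:add2s} played there. The key feature of Lemma~\ref{lem:add4s} is that, applied to any Frobenius seaweed of the form $\mathfrak{p}^A\frac{a'_1|\hdots|a'_{m'}|2}{b'_1|\hdots|b'_{t'}}$, it produces a Frobenius seaweed $\mathfrak{p}^A\frac{a'_1|\hdots|a'_{m'}|4}{b'_1|\hdots|b'_{t'}|2}$ whose spectrum is the old spectrum augmented by exactly one copy of $\{-1,0^3,1^3,2\}$. Combined with Corollary~\ref{cor:vfes}, which lets one swap numerator and denominator without altering the spectrum, this is enough to grow the compositions on either side.

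For the base case $r=1$, Part~(1) is the direct output of Lemma~\ref{lem:add4s} applied to $\mathfrak{g}$, and the augmentation $\{-1,0^3,1^3,2\}$ is exactly the specialization of $\{-1^{2r-1},0^{6r-3},1^{6r-3},2^{2r-1}\}$ at $r=1$. For Part~(2) at $r=1$, I would apply Corollary~\ref{cor:vfes} to the Part~(1) output to obtain $\mathfrak{p}^A\frac{b_1|\hdots|b_t|2}{a_1|\hdots|a_m|4}$ (whose top now ends in $2$), apply Lemma~\ref{lem:add4s} a second time to reach $\mathfrak{p}^A\frac{b_1|\hdots|b_t|4}{a_1|\hdots|a_m|4|2}$ with spectrum $S\cup\{-1^2,0^6,1^6,2^2\}$, and then apply Corollary~\ref{cor:vfes} once more to recover the target $\mathfrak{p}^A\frac{a_1|\hdots|a_m|4|2}{b_1|\hdots|b_t|4}$.

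For the inductive step, assume the theorem holds at level $r-1\ge 1$. The inductive hypothesis on Part~(2) supplies the Frobenius seaweed $\mathfrak{p}^A\frac{a_1|\hdots|a_m|\overbrace{4|\cdots|4}^{r-1}|2}{b_1|\hdots|b_t|\underbrace{4|\cdots|4}_{r-1}}$ with spectrum $S\cup\{-1^{2r-2},0^{6r-6},1^{6r-6},2^{2r-2}\}$, which is already in the form demanded by Lemma~\ref{lem:add4s}. Applying the lemma promotes the terminal top $2$ to a $4$ and appends a $2$ to the bottom, producing the Part~(1) target at level $r$ with spectrum incremented by $\{-1,0^3,1^3,2\}$; a direct count verifies this equals $S\cup\{-1^{2r-1},0^{6r-3},1^{6r-3},2^{2r-1}\}$. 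Part~(2) at level $r$ is then obtained by repeating the three-step chain of the base case (Corollary~\ref{cor:vfes}, then Lemma~\ref{lem:add4s}, then Corollary~\ref{cor:vfes}) applied to the Part~(1) output.

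Since the framework parallels Theorem~\ref{thm:addm2s} exactly, I do not anticipate any substantive obstacle. The one point requiring care is bookkeeping: checking after each swap-and-augment cycle that the composition really has the advertised shape and that the multiplicities aggregate as $\{-1^{2r-2},0^{6r-6},1^{6r-6},2^{2r-2}\}\cup\{-1,0^3,1^3,2\}=\{-1^{2r-1},0^{6r-3},1^{6r-3},2^{2r-1}\}$ (and the analogous identity for Part~(2)). This is mechanical, but will occupy most of the written argument.
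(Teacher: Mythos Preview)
Your proposal is correct and follows essentially the same approach as the paper: induction on $r$, with Lemma~\ref{lem:add4s} supplying each single-step augmentation and Corollary~\ref{cor:vfes} used to swap top and bottom so that the lemma can be applied again, exactly paralleling the proof of Theorem~\ref{thm:addm2s}. The paper's proof is structured identically, including the same base case and the same three-step chain for Part~(2).
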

\begin{proof}
By induction on $r$. An application of Lemma~\ref{lem:add4s} establishes the result for $r=1$ in (1). Then applying Corollary~\ref{cor:vfes}, followed by Lemma~\ref{lem:add4s}, and then Corollary~\ref{cor:vfes} again, to $$\mathfrak{p}^A\frac{a_1|\hdots|a_m|4}{b_1|\hdots|b_t|2}$$ establishes the case $r=1$ for (2). Assume the result holds for $r-1\ge 1$. In particular, assume that the algebra $$\mathfrak{p}^A\frac{a_1|\hdots|a_m|\overbrace{4|\cdots|4}^{r-1}|2}{b_1|\hdots|b_t|\underbrace{4|\cdots|4}_{r-1}}$$ is Frobenius with spectrum equal to $S\cup \{-1^{2r-2},0^{6r-6},1^{6r-6},2^{2r-2}\}$. Applying Lemma~\ref{lem:add4s}, we find that the algebra $$\mathfrak{g}'=\mathfrak{p}^A\frac{a_1|\hdots|a_m|\overbrace{4|\cdots|4}^r}{b_1|\hdots|b_t|\underbrace{4|\cdots|4}_{r-1}|2}$$ is Frobenius with spectrum equal to $S\cup \{-1^{2r-1},0^{6r-3},1^{6r-3},2^{2r-1}\}$, as desired. Now, applying Corollary~\ref{cor:vfes}, followed by Lemma~\ref{lem:add4s}, and then Corollary~\ref{cor:vfes} again, to $\mathfrak{g}'$, we find that the algebra $$\mathfrak{g}''=\mathfrak{p}^A\frac{a_1|\hdots|a_m|\overbrace{4|\cdots|4}^r|2}{b_1|\hdots|b_t|\underbrace{4|\cdots|4}_{r}}$$ is Frobenius with spectrum equal to $S\cup\{-1^{2r},0^{6r},1^{6r},2^{2r}\}$, as desired. The result follows by induction.
\end{proof}

Considering the spectrum formula of Theorem~\ref{thm:addm4s}, we are immediately led to the following.

\begin{corollary}\label{cor:444}
Let $\mathfrak{g}=\mathfrak{p}^A\frac{a_1|\hdots|a_m|2}{b_1|\hdots|b_t}$ be a Frobenius, type-A seaweed with the unimodal spectrum property. If
$$\mathfrak{g}'=\mathfrak{p}^A\frac{a_1|\hdots|a_m|\overbrace{4|\cdots|4}^r}{b_1|\hdots|b_t|\underbrace{4|\cdots|4}_{r-1}|2}\quad\text{and}\quad \mathfrak{g}''=\mathfrak{p}^A\frac{a_1|\hdots|a_m|\overbrace{4|\cdots|4}^r|2}{b_1|\hdots|b_t|\underbrace{4|\cdots|4}_{r}},$$ then $\mathfrak{g}'$ and $\mathfrak{g}''$ have the unimodal spectrum property.
\end{corollary}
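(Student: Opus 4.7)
The plan is to read off the spectrum of $\mathfrak{g}'$ (resp.\ $\mathfrak{g}''$) from Theorem~\ref{thm:addm4s} and verify directly that the added multiset preserves unimodality of the multiplicity sequence. Let $m_\lambda$ denote the multiplicity of the integer $\lambda$ in $S$, and $\tilde m_\lambda$ the corresponding multiplicity in the spectrum of $\mathfrak{g}'$. By Theorem~\ref{thm:addm4s}(1), $\tilde m_\lambda = m_\lambda$ for $\lambda \notin \{-1, 0, 1, 2\}$, while the added multiset contributes $2r-1$ at $\lambda = -1, 2$ and $6r-3$ at $\lambda = 0, 1$. Since the unimodal spectrum property for $S$ forces the peak to lie at $\{0,1\}$ (the integers closest to $1/2$), we have $m_{-2}\le m_{-1}\le m_0$ and $m_1 \ge m_2\ge m_3$.

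To check unimodality about $1/2$ for $(\tilde m_\lambda)$, note first that transitions outside $\{-2,-1,0,1,2,3\}$ are inherited from $S$, and the transitions $\tilde m_{-2} \le \tilde m_{-1}$ and $\tilde m_2 \ge \tilde m_3$ hold because only $\tilde m_{-1}$ and $\tilde m_2$ are increased by nonnegative amounts. Since $\tilde m_0$ and $\tilde m_1$ receive the same increment $6r-3$, the ordering between them matches that between $m_0$ and $m_1$, so the location of the peak is preserved. The crucial inequality is
$$\tilde m_0 - \tilde m_{-1} = (m_0 - m_{-1}) + (4r-2),$$
which is nonnegative because $m_0 \ge m_{-1}$ and $4r-2 \ge 2$ for $r\ge 1$. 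A symmetric calculation gives $\tilde m_1 - \tilde m_2 = (m_1-m_2) + (4r-2) \ge 0$, completing the verification for $\mathfrak{g}'$.

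The proof for $\mathfrak{g}''$ is identical after replacing $2r-1$ and $6r-3$ by $2r$ and $6r$ respectively; the key inequality becomes $\tilde m_0 - \tilde m_{-1} = (m_0 - m_{-1}) + 4r \ge 4 > 0$ for $r\ge 1$. The argument poses no real obstacle beyond a routine bookkeeping check: one must confirm that in each added multiset the increment at the central positions $\{0,1\}$ exceeds the increment at the outer positions $\{-1,2\}$ by enough to absorb the (nonpositive) drop $m_0 - m_{-1}$, which is immediate from $6r-3 \ge 2r-1$ and $6r \ge 2r$ for $r\ge 1$. No structural input beyond Theorem~\ref{thm:addm4s} and the hypothesis that $S$ is unimodal about $1/2$ is needed.
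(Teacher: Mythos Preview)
Your proof is correct and follows the same approach implicit in the paper, which simply notes that the corollary follows immediately from the spectrum formula of Theorem~\ref{thm:addm4s}; you have just written out the multiplicity bookkeeping that the paper leaves to the reader. (One harmless slip: in your closing summary you call $m_0 - m_{-1}$ a ``nonpositive drop'', but you correctly showed earlier that $m_0 - m_{-1}\ge 0$, so there is nothing to absorb and the inequality holds a fortiori.)
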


As in Corollary~\ref{cor:222}, ``unimodal" cannot be strengthened to ``log-concave" in; the conclusion of Corollary~\ref{cor:444}. A counterexample is provided by the following theorem (see Remark~\ref{rem:nlcc2}) which is a corollary of Theorems~\ref{thm:k2} and \ref{thm:addm4s}.

\begin{theorem}\label{thm:base2ext} Let $k=2m-1\ge 1.$
\begin{enumerate}
    \item[\textup{(1)}] For $k=2m-1\ge 1$, if $$\mathfrak{g}_{k,r}=\mathfrak{p}^A\frac{k|\overbrace{4|\cdots|4}^r}{k+2|\underbrace{4|\cdots|4}_{r-1}|2},$$ then $\mathfrak{g}_{k,r}$ has the unimodal spectrum property with spectrum equal to $\{-1^{2r},0^{6r-1},1^{6r-1},2^{2r}\}$ if $k=1$, $\{-2,-1^{2r+2},0^{6r+2},1^{6r+2},2^{2r+2},3\}$ if $k=3$, and
    \begin{align*}
        \{-m,(-m+1)^3&,-1^{2(k+r)-5},0^{2(k+3r)-4},1^{2(k+3r)-4},2^{2(k+r)-5},m^3,m+1\}\\
        &\cup\bigcup_{i=2}^{m-2}\{(-m+i)^{4i-2},(m-i+1)^{4i-2}\},
    \end{align*}
    if $k>3$.
\end{enumerate}
\begin{enumerate}
\item[\textup{(2)}] For $k=2m-1\ge 1$, if $$\mathfrak{g}_{k,r}=\mathfrak{p}^A\frac{k|\overbrace{4|\cdots|4}^r|2}{k+2|\underbrace{4|\cdots|4}_{r}},$$ then $\mathfrak{g}_{k,r}$ has the unimodal spectrum property with spectrum equal to $\{-1^{2r+1},0^{6r+2},1^{6r+2},2^{2r+1}\}$ if $k=1$, $\{-2,-1^{2r+3},0^{6r+5},1^{6r+5},2^{2r+3},3\}$ if $k=3$, and
\begin{align*}
        \{-m,(-m+1)^3&,-1^{2(k+r)-4},0^{2(k+3r)-1},1^{2(k+3r)-1},2^{2(k+r)-4},m^3,m+1\}\\
        &\cup\bigcup_{i=2}^{m-2}\{(-m+i)^{4i-2},(m-i+1)^{4i-2}\},
    \end{align*}
    if $k>3$.
\end{enumerate}
\end{theorem}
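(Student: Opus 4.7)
My plan is to derive Theorem~\ref{thm:base2ext} as a direct corollary of Theorems~\ref{thm:k2} and~\ref{thm:addm4s}, together with a small direct computation handling the $k=1$ boundary. The key structural observation is that the seaweeds $\mathfrak{g}_{k,r}$ in parts (1) and (2) are precisely of the forms $\mathfrak{g}'$ and $\mathfrak{g}''$ appearing in Theorem~\ref{thm:addm4s}, with base algebra $\mathfrak{p}^A\frac{k|2}{k+2}$ (i.e., a single part $a_1=k$ before the trailing $2$ on top, and $b_1=k+2$ on the bottom). Letting $S$ denote the spectrum of $\mathfrak{p}^A\frac{k|2}{k+2}$, Theorem~\ref{thm:addm4s} then immediately yields spectrum $S\cup\{-1^{2r-1},0^{6r-3},1^{6r-3},2^{2r-1}\}$ in part (1) and spectrum $S\cup\{-1^{2r},0^{6r},1^{6r},2^{2r}\}$ in part (2). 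The proof thus reduces to determining $S$ and merging multisets.

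I would handle $S$ in three cases matching those stated in the theorem. For $k=2m-1>1$, Theorem~\ref{thm:k2} supplies $S$ explicitly as $\{-2,-1^3,0^5,1^5,2^3,3\}$ when $k=3$ and as $\{-m,(-m+1)^3,0^{2k-1},1^{2k-1},m^3,m+1\}\cup\bigcup_{i=2}^{m-1}\{(-m+i)^{4i-2},(m-i+1)^{4i-2}\}$ when $k>3$. For $k=1$ (which is not covered by Theorem~\ref{thm:k2}), I would read off $S=\{-1,0^2,1^2,2\}$ by direct inspection of $\overrightarrow{M}(\mathfrak{p}^A\frac{1|2}{3})$, which has a single bottom edge $v_1\to v_3$ and a single top edge $v_3\to v_2$, leaving only a handful of path weights to enumerate. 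Taking the multiset union with the appropriate contribution from Theorem~\ref{thm:addm4s} then yields each formula in the theorem statement after basic exponent arithmetic.

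For the unimodal spectrum property, I would invoke Corollary~\ref{cor:444}: the base $\mathfrak{p}^A\frac{k|2}{k+2}$ possesses the log-concave (hence unimodal) spectrum property by Corollary~\ref{cor:k2log} (with the $k=1$ case directly log-concave via the multiplicity sequence $1,2,2,1$), so both families $\mathfrak{g}_{k,r}$ inherit the unimodal spectrum property. The only genuine obstacle is the bookkeeping in the $k>3$ regime, where the multiplicities of $-1$ and $2$ in $S$ are hidden inside the indexed union via the $i=m-1$ summand, which contributes $\{-1^{4m-6},2^{4m-6}\}$. To reach the stated closed form one extracts that summand, pools it with $\{-1^{2r-1},2^{2r-1}\}$ in part (1) or $\{-1^{2r},2^{2r}\}$ in part (2) from Theorem~\ref{thm:addm4s}, and verifies the identities $2k-1+6r-3=2(k+3r)-4$ and $4m-6+2r-1=2(k+r)-5$ (with analogous $2k-1+6r=2(k+3r)-1$ and $4m-6+2r=2(k+r)-4$ for part (2)). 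These algebraic checks are routine but should be executed carefully to match the stated exponents.
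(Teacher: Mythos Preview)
Your proposal is correct and is precisely the argument the paper intends: the paper states that Theorem~\ref{thm:base2ext} ``is a corollary of Theorems~\ref{thm:k2} and~\ref{thm:addm4s}'' and gives no separate proof, so your plan of applying Theorem~\ref{thm:addm4s} to the base $\mathfrak{p}^A\frac{k|2}{k+2}$, reading off $S$ from Theorem~\ref{thm:k2} (with the $k=1$ case computed directly), and invoking Corollary~\ref{cor:444} for unimodality is exactly on target. Your bookkeeping in the $k>3$ regime (extracting the $i=m-1$ summand to isolate the multiplicities of $-1$ and $2$) and the exponent identities are all correct.
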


\begin{remark}\label{rem:nlcc2}
    Utilizing Theorem~\ref{thm:base2ext}, we can construct examples of Frobenius, type-A seaweeds which do not have the log-concave spectrum property. For example, let $\mathfrak{g}=\mathfrak{p}^A\frac{5|2}{7}$ and $\mathfrak{g}'=\mathfrak{p}^A\frac{5|4|4|2}{7|4|4}.$ Recall from Theorem~\ref{thm:k2} that the spectrum of $\mathfrak{g}$ is $\{-3,-2^3,-1^6,0^9,1^9,2^6,3^3,4\}$. On the other hand, by Theorem~\ref{thm:addm4s}, we find that the spectrum of $\mathfrak{g}'$ is $\{-3,-2^3,-1^{10},0^{21},1^{21},2^{10},3^3,4\}.$ Clearly, $\mathfrak{g}'$ does not have the log-concave spectrum property.
\end{remark}

\begin{remark}
Note that for fixed $k$ and varying values of $r$, the collection of distinct eigenvalues in the spectra of the Frobenius, type-A seaweeds considered in Theorem~\ref{thm:base2ext} is fixed.
\end{remark}

\begin{remark}
    Combining Theorems~\ref{thm:addm4s} and~\ref{thm:base2ext} and Corollary~\ref{cor:444} with Corollaries~\ref{cor:vfes},~\ref{cor:hfes}, and~\ref{cor:vhfes}, we obtain similar results for related families of Frobenius, type-A seaweeds.
\end{remark}

Considering the results found above, as well as some experimental evidence, we are naturally led to the following conjectures.

\begin{conj}\label{conj:stab1}
Let $\mathfrak{g}=\mathfrak{p}^A\frac{a_1|\hdots|a_m|k}{b_1|\hdots|b_t}$, for $k\ge 1$, be a Frobenius, type-A seaweed with spectrum $S$. If
$$\mathfrak{g}'=\mathfrak{p}^A\frac{a_1|\hdots|a_m|\overbrace{2k|\cdots|2k}^r}{b_1|\hdots|b_t|\underbrace{2k|\cdots|2k}_{r-1}|k}\quad\text{or}\quad\mathfrak{g}'=\mathfrak{p}^A\frac{a_1|\hdots|a_m|\overbrace{2k|\cdots|2k}^r|k}{b_1|\hdots|b_t|\underbrace{2k|\cdots|2k}_{r}},$$ for $r\ge 1$, then $\mathfrak{g}'$ is a Frobenius, type-A seaweed with spectrum $S\cup S'$, where $S'$ is a multiset consisting of values contained in $S$. Moreover, if $\mathfrak{g}$ has the unimodal spectrum property, then so does $\mathfrak{g}'$.
\end{conj}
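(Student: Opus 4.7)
The plan is to follow the induction scheme of Theorems~\ref{thm:addm2s} and~\ref{thm:addm4s}: induct on $r$ and reduce to a base lemma handling $r=1$, alternating between the two forms of $\mathfrak{g}'$ via Corollary~\ref{cor:vfes}. The crux is therefore to prove the following one-step statement: given a Frobenius seaweed $\mathfrak{g}=\mathfrak{p}^A\frac{a_1|\cdots|a_m|k}{b_1|\cdots|b_t}$ with spectrum $S$, the algebra $\mathfrak{g}'=\mathfrak{p}^A\frac{a_1|\cdots|a_m|2k}{b_1|\cdots|b_t|k}$ is Frobenius with spectrum $S\cup S'$, where every value of $S'$ appears in $S$, and $\mathfrak{g}'$ inherits the unimodal spectrum property from $\mathfrak{g}$.

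Setting $n=k+\sum_i a_i$, I would first describe the local modification of the meander explicitly: $\overrightarrow{M}(\mathfrak{g}')$ is obtained from $\overrightarrow{M}(\mathfrak{g})$ by deleting the $\lfloor k/2\rfloor$ top arcs on $v_{n-k+1},\ldots,v_n$, adjoining $k$ new vertices $v_{n+1},\ldots,v_{n+k}$, installing the $k$ nested top arcs $\{v_{n-k+i},v_{n+k-i+1}\}$ for $i=1,\ldots,k$, and installing the $\lfloor k/2\rfloor$ nested bottom arcs $\{v_{n+i},v_{n+k-i+1}\}$ for $i=1,\ldots,\lfloor k/2\rfloor$. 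The key structural observation is that the new top arcs pair each old vertex $v_j$ (for $n-k+1\le j\le n$) with the new vertex $v_{2n-j+1}$, and under this pairing the new bottom arcs are reflections of the deleted top arcs; hence ``folding'' $\overrightarrow{M}(\mathfrak{g}')$ along this pairing recovers $\overrightarrow{M}(\mathfrak{g})$. This yields both the Frobenius property for $\mathfrak{g}'$ (its meander remains a single path by Corollary~\ref{cor:indA}) and the identity $w(P_{i,j}(\mathfrak{g}'))=w(P_{i,j}(\mathfrak{g}))$ for all $1\le i\ne j\le n$, proved by the same case split used in Lemmas~\ref{lem:add2s} and~\ref{lem:add4s}: paths avoiding the deleted top arcs are literally unchanged, while paths using them acquire a detour through the new block whose forward- and backward-arc contributions cancel.

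It remains to analyze the ``new'' weights, namely $w(P_{i,j}(\mathfrak{g}'))$ whenever $i$ or $j$ lies in $\{n+1,\ldots,n+k\}$, together with the $k$ additional diagonal zeros. Using the folding pairing, one can express each such weight as $w(P_{i,v}(\mathfrak{g}))+c$ for an old vertex $v\le n$ and a constant $c\in\{0,1\}$ determined by which new arcs the path traverses; this exhibits $S'$ as a disjoint union of shifts by $0$ or $1$ of sub-multisets of the extended spectrum of $\mathfrak{g}$. Because $S$ consists of an unbroken interval of integers centered at $\tfrac12$ (see \textbf{\cite{unbroken}}) and the paths traversing the new block have weights bounded by the nesting depth $k$, a direct check shows that the shifted values remain inside the range of $S$; in particular every value appearing in $S'$ also appears in $S$.

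The main obstacle is the unimodality conclusion. Even with $S'$ contained in $S$ as a set, preservation of unimodality about $\tfrac12$ is not automatic: one must rule out that $S'$ piles disproportionate mass on values far from $\tfrac12$. The folding symmetry forces $S'$ to be palindromic about $\tfrac12$, so my plan is to show that $S'$ is itself unimodal about $\tfrac12$ by identifying it, up to the shift and explicit diagonal zeros, with the spectrum of a smaller auxiliary seaweed of ``last-block'' type (analogous to $\mathfrak{p}^A\frac{k|k}{2k}$), which can be treated by a secondary induction. The hardest piece will be verifying the multiplicity inequalities defining unimodality of $S\cup S'$ from those of $S$ alone; I anticipate this may require strengthening the inductive hypothesis to a dominance relation between the multiplicities added by $S'$ and the consecutive gaps in $S$, effectively a discrete interlacing argument carried through the induction on $r$.
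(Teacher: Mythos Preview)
The statement you are trying to prove is Conjecture~\ref{conj:stab1}: the paper does \emph{not} prove it. It is offered as a conjecture extrapolated from the two proven special cases $k=1$ (Theorem~\ref{thm:addm2s}) and $k=2$ (Theorem~\ref{thm:addm4s}) together with experimental evidence, so there is no proof in the paper against which to compare your attempt.

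That said, parts of your plan are sound. The induction on $r$ alternating between the two forms via Corollary~\ref{cor:vfes} is exactly the scheme used for $k=1,2$, and your folding description of $\overrightarrow{M}(\mathfrak{g}')$ is correct: each deleted top arc of the terminal $k$-block is replaced by a length-$3$ detour through the new vertices whose weight is again $+1$, so $\mathfrak{g}'$ is Frobenius and $w(P_{i,j}(\mathfrak{g}'))=w(P_{i,j}(\mathfrak{g}))$ for $1\le i\neq j\le n$. This is the natural common generalization of Lemmas~\ref{lem:add2s} and~\ref{lem:add4s}.

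The genuine gaps are the two claims the paper leaves open. First, ``a direct check shows the shifted values remain inside the range of $S$'' is not a proof: for general $k$ you have not identified the multiset $S'$, and bounding the new path weights by the nesting depth is not enough, since you would still need that the (a priori $\mathfrak{g}$-dependent) interval of $S$ is at least $[-k+1,k]$. For $k=1,2$ the paper computes $S'$ explicitly; for arbitrary $k$ that computation is the missing lemma. Second, and more seriously, the unimodality step is only a plan. Even granting that $S'$ is palindromic about $\tfrac12$ and unimodal, $S\cup S'$ need not be unimodal; you correctly note that one would need a dominance/interlacing inequality between the increments contributed by $S'$ and the consecutive gaps in $S$, but you neither state such an inequality precisely nor show it propagates through the induction. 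This is exactly the obstruction that led the authors to leave the general $k$ case as a conjecture.
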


\begin{conj}\label{conj:stab2}
If $$\mathfrak{g}=\mathfrak{p}^A\frac{\overbrace{2k|\cdots|2k}^r|1}{2kr+1},$$ for $k,r\ge 1$, then $\mathfrak{g}$ is Frobenius, and the set of distinct eigenvalues contained in the spectrum of $\mathfrak{g}$ is equal to the collection of integers contained in the interval
$$\begin{cases}
    [-2k+1,2k], & \text{if}~r~\text{is odd}; \\
    [-k,k+1], & \text{if}~r~\text{is even}.
\end{cases}$$
Moreover, $\mathfrak{g}$ has the unimodal spectrum property.
\end{conj}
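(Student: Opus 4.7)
The plan is to proceed by induction on $r$, exploiting the parity pattern in the conjectured spectrum to set up a two-step recursion $r \mapsto r+2$. Write $\mathfrak{g}_r = \mathfrak{p}^A\frac{\overbrace{2k|\cdots|2k}^r|1}{2kr+1}$ and set $n = 2kr+1$. First I would check that $\mathfrak{g}_r$ is Frobenius: the meander $M(\mathfrak{g}_r)$ has $r$ nested-arc top-blocks of size $2k$, a singleton top-block at $v_n$, and a single nested-arc bottom-block of size $n$. Starting at $v_n$ and following the unique alternation of top and bottom edges, one can verify (or prove by induction, using the bottom arc $(v_1,v_n)$ to peel off $v_n$ and reduce to a meander of the same form with smaller $r$) that $M(\mathfrak{g}_r)$ is a single Hamiltonian path, so Corollary~\ref{cor:indA} gives $\ind \mathfrak{g}_r = 0$.

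For the spectrum, the base cases are $r=1$ and $r=2$. The case $r=1$ is $\mathfrak{p}^A\frac{2k|1}{2k+1}$, so Theorem~\ref{thm:base1} supplies an explicit log-concave (hence unimodal) spectrum whose set of distinct eigenvalues is exactly $\{-2k+1, -2k+2, \hdots, 2k\}$. For $r=2$ I would compute $M(\mathfrak{g}_2)$ directly and verify, by an argument analogous to Lemma~\ref{lem:2...21} and Theorem~\ref{thm:221}, that the weights of paths exhaust the integers in $[-k, k+1]$ and that the resulting multiplicity sequence is unimodal.

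The inductive step from $r$ to $r+2$ is the core of the argument. The natural tool is a weight-preserving compression $\mathcal{S}: \overrightarrow{M}(\mathfrak{g}_{r+2}) \to \overrightarrow{M}(\mathfrak{g}_r)$ in the spirit of the transformations used throughout Section~\ref{sec:mp}: identify each of the two inserted $2k$-blocks together with the pair of bottom arcs they displace, and show that any path entering these inserted blocks traverses them with net weight contribution zero. If such a compression produces a weight-preserving surjection onto the paths of $\overrightarrow{M}(\mathfrak{g}_r)$, then no new distinct eigenvalues appear when moving from $\mathfrak{g}_r$ to $\mathfrak{g}_{r+2}$, and combined with the unbroken-spectrum result of~\textbf{\cite{unbroken}} this would pin down the set of distinct eigenvalues as the claimed interval.

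The main obstacle I anticipate is the unimodality statement. Unlike the stabilization results in Corollaries~\ref{cor:222} and~\ref{cor:444}, here the denominator is a single parabolic block, so inserting two $2k$-blocks in the numerator also reshapes the entire bottom nested-arc family; consequently the fibers of $\mathcal{S}$ have non-uniform size and the multiplicity increments depend intricately on $k$ and on the parity of $r$. Proving that these increments preserve unimodality — the stronger log-concavity very likely fails here, as it did in the examples following Corollaries~\ref{cor:222} and~\ref{cor:444} — will require an explicit multiplicity formula, which I expect can be extracted by partitioning paths in $\overrightarrow{M}(\mathfrak{g}_{r+2})$ according to how many inserted arcs they traverse and matching each class with a subfamily of paths in $\overrightarrow{M}(\mathfrak{g}_r)$, in the style of the bijective arguments in Lemmas~\ref{lem:trbbase1}--\ref{lem:trbgen}. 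The bookkeeping needed to carry this through uniformly in $k$ is the principal difficulty.
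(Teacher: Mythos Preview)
The statement you are attempting to prove is labeled as a \emph{conjecture} in the paper (Conjecture~\ref{conj:stab2}), and the paper provides no proof for it; it is presented as an open problem supported by experimental evidence at the end of Section~\ref{sec:add4's}. There is therefore no proof in the paper against which to compare your proposal.

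Your outline is a reasonable strategy---induct on $r$ in steps of two, use Theorem~\ref{thm:base1} for the base case $r=1$, and try to build a weight-preserving compression $\overrightarrow{M}(\mathfrak{g}_{r+2})\to\overrightarrow{M}(\mathfrak{g}_r)$ in the style of Lemmas~\ref{lem:trbbase1}--\ref{lem:trbgen}---but it is not a proof. You correctly identify the central obstruction yourself: because the bottom composition is a single block $(2kr+1)$, inserting two $2k$-blocks in the numerator changes every bottom arc, so the compression you describe does not exist in any straightforward sense (contrast this with Theorems~\ref{thm:addm2s} and~\ref{thm:addm4s}, where the appended blocks sit beyond the original meander and leave it untouched). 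Your fibers-of-$\mathcal{S}$ idea for tracking multiplicity increments is plausible in spirit, but you have not carried it out, and the phrase ``the bookkeeping needed to carry this through uniformly in $k$ is the principal difficulty'' is an admission that the key step is missing. Your instinct that log-concavity fails here is correct and is confirmed by the remark immediately following the conjecture in the paper: the spectrum of $\mathfrak{p}^A\frac{8|8|8|1}{25}$ is unimodal but not log-concave.
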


\begin{remark}
The family of type-A seaweeds considered in Conjecture~\ref{conj:stab2} provide us with yet another example of a Frobenius, type-A seaweed which does not have the log-concave spectrum property. One can compute that the spectrum of $\mathfrak{g}=\mathfrak{p}^A\frac{8|8|8|1}{25}$ is equal to $$\{-7^2,-6^5,-5^8,-4^{13},-3^{23},-2^{37},-1^{52},0^{64},1^{64},2^{52},3^{37},4^{23},5^{13},6^8,7^5,8^2\}.$$ Notice that $8^2=64<65=5*13,$ so $\mathfrak{g}$ does not have the log-concave spectrum property.
\end{remark}

\begin{conj}\label{conj:stab3}
If $$\mathfrak{g}_{k,r}=\mathfrak{p}^A\frac{\overbrace{2k|\cdots|2k}^r|1}{1|\underbrace{2k|\cdots|2k}_r},$$ for $k,r\ge 1$, then the set of distinct eigenvalues contained in the spectrum of $\mathfrak{g}_{k,r}$ is equal to the collection of integers contained in the interval $[-k+1,k]$, and $\mathfrak{g}_{k,r}$ has the log-concave spectrum property. Moreover, if $i\in[-k+1,0]$, then the multiplicity of $i$ in the spectrum of $\mathfrak{g}_{k,r}$ is equal to the multiplicity of $i-1$ in the spectrum of $\mathfrak{g}_{k+1,r}$; similarly, if $i\in(0,k]$, then the multiplicity of $i$ in the spectrum of $\mathfrak{g}_{k,r}$ is equal to the multiplicity of $i+1$ in the spectrum of $\mathfrak{g}_{k+1,r}$.
\end{conj}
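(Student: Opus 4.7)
The plan is to reduce the conjecture to a closed-form computation of the weights along the oriented meander of $\mathfrak{g}_{k,r}$ followed by a combinatorial count of pairwise weight differences. Write $n = 2kr + 1$. First I would verify that $M(\mathfrak{g}_{k,r})$ is a single Hamiltonian path: each interior vertex lies in exactly one top block (of size $2k$) and exactly one bottom block (of size $2k$), so has degree $2$, while $v_1$ and $v_n$ each lie in one length-$2k$ block and one length-$1$ block, so have degree $1$; counting gives $2kr = n-1$ edges, and a direct traversal starting at $v_1$ confirms connectivity, forcing a single path. Corollary~\ref{cor:indA} then yields that $\mathfrak{g}_{k,r}$ is Frobenius. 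Tracing the path and noting that top edges are traversed backward and bottom edges forward within the zigzag of each length-$2k$ unit, induction on $r$ would yield
$$w(P_{1,j}(\mathfrak{g}_{k,r})) \;=\; -\min(s,\,2k-s),\qquad s = (j-1)\bmod 2k.$$

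Next I would identify which $(i,j)$ actually contribute to the spectrum. The flag conditions defining $\mathfrak{g}_{k,r}$ force $e_{i,j}\in\mathfrak{g}_{k,r}$ (with $i\neq j$) exactly when $i,j$ lie in a common top block (if $i>j$) or a common bottom block (if $i<j$). Since the weight sequence restricted to any such block is a shift of $(0,-1,-2,\ldots,-k,\ldots,-2,-1)$, and shifts cancel in the differences $w(P_{i,j})=w(P_{1,j})-w(P_{1,i})$, each of the $r$ top blocks and $r$ bottom blocks contributes the same multiset $M_k$ of off-diagonal eigenvalues. Combined with the $2kr$ additional zero eigenvalues from the traceless diagonal (via Remark~\ref{rem:extra0}), this gives the decomposition $\mathrm{spec}(\mathfrak{g}_{k,r}) = \{0^{2kr}\} \cup 2r\cdot M_k$.

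Setting $f(\alpha)=\min(\alpha,2k-\alpha)$ for $\alpha\in\{0,\ldots,2k-1\}$, the multiset $M_k$ equals $\{f(\alpha)-f(\beta):0\le\beta<\alpha\le 2k-1\}$. Splitting pairs $(\alpha,\beta)$ by whether each lies in the increasing half $\{0,\ldots,k\}$ or the decreasing half $\{k,\ldots,2k-1\}$ and carefully handling the shared peak $\alpha=k$ yields the closed-form multiplicities
$$c_k(j) \;=\; \begin{cases} 2(k-j)+1, & 1\le j\le k,\\ k-1, & j=0,\\ 2(k+j)-1, & -(k-1)\le j\le -1. \end{cases}$$
Substituting gives $\mathrm{mult}_{\mathfrak{g}_{k,r}}(j)=2r\,c_k(j)$ for $j\neq 0$ and $\mathrm{mult}_{\mathfrak{g}_{k,r}}(0)=2r(2k-1)$; arranged in increasing order of $j$, these read $2r$ times the sequence $1,3,5,\ldots,2k-3,2k-1,2k-1,2k-3,\ldots,5,3,1$ supported on $[-k+1,k]\cap\mathbb{Z}$. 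The support claim is then immediate, log-concavity follows from the elementary inequalities $(2m+1)^2>(2m-1)(2m+3)$ on the two tails and $(2k-1)^2>(2k-1)(2k-3)$ at the central plateau, and the ``moreover'' multiplicity comparison with $\mathfrak{g}_{k+1,r}$ reduces to the substitution $k\mapsto k+1$, $j\mapsto j\mp 1$ in the closed forms.

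The main obstacle is the casework in computing $c_k(j)$: one must enumerate ordered pairs $(\alpha,\beta)$ with $\alpha>\beta$ across the two halves of the domain of $f$, ensuring the peak $\alpha=k$ is not double-counted. Once that combinatorial count is in hand, the remaining properties follow mechanically from the structural decomposition of the spectrum.
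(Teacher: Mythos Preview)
The paper does \emph{not} prove this statement: it is stated as Conjecture~\ref{conj:stab3} and left open. So there is no ``paper's own proof'' to compare against; your plan, if carried out, would actually resolve the conjecture.

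Your outline is essentially correct, and I checked the key ingredients. The meander is indeed a single path (the traversal from $v_1$ visits $1,\,2k,\,3,\,2k{-}2,\,5,\ldots,\,2,\,2k{+}1$ within the first unit and then repeats, so induction on $r$ works). Your weight formula $w(P_{1,j})=-\min(s,2k-s)$ with $s=(j-1)\bmod 2k$ is correct: one verifies directly that this function satisfies $W(b)-W(a)=1$ on every oriented edge (using $s_a+s_b=2k-1$ on top edges and $s_a+s_b\equiv 2k+1$ on bottom edges), and connectivity then forces $W=w(P_{1,\cdot})$. Your closed form $c_k(j)$ and the resulting multiplicities $2r\cdot(1,3,\ldots,2k{-}1,2k{-}1,\ldots,3,1)$ on $[-k{+}1,k]$ check out for $k=1,2,3$, and the log-concavity and ``moreover'' claims are then mechanical as you say.

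There is one genuine soft spot. You write that ``the weight sequence restricted to any such block is a shift of $(0,-1,\ldots,-k,\ldots,-1)$, and shifts cancel in the differences,'' and conclude that each of the $2r$ blocks contributes the same multiset $M_k$. For the $r$ top blocks this is fine (each has position-ordered weight sequence exactly $(0,-1,\ldots,-k,\ldots,-1)$ and you take pairs $i>j$). But a bottom block has position-ordered weight sequence $(-1,-2,\ldots,-k,\ldots,-1,0)$, which is a \emph{cyclic} rotation, not an additive shift, of the top sequence; and for bottom blocks you take pairs $i<j$, not $i>j$. Cyclic rotations do not preserve the multiset $\{W_j-W_i:i<j\}$ in general, so ``shifts cancel'' is not the right reason. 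What actually makes the two contributions equal is a reversal symmetry: writing $g(p)=-f(p-1)$ for the top-block weight at position $p$, one checks $W^{\mathrm{bot}}_p=g(2k{+}1{-}p)$, so the substitution $p\mapsto 2k{+}1{-}p$ carries the bottom-block multiset $\{W^{\mathrm{bot}}_{p_j}-W^{\mathrm{bot}}_{p_i}:p_i<p_j\}$ bijectively onto the top-block multiset $\{g(a)-g(b):a<b\}$. Once you replace the ``shifts cancel'' sentence with this reversal argument, the proof goes through.
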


\section{Epilogue}\label{sec:epilogue}
The inductive proof methods outlined in this article correspond naturally to a set of ``winding moves" on the meander of a Frobenius, type-A seaweed. Such winding moves were first introduced by Panyushev (\textbf{\cite{Pan2001}}, 2001) -- and later recast graph-theoretically by Coll et al. (\textbf{\cite{meanders2}}, 2012) -- for the purpose of simplifying computations involving seaweeds. The initial, overall goal of our study was to prove Conjecture~\ref{conj:uni} by tracking the effects of the winding moves on the spectra of Frobenius, type-A seaweeds via (extended) spectrum matrices. It quickly became clear, however, that the spectrum of a generic Frobenius, type-A seaweed grew increasingly wild upon iterative applications of winding moves; e.g., compare Theorems~\ref{thm:k2} and \ref{thm:k+2k}. Thus, we still have the following general question.
\begin{que}\label{que:1}
\textit{How do the winding moves on the meander of a Frobenius, type-A seaweed $\mathfrak{g}$ affect the spectrum of $\mathfrak{g}?$}
\end{que}

\noindent Note that upon successfully answering this question, one should obtain a combinatorial proof of Conjecture~\ref{conj:uni}.

While the results of Section~\ref{sec:mp} provide significant insight into answering Question~\ref{que:1}, such results have led to further questions. In particular, to the authors' knowledge, this article marks the first acknowledgement of the log-concave spectrum property for seaweeds. While not a property of Frobenius, type-A seaweeds in general, per Section~\ref{sec:stable}, the results of Section~\ref{sec:mp} suggest that all Frobenius, maximal parabolic, type-A seaweeds do possess the log-concave spectrum property. Thus, we have the following conjecture.
\begin{conj}
If $\mathfrak{g}$ is a Frobenius, maximal parabolic, type-A seaweed, then $\mathfrak{g}$ has the log-concave spectrum property.
\end{conj}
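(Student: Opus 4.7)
The plan is to proceed by strong induction on $a+b$, where $\mathfrak{g}=\mathfrak{p}^A\frac{a|b}{a+b}$ is Frobenius with $\gcd(a,b)=1$. By Corollary~\ref{cor:vfes} we may assume $a\geq b$, and the base case $b=1$ is Corollary~\ref{cor:k1log}. For the inductive step with $b>1$, Euclidean division gives $a=qb+r$ with $0<r<b$ and $\gcd(b,r)=1$. Lemmas~\ref{lem:trbbase1}, \ref{lem:blbbase1}/\ref{lem:blbgen}, and \ref{lem:tlbbase1}/\ref{lem:trbgen} then decompose the multiset of values in $\Sigma(\mathfrak{g})$ as
\[
\widehat{\Sigma}\bigl(\mathfrak{p}^A\tfrac{a-b|b}{a}\bigr)\;\cup\;\widehat{\Sigma}\bigl(\mathfrak{p}^A\tfrac{b-r|r}{b}\bigr)\;\cup\;\bigl(\mathcal{T}(\mathfrak{p}^A\tfrac{a-b|b}{a})+1\bigr),
\]
where $\mathcal{T}(\cdot)$ denotes the multiset of entries in the top-right $(a-b)\times b$ block. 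Removing the extra $0$ per Remark~\ref{rem:extra0} gives the spectrum of $\mathfrak{g}$, and both smaller algebras fall under the inductive hypothesis since their parameter sums are strictly less than $a+b$.

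To carry the induction, I would strengthen the hypothesis to simultaneously record three generating functions for each admissible pair $(a,b)$: the spectrum polynomial $\sigma_{a,b}(z)$, the extended spectrum polynomial $\hat\sigma_{a,b}(z)$, and the TR-block polynomial $\tau_{a,b}(z)$. The structural lemmas above translate into a closed recursion
\[
\sigma_{a,b}(z)\;=\;\hat\sigma_{a-b,b}(z)\;+\;\hat\sigma_{b-r,r}(z)\;+\;z\,\tau_{a-b,b}(z)\;-\;1,
\]
together with analogous expressions for $\hat\sigma_{a,b}$ (using skew-symmetry, Remark~\ref{rem:skew}) and for $\tau_{a,b}$ (reading off Lemma~\ref{lem:trbgen} with $m=1$ or $m>1$). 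The symmetry $\sigma_{a,b}(z)=z\,\sigma_{a,b}(z^{-1})$ established in~\cite{unbroken} lets us focus only on coefficients with $k\leq 0$, and the base cases $\sigma_{k,1}(z)$, $\tau_{k,1}(z)$ are available in closed form from Theorem~\ref{thm:base1} and Lemma~\ref{lem:tailk1}.

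The main obstacle is that log-concavity is not preserved under arbitrary sums of log-concave sequences, so the step of verifying log-concavity of $\sigma_{a,b}$ from log-concavity of the summands in the recursion above is nontrivial. To surmount this, I would try one of two routes. The first is to strengthen the inductive invariant to real-rootedness (or Polya-frequency property PF$_2$) of $\sigma_{a,b}(z)$; this property implies log-concavity, is symmetric-preserving, and is known to be stable under certain operations on coefficient sequences---the task becomes showing the specific combination above lands in this class, which should be checkable against the explicit formulas of Theorems~\ref{thm:base1},~\ref{thm:k2},~\ref{thm:k1k},~\ref{thm:k+2k}. The second, perhaps cleaner, route is to iterate the recursion all the way down along the continued fraction expansion $a/b=[q_1;q_2,\ldots,q_\ell]$ to obtain a single closed product/sum formula for $\sigma_{a,b}(z)$ indexed by the $q_i$'s, from which log-concavity can be read directly---this would subsume all four families already verified in Section~\ref{sec:mp} as the cases $\ell=1$ and $\ell=2$.

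The hardest single step is likely to be the correct formulation of the strengthened invariant: the inductive proofs for fixed small $b$ in the paper reveal that the ``TR block multiset'' $\tau_{a,b}$ has its own delicate peak structure (cf.\ Lemma~\ref{lem:blbbase2}), and matching the peaks of the three summands so that their sum remains log-concave---rather than merely unimodal---appears to be what forces families outside the maximal-parabolic range (e.g., the algebras of Remark~\ref{rem:nlcc} and Remark~\ref{rem:nlcc2}) to fail log-concavity. If real-rootedness or a continued-fraction product formula cannot be established in full generality, one can still hope to salvage the conjecture by isolating the sharp log-concavity ratio $m_k^2/(m_{k-1}m_{k+1})$ as a monotone quantity under the Euclidean reduction.
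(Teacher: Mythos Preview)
The statement you are attempting to prove is presented in the paper as an open \emph{conjecture} (in Section~\ref{sec:epilogue}), not as a theorem; the paper gives no proof and in fact explicitly explains why the inductive approach via the structural lemmas of Section~\ref{sec:mp} does not readily go through: ``each value of $b$ seems to require its own supporting lemma analogous to Lemma~\ref{lem:blbbase2}; that is, it does not seem possible to extend the inductive procedure above to obtain explicit formulae---or establish log-concavity or unimodality---for the spectra of all Frobenius, maximal parabolic, type-A seaweeds.'' So there is no ``paper's own proof'' to compare against.

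Your proposal is therefore a research plan rather than a proof, and you have correctly identified the genuine gap yourself: the recursion you write down expresses $\sigma_{a,b}$ as a sum of three pieces, and log-concavity is not closed under addition. Neither of your suggested remedies is carried out. For the first route, there is no reason given to expect the spectrum polynomials to be real-rooted (indeed, even for the explicit families in Theorems~\ref{thm:base1}--\ref{thm:k+2k} you would need to verify this), and real-rootedness is also not preserved under sums in general. For the second route, a closed continued-fraction formula for $\sigma_{a,b}$ is exactly what the paper says it could not obtain. The honest status of your write-up is that it sets up a plausible inductive framework using the paper's lemmas, but the step that would actually establish log-concavity is missing and is precisely the step the authors flag as the obstruction.
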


\noindent The conjecture above is certainly not comprehensive. For example, the seaweed $\mathfrak{p}^A\frac{1|2|2|4|4|2}{2|2|2|1|4|4}$ is not parabolic, yet it possesses the log-concave spectrum property. We pose the following question.

\begin{que}\label{que:2}
Which Frobenius, type-A seaweeds admit the log-concave spectrum property?
\end{que}

 In contrast to Section~\ref{sec:mp}, in which the focus was primarily on applications of winding moves that increase the values of the parts in the defining compositions of the seaweeds while leaving the number of parts fixed, the motivation for Section~\ref{sec:stable} was to increase the number of parts without altering the size of each. In addition to providing a construction procedure for Frobenius, type-A seaweeds without the log-concave spectrum property, applications of such winding moves preserve the set of distinct eigenvalues in the spectra of the associated algebras. To the author's knowledge, this is the first instance of such behavior being noted, and it leads to the following question.

\begin{que}
Are the methods discussed in Section~\ref{sec:stable} the only means by which to construct a family of Frobenius, type-A seaweeds with a fixed set of distinct eigenvalues.
\end{que}

Recall that, in Section~\ref{sec:mp}, our proof methods required the determination of formulas for the extended spectrum of each Frobenius, type-A seaweed. While it follows from its definition that the extended spectrum of 
a Frobenius, type-A seaweed consists entirely of integers centered at 0, it is unclear whether these integers are generally unbroken and whether their multiplicities generally form a log-concave (resp., unimodal) sequence. Thus, we ask
\begin{que}\label{que:3}
\textit{Is the set of values in the extended spectrum of a Frobenius type-A seaweed unbroken?}
\end{que}
\noindent and
\begin{que}\label{que:4}
\textit{If the values in the extended spectrum of a Frobenius, type-A seaweed $\mathfrak{g}$ are written in increasing order, is the corresponding sequence of multiplicities log-concave \textup(resp., unimodal\textup)?}
\end{que}

\noindent While an answer to Question~\ref{que:3} would be an interesting result in its own right, consideration of Question~\ref{que:4} may also provide insight into an answer to Question~\ref{que:1}. Part of the challenge with using meanders when attempting to answer Question~\ref{que:1} is that it can be difficult to discern which paths in a given meander contribute their weights to the spectrum of the associated algebra. However, this particular difficulty is resolved when computing the extended spectrum of a Frobenius, type-A seaweed $\mathfrak{g}$, since the weights of all paths in $\overrightarrow{M}(\mathfrak{g})$ are counted. Thus, it is plausible that a purely graph-theoretic approach may yield answers to Question~\ref{que:4}, which, in turn, may provide an avenue toward a proof (or counterexample) of Conjecture~\ref{conj:uni}.

\end{document}